  \def\@wrindex#1{%
    \protected@write\@indexfile{}%
      {\string\indexentry{#1}{ \S\thesubsection (p.\thepage)}}
    \endgroup
  \@esphack}
\author{Benjamin Enriquez}
\address{IRMA (UMR 7501) et D\'epartement de Math\'ematiques, Universit\'e de Strasbourg, 7 rue Ren\'e-Descartes, 67084 Strabourg (France)}
\email{b.enriquez@math.unistra.fr}
\author{Federico Zerbini}
\address{Université Paris-Saclay, CNRS, CEA, Institut de physique théorique, 91191, Gif-sur-Yvette, France}
\email{federico.zerbini@ipht.fr}
\newtheorem{thm}{Theorem}[section]
\newtheorem{lem}[thm]{Lemma}
\newtheorem{cor}[thm]{Corollary}
\newtheorem{prop}[thm]{Proposition}
\theoremstyle{definition} \newtheorem{rem}[thm]{Remark}}
\theoremstyle{definition} \newtheorem{defn}[thm]{Definition}}
\theoremstyle{remark} }
\numberwithin{equation}{subsection}
\begin{document}
\baselineskip 16pt 
%{\tiny{RECH/projet/InterprDelta*/BettiDS*.pdf}}\hfill
%\qquad{\tiny{\today}}
%
\title[Maurer-Cartan elements over configurations spaces of curves %positive genus curves via iterated integrals
]{
Construction of Maurer-Cartan elements \\
over configuration spaces of curves %via iterated integrals
}
\begin{abstract}
For~$C$ a complex curve and $n \geq 1$, a pair $(\mathcal{P},\nabla_\mathcal{P})$ of a principal bundle $\mathcal{P}$ with meromorphic flat connection over~$C^n$, holomorphic over the configuration space $C_n(C)$ of~$n$ points over~$C$, was introduced in~\cite{EnrConfSp}. For any point $\infty \in C$, we construct a trivialisation of the restriction of~$\mathcal{P}$ to $(C\setminus\infty)^n$ and obtain a Maurer-Cartan element~$J$ over $C_n(C\setminus\infty)$ out of $\nabla_\mathcal{P}$, thus generalising a construction of Levin and Racinet when the genus of~$C$ is higher than one. We give explicit formulas for~$J$ as well as for~$\nabla_\mathcal{P}$. When $n=1$, this construction gives 
rise to elements of Hain's space of second kind iterated integrals over~$C$.
\end{abstract}
\maketitle

{\small\tableofcontents}

\section{Introduction}\label{sec:Intro}

\subsection{Motivation and context}

Let~$M$ be a smooth manifold and $a,b\in M$. By a theorem of Chen, the function ring on the pro-unipotent completion of $\pi_1(M;a,b)$ is isomorphic to the zeroth cohomology of a complex $B^*(\mathcal{A}^*(M))$, where $\mathcal{A}^*(M)$ is the dg-algebra of smooth differential forms on~$M$ and~$B^*$ is the ``bar-complex'' functor~\cite{Chen}. Elements of $Z^0(B^*(\mathcal{A}^*(M)))$ give rise to homotopy-invariant iterated integrals and therefore to functions on~$\tilde M^2$, where~$\tilde M$ denotes a universal cover of~$M$. When $M=C_n(\mathbb{P}_{\mathbb{C}}^1\setminus\{0,1,\infty\})$ is the configuration space of~$n$ points on $\mathbb{P}_{\mathbb{C}}^1\setminus\{0,1,\infty\}$, a model of $\mathcal A^*(M)$, which is a connected dg-subalgebra of this dg-algebra, is known, and the functions on~$\tilde M^2$ associated with $Z^0(B^*(A))$ are the classical multiple polylogarithms; this was extended to configuration spaces of genus-1 curves in~\cite{BrownLevin}.
Let $\mathfrak{g}$ be a pro-nilpotent Lie algebra, then we define ${\rm MC}(M,\mathfrak{g})$ to be the set of Maurer-Cartan elements\footnote{For $V$ a vector space, we set $V\,\hat{\otimes}\,\mathfrak{g}:=\lim_{\leftarrow}V\otimes (\mathfrak{g}/F^{n}\mathfrak{g})$, where $F^n\mathfrak{g}$ is the $n$-th term of the lower central series of $\mathfrak{g}$.} $\omega\in \mathcal{A}^1(M)\,\hat\otimes\,\mathfrak{g}$, i.e. such that $d\omega+(1/2)[\omega,\omega]=0$. Then a pair $(\omega, P)\in {\rm MC}(M,\mathfrak{g})\times \mathbb{C}[\exp(\mathfrak{g})]$ gives rise to an element of $Z^0(B^*(\mathcal{A}^*(M)))$ and therefore to a function on $\tilde M^2$, given by $(\tilde a,\tilde b)\to P(\mathcal{P}\exp \int_{\tilde a}^{\tilde b}\omega)$, where for a smooth path $\eta:[0,1]\to M$
$$
\mathcal{P}\exp \int_{\eta} \omega\,:=\,\sum_{n \geq 0}\int_{0 \leq t_1 \leq\cdots \leq t_n \leq 1} (\eta^*\omega)(t_1)\cdots (\eta^*\omega)(t_n) \in \exp(\mathfrak{g}) \subset U(\mathfrak{g})^\wedge\,,
$$
is a \emph{path-ordered exponential}. 
%In particular, a principal bundle with flat connection $(P,\nabla_P)$ equipped with a trivialisation of the lift of~$P$ to~$\tilde M$ gives rise to elements of~$Z^0(B^*(A^*(tilde M)))$ and therefore also to functions on $\tilde M^2$.

An analogue of the above map $Z^0(B^*(\mathcal{A}^*(M)))\to C^\infty(\tilde M^2)$ was constructed in the context of complex algebraic varieties~\cite{HainAlg}. For $(C,S)$ a pair of a complex curve~$C$ and a finite subset of points~$S$, it gives rise to a diagram $T(\Gamma(C\setminus S,\Omega^1_C))\supset \{\textrm{2nd kind IIs}\}\to \mathcal{O}((\tilde C\setminus p^{-1}(S))^2)$, where $\{\textrm{2nd kind IIs}\}$ is the subset of the tensor algebra $T(\Gamma(C\setminus S,\Omega^1_C))$ of elements $x$ such that the map ${\rm II}_x : \Pi_1(C\setminus S)\to \mathbb{C}$ induced by iterated integration of $x$ factors through the map $\Pi_1(C\setminus S)\to \Pi_1(C) \times_{C^2}(C\setminus S)^2$, where $\Pi_1(X)$ is the fundamental groupoid of a topological space $X$. Let $\mathfrak g$ be a pro-nilpotent Lie algebra, and define ${\rm MC}_{\rm alg}(C,S,\mathfrak{g})$ to be the subset of all $J \in \Gamma(C\setminus S,\Omega^1_C)\,\hat{\otimes}\,\mathfrak g$ such that $d+J$ has no monodromy at the points of $S$ (i.e. it is conjugated to $d$ at the formal neighborhood of each $s\in S$). Then a pair $(J,P) \in {\rm MC}_{\rm alg}(C,S,\mathfrak g) \times \mathbb{C}[\exp (\mathfrak{g})]$  gives rise to an element of $\{\textrm{2nd kind IIs}\}$ and therefore to a function on $(\tilde C\setminus p^{-1}(S))^2$. The aim of the present paper is the construction of an element $J\in {\rm MC}_{\rm alg}(C,\infty,\mathfrak{g})$, where $C$ is a curve of genus~$\geq 1$ and $\infty\in C$, as well as its analogue on $C_n(C\setminus\infty)$, which is an element $J \in \Gamma(C_n(C\setminus\infty),\Omega^1_C)\,\hat\otimes\, \mathfrak{g}$ such that $d+J$ is conjugated to~$d$ at each formal neighborhood of $(C\setminus\infty)^k\times \infty\times (C\setminus\infty)^{n-1-k}$, and $dJ+(1/2)[J,J]=0$.

A pro-nilpotent Lie algebra $\hat{\mathfrak{t}}_{h,n}$ is associated in~\cite{Bez} to a pair of integers $n,h \geq 1$. For $n \geq 1$ and~$C$ a curve of genus $h \geq 1$, one constructs a pair $(\mathcal{P},\nabla_{\mathcal{P}})$, where~$\mathcal{P}$ is a principal $\exp(\hat{\mathfrak{t}}_{h,n})$-bundle over~$C^n$ and~$\nabla_\mathcal{P}$ is a holomorphic flat connection on the restriction of $\mathcal{P}$ to $C_n(C)$ (\cite{CEE} for $h=1$, \cite{EnrConfSp} for $h>1$; the constructions are based respectively on~\cite{Bernard, BernardHG}) and applies it to the computation of the pro-unipotent completion of the fundamental group of~$C_n(C)$. The data of a point $\infty \in C$ and of an isomorphism of the restriction of~$\mathcal{P}$ to~$(C\setminus\infty)^n$ with a trivial bundle~$\mathcal{P}_{\rm trv}$ defines a Maurer-Cartan element~$J$ on $C_n(C\setminus\infty)$ by the condition that $(\mathcal{P},\nabla_\mathcal{P})$ is isomorphic to $(\mathcal{P}_{\rm trv},d+J)$. If $n=1$ then~$J$ is an element of ${\rm MC}_{\rm alg}(C,\infty,\hat{\mathfrak{t}}_{h,n})$.

A similar construction was carried out in~\cite{LevinRacinet} when $h=1$, in which case $C$ is a punctured elliptic curve $E\setminus 0$. A principal bundle with flat connection $(\mathcal{P}_{\rm CEE},\nabla_{\rm CEE})$ over $C_{n+1}(E)/E=C_n(E\setminus 0)$ was constructed in~\cite{CEE}, and independently in~\cite{LevinRacinet} when $n=1$. In the latter case, an isomorphism of $\mathcal{P}_{\rm CEE}$ with $\mathcal{P}_{\rm trv}$ was constructed in~\cite{LevinRacinet}, \S~5, giving rise to an element $J_{\rm LR} \in {\rm MC}_{\rm alg}(E\setminus 0,\emptyset,\hat{\mathfrak{t}}_{1,2})$. 

\subsection{Contents of the article}\label{sect:main:results}

Throughout this article, we fix a Riemann surface~$C$ of genus $h\geq 1$, a universal cover $p:\tilde C\to C$, a point~$\infty$ on~$C$ and a choice of a compatible system~$e$ of classes of parametrisations of the fundamental groups of $C\setminus\infty$ (see \S\ref{sect:topology}).

\S\ref{sec2} is devoted to the proof of the triviality of the restriction to $(C\setminus\infty)^n$ of the principal bundle~$\mathcal{P}$ from~\cite{EnrConfSp}. The proof necessitates the construction of a holomorphic map\footnote{As~$\mathrm{exp}(\hat{\mathfrak G})$ is prounipotent, it is isomorphic to an infinite product of affine lines, hence we mean that the coordinates of~$g$ are meromorphic functions on~$\tilde C$ with poles at~$p^{-1}(\infty)$.} 
$\tilde g_{\tilde x,{\pmb \beta}}:\tilde C\setminus p^{-1}(\infty)\to \exp(\hat{\mathfrak G})$ with particular properties, where~$\mathfrak G$ is a free Lie algebra with $2h$ generators. This construction occupies \S\ref{sect:topology} to \S\ref{ssec:defgtildexbeta} (see Proposition~\ref{prop:tildeg}); it makes use of iterated integrals, and it depends on the input data $(\tilde x,{\pmb \beta})$, where~$\tilde x$ is a point in~$\tilde C$ and~$\pmb\beta$ is a family of~$h$ meromorphic differential forms on~$C$ introduced in \S\ref{def:omega}. We study the dependence on these data in \S\ref{ssec:dependencetildeg}. In \S\ref{ssec:trivialisation}, we introduce the topological Lie algebra $\hat{\mathfrak{t}}_{h,n}$, recall the construction of the principal $\exp(\hat{\mathfrak{t}}_{h,n})$-bundle $\mathcal{P}$ attached to~$e$, and derive its triviality from Proposition~\ref{prop:tildeg} (see Theorem~\ref{thm:2:28}). 

\S\ref{sec3} is devoted to the application of this result to the construction of a Maurer-Cartan element
${\pmb J}_{\tilde x,{\pmb \beta}}\in \Gamma(C_n(C\setminus\infty),\Omega^1_{C^n})\,\hat{\otimes}\,\hat{\mathfrak{t}}_{h,n}$. Such ${\pmb J}_{\tilde x,{\pmb \beta}}$ is determined by the above isomorphism of~$\mathcal{P}$ with~$\mathcal{P}_{\rm trv}$, which takes~$\nabla_\mathcal{P}$ to $d+{\pmb J}_{\tilde x,{\pmb \beta}}$. In order to make~${\pmb J}_{\tilde x,{\pmb \beta}}$ explicit, one introduces the Maurer-Cartan element ${\pmb K} \in \Gamma_{\rm rat}(\hat C^n,\Omega^1_{\tilde C^n})\,\hat\otimes\,\hat{\mathfrak{t}}_{h,n}$, where $\pi:\hat C\to C$ is the Schottky cover of $C$ (see \S\ref{Schottkyformalism}), such that there is an isomorphism of~$(\pi^n)^*(\mathcal{P})$ with the trivial bundle over~$\hat C^n$ which takes $(p^n)^*(\nabla_\mathcal{P})$ to $d+{\pmb K}$. Then ${\pmb J}_{\tilde x,{\pmb \beta}}$ is determined by the identity $d+{\pmb J}_{\tilde x,{\pmb \beta}}={\pmb g}_{\tilde x,{\pmb \beta}}(d+{\pmb K}){\pmb g}_{\tilde x,{\pmb \beta}}^{-1}$, where~${\pmb g}_{\tilde x,{\pmb \beta}}:(\hat C\setminus\pi^{-1}(\infty))^n\to\exp(\hat{\mathfrak{t}}_{h,n})$ is constructed out of the function $\tilde g_{\tilde x,{\pmb \beta}}$ from \S\ref{sec2} (see Theorem~\ref{thmJflat}). In \S\ref{sec:notationchange} we study the dependence of~${\pmb J}_{\tilde x,{\pmb \beta}}$ on~$\tilde x$ and~$\pmb \beta$.

In \S\ref{sec4}, we provide certain explicit decompositions of ${\pmb K}$ and~${\pmb J}_{\tilde x,{\pmb \beta}}$. The constituents of these decompositions are computed more precisely in \S\ref{sec5}. In particular, we show that the computation of~$\tilde g_{\tilde x,\pmb \beta}$ can be used to deduce expressions of one of the constituents of~${\pmb K}$, a family of 1-forms~$\omega_{i_1\cdots i_n}$, in terms of iterated integrals and residues of classical differential forms (Corollary~\ref{cor516}). When $n=1$, this is sufficient to obtain explicit formulas for~${\pmb K}$ (combining Corollary~\ref{cor515} with eq.~\eqref{eq:decompK1}) and~${\pmb J}_{\tilde x,{\pmb \beta}}$ (combining Definition~\ref{def:210611}, Lemma~\ref{lemma:decomp}, eq.~\eqref{eq4:211006}, Corollary~\ref{cor515} and Lemma~\ref{lem49}) for n=1. Examples of such formulas in low-degree can be found in \S\ref{sec:computations}. 

%Set also\footnote{As~$\mathrm{exp}(\hat{\mathfrak G})$ is prounipotent, it is isomorphic to an infinite product of affine lines, hence in the definition of~$\mathcal{G}_{\rho}$ we mean that the coordinates of~$g$ as above are meromorphic functions on~$\tilde C$ with poles at~$p^{-1}(\infty)$.}
%$$
%\mathcal G_\rho:=\Gamma(C\setminus\infty,\mathrm{exp}(\hat{\mathfrak G})_\rho)=\{g:\tilde C\setminus p^{-1}(\infty)\to 
%\mathrm{exp}(\hat{\mathfrak G})\,|\,
%\forall\gamma\in\mathrm{Aut}(\tilde C/C),
%\gamma^*g=g\cdot\rho(\gamma)^{-1}\}
%$$ 

\section{Trivialization of the restriction of $\mathcal{P}$ to $(C\setminus\infty)^n$}\label{sec2}

In this section we introduce a function $\tilde g_{\tilde x,\pmb\beta}:\tilde C\setminus p^{-1}(\infty)\to\exp (\hat{\mathfrak{G}})$ which depends on $\tilde x\in \tilde C$ and on a tuple $\pmb \beta$ of differential forms, and we use it to show that the restriction of~$\mathcal{P}$ to~$(C\setminus\infty)^n$ is trivial. 

In the background sections \S\ref{sect:topology} to \S\ref{sec:groupoids} we recall some facts on Riemann surfaces, iterated integrals and groupoids. We devote \S\ref{sec:24} to \S\ref{ssec:defgtildexbeta} to the construction of the function~$\tilde g_{\tilde x,\pmb\beta}$. In \S\ref{ssec:dependencetildeg} we study the dependence of~$\tilde g_{\tilde x,\pmb\beta}$ on~$\tilde x$ and~$\pmb\beta$. The proof of the triviality of the restriction of~$\mathcal{P}$ to~$(C\setminus\infty)^n$ is given in \S\ref{ssec:trivialisation}.

\subsection{The geometric setup}\label{sect:topology}

In \S\ref{ssec:univcover}, we recall the relationship between the fundamental group of a topological space and the group of automorphisms of its universal cover. We will give details of the proofs, in order to set up some notation and recall some notions that will be used throughout this article. We refer to \cite{Ha} for further details. In \S\ref{sect:RS}, we introduce the geometric setup of this article. To do so, we recall and demonstrate some facts about fundamental groups of closed Riemann surfaces. 

\subsubsection{Fundamental group and universal cover}\label{ssec:univcover}

For~$X$ a topological space and $x,y\in X$,  we denote by $\pi_1(X;x,y)$ the fundamental torsor of paths from~$x$ to~$y$. The composition of paths induces a map $\pi_1(X;x,y)\times\pi_1(X;y,w)\to\pi_1(X;x,w)$, 
$(\eta,\eta')\mapsto\eta\eta'$ (topologists' convention). %\textcolor{red}{FZ: modified}
 Let $p:\tilde X\to X$ be a universal cover. As~$\tilde X$ is simply connected, for any $\tilde x,\tilde y\in\tilde X$, the set $\pi_1(\tilde X;\tilde x,\tilde y)$ has exactly one element.
\begin{defn}\label{def:eta:xy:0202} 
For $\tilde x,\tilde y\in\tilde X$, we define $\eta_{\tilde x,\tilde y}$ as the image in $\pi_1(C;p(\tilde x),p(\tilde y))$ of the unique element of $\pi_1(\tilde C;\tilde x,\tilde y)$. 
\end{defn}
Let $x\in X$. We recall the following classical result.
\begin{lem}\label{lemma:standard}
(a) There is a right action of $\pi_1(X,x)$ on $p^{-1}(x)$, denoted $(\tilde x,\eta)\mapsto \tilde x\cdot\eta$, where 
$\tilde x\cdot\eta$ is the endpoint of the unique lift starting at $\tilde x$ of any representative of $\eta$.    

(b) There is a unique map $p^{-1}(x)\times\pi_1(X,x)\to\mathrm{Aut}(\tilde X/X)$, $(\tilde x,\eta)\mapsto 
\mathrm{aut}_\eta^{\tilde x}$, such that  $\tilde x\cdot \eta=\mathrm{aut}_\eta^{\tilde x}(\tilde x)$
(equality in~$\tilde X$). 

(c) For any $\tilde x\in p^{-1}(x)$, the map $\mathrm{iso}_{\tilde x} : \pi_1(X,x)\to\mathrm{Aut}(\tilde X/X)$, $\eta\mapsto
\mathrm{aut}_\eta^{\tilde x}$ is a group isomorphism, so $\mathrm{aut}_{\eta\eta'}^{\tilde x}
=\mathrm{aut}_\eta^{\tilde x}\circ \mathrm{aut}_{\eta'}^{\tilde x}$, with inverse given by $\theta\mapsto\eta_{\tilde x,\theta(\tilde x)}$.
\end{lem}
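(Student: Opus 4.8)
The plan is to deduce all three parts from the two standard pillars of covering-space theory: the path- and homotopy-lifting property, and the unique lifting criterion, the latter being available because $\tilde X$ is connected, locally path-connected and simply connected.

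For part (a), I would fix $\tilde x\in p^{-1}(x)$ and a representative $\gamma$ of $\eta$, lift $\gamma$ to the unique path $\tilde\gamma$ in $\tilde X$ with $\tilde\gamma(0)=\tilde x$, and set $\tilde x\cdot\eta:=\tilde\gamma(1)$. The homotopy-lifting property shows this is independent of the chosen representative $\gamma$, since homotopic loops lift to paths with the same endpoint, while $p(\tilde\gamma(1))=\gamma(1)=x$ shows $\tilde x\cdot\eta\in p^{-1}(x)$. The right-action axiom $(\tilde x\cdot\eta)\cdot\eta'=\tilde x\cdot(\eta\eta')$ then follows by concatenating lifts: in the topologists' convention the lift of a representative of $\eta\eta'$ starting at $\tilde x$ is the lift of $\eta$ from $\tilde x$ followed by the lift of $\eta'$ issued from its endpoint $\tilde x\cdot\eta$.

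For part (b), I would use that a deck transformation is precisely a lift of $p:\tilde X\to X$ through $p$ itself. Since $\tilde X$ is simply connected, the lifting criterion provides, for each prescribed image $\tilde x'\in p^{-1}(x)$ of $\tilde x$, a unique continuous $\theta$ with $p\circ\theta=p$ and $\theta(\tilde x)=\tilde x'$. I would apply this with $\tilde x'=\tilde x\cdot\eta$ to define $\mathrm{aut}_\eta^{\tilde x}$, and construct a candidate inverse by the same recipe with $\tilde x$ and $\tilde x\cdot\eta$ exchanged; uniqueness of lifts fixing a point forces both composites to equal the identity, so $\mathrm{aut}_\eta^{\tilde x}$ is a homeomorphism and lies in $\mathrm{Aut}(\tilde X/X)$. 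Uniqueness of the assignment $(\tilde x,\eta)\mapsto\mathrm{aut}_\eta^{\tilde x}$ is immediate from the same principle, as a deck transformation is determined by its value at the single point $\tilde x$.

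For part (c), the homomorphism property is the only place where the conventions must be tracked with care, and is the step I expect to require the most attention. The key auxiliary fact is that every deck transformation $\theta$ commutes with the right action, $\theta(\tilde y\cdot\eta')=\theta(\tilde y)\cdot\eta'$, which holds because $\theta\circ\tilde\gamma'$ is again a lift of the representative $\gamma'$. Evaluating both $\mathrm{aut}_{\eta\eta'}^{\tilde x}$ and $\mathrm{aut}_\eta^{\tilde x}\circ\mathrm{aut}_{\eta'}^{\tilde x}$ at $\tilde x$ and invoking this commutation yields $\tilde x\cdot(\eta\eta')$ in both cases, whence equality by the uniqueness from (b); it is exactly the topologists' convention that makes $\mathrm{iso}_{\tilde x}$ a homomorphism rather than an anti-homomorphism. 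Finally, bijectivity reduces to the simple transitivity of the action of $\pi_1(X,x)$ on $p^{-1}(x)$: transitivity follows by projecting through $p$ a path in $\tilde X$ joining two points of the fibre, and freeness follows because a loop in $\tilde X$ is null-homotopic by simple connectedness, so its projection is trivial in $\pi_1(X,x)$. Injectivity of $\mathrm{iso}_{\tilde x}$ is then freeness, surjectivity follows from transitivity together with the uniqueness in (b), and unwinding Definition~\ref{def:eta:xy:0202} identifies $\eta_{\tilde x,\theta(\tilde x)}$ with the class whose lift from $\tilde x$ terminates at $\theta(\tilde x)$, giving the asserted formula for the inverse.
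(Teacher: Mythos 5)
Your proof is correct and follows essentially the same route as the paper's: concatenation of lifts for (a), the lifting criterion (``universality'' of $\tilde X$) for (b), and for (c) the equivariance $\theta(\tilde x\cdot\eta)=\theta(\tilde x)\cdot\eta$ of deck transformations with the monodromy action, combined with the fact that a deck transformation is determined by its value at a single point. Your treatment of (c) is marginally more streamlined (a single evaluation at $\tilde x$ in place of the paper's two intermediate identities \eqref{intemediate:equality} and $\theta\circ\mathrm{aut}_\eta^{\tilde x}=\mathrm{aut}_\eta^{\theta(\tilde x)}\circ\theta$), and you additionally carry out the bijectivity and inverse-map verification, via simple transitivity of the fibre action, that the paper leaves to the reader with a reference to \cite{Ha}.
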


\proof (a) follows from the equality $\mathrm{lift}_{\tilde x}(\eta\eta')=\mathrm{lift}_{\tilde x}(\eta)
\mathrm{lift}_{\tilde x\cdot\eta}(\eta')$, where  $\mathrm{lift}_{\tilde x}(\eta)$ is the lift to~$\tilde X$
of~$\eta$ starting at~$x$ and the product is the  composition of paths in~$\tilde X$. (b) follows from the 
universality of~$\tilde X$. Let us prove (c). For $\eta,\eta'\in\pi_1(X,x)$, 
\begin{equation}\label{intemediate:equality}
\mathrm{aut}_{\eta'}^{\mathrm{aut}_\eta^{\tilde x}(\tilde x)}\circ\mathrm{aut}_\eta^{\tilde x}
=\mathrm{aut}_{\eta\eta'}^{\tilde x}
\end{equation}
Indeed, the image by the right-hand side of~$\tilde x$ is $\tilde x\cdot\eta\eta'$, while its image by 
the left-hand side is (using the equality in~(b)) $(\tilde x\cdot\eta)\cdot\eta'=\tilde x\cdot\eta\eta'$. 

On the other hand, if $\theta\in\mathrm{Aut}(\tilde X/X)$, if $\eta\in\pi_1(X,x)$ and $\tilde x\in p^{-1}(x)$, then 
the fact that the lift of $\eta$ starting at $\theta(\tilde x)$ is the image by~$\theta$ of the lift of~$\eta$ starting at~$\tilde x$ implies 
\begin{equation}\label{TaggedIdentity}
\theta(\tilde x\cdot\eta)=\theta(\tilde x)\cdot \eta
\end{equation} 
(equality in~$\tilde X$). This equality
can be rewritten as $\theta\circ\mathrm{aut}_\eta^{\tilde x}(\tilde x)=\mathrm{aut}_\eta^{\theta(\tilde x)}
\circ\theta(\tilde x)$ by using the equality in~(b). By the universality of~$\tilde X$, this implies the equality 
$\theta\circ\mathrm{aut}_\eta^{\tilde x}=\mathrm{aut}_\eta^{\theta(\tilde x)}\circ\theta$ in 
$\mathrm{Aut}(\tilde X/X)$, which by replacing~$\eta$ by~$\eta'$ and by plugging $\theta=\mathrm{aut}_\eta^{\tilde x}$
implies that $\mathrm{aut}_\eta^{\tilde x}\circ\mathrm{aut}_{\eta'}^{\tilde x}
=\mathrm{aut}_{\eta'}^{\mathrm{aut}_\eta^{\tilde x}(\tilde x)}\circ\mathrm{aut}_\eta^{\tilde x}$. 
This identity, together with~\eqref{intemediate:equality}, implies that $\mathrm{iso}_{\tilde x}$ is a group morphism.  

We leave to the reader to verify that $\theta\mapsto\eta_{\tilde x,\theta(\tilde x)}$ is inverse to $\mathrm{iso}_{\tilde x}$ (see~\cite{Ha}).
\hfill\qed\medskip

%\begin{lem}\label{lemma:standard}
%For any $\tilde x$ in $\tilde C$, the map $\mathrm{iso}_{\tilde x}:\pi_1(C,p(\tilde x))\to\mathrm{Aut}(\tilde C/C)$ taking 
%$\eta\in\pi_1(C,p(\tilde x))$ to the unique automorphism taking $\tilde x$ to the element of $p^{-1}(p(\tilde x))$
%obtained as the endpoint of the lift of $\eta$ starting at $\tilde x$, is a group isomorphism.
%\end{lem}

\subsubsection{Fundamental groups of Riemann surfaces}\label{sect:RS}

%\textcolor{red}{FZ: title to be verifed}
Let $h\geq 1$ be an integer. 
Let~$C$ be a closed Riemann surface of genus~$h$, let $\infty\in C$ be a point, let $p:\tilde C\to C$ be a universal cover. 
%and let $\widetilde\infty\in p^{-1}(\infty)$. 

\begin{defn}
Let $x\in C\setminus\infty$. To a simply connected neighborhood $U\subset C$ of~$\infty$, a point~$\infty'$ in~$U\setminus\infty$, and a counterclockwise loop $l_\infty\subset U$ around~$\infty$ based at~$\infty'$, one attaches the set of conjugations of~$l_\infty$ by all the elements of the fundamental torsor of paths $\pi_1(C\setminus\infty;x,\infty')$. This is a well-defined conjugacy class of $\pi_1(C\setminus\infty,x)$, which in independent of the choices of~$U$ and~$\infty'$. We call it \emph{the conjugacy class attached to~$\infty$}, and we denote it~$c_{\infty,x}$. 
\end{defn}

\begin{lem}\label{lemma:point}
For $x \in C\setminus\infty$, the kernel of the surjective group morphism $\pi_1(C\setminus\infty,x)\to \pi_1(C,x)$ induced by $C\setminus\infty\subset C$ coincides with the normal subgroup $(c_{\infty,x})$ generated by $c_{\infty,x}$.
\end{lem}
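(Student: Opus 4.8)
The plan is to prove the two inclusions separately, the nontrivial one resting on the Seifert--van Kampen theorem applied to a decomposition of $C$ into a disk around $\infty$ and the complement $C\setminus\infty$.

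\emph{The inclusion $(c_{\infty,x})\subseteq\ker$.} This is the easy direction. A representative $l_\infty$ of the loop around $\infty$ lies in a simply connected neighborhood $U$ of $\infty$, hence bounds a disk in $U\subset C$ and becomes null-homotopic under the inclusion $C\setminus\infty\hookrightarrow C$. Therefore every element of the conjugacy class $c_{\infty,x}$, being a conjugate of $l_\infty$ transported to the base point $x$ by a path in $C\setminus\infty$, maps to the identity in $\pi_1(C,x)$. Since the kernel is a normal subgroup, it contains the normal subgroup $(c_{\infty,x})$ generated by $c_{\infty,x}$.

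\emph{The inclusion $\ker\subseteq(c_{\infty,x})$.} Here I would set $U$ to be a simply connected neighborhood of $\infty$, put $V:=C\setminus\infty$, and choose the base point $\infty'\in U\setminus\infty=U\cap V$. Then $U\cup V=C$, the intersection $U\cap V$ is a punctured disk, hence connected with $\pi_1(U\cap V,\infty')\cong\mathbb{Z}$ generated by the class of $l_\infty$, while $\pi_1(U,\infty')$ is trivial. The Seifert--van Kampen theorem then yields a canonical isomorphism
\[
\pi_1(C,\infty')\;\cong\;\pi_1(V,\infty')\big/ N,
\]
where $N$ is the normal subgroup of $\pi_1(V,\infty')$ generated by the class of $l_\infty$, and under this identification the quotient map is exactly the morphism induced by the inclusion $V\hookrightarrow C$. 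Consequently the kernel of $\pi_1(C\setminus\infty,\infty')\to\pi_1(C,\infty')$ is precisely $N$.

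\emph{Transport to the base point $x$.} Fixing a path $\gamma\in\pi_1(C\setminus\infty;x,\infty')$ induces compatible base-point isomorphisms $\pi_1(-,\infty')\cong\pi_1(-,x)$ for both $C\setminus\infty$ and $C$, carrying the morphism induced by $V\hookrightarrow C$ to itself; hence the kernel at $x$ is the normal subgroup generated by $\gamma\, l_\infty\,\gamma^{-1}$. Finally I would note that the normal closure of a single conjugate of $l_\infty$ coincides with the normal closure of the entire conjugacy class $c_{\infty,x}$, so this kernel equals $(c_{\infty,x})$. The main obstacle is not conceptual but a matter of bookkeeping: one must check that the class of $l_\infty$ appearing in the Van Kampen presentation, conjugated by $\gamma$, matches the definition of $c_{\infty,x}$ as the set of all conjugations of $l_\infty$ by elements of $\pi_1(C\setminus\infty;x,\infty')$, and that the resulting normal closures agree.
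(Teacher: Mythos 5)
Your proof is correct and follows exactly the route the paper indicates: the paper's entire proof is the remark that the lemma is ``a standard consequence of the van Kampen theorem,'' and your argument is precisely that standard consequence spelled out (disk neighborhood $U$ of $\infty$, $V=C\setminus\infty$, punctured-disk intersection, quotient by the normal closure of $l_\infty$, then change of base point). The base-point bookkeeping you flag goes through as you describe, since any two paths in $\pi_1(C\setminus\infty;x,\infty')$ differ by a loop at $x$, so the transported classes $\gamma\,l_\infty\,\gamma^{-1}$ sweep out exactly the conjugacy class $c_{\infty,x}$ and all have the same normal closure.
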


\begin{proof} 
This is a standard consequence of the van Kampen theorem.
\end{proof}

\begin{defn}\label{def:1:5:2403}
Let $\langle A_i,B_i\rangle$ be the free group on~$2h$ generators $A_1,\ldots ,A_h,B_1,\ldots ,B_h$, and let $(A_i,B_i):=A_iB_iA^{-1}_iB^{-1}_i$. For $x\in C\setminus\infty$, we define~$E(x)$ as the quotient set 
$$
\mathrm{Iso}_\infty(\pi_1(C\setminus\infty,x),\langle A_i,B_i\rangle)/\big(\prod_i(A_i,B_i)\big),
$$ 
where 
$\mathrm{Iso}_\infty(\pi_1(C\setminus\infty,x),\langle A_i,B_i\rangle)$ is the set of group isomorphisms $\pi_1(C\setminus\infty,x)
\to\langle A_i,B_i\rangle$ taking the conjugacy class attached to $\infty$ to the conjugacy class of $\prod_i(A_i,B_i)$, and where $(\prod_i(A_i,B_i))$ is the normal subgroup of $\langle A_i,B_i\rangle$ generated by $\prod_i(A_i,B_i)$, 
which is acting on $\mathrm{Iso}_\infty(\pi_1(C\setminus\infty,x),\langle A_i,B_i\rangle)$ by conjugation. 
\end{defn}

\begin{lem}\label{lemma110}
For $\tilde x\in\tilde C$ and $f\in E(p(\tilde x))$, there exists 
a unique group isomorphism $f^{\rm fill} : \pi_1(C,p(\tilde x))\to
\langle A_i,B_i\rangle/(\prod_i(A_i,B_i))$ such that, for any group isomorphism~$\tilde f$ above~$f$, the diagram 
\begin{equation}\label{dcetdet:02032021}
\begin{tikzcd}
\pi_1(C\setminus\infty,p(\tilde x)) \arrow[r,"{\tilde f}"] \arrow[d]
& \langle A_i,B_i\rangle \arrow[d]  \\
\pi_1(C,p(\tilde x)) \arrow[r,"{f^{\rm fill}}"]
& \langle A_i,B_i\rangle/(\prod_i(A_i,B_i))
\end{tikzcd}
\end{equation}
commutes; $f^{\mathrm{fill}}$ should be viewed as a ``hole-filled'' version of $f$.   
\end{lem}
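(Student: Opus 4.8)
The plan is to use the fact that the kernel of the surjection $\pi_1(C\setminus\infty,p(\tilde x))\to\pi_1(C,p(\tilde x))$ is, by Lemma~\ref{lemma:point}, the normal subgroup $(c_{\infty,p(\tilde x)})$ generated by the conjugacy class attached to~$\infty$, together with the defining property of $\mathrm{Iso}_\infty$ that any representative~$\tilde f$ sends this conjugacy class into the conjugacy class of $\prod_i(A_i,B_i)$. I would first fix an arbitrary group isomorphism~$\tilde f$ lying above~$f$ (that is, a representative of the class~$f$), and show that it descends to the desired map $f^{\mathrm{fill}}$. Since $\tilde f$ carries $c_{\infty,p(\tilde x)}$ into the conjugacy class of $\prod_i(A_i,B_i)$, it carries the normal subgroup $(c_{\infty,p(\tilde x)})$ into the normal subgroup $(\prod_i(A_i,B_i))$. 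Hence the composite $\pi_1(C\setminus\infty,p(\tilde x))\xrightarrow{\tilde f}\langle A_i,B_i\rangle\to \langle A_i,B_i\rangle/(\prod_i(A_i,B_i))$ kills the kernel of the left vertical map, and by the universal property of the quotient $\pi_1(C\setminus\infty,p(\tilde x))\twoheadrightarrow\pi_1(C,p(\tilde x))$ (Lemma~\ref{lemma:point}) it factors uniquely through a homomorphism $f^{\mathrm{fill}}:\pi_1(C,p(\tilde x))\to\langle A_i,B_i\rangle/(\prod_i(A_i,B_i))$ making the square commute. That it is an isomorphism follows because $\tilde f$ is an isomorphism and both vertical maps are surjections with the kernels matched up by~$\tilde f$.

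The second point is \emph{well-definedness}: the map $f^{\mathrm{fill}}$ must not depend on the chosen representative~$\tilde f$, only on the class~$f\in E(p(\tilde x))$. Here I would use that any two representatives of~$f$ differ by the conjugation action of the normal subgroup $(\prod_i(A_i,B_i))$, i.e. $\tilde f'=\mathrm{conj}_w\circ\tilde f$ for some $w\in(\prod_i(A_i,B_i))$, where $\mathrm{conj}_w$ denotes conjugation by~$w$ in $\langle A_i,B_i\rangle$. Composing with the quotient map $\langle A_i,B_i\rangle\to\langle A_i,B_i\rangle/(\prod_i(A_i,B_i))$, the inner automorphism $\mathrm{conj}_w$ becomes trivial, since conjugation by an element of the normal subgroup being quotiented out is the identity on the quotient. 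Therefore the two composites $\tilde f$ and~$\tilde f'$ induce the \emph{same} map to the quotient, so the factorisations through $\pi_1(C,p(\tilde x))$ agree; this shows $f^{\mathrm{fill}}$ depends only on~$f$. Uniqueness of $f^{\mathrm{fill}}$ for a \emph{fixed}~$\tilde f$ is immediate from surjectivity of the left vertical arrow.

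The main obstacle, such as it is, is mostly bookkeeping: one must check carefully that the condition defining $\mathrm{Iso}_\infty$ (sending the conjugacy class attached to~$\infty$ to the conjugacy class of $\prod_i(A_i,B_i)$) is exactly what is needed to guarantee that the image of the kernel $(c_{\infty,p(\tilde x)})$ lands in $(\prod_i(A_i,B_i))$, and that a normal subgroup generated by a conjugacy class is sent into the normal subgroup generated by the image conjugacy class. I expect the only genuinely delicate step to be keeping the two \emph{different} quotients straight --- the topological one from Lemma~\ref{lemma:point} on the left and the purely algebraic one on the right --- and verifying that the universal property of the left quotient applies to the composite into the right-hand quotient. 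Everything else reduces to the standard first isomorphism / universal property of quotients, and the observation that inner automorphisms by the subgroup being modded out act trivially on the quotient.
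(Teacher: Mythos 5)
Your proposal is correct and follows essentially the same route as the paper's proof: existence and uniqueness of $f^{\mathrm{fill}}$ come from $\tilde f$ matching the kernel $(c_{\infty,p(\tilde x)})$ of Lemma~\ref{lemma:point} with $(\prod_i(A_i,B_i))$, and independence of the representative comes from the fact that conjugation by an element of $(\prod_i(A_i,B_i))$ becomes trivial after passing to the quotient. Your write-up is merely more explicit than the paper's (spelling out the universal property and the isomorphism check), with no substantive difference.
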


\begin{proof}
For a given~$\tilde f$, the existence and uniqueness of~$f^{\rm fill}$ such that~\eqref{dcetdet:02032021} commutes follows from the fact that~$\tilde f$ takes 
the kernel of the left vertical morphism to its analogue on the right. If~$\tilde f$ is replaced by~$\tilde{\tilde f}$, then~$\tilde{\tilde f}$ is obtained
from~$\tilde f$ by conjugation by an element of $(\prod_i(A_i,B_i))$, whose image in $\langle A_i,B_i\rangle/(\prod_i(A_i,B_i))$ is~1. It follows that~\eqref{dcetdet:02032021} commutes also with~$\tilde f$ replaced by~$\tilde{\tilde f}$. 
\end{proof}

\begin{lem}\label{lemma:28122020}
Let $\tilde x,\tilde y\in\tilde C\setminus p^{-1}(\infty)$. Let $i_{\hat x,\hat y}:\pi_1(C\setminus\infty,p(\tilde x))\to
\pi_1(C\setminus\infty,p(\tilde y))$ 
be the group isomorphism induced by the choice of a universal cover $\widetilde{C\setminus\infty}\to\tilde C\setminus p^{-1}(\infty)$ 
and of lifts $\hat x,\hat y\in\widetilde{C\setminus\infty}$ of $\tilde x,\tilde y$. 
The bijection $E(p(\tilde x))\to E(p(\tilde y))$ induced by composition with $i_{\hat y,\hat x}=i^{-1}_{\hat x,\hat y}$ is independent on the choice of $\hat x,\hat y$ and will be denoted $\mathrm{bij}_{\tilde x,\tilde y}$. 
For any $\tilde x,\tilde y,\tilde z\in\tilde C\setminus p^{-1}(\infty)$, one has $\mathrm{bij}_{\tilde y,\tilde z}\circ 
\mathrm{bij}_{\tilde x,\tilde y}=\mathrm{bij}_{\tilde x,\tilde z}$. 
\end{lem}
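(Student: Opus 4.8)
The plan is to make the isomorphism $i_{\hat x,\hat y}$ explicit using Lemma~\ref{lemma:standard}, and then to reduce both assertions to elementary manipulations of the paths $\eta_{\hat x,\hat y}$ in the universal cover $\widetilde{C\setminus\infty}$. Applying Lemma~\ref{lemma:standard} to $C\setminus\infty$ with universal cover $\widetilde{C\setminus\infty}$, each lift $\hat x$ of $\tilde x$ yields an isomorphism $\mathrm{iso}_{\hat x}:\pi_1(C\setminus\infty,p(\tilde x))\to\mathrm{Aut}(\widetilde{C\setminus\infty}/(C\setminus\infty))$, and I would identify $i_{\hat x,\hat y}=\mathrm{iso}_{\hat y}^{-1}\circ\mathrm{iso}_{\hat x}$ with the change-of-basepoint isomorphism $\gamma\mapsto\eta_{\hat x,\hat y}^{-1}\gamma\,\eta_{\hat x,\hat y}$, where $\eta_{\hat x,\hat y}$ denotes the projection to $C\setminus\infty$ of the unique path from $\hat x$ to $\hat y$. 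Since conjugation by a path carries $c_{\infty,p(\tilde x)}$ to $c_{\infty,p(\tilde y)}$, precomposition with $i_{\hat y,\hat x}$ maps $\mathrm{Iso}_\infty$ to $\mathrm{Iso}_\infty$; and because it acts on the source of the isomorphisms whereas the action of $(\prod_i(A_i,B_i))$ defining $E$ is by conjugation on the target $\langle A_i,B_i\rangle$, the two commute, so $[\tilde f]\mapsto[\tilde f\circ i_{\hat y,\hat x}]$ descends to a well-defined map $E(p(\tilde x))\to E(p(\tilde y))$, invertible via $i_{\hat x,\hat y}$.

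The crux is the independence of this map on the lifts $\hat x,\hat y$, and I expect it to be the main obstacle. Here I would use that the covering $\widetilde{C\setminus\infty}\to\tilde C\setminus p^{-1}(\infty)$ classifies $\ker(\pi_1(C\setminus\infty)\to\pi_1(C))$, which by Lemma~\ref{lemma:point} is the normal subgroup $N:=(c_{\infty,p(\tilde x)})$; hence any two lifts of $\tilde x$ differ by the deck action of some $n_x\in N$, giving $\eta_{\hat x,\hat x'}=n_x$, and likewise $\eta_{\hat y,\hat y'}=n_y$ with $n_y\in N':=(c_{\infty,p(\tilde y)})$. The concatenation rule $\eta_{\hat a,\hat b}\,\eta_{\hat b,\hat c}=\eta_{\hat a,\hat c}$ then yields $\eta_{\hat x',\hat y'}=n_x^{-1}\,\eta_{\hat x,\hat y}\,n_y$. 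Setting $\tilde f':=\tilde f\circ i_{\hat y,\hat x}$, a short computation shows that $\tilde f\circ i_{\hat y',\hat x'}$ is the map $\gamma'\mapsto g\,\tilde f'(\gamma')\,g^{-1}$ with $g=\tilde f(n_x)^{-1}\,\tilde f'(n_y)$. The decisive point is that, $\tilde f$ and $\tilde f'$ being in $\mathrm{Iso}_\infty$, they send the respective classes $c_{\infty,p(\tilde x)}$ and $c_{\infty,p(\tilde y)}$, and hence the normal subgroups $N$ and $N'$ that these classes generate, into the normal subgroup $(\prod_i(A_i,B_i))$ of $\langle A_i,B_i\rangle$; thus $g\in(\prod_i(A_i,B_i))$, so $\tilde f\circ i_{\hat y',\hat x'}$ and $\tilde f'$ define the same class in $E(p(\tilde y))$. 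This is exactly where the quotient in the definition of $E$, together with the $\mathrm{Iso}_\infty$ condition, is needed.

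Finally, the cocycle identity would follow formally once lift-independence is established. Choosing a single triple of lifts $\hat x,\hat y,\hat z$ of $\tilde x,\tilde y,\tilde z$, I would compute $\mathrm{bij}_{\tilde y,\tilde z}\circ\mathrm{bij}_{\tilde x,\tilde y}([\tilde f])=[\tilde f\circ(i_{\hat y,\hat x}\circ i_{\hat z,\hat y})]$, reducing the claim to the identity $i_{\hat y,\hat x}\circ i_{\hat z,\hat y}=i_{\hat z,\hat x}$. The latter is immediate from $\mathrm{iso}_{\hat x}^{-1}\circ\mathrm{iso}_{\hat y}\circ\mathrm{iso}_{\hat y}^{-1}\circ\mathrm{iso}_{\hat z}=\mathrm{iso}_{\hat x}^{-1}\circ\mathrm{iso}_{\hat z}$, equivalently from the concatenation $\eta_{\hat x,\hat y}\,\eta_{\hat y,\hat z}=\eta_{\hat x,\hat z}$, and gives $\mathrm{bij}_{\tilde y,\tilde z}\circ\mathrm{bij}_{\tilde x,\tilde y}=\mathrm{bij}_{\tilde x,\tilde z}$.
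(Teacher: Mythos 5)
Your proposal is correct and takes essentially the same route as the paper's own (much terser) proof: both reduce the ambiguity in the lifts, via Lemma~\ref{lemma:point}, to conjugation by elements of the normal subgroups $(c_{\infty,p(\tilde x)})$, $(c_{\infty,p(\tilde y)})$, whose images under the $\mathrm{Iso}_\infty$ condition land in $(\prod_i(A_i,B_i))$ and hence act trivially on the quotient defining $E$, and both treat the cocycle identity as immediate from concatenation of paths. Your computation $g=\tilde f(n_x)^{-1}\,\tilde f'(n_y)$ simply makes explicit what the paper compresses into one sentence.
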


\begin{proof}
The replacement of the lifts $\hat x,\hat y$ has the effect of post-composing the isomorphism 
$\mathrm{iso}_{\hat x,\hat y}$ by the conjugation by an element of $\mathrm{ker}(\pi_1(C\setminus\infty,p(\tilde y))\to
\pi_1(C,p(\tilde y)))$. By Lemma~\ref{lemma:point}, this kernel is nothing but $(c_{\infty,p(\tilde y)})$, which is equal to the preimage of the subgroup $(\prod_i(A_i,B_i))$ of 
$\langle A_i,B_i\rangle$ under an element of $\mathrm{Iso}_\infty(\pi_1(C\setminus\infty,p(\tilde x)),\langle A_i,B_i\rangle)$. The 
composition statement is obvious. 
\end{proof} 

\begin{defn}\label{def:1:7:2302}
Define $E$ to be the set of assignments $\tilde C\setminus  p^{-1}(\infty)\ni\tilde z\mapsto e_{\tilde z}\in E(p(\tilde z))$, such that 
$\mathrm{bij}_{\tilde z,\tilde w}(e_{\tilde z})=e_{\tilde w}$ for any $\tilde z,\tilde w\in\tilde C\setminus p^{-1}(\infty)$. 
\end{defn}

\begin{lem}
For any $\tilde x\in\tilde C\setminus p^{-1}(\infty)$, there is a bijection between $E$ and $E(p(\tilde x))$, 
taking the assignment $\tilde C\setminus p^{-1}(\infty)\ni\tilde z\mapsto e_{\tilde z}\in E(p(\tilde z))$ to 
$e_{\tilde x}\in E(p(\tilde x))$. 
\end{lem}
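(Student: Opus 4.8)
The plan is to show that the ``evaluation at $\tilde x$'' map $\mathrm{ev}_{\tilde x}\colon E\to E(p(\tilde x))$, sending an assignment $\tilde C\setminus p^{-1}(\infty)\ni\tilde z\mapsto e_{\tilde z}$ to its value $e_{\tilde x}$, is a bijection; this is exactly the map whose existence is asserted. The whole argument is formal and rests only on the composition law $\mathrm{bij}_{\tilde y,\tilde z}\circ\mathrm{bij}_{\tilde x,\tilde y}=\mathrm{bij}_{\tilde x,\tilde z}$ established in Lemma~\ref{lemma:28122020}. First I would record two consequences of this law. Taking $\tilde x=\tilde y=\tilde z$ gives $\mathrm{bij}_{\tilde x,\tilde x}\circ\mathrm{bij}_{\tilde x,\tilde x}=\mathrm{bij}_{\tilde x,\tilde x}$, and since each $\mathrm{bij}$ is a bijection this forces $\mathrm{bij}_{\tilde x,\tilde x}=\mathrm{id}_{E(p(\tilde x))}$; taking then $\tilde z=\tilde x$ yields $\mathrm{bij}_{\tilde y,\tilde x}\circ\mathrm{bij}_{\tilde x,\tilde y}=\mathrm{id}$, so that $\mathrm{bij}_{\tilde y,\tilde x}=\mathrm{bij}_{\tilde x,\tilde y}^{-1}$.

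For injectivity, suppose that $(e_{\tilde z})_{\tilde z}$ and $(e'_{\tilde z})_{\tilde z}$ both lie in $E$ and agree at $\tilde x$. For an arbitrary $\tilde w\in\tilde C\setminus p^{-1}(\infty)$, the compatibility condition of Definition~\ref{def:1:7:2302} gives $e_{\tilde w}=\mathrm{bij}_{\tilde x,\tilde w}(e_{\tilde x})$ and likewise $e'_{\tilde w}=\mathrm{bij}_{\tilde x,\tilde w}(e'_{\tilde x})$; since $e_{\tilde x}=e'_{\tilde x}$ we conclude $e_{\tilde w}=e'_{\tilde w}$ for every $\tilde w$, so the two assignments coincide and $\mathrm{ev}_{\tilde x}$ is injective.

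For surjectivity, given any $e_{\tilde x}\in E(p(\tilde x))$, I would define a candidate assignment by setting $e_{\tilde w}:=\mathrm{bij}_{\tilde x,\tilde w}(e_{\tilde x})$ for all $\tilde w$, noting that the prescribed value at $\tilde x$ is recovered because $\mathrm{bij}_{\tilde x,\tilde x}=\mathrm{id}$. It then remains to check the compatibility $\mathrm{bij}_{\tilde z,\tilde w}(e_{\tilde z})=e_{\tilde w}$ required for membership in $E$, which follows immediately from the composition law:
$$
\mathrm{bij}_{\tilde z,\tilde w}(e_{\tilde z})=\mathrm{bij}_{\tilde z,\tilde w}\big(\mathrm{bij}_{\tilde x,\tilde z}(e_{\tilde x})\big)=\big(\mathrm{bij}_{\tilde z,\tilde w}\circ\mathrm{bij}_{\tilde x,\tilde z}\big)(e_{\tilde x})=\mathrm{bij}_{\tilde x,\tilde w}(e_{\tilde x})=e_{\tilde w}.
$$
Hence the assignment $(e_{\tilde w})_{\tilde w}$ belongs to $E$ and satisfies $\mathrm{ev}_{\tilde x}\big((e_{\tilde w})_{\tilde w}\big)=e_{\tilde x}$, establishing surjectivity.

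Since the argument is purely formal, there is no substantial obstacle; the only point requiring care is that membership in $E$ demands the compatibility for \emph{all} pairs $(\tilde z,\tilde w)$, so in the surjectivity step one must genuinely invoke the full composition law (and the identity $\mathrm{bij}_{\tilde x,\tilde x}=\mathrm{id}$ it entails) rather than only the compatibilities involving $\tilde x$. In categorical terms, the family $\{E(p(\tilde z))\}$ equipped with the bijections $\{\mathrm{bij}_{\tilde z,\tilde w}\}$ is a transitive system of sets, i.e.\ a functor out of the codiscrete groupoid on $\tilde C\setminus p^{-1}(\infty)$, and $E$ is its inverse limit, which for such a system is canonically identified with the value at any single object.
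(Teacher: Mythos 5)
Your proof is correct and is essentially the paper's argument: the paper's one-line proof simply asserts that $e\mapsto(\tilde z\mapsto\mathrm{bij}_{\tilde x,\tilde z}(e))$ is the inverse bijection, checked via the composition identities of Lemma~\ref{lemma:28122020}, which is exactly your surjectivity construction together with your injectivity/compatibility verifications. Your extra observations (deriving $\mathrm{bij}_{\tilde x,\tilde x}=\mathrm{id}$ and $\mathrm{bij}_{\tilde y,\tilde x}=\mathrm{bij}_{\tilde x,\tilde y}^{-1}$ from the composition law) merely make explicit what the paper leaves to the reader.
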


\begin{proof}
Using the composition identities of Lemma~\ref{lemma:28122020}, one checks that the map taking $e\in E(p(\tilde x))$ 
to the assignment $\tilde C\setminus p^{-1}(\infty)\ni\tilde z\mapsto  \mathrm{bij}_{\tilde x,\tilde z}(e)\in E(p(\tilde z))$ is the 
inverse bijection. 
\end{proof}

\begin{lem}\label{lemma:28122020b}
Let $\tilde x\in \tilde C\setminus p^{-1}(\infty)$. Consider the composition
\begin{align*}
&E\stackrel{\sim}{\to} E(p(\tilde x))=\mathrm{Iso}_\infty(\pi_1(C\setminus\infty,p(\tilde x)),\langle A_i,B_i\rangle)/(\prod_i(A_i,B_i))\\ &
\to\mathrm{Iso}(\pi_1(C,p(\tilde x)),\langle A_i,B_i\rangle/(\prod_i(A_i,B_i))) \to\mathrm{Iso}(\mathrm{Aut}(\tilde C/C),
\langle A_i,B_i\rangle/(\prod_i(A_i,B_i))),
\end{align*}
where the second map takes $f \in \mathrm{Iso}_\infty(\pi_1(C\setminus\infty,p(\tilde x)),\langle A_i,B_i\rangle)/(\prod_i(A_i,B_i))$ to the isomorphism $f^{\rm fill}:\pi_1(C,p(\tilde x))\tilde{\to} \langle A_i,B_i\rangle/(\prod_i(A_i,B_i))$ (see Lemma~\ref{lemma110}), and the third map is the composition with the isomorphism $\mathrm{iso}^{-1}_{\tilde x}:\mathrm{Aut}(\tilde C/C)\,\tilde\to\, \pi_1(C,p(\tilde x))$ (see Lemma~\ref{lemma:standard}). This defines a map 
\begin{equation}\label{triangle:02032021}
E\to \mathrm{Iso}(\mathrm{Aut}(\tilde C/C),
\langle A_i,B_i\rangle/(\prod_i(A_i,B_i))), \quad e\mapsto\mathrm{can}_e:=e_{\tilde x}^{\rm fill}\circ \mathrm{iso}^{-1}_{\tilde x}
\end{equation}
which is independent of $\tilde x$. 
\end{lem}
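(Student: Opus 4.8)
The plan is to prove two things: first that the composition $E \to \mathrm{Iso}(\mathrm{Aut}(\tilde C/C), \langle A_i,B_i\rangle/(\prod_i(A_i,B_i)))$ given by $e \mapsto e_{\tilde x}^{\rm fill}\circ \mathrm{iso}^{-1}_{\tilde x}$ is genuinely well-defined for each fixed $\tilde x$ (which is essentially bookkeeping, since each of the three constituent maps has already been shown to be well-defined by Lemma~\ref{lemma110}, Definition~\ref{def:1:7:2302} and Lemma~\ref{lemma:standard}); and second, the substantive point, that the resulting map does not depend on the choice of $\tilde x \in \tilde C\setminus p^{-1}(\infty)$. First I would fix two base points $\tilde x,\tilde y\in\tilde C\setminus p^{-1}(\infty)$ and unwind what $\mathrm{can}_e$ computes in each case, tracking how $e_{\tilde x}$ and $e_{\tilde y}$ are related.

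The key relation to exploit is the compatibility condition built into the definition of $E$, namely $\mathrm{bij}_{\tilde x,\tilde y}(e_{\tilde x})=e_{\tilde y}$ from Definition~\ref{def:1:7:2302}, together with the description of $\mathrm{bij}_{\tilde x,\tilde y}$ in Lemma~\ref{lemma:28122020} as composition with the isomorphism $i_{\hat y,\hat x}:\pi_1(C\setminus\infty,p(\tilde y))\to\pi_1(C\setminus\infty,p(\tilde x))$ induced by a choice of lifts $\hat x,\hat y$ to the universal cover $\widetilde{C\setminus\infty}$. So I would choose representatives $\tilde f_{\tilde x}$ of $e_{\tilde x}$ and $\tilde f_{\tilde y}$ of $e_{\tilde y}$, with $\tilde f_{\tilde y}=\tilde f_{\tilde x}\circ i_{\hat y,\hat x}$ up to the ambiguity by $(\prod_i(A_i,B_i))$. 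Passing to the filled versions, the functoriality of the hole-filling construction (Lemma~\ref{lemma110}) gives $(e_{\tilde y})^{\rm fill}=(e_{\tilde x})^{\rm fill}\circ (i_{\hat y,\hat x})^{\rm fill}$, where $(i_{\hat y,\hat x})^{\rm fill}:\pi_1(C,p(\tilde y))\to\pi_1(C,p(\tilde x))$ is the filled version of $i_{\hat y,\hat x}$. The point is then that this filled isomorphism is precisely the change-of-basepoint isomorphism $\mathrm{iso}_{\tilde x}\circ\mathrm{iso}^{-1}_{\tilde y}$ in the closed surface, because a universal cover of $C\setminus\infty$ and a universal cover $\tilde C$ of $C$ are intertwined by the covering $\widetilde{C\setminus\infty}\to\tilde C\setminus p^{-1}(\infty)$, and the deck-transformation picture of Lemma~\ref{lemma:standard}(c) is compatible with this. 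Granting that, I would compute
\begin{equation*}
(e_{\tilde y})^{\rm fill}\circ\mathrm{iso}^{-1}_{\tilde y}
=(e_{\tilde x})^{\rm fill}\circ(\mathrm{iso}_{\tilde x}\circ\mathrm{iso}^{-1}_{\tilde y})\circ\mathrm{iso}^{-1}_{\tilde y}\,,
\end{equation*}
and check that the middle factors collapse so that the whole expression equals $(e_{\tilde x})^{\rm fill}\circ\mathrm{iso}^{-1}_{\tilde x}$, which is the desired $\tilde x$-independence.

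The main obstacle I anticipate is the careful identification of the filled map $(i_{\hat y,\hat x})^{\rm fill}$ with the deck-transformation/change-of-basepoint isomorphism $\mathrm{iso}_{\tilde x}\circ\mathrm{iso}^{-1}_{\tilde y}$ of $\mathrm{Aut}(\tilde C/C)\cong\pi_1(C,-)$. This requires matching two a priori different ways of passing between basepoints: on the one hand the punctured-surface isomorphism $i_{\hat y,\hat x}$ induced by lifts to $\widetilde{C\setminus\infty}$ and then hole-filled, and on the other hand the isomorphisms $\mathrm{iso}_{\tilde x},\mathrm{iso}_{\tilde y}$ of Lemma~\ref{lemma:standard}(c), which live on the \emph{closed} surface and are phrased in terms of deck transformations of $\tilde C$. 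I would handle this by commuting the filling functor past the covering map $\widetilde{C\setminus\infty}\to\tilde C\setminus p^{-1}(\infty)$ and invoking the compatibility of change-of-basepoint isomorphisms with inclusion-induced maps, so that the independence of $\hat x,\hat y$ already established in Lemma~\ref{lemma:28122020} upgrades to the claimed identity; the remaining verifications (that the resulting composite respects the quotient by $(\prod_i(A_i,B_i))$) are then routine naturality checks.
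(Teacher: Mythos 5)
Your proposal is correct and takes essentially the same route as the paper's own proof, which establishes the two facts you isolate by exhibiting two commutative diagrams: one encoding the compatibility $\mathrm{bij}_{\tilde x,\tilde y}(e_{\tilde x})=e_{\tilde y}$ built into the definition of $E$, the other encoding precisely your key identification, namely that $i_{\hat x,\hat y}$ descends along the inclusion-induced maps to the closed-surface basepoint-change isomorphism $j_{\tilde x,\tilde y}$ satisfying $\mathrm{iso}_{\tilde y}\circ j_{\tilde x,\tilde y}=\mathrm{iso}_{\tilde x}$. One typographical slip: the filled map $(i_{\hat y,\hat x})^{\rm fill}$ should be identified with $\mathrm{iso}^{-1}_{\tilde x}\circ\mathrm{iso}_{\tilde y}$ rather than $\mathrm{iso}_{\tilde x}\circ\mathrm{iso}^{-1}_{\tilde y}$ (your order does not typecheck), after which your displayed computation collapses to $(e_{\tilde x})^{\rm fill}\circ\mathrm{iso}^{-1}_{\tilde x}$ exactly as intended.
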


\begin{proof}
Let $j_{\tilde x,\tilde y}:\pi_1(C,p(\tilde x))\to \pi_1(C,p(\tilde y))$ be the isomorphism corresponding to 
a pair of points $\tilde x,\tilde y\in \tilde C\setminus p^{-1}(\infty)$, and let $\hat x,\hat y$ be any two lifts of $\tilde x,\tilde y$ to $\widetilde{C\setminus\infty}$. The independence of the map of interest in $\tilde x$ follows from the commutativity of the following diagrams:
\[
\begin{tikzcd}
E \arrow[r] \arrow[dr] & E(p(\tilde x)) \arrow[d, "{{\rm bij}_{\tilde x,\tilde y}}"] \\
  & E(p(\tilde y))
\end{tikzcd}
\]
\[
\begin{tikzcd}
\pi_1(C\setminus\infty,p(\tilde x)) \arrow[r] \arrow[d,"{i_{\hat x,\hat y}}"]
& \pi_1(C,p(\tilde x)) \arrow[d, "{j_{\tilde x,\tilde y}}"]  \arrow[r,"{\rm iso_{\tilde x}}"] & {\rm Aut}(\tilde C/C) \\
\pi_1(C\setminus\infty,p(\tilde y)) \arrow[r]
& \pi_1(C,p(\tilde y)) \arrow[ru,"{\rm iso_{\tilde y}}"] &
\end{tikzcd}
\]
\end{proof}

In the rest of the paper, we choose an element $e\in E$, which by Lemma~\ref{lemma:28122020b} gives rise to a group isomorphism 
$\mathrm{Aut}(\tilde C/C)\simeq \langle A_i,B_i\rangle/(\prod_i(A_i,B_i))$ and by Lemma~\ref{lemma:standard} and abelianization, 
to a group isomorphism $H_1(C,\mathbb Z)\simeq\oplus_{i=1}^h(\mathbb Z\overline A_i\oplus\mathbb Z\overline B_i)$.

\subsection{1-forms on $C$ and iterated integrals}\label{def:omega}  

\subsubsection{1-forms on $C$}
Let us choose a $h$-tuple $\pmb \beta=(\beta_{1},\ldots,\beta_{h})$ of elements of $\Gamma(C\setminus\infty,\Omega^1_C)$ such that, for any $i,j\in[\![1,h]\!]$, $\int_{\overline A_i}\beta_{j}=0$,  $\int_{\overline B_i}\beta_{j}=\delta_{ij}$. Moreover, let $\pmb\omega:=(\omega_1,\ldots,\omega_h)$ be the unique $h$-tuple of 
holomorphic 1-forms on $C$ such that $\int_{\overline A_i}\omega_j=\delta_{ij}$ for $i,j\in
[\![1,h]\!]$. We denote $\tau_{ij}:=\int_{\overline B_i}\omega_j$ for any $i,j\in
[\![1,h]\!]$, and we consider the $h$-tuple $\pmb\alpha=(\alpha_1,\ldots,\alpha_h)$ defined by $\alpha_i:=
\omega_i-\sum_{j=1}^h\tau_{ij}\beta_j$. The $2h$-tuple $\pmb\gamma:=(\pmb\alpha,
\pmb\beta)$ is then a set of representatives of a basis of the algebraic de Rham cohomology group $H_{\rm dR}^1(C\setminus\infty)$, dual to the basis $(\overline C_1,\ldots ,\overline C_{2h}):=(\overline A_1,\ldots ,\overline A_h,\overline B_1,\ldots ,\overline B_h)$ of the Betti cohomology $H_{\rm B}^1(C\setminus\infty)$, because $\int_{\overline C_i}\gamma_j=\delta_{ij}$ for $i,j\in[\![1,2h]\!]$. Such choice of representatives only depends on the choice of the $h$-tuple $\pmb \beta$.

\subsubsection{Iterated integrals}\label{sssec:itint}

\begin{defn}
The iterated integral of smooth 1-forms $\gamma_1,\ldots ,\gamma_r$ over a piecewise smooth path $\eta : [0,1]\to M$ on a smooth manifold $M$ is defined as
$$
\int_\eta\gamma_1\cdots\gamma_r\,:=\,\int_{0\leq t_1\leq \cdots \leq t_r\leq 1}(\eta^*\gamma_1)(t_1)\cdots (\eta^*\gamma_1)(t_r)\,.
$$
We call $r$ the \emph{length} of the iterated integral.
\end{defn}
We recall the following well-known properties:
\begin{itemize}
\item \emph{Composition of paths:}
\begin{equation}\label{eq:comppaths}
\int_{\eta_1\eta_2}\gamma_1\cdots\gamma_r\,=\,\sum_{i=0}^r\int_{\eta_1}\gamma_1\cdots \gamma_i\int_{\eta_2}\gamma_{i+1}\cdots \gamma_{r}.
\end{equation}
\item \emph{Inversion of paths:}
\begin{equation*}
\int_{\eta^{-1}}\gamma_1\cdots \gamma_r\,=\,(-1)^r\int_{\eta}\gamma_r\cdots \gamma_1.
\end{equation*}
\item \emph{Shuffle product:}
\begin{equation}\label{eq:shuffle}
\int_{\eta}\gamma_1,\cdots \gamma_r\int_{\eta}\gamma_{r+1}\cdots \gamma_{r+s}\,=\,\sum_{\sigma\in\shuffle (r,s)}\int_{\eta}\gamma_{\sigma (1)}\cdots \gamma_{\sigma (r+s)}.
\end{equation}
\end{itemize}

Suppose now that $\gamma_1,\ldots ,\gamma_r\in\Gamma(C\setminus\infty,\Omega^1_C)$, and let us denote in the same way their pull-back to the universal cover $\widetilde{C\setminus\infty}$. Because these forms are holomorphic and $\widetilde{C\setminus\infty}$ is simply connected, the result of the integral
$\int_{\tilde x}^{\tilde y} \gamma_1\cdots \gamma_r$
does not depend\footnote{Indeed, differentiation induces an isomorphism between the subspace of $\Gamma(U,\mathcal{O}_U)$ of functions vanishing at~$\tilde x$ and $\Gamma(U,\Omega^1_U)$, where $U:=\widetilde{C\setminus \infty}$, and if~$I$ is the inverse isomorphism, then for any~$\eta$ one has 
$\int_{\eta}\gamma_1\cdots \gamma_n=I(\gamma_n\cdots I(\gamma_2\cdot I(\gamma_1)))(\tilde y)$, a quantity which does not depend on $\eta$.} on the integration path from $\tilde x\in \widetilde{C\setminus\infty}$ to $\tilde y\in\widetilde{C\setminus\infty}$. Let $x \in C\setminus\infty$. For $\eta \in \pi_1(C\setminus \infty,x)$, consider ${\rm aut}_\eta^{\tilde x}$ as defined in \S\ref{ssec:univcover}. The pairs $(\tilde x,{\rm aut}_\eta^{\tilde x}) \in \widetilde{C\setminus\infty}^2$ are all related by the action of ${\rm Aut}(\widetilde{C\setminus\infty}/C\setminus\infty)$. This implies that $\int_{\tilde x}^{{\rm aut}_\eta^{\tilde x}(\tilde x)}\gamma_1\ldots \gamma_r$ is independent of the choice of $\tilde x$, and we set
\begin{equation}\label{itintovercycle}
\int_\eta\gamma_1\ldots \gamma_r:=\int_{\tilde x}^{{\rm aut}_\eta^{\tilde x}(\tilde x)}\gamma_1\ldots \gamma_r\,.
\end{equation}

\subsection{Groupoids}\label{sec:groupoids} Recall that a groupoid is a small category where all the morphisms are invertible. If~$G$ is a groupoid, we denote by $S$ its set of objects and, for $x,y \in S$, we denote by $G(x,y)$ the set of morphisms from~$x$ to~$y$. 

\subsubsection{1-nerves} To a groupoid~$G$, one attaches a set $\pmb{G}:=\{(x,y,\gamma)\,|\,x,y \in S,\gamma \in G(x,y)\}$, called \emph{the 1-nerve of $G$}. This set is equipped with source and target maps $s,t:\pmb{G}\to S$, with $s(x,y,\gamma):=x$, $t(x,y,\gamma):=y$. The product, unit and inverse induce maps $\pmb{G}\times_S \pmb{G}\to\pmb{G}$, $S\to \pmb{G}$ and $\pmb{G}\to \pmb{G}$ satisfying some axioms (e.g. associativity). This gives rise to an equivalence between groupoids and pairs of sets $(\pmb{G},S)$ equipped with maps $s,t:\pmb{G}\to S$ as well as $m_{\pmb{G}} : \pmb{G} \times_S \pmb{G}\to \pmb{G}$, $S\to \pmb{G}$ and $\pmb{G}\to \pmb{G}$ satisfying these axioms. 

Explicitly, if $G,G'$ are two groupoids and $(\pmb{G},S,s,t)$ and $(\pmb{G}',S',s',t')$ are the corresponding 
sets and maps, then a groupoid morphism $G\to G'$ is the same as a pair of maps $\pmb{f}:\pmb{G}\to \pmb{G}'$, $f:S\to S'$ such that $f \circ s=s' \circ \pmb{f}$, $f \circ t=t' \circ \pmb{f}$ (which guarantees that $\pmb{f}$ induces a map $\pmb{f}^{(2)}:\pmb{G} \times_S \pmb{G}\to \pmb{G}' \times_{S'} \pmb{G}'$), and $\pmb{f} \circ m_{\pmb{G}}=m_{\pmb{G}'} \circ \pmb{f}^{(2)}$. 

In particular, if~$G'$ is the groupoid corresponding to a group $K$, a morphism  $G\to G'$ is the same  as a map $\pmb{f}:\pmb{G}\to K$, such that $\pmb{f} \circ m_{\pmb{G}}=m_K \circ \pmb{f}^{(2)}$, where $m_K:K \times K\to K$ is the product map.

\subsubsection{Quotients}\label{sec:groupoidquotient}
Let $G$ be a groupoid with set of objects $S$. For $x\in S$, we set $G(x):=G(x,x)$. If $N\triangleleft G(x)$ is a normal subgroup, the groupoid quotient $G/N$ is the groupoid with the same set of objects as~$G$ and such that for 
$y,z\in S$, $(G/N)(y,z)$ is the quotient of $G(y,z)$ by the equivalence 
relation generated by $g\circ f\sim g\circ n\circ f$, where $f\in G(y,x)$, $g\in G(x,z)$, $n\in N$. 
In this situation, if~$G'$ is a groupoid, there is a bijection between $\mathrm{Hom}(G/N,G')$
and the subset of $\mathrm{Hom}(G,G')$ of all functors $F:G\to G'$ such that~$N$
is contained in the kernel of the group morphism $F \big|_{G(x)}:G(x)\to G'(F(x))$, the functor 
$\tilde F:G/N\to G'$ associated to~$F$ being such that its composition with the projection 
$G\to G/N$ coincides with~$F$.

\subsubsection{The groupoid $\Pi_1(X)$} 

For $X$ a topological space, we denote by $\Pi_1(X)$ its fundamental groupoid. Its set of objects is  $X$
and for $x,y\in X$, the set of morphisms $\Pi_1(X)(x,y)$ from $x$ to $y$ is the fundamental torsor
$\pi_1(X;x,y)$ of paths from $x$ to $y$. We denote by $\pmb\Pi_1(X)$ the 1-nerve of $\Pi_1(X)$, i.e. the set of triples $(x,y,\gamma)$
where $x,y\in X$ and $\gamma\in\pi_1(X;x,y)$.

Note that if $x\in X$, then there is a tautological groupoid morphism $\iota_x : \pi_1(X,x)\to \Pi_1(X)$, 
with underlying map between sets of objects $\{*\}\to X$, $*\mapsto x$. The corresponding 
map between 1-nerves is then $\pmb\iota_x : \pi_1(X,x)\to\pmb\Pi_1(X)$, $\gamma\mapsto(x,x,\gamma)$. 

Define also $\mathrm{diag} : X\to \pmb\Pi_1(X)$ to be the map $x\mapsto (x,x,1)$. 

\subsection{The groupoid morphism $F_{\pmb\Lambda} : \Pi_1(C\setminus\infty)\to \mathrm{exp}(\hat{\mathfrak G})$}\label{sec:24}

Let $\mathfrak G:=\text{Lie}(a_1,\ldots ,a_h,b_1,\ldots ,b_h)$ be the free complex Lie algebra with generators $a_i,b_i$, 
$i\in[\![1,h]\!]$. This Lie algebra is equipped with a $\mathbb Z_{\geq 0}$-grading by $\mathrm{deg}(a_i)=\mathrm{deg}(b_i)=1$. 
We denote by $\hat{\mathfrak G}$ the corresponding completion. The degree completion $U(\mathfrak G)^\wedge$ of the 
enveloping algebra of $\mathfrak G$ is a topological Hopf algebra; the exponential map $\mathrm{exp} : 
U(\mathfrak G)^\wedge_+\to U(\mathfrak G)^\wedge$ (where the source is the direct product of the positive degree 
components of $U(\mathfrak G)$) sets up a bijection between $\hat{\mathfrak G}$ and its group $\mathcal G(U(\mathfrak G)^\wedge)$
of group-like elements, which will henceforth be denoted $\mathrm{exp}(\hat{\mathfrak G})$. 
Throughout this section, we will use the notation $(c_1,\ldots ,c_{2h}):=(a_1,\ldots ,a_h,b_1,\ldots ,b_h)$. 

\begin{lem}\label{lemme:3:1:0202}
For $\pmb\Lambda=(\Lambda_1,\ldots ,\Lambda_{2h})\in\hat{\mathfrak G}^{2h}$, there is a unique map $F_{\pmb\Lambda}: \pmb\Pi_1(C\setminus\infty)\to \mathrm{exp}(\hat{\mathfrak G})$ which is holomorphic in the first two entries 
$x,y\in C\setminus\infty$ and such that 

(a) the map $F_{\pmb\Lambda}\circ\mathrm{diag} : C\setminus\infty\to \mathrm{exp}(\hat{\mathfrak G})$ is equal to $1$. 

(b) $F_{\pmb\Lambda}$ satisfies the differential equation 
\begin{equation}\label{eq:diffeqFLambda}
dF_{\pmb\Lambda}\,=\,F_{\pmb\Lambda}\cdot t^*\bigg(\sum_{i=1}^{2h}\,\Lambda_i\,\gamma_i\bigg)\,-\,s^*\bigg(\sum_{i=1}^{2h}\,\Lambda_i\,\gamma_i\bigg)\cdot F_{\pmb\Lambda}\,. 
\end{equation}
\end{lem}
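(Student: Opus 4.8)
The plan is to construct $F_{\pmb\Lambda}$ explicitly as a sum of iterated integrals and then verify that the resulting object satisfies (a) and (b) and is unique. For a triple $(x,y,\gamma)\in\pmb\Pi_1(C\setminus\infty)$, represent $\gamma$ by a path $\eta$ from $x$ to $y$ and set
\begin{equation}\label{eq:Fdef}
F_{\pmb\Lambda}(x,y,\gamma)\,:=\,\sum_{r\geq 0}\ \sum_{i_1,\ldots,i_r=1}^{2h}\Big(\int_\eta\gamma_{i_1}\cdots\gamma_{i_r}\Big)\,\Lambda_{i_r}\cdots\Lambda_{i_1}\,,
\end{equation}
i.e. the path-ordered exponential $\mathcal P\exp\int_\eta\sum_i\Lambda_i\gamma_i$ in the notation of the introduction, with values in $\exp(\hat{\mathfrak G})$. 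First I would check that this is well defined: since the $\gamma_i$ are pulled back from $C\setminus\infty$, homotopy invariance of iterated integrals of closed $1$-forms (a consequence of the composition and Stokes arguments behind \eqref{eq:comppaths}, and the footnote computation in \S\ref{sssec:itint} lifting to $\widetilde{C\setminus\infty}$) guarantees that \eqref{eq:Fdef} depends only on the homotopy class $\gamma$, not on the representative $\eta$. That the value lies in $\exp(\hat{\mathfrak G})=\mathcal G(U(\mathfrak G)^\wedge)$, rather than merely in $U(\mathfrak G)^\wedge$, follows from the shuffle relation \eqref{eq:shuffle}, which is exactly the statement that the coefficients of \eqref{eq:Fdef} satisfy the identity characterizing group-like elements.

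Next I would verify properties (a) and (b). Property (a) is immediate, since $\int_\eta$ over the constant path at $x$ vanishes in every positive length, leaving only the length-$0$ term $1$. For (b), fix the source $x$ and let the target $y$ vary; then $F_{\pmb\Lambda}(x,y,\gamma)=\mathcal P\exp\int_{\tilde x}^{\tilde y}\sum_i\Lambda_i\gamma_i$ as a function on the universal cover, and differentiating the path-ordered exponential in the endpoint $y$ gives $d_y F_{\pmb\Lambda}=F_{\pmb\Lambda}\cdot t^*(\sum_i\Lambda_i\gamma_i)$. Differentiating instead in the source $x$ produces the left term $-s^*(\sum_i\Lambda_i\gamma_i)\cdot F_{\pmb\Lambda}$, by the same computation applied to the inversion-of-paths formula. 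Summing the two partial differentials yields \eqref{eq:diffeqFLambda}. Holomorphy in $(x,y)$ is clear from \eqref{eq:Fdef}, each term being an iterated integral of holomorphic forms.

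For uniqueness, suppose $F$ and $F'$ both satisfy (a) and (b) and are holomorphic in $(x,y)$. I would argue degree by degree in the grading of $\hat{\mathfrak G}$: in degree $0$ both equal $1$ by (a), and \eqref{eq:diffeqFLambda} expresses the degree-$r$ component's differential purely in terms of strictly lower-degree components times the $\gamma_i$. Hence the difference $D:=F-F'$ satisfies, in the lowest degree $r$ in which it is nonzero, the equation $dD^{(r)}=0$ with $D^{(r)}$ vanishing on the diagonal by (a); since $\widetilde{C\setminus\infty}$ is connected and simply connected, a closed holomorphic function vanishing at a point vanishes identically, so $D^{(r)}=0$, and induction on $r$ gives $F=F'$. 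The main obstacle I expect is the well-definedness and group-likeness in the first step: one must be careful that iterated integration of the pulled-back forms descends to a function on the fundamental groupoid of $C\setminus\infty$ (not just on the universal cover), which is where homotopy invariance of iterated integrals of closed forms is genuinely used, and that the shuffle relation is invoked correctly to land in $\exp(\hat{\mathfrak G})$ rather than in the full completed enveloping algebra.
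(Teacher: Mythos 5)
Your overall route is exactly the paper's: the paper likewise defines $F_{\pmb\Lambda}$ by the iterated-integral expansion \eqref{expansion:0202}, declares existence and uniqueness of the $\mathbb C\langle\langle c_1,\ldots,c_{2h}\rangle\rangle$-valued solution ``standard'' (your degree-by-degree induction is precisely the argument being waved at), obtains well-definedness from holomorphy of $\gamma_{\pmb\Lambda}$ via the footnote in \S\ref{sssec:itint}, and gets group-likeness from the fact that the values are path-ordered exponentials of an $\hat{\mathfrak G}$-valued form, for which your shuffle-relation argument is the standard underlying proof. So there is no difference of method; but there is one concrete error in your explicit formula that you should fix, and one loose justification.

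The error: with the paper's convention $\int_\eta\gamma_1\cdots\gamma_r=\int_{0\leq t_1\leq\cdots\leq t_r\leq 1}(\eta^*\gamma_1)(t_1)\cdots(\eta^*\gamma_r)(t_r)$, extending $\eta$ at its \emph{endpoint} inserts a form in the \emph{last} slot, so $d_y\int_\eta\gamma_{i_1}\cdots\gamma_{i_r}=\big(\int_\eta\gamma_{i_1}\cdots\gamma_{i_{r-1}}\big)\gamma_{i_r}(y)$; for the equation \eqref{eq:diffeqFLambda}, whose target-variable part reads $d_yF_{\pmb\Lambda}=F_{\pmb\Lambda}\cdot\gamma_{\pmb\Lambda}(y)$, the coefficient of $\int_\eta\gamma_{i_1}\cdots\gamma_{i_r}$ must therefore be $\Lambda_{i_1}\cdots\Lambda_{i_r}$, in the \emph{same} order as the forms. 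Your displayed definition uses the reversed product $\Lambda_{i_r}\cdots\Lambda_{i_1}$; the resulting function satisfies the transposed equation $dF=t^*(\gamma_{\pmb\Lambda})\cdot F-F\cdot s^*(\gamma_{\pmb\Lambda})$, not \eqref{eq:diffeqFLambda}, and it is also not equal to $\mathcal P\exp\int_\eta\sum_i\Lambda_i\gamma_i$ as you claim (expanding that path-ordered exponential gives the unreversed order, since the constant $\Lambda_i$'s commute with the scalar forms but not with each other). Swapping the order repairs everything, and your subsequent verification of (a), (b) and uniqueness then goes through. Separately, the phrase ``homotopy invariance of iterated integrals of closed $1$-forms'' is not quite right as stated: an individual iterated integral of closed forms of length $\geq 2$ is \emph{not} homotopy invariant in general. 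What is actually used is either the paper's footnote argument (holomorphy of $\gamma_{\pmb\Lambda}$ makes the integral path-independent on the simply connected cover $\widetilde{C\setminus\infty}$) or, equivalently, flatness of $d-\gamma_{\pmb\Lambda}$, which holds automatically here because $d\gamma_{\pmb\Lambda}=0$ and $\gamma_{\pmb\Lambda}\wedge\gamma_{\pmb\Lambda}$ is a $(2,0)$-form on a Riemann surface, hence zero; since you do cite the footnote, this is a matter of wording rather than substance.
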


\begin{proof}

Recall that the algebra $\mathbb C\langle\langle c_1,\ldots,c_{2h}\rangle\rangle$ is equipped with a topological Hopf algebra 
structure, defined by the condition that the elements $c_i$, $i\in[\![1,2h]\!]$, are primitive. Then $\hat{\mathfrak G}$ (resp. 
$\mathrm{exp}(\hat{\mathfrak G})$) is its Lie algebra of primitive elements (resp. its group of group-like elements). 

The existence and uniqueness of a solution to~$(a)$ and~$(b)$ valued in 
$\mathbb C\langle\langle c_1,\ldots,c_{2h}\rangle\rangle$ 
is standard, the map $F_{\pmb \Lambda}$ being given in terms of iterated integrals (see \S\ref{sssec:itint}) by\footnote{The fact that this map is well-defined, i.e. that it only depends on the endpoints of~$\eta$ and on its homotopy class, follows from the holomorphicity of~$\gamma_{\pmb \Lambda}$.} 
\begin{equation}\label{expansion:0202}
(x,y,\eta)\to 1\,+\,\int_{\eta}\gamma_{\pmb \Lambda}\,+\,\int_{\eta}\gamma_{\pmb \Lambda}\,\gamma_{\pmb \Lambda}\,+\,\int_{\eta}\gamma_{\pmb \Lambda}\,\gamma_{\pmb \Lambda}\,\gamma_{\pmb \Lambda}\,+\,\cdots\,,
\end{equation}
where $\gamma_{\pmb \Lambda}:=\sum_{i=1}^{2h}\Lambda_i\gamma_i$. 

The fact that the values of the function $F_{\pmb\Lambda}$ belong to the group $\mathrm{exp}(\hat{\mathfrak G})$ of group-like elements in 
$\mathbb C\langle\langle c_1,\ldots,c_{2h}\rangle\rangle$ is also standard, and follows from the fact that the form $\gamma_{\pmb\Lambda}$ is 
valued in $\hat{\mathfrak G}$, and that the values of $F_{\pmb\Lambda}$ are path-ordered exponentials of 
$\gamma_{\pmb\Lambda}$. 
\end{proof} 
Notice that $F_{\pmb \Lambda}$ also depends on the choice of~$\pmb \beta$ (see \S\ref{def:omega}), which determines the differential forms~$\gamma_i$. We will not keep track of such dependence for the moment, but it will play a role after \S\ref{ssec:defgtildexbeta}.

\begin{lem}\label{lem:2912a} 
The map $F_{\pmb\Lambda} : \pmb\Pi_1(C\setminus\infty)\to \mathrm{exp}(\hat{\mathfrak G})$ is induced by a groupoid morphism 
$F_{\pmb\Lambda} : \Pi_1(C\setminus\infty)\to \mathrm{exp}(\hat{\mathfrak G})$, whose underlying map between 
sets of objects is the only map $C\setminus\infty\to\{*\}$.
\end{lem}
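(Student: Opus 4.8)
The plan is to invoke the characterization of groupoid morphisms into a one-object groupoid recalled in \S\ref{sec:groupoids}. Since the target $\mathrm{exp}(\hat{\mathfrak G})$ is a group, the associated groupoid has the single object $\{*\}$; hence the underlying map of objects is forced to be the unique map $C\setminus\infty\to\{*\}$, and the source/target compatibility conditions $f\circ s=s'\circ\pmb f$ and $f\circ t=t'\circ\pmb f$ hold automatically (as $s'=t'$). By the discussion of \S\ref{sec:groupoids}, it then remains to check that $F_{\pmb\Lambda}:\pmb\Pi_1(C\setminus\infty)\to\mathrm{exp}(\hat{\mathfrak G})$ is multiplicative, i.e. $\pmb f\circ m_{\pmb\Pi_1(C\setminus\infty)}=m_{\mathrm{exp}(\hat{\mathfrak G})}\circ\pmb f^{(2)}$. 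Concretely, for a composable pair of triples $(x,y,\eta_1)$ and $(y,z,\eta_2)$, this amounts to the identity
\[
F_{\pmb\Lambda}(x,z,\eta_1\eta_2)=F_{\pmb\Lambda}(x,y,\eta_1)\,F_{\pmb\Lambda}(y,z,\eta_2).
\]

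First I would recall that $F_{\pmb\Lambda}$ is genuinely defined on $\pmb\Pi_1(C\setminus\infty)$, depending only on homotopy classes of paths, by the footnote to Lemma~\ref{lemme:3:1:0202}. Then I would establish the displayed identity by inserting the explicit series expansion~\eqref{expansion:0202} and applying the composition-of-paths formula~\eqref{eq:comppaths} to each iterated integral of length~$r$ in which every letter equals the single $\hat{\mathfrak G}$-valued form $\gamma_{\pmb\Lambda}$, yielding
\[
\int_{\eta_1\eta_2}\underbrace{\gamma_{\pmb\Lambda}\cdots\gamma_{\pmb\Lambda}}_{r}=\sum_{i=0}^{r}\Big(\int_{\eta_1}\underbrace{\gamma_{\pmb\Lambda}\cdots\gamma_{\pmb\Lambda}}_{i}\Big)\Big(\int_{\eta_2}\underbrace{\gamma_{\pmb\Lambda}\cdots\gamma_{\pmb\Lambda}}_{r-i}\Big),
\]
where the left-to-right ordering of the factors is preserved, because~\eqref{eq:comppaths} is stated for arbitrary 1-forms and therefore remains valid for the noncommutative, $U(\mathfrak G)^\wedge$-valued form $\gamma_{\pmb\Lambda}$. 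Summing over $r\geq 0$ and recognizing the right-hand side as the Cauchy product of the two series defining $F_{\pmb\Lambda}(x,y,\eta_1)$ and $F_{\pmb\Lambda}(y,z,\eta_2)$ gives precisely the desired multiplicativity.

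The preservation of units is then automatic: applying multiplicativity to $1_x=1_x\cdot 1_x$ forces $F_{\pmb\Lambda}(x,x,1_x)$ to be idempotent in the group $\mathrm{exp}(\hat{\mathfrak G})$, hence equal to~$1$, consistently with property~(a) of Lemma~\ref{lemme:3:1:0202}; and the values already lie in $\mathrm{exp}(\hat{\mathfrak G})$ by that same lemma, so no further check on the target is needed. This verification is essentially formal, and I expect the only delicate point to be the bookkeeping in the Cauchy-product step, where the noncommutative ordering of the forms $\gamma_{\pmb\Lambda}$ must be tracked throughout; this is exactly what the stated form of~\eqref{eq:comppaths} guarantees. (Alternatively, one could prove the displayed identity by observing that both sides satisfy the differential equation~\eqref{eq:diffeqFLambda} as functions of~$z$ with the same initial value at $z=y$, but the composition formula route is more direct.)
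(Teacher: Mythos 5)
Your proposal is correct and follows the same route as the paper: the paper's proof is precisely the one-line observation that, by the remark at the end of \S\ref{sec:groupoids}, a groupoid morphism to the one-object groupoid $\mathrm{exp}(\hat{\mathfrak G})$ amounts to multiplicativity of the map on 1-nerves, which follows from the composition-of-paths property~\eqref{eq:comppaths}. You have merely made explicit the Cauchy-product bookkeeping (with the correct left-to-right ordering of the noncommutative factors) that the paper leaves implicit, so the two arguments coincide in substance.
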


\begin{proof}
As remarked at the end of \S\ref{sec:groupoids}, a groupoid morphism $F_{\pmb\Lambda}:\Pi_1(C\setminus\infty)\to 
\mathrm{exp}(\hat{\mathfrak G})$ is the same as a map $F_{\pmb\Lambda}:\pmb\Pi_1(C\setminus\infty)\to \mathrm{exp}(\hat{\mathfrak G})$ such that $F_{\pmb\Lambda}\circ m_{\pmb\Pi_1(C\setminus\infty)}=m_{\mathrm{exp}(\hat{\mathfrak G})}\circ F_{\pmb\Lambda}^{(2)}$, which follows in our case from the composition-of-paths property~\eqref{eq:comppaths} of iterated integrals.
\end{proof}

\subsection{The function $(\rho,f)\mapsto\pmb\Lambda(\rho,f)$}

It follows from Lemma~\ref{lem:2912a} that for any $\pmb\Lambda\in\hat{\mathfrak G}^{2h}$ and any $\tilde x\in \tilde C\setminus p^{-1}(\infty)$ the composite map 
$F_{\pmb\Lambda}\circ\iota_{p(\tilde x)} : \pi_1(C\setminus\infty,p(\tilde x))\to \mathrm{exp}(\hat{\mathfrak G})$ is a 
group morphism. 

\begin{lem}\label{lem:29121529}
For any $f\in\mathrm{Iso}_\infty(\pi_1(C\setminus\infty,p(\tilde x)),\langle A_i,B_i\rangle)$ and $\rho\in\mathrm{Hom}(\langle A_i,B_i\rangle,\mathrm{exp}(\hat{\mathfrak G}))$ there is a unique solution $\pmb\Lambda(\rho,f)\in\hat{\mathfrak G}^{2h}$ 
to the equation
\begin{equation}\label{identity:Lambda}
F_{\pmb\Lambda}\circ\iota_{p(\tilde x)}\circ f^{-1}=\rho
\end{equation}
(equality of group morphisms $\langle A_i,B_i\rangle\to \mathrm{exp}(\hat{\mathfrak G})$).
\end{lem}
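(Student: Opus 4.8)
The plan is to turn \eqref{identity:Lambda} into a system of $2h$ equations indexed by the free generators, to pass to logarithms so that everything lives in the graded Lie algebra $\hat{\mathfrak G}$, and then to solve the resulting system degree by degree, the solvability at each degree being governed by a single period matrix.

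First, since $\langle A_i,B_i\rangle$ is free on the $2h$ generators, which I order as $(w_1,\dots,w_{2h}):=(A_1,\dots,A_h,B_1,\dots,B_h)$, a group morphism out of it is determined by the images of the $w_k$. Writing $\ell_k:=f^{-1}(w_k)\in\pi_1(C\setminus\infty,p(\tilde x))$ and using $F_{\pmb\Lambda}\circ\iota_{p(\tilde x)}(\gamma)=F_{\pmb\Lambda}(p(\tilde x),p(\tilde x),\gamma)$, equation \eqref{identity:Lambda} becomes the system $F_{\pmb\Lambda}(p(\tilde x),p(\tilde x),\ell_k)=\rho(w_k)$ for $k\in[\![1,2h]\!]$. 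As both sides are group-like and $\exp\colon\hat{\mathfrak G}\to\exp(\hat{\mathfrak G})$ is a bijection, this is in turn equivalent to $\log F_{\pmb\Lambda}(p(\tilde x),p(\tilde x),\ell_k)=\log\rho(w_k)$ in $\hat{\mathfrak G}$.

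The key computation is the behaviour of these equations with respect to the grading $\hat{\mathfrak G}=\prod_{n\ge 1}\mathfrak G_n$. Writing $\Lambda_j=\sum_{n\ge1}\Lambda_j^{(n)}$ and expanding $F_{\pmb\Lambda}$ via \eqref{expansion:0202}, I claim that the degree-$n$ component of $\log F_{\pmb\Lambda}(p(\tilde x),p(\tilde x),\ell_k)$ equals
$$
\sum_{j=1}^{2h}\Big(\int_{\ell_k}\gamma_j\Big)\,\Lambda_j^{(n)}\;+\;R_k^{(n)}\big(\pmb\Lambda^{(<n)}\big),
$$
where $R_k^{(n)}$ depends only on the components of $\pmb\Lambda$ of degree $<n$. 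Indeed, the length-$r$ term in \eqref{expansion:0202} is an $r$-fold product of the $\Lambda_j$, so a component $\Lambda_j^{(n)}$ can occur in its degree-$n$ part only for $r=1$, each of the $r\ge2$ factors otherwise having degree $\le n-1$; taking $\log$ does not affect this, the terms $u^2,u^3,\dots$ of $\log(1+u)$ being products of at least two positive-degree elements. Hence, degree by degree, the system reads
$$
\big(\log\rho(w_k)\big)^{(n)}-R_k^{(n)}(\pmb\Lambda^{(<n)})=\sum_{j=1}^{2h}M_{kj}\,\Lambda_j^{(n)},\qquad k\in[\![1,2h]\!],
$$
a linear system for $(\Lambda_1^{(n)},\dots,\Lambda_{2h}^{(n)})\in(\mathfrak G_n)^{2h}$ whose matrix is the period matrix $M=(M_{kj})$, $M_{kj}:=\int_{\ell_k}\gamma_j$.

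The crux — and the main obstacle — is the invertibility of $M$, which is where the geometry enters. Since the $\gamma_j$ are holomorphic, hence closed, $M_{kj}$ depends only on the homology class $[\ell_k]\in H_1(C\setminus\infty,\mathbb{Z})$. As $f$ is a group isomorphism and $\pi_1(C\setminus\infty,p(\tilde x))^{\mathrm{ab}}=H_1(C\setminus\infty,\mathbb{Z})$, the classes $[\ell_k]=[f^{-1}(w_k)]$ form a $\mathbb{Z}$-basis of $H_1(C\setminus\infty,\mathbb{Z})$, while the $[\gamma_j]$ form a basis of $H_{\rm dR}^1(C\setminus\infty)$ by \S\ref{def:omega}; since the integration pairing $H_1(C\setminus\infty,\mathbb{Z})\otimes_{\mathbb{Z}}\mathbb{C}\times H_{\rm dR}^1(C\setminus\infty)\to\mathbb{C}$ is perfect, $M$ is the matrix of a perfect pairing expressed in two bases and is therefore invertible. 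Note that only the bijectivity of $f$ is used here, not its compatibility with $\infty$ nor with the fixed $e\in E$. Granting this, one concludes by induction on $n$: with $\pmb\Lambda^{(<n)}$ already uniquely determined, the left-hand side above is known, and invertibility of $M$ determines $(\Lambda_1^{(n)},\dots,\Lambda_{2h}^{(n)})$ uniquely (the base case $n=1$ being identical, with $R_k^{(1)}=0$). Assembling the components yields the desired unique $\pmb\Lambda(\rho,f)\in\hat{\mathfrak G}^{2h}$ solving \eqref{identity:Lambda}.
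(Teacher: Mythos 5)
Your proposal is correct, and it reaches the paper's conclusion by a genuinely different packaging of the same triangularity phenomenon. The paper bundles the whole problem into the single self-map $\mathrm{hol}_f$ of $\hat{\mathfrak G}^{2h}$, observes that $F_{\pmb\Lambda}=\mathrm{ev}_{\pmb\Lambda}(F_{\pmb\Lambda_0})$ for $\pmb\Lambda_0=(c_1,\ldots,c_{2h})$, so that $\mathrm{hol}_f$ comes from the substitution endomorphism $c_i\mapsto c_i+l_i^f$ of $\hat{\mathfrak G}$ with $l_i^f\in\hat{\mathfrak G}_{\geq 2}$ under an anti-morphism to self-maps of $\hat{\mathfrak G}^{2h}$, inverts that endomorphism ($c_i\mapsto c_i+m_i^f$), and writes the solution as $\mathrm{hol}_f^{-1}(\log\rho(A_1),\ldots,\log\rho(B_h))$. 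Your degree-by-degree induction is the hands-on unrolling of this ``identity plus higher degree'' structure: one linear system per degree instead of one functorial inversion. Where you genuinely diverge is at the degree-one step. The paper's expansion $\mathrm{hol}^i_f(\pmb\Lambda_0)=\sum_j\int_{C_i}\gamma_j\,c_j+l_i^f$ is simplified there to $c_i+l_i^f$, i.e.\ the period matrix is taken to be the identity; this uses that the homology classes of the loops $\iota_{p(\tilde x)}(f^{-1}(C_i))$ agree with the classes $\overline C_i$ against which the $\gamma_j$ were normalized in \S\ref{def:omega} — automatic when $f$ lifts $e_{\tilde x}$, which is the case used afterwards, but not for an arbitrary $f\in\mathrm{Iso}_\infty$. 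You instead prove only that $M=(\int_{\ell_k}\gamma_j)$ is invertible (in fact it lies in $\mathrm{GL}_{2h}(\mathbb Z)$, being the change-of-basis matrix between $([\ell_k])$ and $(\overline C_i)$), via the perfectness of the period pairing and the bare bijectivity of $f$; your remark that neither the condition at $\infty$ nor the choice of $e$ enters here is accurate, and this makes your argument more robust for general $f$, at the modest price of a longer induction.

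One point you should make explicit. Your linear system a priori determines $(\Lambda_1^{(n)},\ldots,\Lambda_{2h}^{(n)})$ only inside the degree-$n$ part of $U(\mathfrak G)^\wedge$, whereas the lemma demands a solution in $\hat{\mathfrak G}^{2h}$: you must check that the right-hand sides $(\log\rho(w_k))^{(n)}-R_k^{(n)}(\pmb\Lambda^{(<n)})$ are Lie elements, so that $\Lambda^{(n)}=M^{-1}(\cdots)$ is too (invertibility of the scalar matrix $M$ then gives existence and uniqueness in the Lie part, since uniqueness in the larger space implies it in the smaller). This is easy but not automatic from what you wrote: apply Lemma~\ref{lemme:3:1:0202} (group-likeness of the values of $F$) to the truncated tuple $\pmb\Lambda'$ whose components agree with $\pmb\Lambda$ in degrees $<n$ and vanish in degrees $\geq n$; then $\log F_{\pmb\Lambda'}(p(\tilde x),p(\tilde x),\ell_k)\in\hat{\mathfrak G}$, and by your own degree count its degree-$n$ component is exactly $R_k^{(n)}(\pmb\Lambda^{(<n)})$, which is therefore Lie; similarly $\log\rho(w_k)\in\hat{\mathfrak G}$ because $\rho(w_k)$ is group-like. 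With this one-sentence addition your induction is complete. The paper never faces this issue because its inverse is realized by substitution endomorphisms of $\hat{\mathfrak G}$ itself, which manifestly preserve Lie elements — that is what the functorial formulation buys.
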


\begin{proof}
Denote by $\mathrm{hol}_{f}=(\mathrm{hol}_{f}^1,\ldots ,\mathrm{hol}_{f}^{2h})$ the self-map of $\hat{\mathfrak G}^{2h}$ given 
by $\pmb\Lambda\mapsto(\mathrm{log} F_{\pmb\Lambda}\circ\iota_{p(\tilde x)}\circ f^{-1}(A_1),\ldots ,\mathrm{log} F_{\pmb\Lambda}
\circ\iota_{p(\tilde x)}\circ f^{-1}(B_h))$. Set $\pmb\Lambda_0=(c_i)_{i\in[\![1,2h]\!]}$. By~\eqref{expansion:0202}, one has 
the degree expansion $\mathrm{hol}_f^i(\pmb\Lambda_0)=\sum_{j=1}^{2h}\int_{C_i}\gamma_jc_j+l_i^f$, where 
$l_i^f\in U(\mathfrak G)^\wedge_{\geq 2}$. Therefore  $\mathrm{hol}_f^i(\pmb\Lambda_0)=c_i+l_i^f$.  

It follows from Lemma~\ref{lemme:3:1:0202} that $F_{\pmb\Lambda_0}$ takes its values in $\mathrm{exp}(\hat{\mathfrak G})$, 
which implies that $l_i^f\in\hat{\mathfrak G}_{\geq 2}$. If now $\pmb\Lambda\in\hat{\mathfrak G}^{2h}$ is arbitrary, there is a unique 
endomorphism of $\hat{\mathfrak G}$ induced by $c_i\mapsto\Lambda_i$, which we denote by 
$a\mapsto \mathrm{ev}_{\pmb\Lambda}(a)$; it naturally extends to 
endomorphisms of $\mathbb C\langle\langle c_1,\ldots,c_{2c}\rangle\rangle$ and of its subset $\mathrm{exp}(\hat{\mathfrak G})$ of group-like 
elements. Then the 
function $F_{\pmb\Lambda}$ is the image of $F_{\pmb\Lambda_0}$ by this endomorphism. In particular, 
$\mathrm{hol}_i^f(\pmb\Lambda)=\mathrm{ev}_{\pmb\Lambda}(c_i+l_i^f)$. 

The endomorphism of $\hat{\mathfrak G}$ induced by $c_i\mapsto c_i+l^{f}_i$ for 
$i\in[\![1,2h]\!]$ is invertible, its inverse being given by $c_i\mapsto c_i+m^{f}_i$ for certain elements 
$m^{f}_1,\ldots ,m^{f}_{2h}\in\hat{\mathfrak G}_{\geq 2}$. There is an anti-morphism from the semigroup of 
endomorphisms of the topological Lie algebra $\hat{\mathfrak G}$ to that of self-maps of $\hat{\mathfrak G}^{2h}$, taking 
the endomorphism $\theta$ to the map $\pmb\Lambda\mapsto(\mathrm{ev}_{\pmb\Lambda}(\theta(c_1)),\ldots,
\mathrm{ev}_{\pmb\Lambda}(\theta(c_{2h})))$. It follows that 
$\mathrm{hol}_{f}$ is invertible, with inverse $\mathrm{hol}_{f}^{-1}$ given by 
$\pmb\alpha=(\alpha_1,\ldots ,\alpha_{2h})\mapsto(\mathrm{ev}_{\pmb\alpha}(c_1+m_1^{f}),\ldots,
\mathrm{ev}_{\pmb\alpha}(c_{2h}+m_{2h}^{f}))$. 

Then~\eqref{identity:Lambda} is equivalent to $\mathrm{hol}_{f}(\pmb\Lambda)=(\mathrm{log}\rho(A_1),\ldots ,
\mathrm{log}\rho(B_h))$. The unique solution is $\pmb\Lambda=\mathrm{hol}_{f}^{-1}(\mathrm{log}\rho(A_1),\ldots ,
\mathrm{log}\rho(B_h))$, which is therefore the value of $\pmb\Lambda(\rho,f)$. 
\end{proof}
Notice that such $\Lambda(\rho,f)$ implicitly depends also on the choice of $\tilde x\in \tilde C\setminus p^{-1}(\infty)$. Such dependence will be studied in \S\ref{ssec:dependencetildeg}.

\begin{rem}\label{rem:210616}
Condition~\eqref{identity:Lambda} is equivalent to the system of equations 
\begin{equation*}
\sum_{m\geq 0}\int_{\iota_{p(\tilde x)}(f^{-1}(A_i))}\underbrace{\gamma_{\pmb\Lambda}\cdots\gamma_{\pmb\Lambda}}_m
=\rho(A_i), \quad \sum_{m\geq 0}\int_{\iota_{p(\tilde x)}(f^{-1}(B_i))}\underbrace{\gamma_{\pmb\Lambda}\cdots\gamma_{\pmb\Lambda}}_m
=\rho(B_i), 
\end{equation*}
where $\iota_{p(\tilde x)}(f^{-1}(A_i)),\iota_{p(\tilde x)}(f^{-1}(B_i))$ are the classes of the loops on $C\setminus\infty$ based at 
$p(\tilde x)$ corresponding to $A_i,B_i$, and the iterated integrals are defined as in eq.~\eqref{itintovercycle}. 
\end{rem}

\subsection{The particular case of $\rho_0$}\label{ssec:26}

Let $\mathrm{Lie}(b_1,\ldots,b_h)$ be the Lie subalgebra of $\mathfrak G$ generated by $b_1,\ldots,b_h$
and let $\mathrm{Lie}(b_1,\ldots,b_h)^\wedge$ be its degree completion. We denote by $\mathrm{exp}(\mathrm{Lie}(b_1,\ldots,b_h)^\wedge)$ the corresponding subgroup of $\mathrm{exp}(\hat{\mathfrak G})$. 

Define $\rho_0:\langle A_i,B_i\rangle /(\Pi_i(A_i,B_i))\to \mathrm{exp}(\mathrm{Lie}(b_1,\ldots,b_h)^\wedge)$ to be the morphism induced by 
$A_i\mapsto 1$, $B_i\mapsto \mathrm{exp}(b_i)$ for $i\in[\![1,h]\!]$.  

\begin{lem}\label{lem:210731n1}
For any $f\in\mathrm{Iso}_\infty(\pi_1(C\setminus\infty,p(\tilde x)),
\langle A_i,B_i\rangle)$ one has $\pmb\Lambda(\rho_0,f)\in (\mathrm{Lie}(b_1,\ldots,b_h)^\wedge)^{2h}$, and the image of $F_{\pmb{\Lambda}(\rho_0,f)}:\Pi_1(C\setminus\infty)\to\exp(\hat{\mathfrak{G}})$ is contained in $\mathrm{exp}(\mathrm{Lie}(b_1,\ldots,b_h)^\wedge)$.
\end{lem}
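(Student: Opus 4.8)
The plan is to read off both assertions from the explicit description of $\pmb\Lambda(\rho,f)$ produced in the proof of Lemma~\ref{lem:29121529}, specialised to $\rho=\rho_0$. Write $\hat{\mathfrak h}:=\mathrm{Lie}(b_1,\ldots,b_h)^\wedge\subset\hat{\mathfrak G}$. First I would record the relevant values of $\rho_0$: since $\rho_0(A_i)=1$ and $\rho_0(B_i)=\exp(b_i)$, one has $\log\rho_0(A_i)=0$ and $\log\rho_0(B_i)=b_i$, so the tuple feeding $\mathrm{hol}_f^{-1}$ is $\pmb\alpha_0:=(0,\ldots,0,b_1,\ldots,b_h)\in\hat{\mathfrak h}^{2h}$. (Here $\rho_0$ is viewed as a morphism on $\langle A_i,B_i\rangle$ via the projection $\langle A_i,B_i\rangle\to\langle A_i,B_i\rangle/(\prod_i(A_i,B_i))$, which is legitimate since $\rho_0((A_i,B_i))=1$.)

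The key observation is that the evaluation endomorphism $\mathrm{ev}_{\pmb\alpha_0}$ of $\hat{\mathfrak G}$ occurring in $\mathrm{hol}_f^{-1}$ is exactly the Lie algebra retraction $\pi:\hat{\mathfrak G}\to\hat{\mathfrak h}$ determined by $a_i\mapsto 0$, $b_i\mapsto b_i$; in particular its image is $\hat{\mathfrak h}$. From the formula $\mathrm{hol}_f^{-1}(\pmb\alpha)=(\mathrm{ev}_{\pmb\alpha}(c_1+m_1^f),\ldots,\mathrm{ev}_{\pmb\alpha}(c_{2h}+m_{2h}^f))$ established there, I then obtain
$$
\pmb\Lambda(\rho_0,f)=\big(\pi(c_1+m_1^f),\ldots,\pi(c_{2h}+m_{2h}^f)\big)\in\hat{\mathfrak h}^{2h},
$$
which is the first claim. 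Note that no information on the elements $m_i^f$ beyond their membership in $\hat{\mathfrak G}$ is needed, since $\pi$ sends all of $\hat{\mathfrak G}$ into $\hat{\mathfrak h}$.

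For the second claim I would argue that once $\pmb\Lambda:=\pmb\Lambda(\rho_0,f)\in\hat{\mathfrak h}^{2h}$, the form $\gamma_{\pmb\Lambda}=\sum_{i=1}^{2h}\Lambda_i\gamma_i$ is valued in $\hat{\mathfrak h}$. Applying the last paragraph of the proof of Lemma~\ref{lemme:3:1:0202} with $\mathfrak h$ in place of $\mathfrak G$, the series~\eqref{expansion:0202} defining $F_{\pmb\Lambda}$ then has all its terms in $U(\mathfrak h)^\wedge$, and being group-like each value lands in the group of group-like elements of $U(\mathfrak h)^\wedge$, namely $\exp(\hat{\mathfrak h})$. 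Equivalently, one invokes that $U(\mathfrak h)^\wedge$ is a sub-Hopf-algebra of $U(\mathfrak G)^\wedge$, so that $\exp(\hat{\mathfrak h})=\exp(\hat{\mathfrak G})\cap U(\mathfrak h)^\wedge$.

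The computation is short; the only points requiring a word of care, and hence the main (minor) obstacle, are the identification of $\mathrm{ev}_{\pmb\alpha_0}$ with a retraction onto $\hat{\mathfrak h}$, and the compatibility of the two group-like structures, i.e.\ the verification that a group-like element of $U(\mathfrak G)^\wedge$ lying in $U(\mathfrak h)^\wedge$ is automatically group-like in $U(\mathfrak h)^\wedge$.
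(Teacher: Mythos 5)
Your proposal is correct and follows essentially the same route as the paper's own proof: the first claim is read off, just as in the paper, from the formula $\pmb\Lambda(\rho_0,f)=\big(\mathrm{ev}_{\pmb\alpha_0}(c_1+m_1^f),\ldots,\mathrm{ev}_{\pmb\alpha_0}(c_{2h}+m_{2h}^f)\big)$ in the proof of Lemma~\ref{lem:29121529}, with $\mathrm{ev}_{\pmb\alpha_0}$ the retraction $a_i\mapsto 0$, $b_i\mapsto b_i$ onto $\mathrm{Lie}(b_1,\ldots,b_h)^\wedge$, and the second from the expansion~\eqref{expansion:0202}, which places all values in $U(\mathrm{Lie}(b_1,\ldots,b_h))^\wedge$, combined with group-likeness in $U(\mathfrak G)^\wedge$. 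The compatibility point you single out — that a group-like element of $U(\mathfrak G)^\wedge$ lying in $U(\mathrm{Lie}(b_1,\ldots,b_h))^\wedge$ is group-like there, i.e.\ $\mathcal G(U(\mathrm{Lie}(b_1,\ldots,b_h))^\wedge)=\exp(\mathrm{Lie}(b_1,\ldots,b_h)^\wedge)$ — is exactly the identification the paper invokes in its final step.
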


\begin{proof}
Let us first show that $\pmb\Lambda(\rho_0,f)\in (\mathrm{Lie}(b_1,\ldots,b_h)^\wedge)^{2h}$. By the proof of 
Lemma~\ref{lem:29121529}, one has  $\pmb\Lambda(\rho_0,f)=\mathrm{hol}_f^{-1}(\mathrm{log}\rho_0(A_1),\ldots,
\mathrm{log}\rho_0(B_{h}))=(\mathrm{ev}_{\pmb\alpha}(c_1+m_1^f),\ldots,\mathrm{ev}_{\pmb\alpha}(c_{2h}+m_{2h}^f))$, 
where $\pmb\alpha=(\mathrm{log}\rho_0(A_1),\ldots,\mathrm{log}\rho_0(B_{h}))=(0,\ldots,0,b_1,\ldots,b_h)$, and 
$\mathrm{ev}_{\pmb\alpha}$ is the endomorphism of $\hat{\mathfrak G}$ such that
$a_i\mapsto 0$, $b_i\mapsto b_i$ for $i\in[\![1,h]\!]$. It follows that the image of~$\mathrm{ev}_{\pmb\alpha}$
is contained in $\mathrm{Lie}(b_1,\ldots,b_h)^\wedge$, and therefore that 
$\pmb\Lambda(\rho_0,f)\in (\mathrm{Lie}(b_1,\ldots,b_h)^\wedge)^{2h}$.

Let us now show that the image of the groupoid morphism $F_{\pmb\Lambda(\rho_0,f)}:\Pi_1(C\setminus\infty)\to
\mathrm{exp}(\hat{\mathfrak G})$ is contained in $\mathrm{exp}(\mathrm{Lie}(b_1,\ldots,b_h)^\wedge)$. 
By~\eqref{expansion:0202}, for any $(x,y,\eta)\in\Pi_1(C\setminus\infty)$ one has $F_{\pmb\Lambda(\rho_0,f)}(x,y,\eta)
=1+\int_\eta\gamma_{\pmb\Lambda(\rho_0,f)}+\int_\eta\gamma_{\pmb\Lambda(\rho_0,f)}\gamma_{\pmb\Lambda(\rho_0,f)}
+\ldots$, which together with $\gamma_{\pmb\Lambda(\rho_0,f)}=\sum_{i=1}^{2h}\gamma_i\Lambda_i(\rho_0,f)$
implies $F_{\pmb\Lambda(\rho_0,f)}(x,y,\eta)\in U(\mathrm{Lie}(b_1,\ldots,b_h))^\wedge$. Since 
$F_{\pmb\Lambda(\rho_0,f)}(x,y,\eta)\in\mathcal G(U(\mathfrak G)^\wedge)$, it follows that  
$F_{\pmb\Lambda(\rho_0,f)}(x,y,\eta)\in\mathcal G(U(\mathrm{Lie}(b_1,\ldots,b_h))^\wedge)
=\mathrm{exp}(\mathrm{Lie}(b_1,\ldots,b_h)^\wedge)$. 
\end{proof}

\begin{prop}\label{prop:1:1732}
If $f,f'\in\mathrm{Iso}_\infty(\pi_1(C\setminus\infty,p(\tilde x)),
\langle A_i,B_i\rangle)$ are related by the conjugation action of $(\prod_i(A_i,B_i))$, then
$\pmb\Lambda(\rho_0,f)=\pmb\Lambda(\rho_0,f')$.  
\end{prop}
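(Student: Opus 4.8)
The plan is to exploit the uniqueness part of Lemma~\ref{lem:29121529}: since $\pmb\Lambda(\rho_0,f')$ is by definition the unique element of $\hat{\mathfrak G}^{2h}$ solving $F_{\pmb\Lambda}\circ\iota_{p(\tilde x)}\circ (f')^{-1}=\rho_0$, it suffices to verify that $\pmb\Lambda(\rho_0,f)$ already solves this same equation. Concretely, I would set $\Phi:=F_{\pmb\Lambda(\rho_0,f)}\circ\iota_{p(\tilde x)}:\pi_1(C\setminus\infty,p(\tilde x))\to\exp(\hat{\mathfrak G})$, which by the defining property~\eqref{identity:Lambda} of $\pmb\Lambda(\rho_0,f)$ satisfies $\Phi\circ f^{-1}=\rho_0$, and show that also $\Phi\circ (f')^{-1}=\rho_0$; the proposition then follows at once.

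By hypothesis and Definition~\ref{def:1:5:2403}, $f$ and $f'$ differ by the conjugation action of the normal subgroup $(\prod_i(A_i,B_i))$ on the target group, i.e. there is $g\in(\prod_i(A_i,B_i))$ with $f'=\mathrm{conj}_g\circ f$, where $\mathrm{conj}_g(w):=gwg^{-1}$. Hence $(f')^{-1}=f^{-1}\circ\mathrm{conj}_{g^{-1}}$ and
\[
\Phi\circ (f')^{-1}=\Phi\circ f^{-1}\circ\mathrm{conj}_{g^{-1}}=\rho_0\circ\mathrm{conj}_{g^{-1}},
\]
so that for any $w\in\langle A_i,B_i\rangle$ one has $\rho_0(\mathrm{conj}_{g^{-1}}(w))=\rho_0(g^{-1})\rho_0(w)\rho_0(g)$.

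The key point is that $\rho_0$ is, by its very construction, defined on the quotient $\langle A_i,B_i\rangle/(\prod_i(A_i,B_i))$, so when regarded as a morphism on $\langle A_i,B_i\rangle$ via the projection it is trivial on the normal subgroup $(\prod_i(A_i,B_i))$; in particular $\rho_0(g)=1$. Therefore $\rho_0\circ\mathrm{conj}_{g^{-1}}=\rho_0$, which gives $\Phi\circ (f')^{-1}=\rho_0$ and concludes the argument. I do not expect a serious obstacle: the only points requiring care are tracking the direction of composition when passing from $f'=\mathrm{conj}_g\circ f$ to $(f')^{-1}$, and keeping in mind that equation~\eqref{identity:Lambda} is to be read with $\rho_0$ pulled back along $\langle A_i,B_i\rangle\to\langle A_i,B_i\rangle/(\prod_i(A_i,B_i))$, which is precisely what renders $\rho_0$ invariant under conjugation by elements of $(\prod_i(A_i,B_i))$.
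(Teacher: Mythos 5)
Your proposal is correct and is essentially the paper's own argument: both rest on the observation that $\rho_0$ kills $(\prod_i(A_i,B_i))$ (so $\rho_0(g)=1$ and $\rho_0$ is invariant under conjugation by $g^{\pm 1}$) together with the uniqueness statement of Lemma~\ref{lem:29121529}. The only cosmetic difference is the direction of the verification: you check that $\pmb\Lambda(\rho_0,f)$ solves the defining equation for $f'$, whereas the paper computes $F_{\pmb\Lambda(\rho_0,f')}\circ\iota_{p(\tilde x)}=\rho_0\circ\mathrm{Ad}_g\circ f=\rho_0\circ f$ and invokes uniqueness for $f$ --- the same calculation read in reverse.
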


\proof By assumption, there exists $g\in(\prod_{i=1}^h(A_i,B_i))$ such that $f'=\mathrm{Ad}_g\circ f$.  

One has $\rho_0(\prod_{i=1}^h(A_i,B_i))=\prod_{i=1}^h(\rho_0(A_i),\rho_0(B_i))=\prod_{i=1}^h(1,e^{b_i})=1$. 
It follows that  
\begin{equation}\label{stat:kernel}
\bigg(\prod_{i=1}^h(A_i,B_i)\bigg)\subset\mathrm{ker}(\rho_0). 
\end{equation}
In particular, 
\begin{equation}\label{van:rho0:g}
\rho_0(g)=1.
\end{equation} 

Then 
$$
F_{\pmb\Lambda(\rho_0,f')}\circ\iota_{p(\tilde x)}=\rho_0\circ f'
=\rho_0\circ\mathrm{Ad}_g\circ f=\mathrm{Ad}_{\rho_0(g)}\circ\rho_0\circ f=\rho_0\circ f,  
$$
where the first identity follows from~\eqref{identity:Lambda}, the second from $f'=\mathrm{Ad}_g\circ f$, 
and the last identity from~\eqref{van:rho0:g}. It then follows from the resulting equality 
$F_{\pmb\Lambda(\rho_0,f')}\circ\iota_{p(\tilde x)}=\rho_0\circ f$ and from the uniqueness
proved in Lemma~\ref{lem:29121529} of solutions of~\eqref{identity:Lambda}, that 
$\pmb\Lambda(\rho_0,f')=\pmb\Lambda(\rho_0,f)$. 
\hfill\qed\medskip 

\begin{defn}
Define $\Pi_1(C)\big|_{C\setminus\infty}$ to be the groupoid obtained from $\Pi_1(C)$
by restricting the set of objects to $C\setminus\infty$. The 1-nerve of this groupoid is the fibered product 
$\pmb\Pi_1(C)\times_{C^2}(C\setminus\infty)^2$.  
\end{defn}

The inclusion $C\setminus\infty\subset C$ induces a groupoid morphism $\Pi_1(C\setminus\infty)\to\Pi_1(C)$, 
which admits a factorization $\Pi_1(C\setminus\infty)\to\Pi_1(C)\big|_{C\setminus\infty}\to\Pi_1(C)$. 

\begin{prop}\label{prop:2:1741}
If $f\in\mathrm{Iso}_\infty(\pi_1(C\setminus\infty,p(\tilde x)),\langle A_i,B_i\rangle)$, then there is a groupoid morphism 
$G_f:\Pi_1(C)\big|_{C\setminus\infty}\to \mathrm{exp}(\mathrm{Lie}(b_1,\ldots,b_h)^\wedge)$, uniquely determined by the 
condition that $F_{\pmb\Lambda(\rho_0,f)}$ coincides with the composition 
$\Pi_1(C\setminus\infty)\to\Pi_1(C)\big|_{C\setminus\infty}\stackrel{G_f}{\longrightarrow} 
\mathrm{exp}(\mathrm{Lie}(b_1,\ldots,b_h)^\wedge)$. 
\end{prop}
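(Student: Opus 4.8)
The plan is to realize $\Pi_1(C)\big|_{C\setminus\infty}$ as a groupoid quotient of $\Pi_1(C\setminus\infty)$ and then invoke the universal property of such quotients recalled in \S\ref{sec:groupoidquotient}. Concretely, fix the base object $x_0:=p(\tilde x)$ and let $N:=(c_{\infty,x_0})\triangleleft\pi_1(C\setminus\infty,x_0)$ be the normal subgroup generated by the conjugacy class attached to $\infty$. By Lemma~\ref{lemma:point}, $N$ is the kernel of $\pi_1(C\setminus\infty,x_0)\to\pi_1(C,x_0)$, and I would check that the morphism $\Pi_1(C\setminus\infty)/N\to\Pi_1(C)\big|_{C\setminus\infty}$ induced by inclusion is an isomorphism of groupoids: it is the identity on objects, and on each hom-set it is the map from the quotient of $\pi_1(C\setminus\infty;y,z)$ to $\pi_1(C;y,z)$, which is bijective because the inclusion-induced map $\pi_1(C\setminus\infty;y,z)\to\pi_1(C;y,z)$ is surjective with fibers exactly the cosets of the (basepoint-shifted) kernel.

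Granting this identification, the universal property of \S\ref{sec:groupoidquotient} reduces the existence and uniqueness of $G_f$ to a single assertion: that $N$ is contained in the kernel of the group morphism $F_{\pmb\Lambda(\rho_0,f)}\circ\iota_{x_0}:\pi_1(C\setminus\infty,x_0)\to\exp(\hat{\mathfrak G})$. Since this kernel is normal and $N$ is normally generated by the conjugacy class $c_{\infty,x_0}$, it suffices to show that $F_{\pmb\Lambda(\rho_0,f)}\circ\iota_{x_0}$ annihilates $c_{\infty,x_0}$.

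For this I would use the defining equation~\eqref{identity:Lambda}, which upon composing with $f$ on the right gives $F_{\pmb\Lambda(\rho_0,f)}\circ\iota_{x_0}=\rho_0\circ f$. By the definition of $\mathrm{Iso}_\infty$ (Definition~\ref{def:1:5:2403}), $f$ carries $c_{\infty,x_0}$ into the conjugacy class of $\prod_i(A_i,B_i)$; and by~\eqref{stat:kernel} one has $\prod_i(A_i,B_i)\in\ker\rho_0$, so that $\rho_0$ kills the entire conjugacy class. Composing, $F_{\pmb\Lambda(\rho_0,f)}\circ\iota_{x_0}$ vanishes on $c_{\infty,x_0}$, as required. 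Finally, Lemma~\ref{lem:210731n1} guarantees that the image of $F_{\pmb\Lambda(\rho_0,f)}$, and hence of the factorisation $G_f$, lies in $\exp(\mathrm{Lie}(b_1,\ldots,b_h)^\wedge)$, so $G_f$ takes values in the asserted target.

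The main obstacle is the first step: cleanly identifying $\Pi_1(C)\big|_{C\setminus\infty}$ with the groupoid quotient $\Pi_1(C\setminus\infty)/N$. While Lemma~\ref{lemma:point} pins down the kernel at the single basepoint $x_0$, the quotient construction of \S\ref{sec:groupoidquotient} inserts loops from $N$ based at $x_0$ only, so one must verify that this suffices to realise, on every hom-set $\pi_1(C\setminus\infty;y,z)$, the full relation of becoming homotopic in $C$, i.e. that conjugation by connecting paths propagates the kernel from $x_0$ to all points of $C\setminus\infty$. Once this bookkeeping is settled, the remainder is the short computation above; alternatively one may bypass the explicit quotient identification and argue directly that $F_{\pmb\Lambda(\rho_0,f)}$ is constant on the fibres of each map $\pi_1(C\setminus\infty;y,z)\to\pi_1(C;y,z)$, using that $F_{\pmb\Lambda(\rho_0,f)}$ is a groupoid morphism together with the vanishing on $c_{\infty,x_0}$ just established.
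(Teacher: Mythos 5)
Your proposal is correct and follows essentially the same route as the paper's proof: both identify $\Pi_1(C)\big|_{C\setminus\infty}$ with the groupoid quotient of $\Pi_1(C\setminus\infty)$ by the kernel $H=(c_{\infty,p(\tilde x)})$ at the basepoint, invoke the universal property of groupoid quotients from \S\ref{sec:groupoidquotient}, and reduce everything to the computation $F_{\pmb\Lambda(\rho_0,f)}\circ\iota_{p(\tilde x)}=\rho_0\circ f$ killing $H$ via~\eqref{stat:kernel}, with Lemma~\ref{lem:210731n1} placing the values in $\mathrm{exp}(\mathrm{Lie}(b_1,\ldots,b_h)^\wedge)$. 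The only difference is presentational: you flag and propose to verify the quotient identification on all hom-sets, a point the paper simply asserts, and your conjugacy-class argument for $\rho_0\circ f(H)=1$ is an expanded version of the paper's one-line use of $f(H)=(\prod_i(A_i,B_i))$.
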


\begin{proof} Let $H$ be the kernel of the group morphism $\pi_1(C\setminus\infty,p(\tilde x))\to\pi_1(C,p(\tilde x))$. This is the normal subgroup of $\pi_1(C\setminus\infty,p(\tilde x))=\Pi_1(C\setminus\infty)(p(\tilde x))$ generated by a loop around~$\infty$. The groupoid quotient $\Pi_1(C\setminus\infty)/H$, defined as in \S\ref{sec:groupoidquotient}, coincides with the restriction $\Pi_1(C)\big|_{C\setminus\infty}$
of the fundamental groupoid~$\Pi_1(C)$ to the subset of objects $C\setminus\infty\subset C$. Let us also consider the trivial groupoid 
corresponding to the group~$\mathrm{exp}(\mathrm{Lie}(b_1,\ldots,b_h)^\wedge)$, whose set of objects is~$\{*\}$, and 
$\mathrm{exp}(\mathrm{Lie}(b_1,\ldots,b_h)^\wedge)(*)=\mathrm{exp}(\mathrm{Lie}(b_1,\ldots,b_h)^\wedge)$. 

By Lemmas~\ref{lem:2912a} and~\ref{lem:210731n1}, $F_{\pmb\Lambda(\rho_0,f)}$ is a groupoid morphism $\Pi_1(C\setminus\infty)\to \mathrm{exp}(\mathrm{Lie}(b_1,\ldots,b_h)^\wedge)$. Its restriction to $\Pi_1(C\setminus\infty)(p(\tilde x))$ is simply the group morphism $F_{\pmb\Lambda(\rho_0,f)}\circ \iota_{p(\tilde x)}$, which by~\eqref{identity:Lambda} coincides with $\rho_0\circ f$, and 
$$
\rho_0\circ f(H)
=\rho_0\Bigg(\bigg(\prod_{i=1}^h(A_i,B_i)\bigg)\Bigg)=1, 
$$
where the first equality follows from $f(H)
=(\prod_{i=1}^h(A_i,B_i))$, and the second equality follows from~\eqref{stat:kernel}. 

Therefore the restriction of $F_{\pmb\Lambda(\rho_0,f)}$ to $H$ is trivial, and so by a property of groupoid quotients mentioned in \S\ref{sec:groupoidquotient} there exists
a groupoid morphism $G_f:\Pi_1(C\setminus\infty)/H\to \mathrm{exp}(\mathrm{Lie}(b_1,\ldots,b_h)^\wedge)$, such that its 
composition with $\Pi_1(C\setminus\infty)\to \Pi_1(C\setminus\infty)/H$ coincides with $F_{\pmb\Lambda(\rho_0,f)}$. 
\end{proof}

\begin{lem}\label{lemma:conj:inv}
If $f,f'\in\mathrm{Iso}_\infty(\pi_1(C\setminus\infty,p(\tilde x)),\langle A_i,B_i\rangle)$ are conjugated by the action of 
an element of $(\prod_{i=1}^h(A_i,B_i))$, then the groupoid morphisms~$G_f$ and 
$G_{f'}:\Pi_1(C)\big|_{C\setminus\infty}\to \mathrm{exp}(\mathrm{Lie}(b_1,\ldots,b_h)^\wedge)$ are equal. 
\end{lem}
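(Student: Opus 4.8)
The plan is to reduce the statement to the two preceding results, namely Proposition~\ref{prop:1:1732} and the uniqueness clause of Proposition~\ref{prop:2:1741}. The key observation is that the groupoid morphism $G_f$ depends on $f$ only through the tuple $\pmb\Lambda(\rho_0,f)\in\hat{\mathfrak G}^{2h}$. First I would recall how $G_f$ is characterized: by Proposition~\ref{prop:2:1741} it is the unique groupoid morphism $\Pi_1(C)\big|_{C\setminus\infty}\to\mathrm{exp}(\mathrm{Lie}(b_1,\ldots,b_h)^\wedge)$ whose composition with the canonical projection $\Pi_1(C\setminus\infty)\to\Pi_1(C)\big|_{C\setminus\infty}$ equals $F_{\pmb\Lambda(\rho_0,f)}$. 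Since the morphism $F_{\pmb\Lambda}$ of Lemma~\ref{lemme:3:1:0202} depends on its argument only through the tuple $\pmb\Lambda$, the morphism $F_{\pmb\Lambda(\rho_0,f)}$ is completely determined by $\pmb\Lambda(\rho_0,f)$, and the same holds for $f'$.

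Next I would invoke Proposition~\ref{prop:1:1732}. Since $f$ and $f'$ are conjugated by an element of $(\prod_{i=1}^h(A_i,B_i))$, that proposition yields $\pmb\Lambda(\rho_0,f)=\pmb\Lambda(\rho_0,f')$, and hence $F_{\pmb\Lambda(\rho_0,f)}=F_{\pmb\Lambda(\rho_0,f')}$ as groupoid morphisms $\Pi_1(C\setminus\infty)\to\mathrm{exp}(\mathrm{Lie}(b_1,\ldots,b_h)^\wedge)$.

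Finally, both $G_f$ and $G_{f'}$ satisfy the same characterizing property: their respective compositions with the projection $\Pi_1(C\setminus\infty)\to\Pi_1(C)\big|_{C\setminus\infty}$ coincide, being equal to the single morphism $F_{\pmb\Lambda(\rho_0,f)}=F_{\pmb\Lambda(\rho_0,f')}$. Because this projection is the groupoid quotient by $H$ described in \S\ref{sec:groupoidquotient}, hence surjective on morphisms, the uniqueness asserted in Proposition~\ref{prop:2:1741} forces $G_f=G_{f'}$. I expect essentially no obstacle here: the substantive content has already been packaged into Propositions~\ref{prop:1:1732} and~\ref{prop:2:1741}, and the present statement is the formal consequence of combining the equality of the $\pmb\Lambda$-data with the uniqueness of the induced morphism on the quotient groupoid.
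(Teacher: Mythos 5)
Your proposal is correct and follows essentially the same route as the paper's own proof: both deduce $F_{\pmb\Lambda(\rho_0,f)}=F_{\pmb\Lambda(\rho_0,f')}$ from Proposition~\ref{prop:1:1732} and then conclude $G_f=G_{f'}$ from the determination of $G_f$ by the condition $F_{\pmb\Lambda(\rho_0,f)}=G_f\circ\mathrm{pr}$ in Proposition~\ref{prop:2:1741}. Your added remark that $\mathrm{pr}$ is surjective on morphisms merely makes explicit why this condition pins down $G_f$ uniquely, which the paper leaves implicit.
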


\begin{proof}
By Proposition~\ref{prop:2:1741}, the groupoid morphisms~$G_f$ and~$G_{f'}$ are determined by the conditions $F_{\pmb\Lambda(\rho_0,f)}
=G_f\circ\mathrm{pr}$ and $F_{\pmb\Lambda(\rho_0,f')}
=G_{f'}\circ\mathrm{pr}$, where $\mathrm{pr}$ is the groupoid morphism 
$\Pi_1(C\setminus\infty)\to\Pi_1(C)\big|_{C\setminus\infty}$. By Proposition~\ref{prop:1:1732}, the groupoid morphisms 
$F_{\pmb\Lambda(\rho_0,f)}$ and $F_{\pmb\Lambda(\rho_0,f')}:\Pi_1(C\setminus\infty)\to \mathrm{exp}(\mathrm{Lie}(b_1,\ldots,b_h)^\wedge)$ 
coincide. It follows that the two said conditions are the same, so that~$G_f$ and~$G_{f'}$ coincide.
\end{proof}

\subsection{Construction and properties of~$\tilde g_{\tilde x,\pmb\beta}$}\label{ssec:defgtildexbeta}

Recall that an element~$e\in E$ is fixed; it associates to any $\tilde x\in \tilde C\setminus p^{-1}(\infty)$ an element $e_{\tilde x}\in E(p(\tilde x))=\mathrm{Iso}_\infty(\pi_1(C\setminus\infty,x),\langle A_i,B_i\rangle)/\big(\prod_i(A_i,B_i)\big)$, and it gives rise to an isomorphism ${\rm can}_e:{\rm Aut}(\tilde C/C)\tilde{\to} \langle A_i,B_i\rangle/(\prod_i(A_i,B_i))$ (see \S\ref{sect:topology}). 

\begin{lem}
The groupoid morphisms $G_f:\Pi_1(C)\big|_{C\setminus\infty}\to \mathrm{exp}(\mathrm{Lie}(b_1,\ldots,b_h)^\wedge)$, 
where $f\in\mathrm{Iso}_\infty(\pi_1(C\setminus\infty,p(\tilde x)),\langle A_i,B_i\rangle)$ is a lift of~$e_{\tilde x}$, 
are all equal.
\end{lem}

\begin{proof}
Any two such lifts are conjugated by the action of an element of $(\prod_{i=1}^h(A_i,B_i))$. The result then follows from 
Lemma~\ref{lemma:conj:inv}.
\end{proof}

\begin{defn}\label{def:210731}
We denote by~$G_{(e_{\tilde x})}$ the common value of the groupoid morphisms $G_f:\Pi_1(C)\big|_{C\setminus\infty}\to \mathrm{exp}(\mathrm{Lie}(b_1,\ldots,b_h)^\wedge)$, 
where $f\in\mathrm{Iso}_\infty(\pi_1(C\setminus\infty,p(\tilde x)),\langle A_i,B_i\rangle)$ is a lift of~$e_{\tilde x}$. The map between 1-nerves $\pmb\Pi_1(C) \times_{C^2}(C\setminus\infty)^2\to \mathrm{exp}(\mathrm{Lie}(b_1,\ldots,b_h)^\wedge)$ induced by the groupoid morphism~$G_{(e_{\tilde x})}$ will also be denoted~$G_{(e_{\tilde x})}$.
\end{defn}

Let $u,v,w \in C\setminus\infty$, $\alpha \in\pi_1(C;u,v)$ and $\beta \in \pi_1(C;v,w)$, then $\alpha\cdot \beta \in \pi_1(C;u,w)$, and the fact that~$G_{(e_{\tilde x})}$ is a groupoid morphism implies that, at the level of 1-nerves,
\begin{equation}\label{eq:Fcomposit}
G_{(e_{\tilde x})}(u,w,\alpha\cdot\beta)\,=\,G_{(e_{\tilde x})}(u,v,\alpha)\,G_{(e_{\tilde x})}(v,w,\beta)\,.
\end{equation} 

%Recall that $\tilde x \in \tilde C-p^{-1}(\infty)$. For $\tilde y \in \tilde C-p^{-1}(\infty)$, the set 
%$\pi_1(\tilde C;\tilde x,\tilde y)$ has only one element, because $\tilde C$ is simply connected. The image of this element by the 
%map $p_*:\pi_1(\tilde C;\tilde x,\tilde y)\to \pi_1(C;p(\tilde x),p(\tilde y))$ induced by~$p$ is therefore a well-defined element of  
%$\pi_1(C;p(\tilde x),p(\tilde y))$, which we denote by $\eta_{\tilde x,\tilde y}$. 

Define a map 
$$
\textrm{map}_{\tilde x} : \tilde C\setminus p^{-1}(\infty)\to \pmb\Pi_1(C) \times_{C^2}(C\setminus\infty)^2
$$ 
by setting $\textrm{map}_{\tilde x}(\tilde y):=(p(\tilde x),p(\tilde y),\eta_{\tilde x,\tilde y})$, where $\eta_{\tilde x,\tilde y}$ is as in Definition~\ref{def:eta:xy:0202}.

\begin{defn}\label{def:g:30032021}
We denote\footnote{$G_{(e_{\tilde x})}$ depends on $e$ and $\tilde x$ but also, implicitly, on the tuple of differential forms~$\pmb \beta$ through eq.~\eqref{eq:diffeqFLambda}. The system of compatible classes of parametrisation $e$ is considered as part of the topological assumptions for our construction, and is therefore omitted from now on.} by~$\tilde g_{\tilde x,\pmb \beta}$ the map $\tilde C\setminus p^{-1}(\infty)\to \mathrm{exp}(\mathrm{Lie}(b_1,\ldots,b_h)^\wedge)$ given by 
$$
\tilde y\mapsto \big(G_{(e_{\tilde x})} \circ \textrm{map}_{\tilde x}(\tilde y)\big)^{-1}.
$$ 
\end{defn}

\begin{prop}\label{prop:tildeg}
The map $\tilde g_{\tilde x,\pmb \beta}$ has the property that, for any $\theta\in\mathrm{Aut}(\tilde C/C)$,
$$
\theta^*\tilde g_{\tilde x,\pmb \beta}=\tilde g_{\tilde x,\pmb \beta}\cdot(\rho_0\circ {\rm can}_e(\theta))^{-1}\,.
$$
\end{prop}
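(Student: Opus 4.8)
The plan is to unwind the definitions and reduce the claimed transformation law to the behaviour of the groupoid morphism $G_{(e_{\tilde x})}$ under the deck transformation $\theta$, together with the defining property of ${\rm can}_e$.

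First I would compute $\theta^*\tilde g_{\tilde x,\pmb\beta}(\tilde y) = \tilde g_{\tilde x,\pmb\beta}(\theta(\tilde y))$ directly from Definition~\ref{def:g:30032021}, obtaining
$$
\theta^*\tilde g_{\tilde x,\pmb\beta}(\tilde y) = \big(G_{(e_{\tilde x})}(p(\tilde x), p(\theta(\tilde y)), \eta_{\tilde x,\theta(\tilde y)})\big)^{-1}.
$$
Since $\theta$ is a deck transformation, $p(\theta(\tilde y)) = p(\tilde y)$, so the target point in $C\setminus\infty$ is unchanged; only the path class $\eta_{\tilde x,\theta(\tilde y)}$ differs from $\eta_{\tilde x,\tilde y}$. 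The key geometric step is to decompose this path: I would write $\eta_{\tilde x,\theta(\tilde y)}$ as a composition of a loop at $p(\tilde x)$ representing the deck transformation $\theta$ (via the isomorphism $\mathrm{iso}_{\tilde x}$ of Lemma~\ref{lemma:standard}, so that $\theta = \mathrm{aut}_{\eta}^{\tilde x}$ for $\eta = \eta_{\tilde x,\theta(\tilde x)}$) followed by $\eta_{\tilde x,\tilde y}$, using the cocycle-type identity $\eta_{\tilde x,\theta(\tilde y)} = \eta_{\tilde x,\theta(\tilde x)}\cdot\eta_{\theta(\tilde x),\theta(\tilde y)}$ and the fact that $\theta$ carries the unique lift path from $\tilde x$ to $\tilde y$ to the one from $\theta(\tilde x)$ to $\theta(\tilde y)$, giving $\eta_{\theta(\tilde x),\theta(\tilde y)} = \eta_{\tilde x,\tilde y}$ downstairs in $\Pi_1(C)$.

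**Applying the groupoid morphism.** Next I would apply the groupoid composition property~\eqref{eq:Fcomposit} to this decomposition, yielding
$$
G_{(e_{\tilde x})}(p(\tilde x), p(\tilde y), \eta_{\tilde x,\theta(\tilde y)}) = G_{(e_{\tilde x})}\big(p(\tilde x), p(\tilde x), \eta_{\tilde x,\theta(\tilde x)}\big)\cdot G_{(e_{\tilde x})}\big(p(\tilde x), p(\tilde y), \eta_{\tilde x,\tilde y}\big).
$$
The second factor is exactly $\big(\tilde g_{\tilde x,\pmb\beta}(\tilde y)\big)^{-1}$ by definition. For the first factor, a loop at $p(\tilde x)$, I would identify $G_{(e_{\tilde x})}$ restricted to $\pi_1(C,p(\tilde x))$ with $\rho_0 \circ e_{\tilde x}^{\rm fill}$ using Proposition~\ref{prop:2:1741} (which shows $F_{\pmb\Lambda(\rho_0,f)} = G_f\circ\mathrm{pr}$ and hence $G_f\circ\iota_{p(\tilde x)}$ factors as $\rho_0\circ f^{\rm fill}$ on $\pi_1(C)$). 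Combined with $\mathrm{iso}_{\tilde x}(\eta_{\tilde x,\theta(\tilde x)}) = \theta$ from Lemma~\ref{lemma:standard}(c) and the formula ${\rm can}_e = e_{\tilde x}^{\rm fill}\circ\mathrm{iso}_{\tilde x}^{-1}$ from~\eqref{triangle:02032021}, this first factor equals $\rho_0\circ{\rm can}_e(\theta)$.

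**Assembling and the main obstacle.** Taking inverses of the product and using that $\tilde g_{\tilde x,\pmb\beta}(\tilde y)$ is the inverse of the second factor, I would conclude
$$
\theta^*\tilde g_{\tilde x,\pmb\beta}(\tilde y) = \tilde g_{\tilde x,\pmb\beta}(\tilde y)\cdot\big(\rho_0\circ{\rm can}_e(\theta)\big)^{-1},
$$
which is the desired identity (note the order is forced by taking the inverse of a product). The main obstacle I anticipate is the bookkeeping of path compositions and the topologists' convention~--- in particular verifying carefully that $\eta_{\theta(\tilde x),\theta(\tilde y)}$ maps to the same class as $\eta_{\tilde x,\tilde y}$ in $\pi_1(C)$ (this is precisely the content of identity~\eqref{TaggedIdentity} in the proof of Lemma~\ref{lemma:standard}, applied to lifts), and keeping the left/right placement of the deck-transformation factor consistent so that the inversion produces the factor on the correct side. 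The identification of the loop factor with ${\rm can}_e(\theta)$ is where the well-definedness of ${\rm can}_e$ (independence of lift $f$ of $e_{\tilde x}$, Lemma~\ref{lemma:conj:inv}) is implicitly used.
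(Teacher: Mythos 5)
Your proposal is correct and follows essentially the same route as the paper's proof: the same decomposition $\eta_{\tilde x,\theta(\tilde y)}=\eta_{\tilde x,\theta(\tilde x)}\cdot\eta_{\tilde x,\tilde y}$, the same use of the groupoid composition property~\eqref{eq:Fcomposit}, and the same identification of the loop factor $G_{(e_{\tilde x})}(p(\tilde x),p(\tilde x),\eta_{\tilde x,\theta(\tilde x)})$ with $\rho_0\circ{\rm can}_e(\theta)$ via $\mathrm{iso}_{\tilde x}$ and the defining property~\eqref{identity:Lambda}. Your unwinding of that last step through Proposition~\ref{prop:2:1741} and the factorization $\rho_0\circ e_{\tilde x}^{\rm fill}$ just makes explicit what the paper compresses into its citation of Lemma~\ref{lem:29121529}.
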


\begin{proof}
Let us denote $\phi:=G_{(e_{\tilde x})} \circ \textrm{map}_{\tilde x}$, then the statement is equivalent to saying that $\theta^*\phi=\rho_0 \circ \textrm{can}_e(\theta)\cdot \phi$ for any $\theta \in \textrm{Aut}(\tilde C/C)$,  
i.e. that for any $\tilde y \in \tilde C\setminus p^{-1}(\infty)$, $\phi(\theta(\tilde y))=\rho_0 \circ \textrm{can}_e(\theta)\cdot \phi(\tilde y)$, which we now prove.

For any $\tilde y\in \tilde C\setminus p^{-1}(\infty)$, we have $\textrm{map}_{\tilde x}(\theta(\tilde y))=(p(\tilde x),p(\tilde y),\eta_{\tilde x,\theta(\tilde y)})$. 
Since
$$
\eta_{\tilde x,\theta(\tilde y)}=\eta_{\tilde x,\theta(\tilde x)}\cdot\eta_{\theta(\tilde x),\theta(\tilde y)}
=\eta_{\tilde x,\theta(\tilde x)}\cdot\eta_{\tilde x,\tilde y},
$$ 
we can use eq.~\eqref{eq:Fcomposit} and find
%because a path from~$\tilde x$ to~$\theta(\tilde y)$ can be written as the composition of a path from~$\tilde x$ to~$\theta(\tilde x)$ and a path from~$\theta(\tilde x)$ to~$\theta(\tilde y)$, and $\eta_{\theta(\tilde x),\theta(\tilde y)}=\eta_{\tilde x,\tilde y}$ for any $\tilde x,\tilde y\in \tilde C$ and any $\theta\in \textrm{Aut}(\tilde C/C)$. It then follows from~
$$
\phi(\theta(\tilde y))=G_{(e_{\tilde x})}(p(\tilde x),p(\tilde x),\eta_{\tilde x,\theta(\tilde x)})\,\phi(\tilde y),
$$ 
so that we are left with proving that 
\begin{equation}\label{eq:210731n1}
G_{(e_{\tilde x})}(p(\tilde x),p(\tilde x),\eta_{\tilde x,\theta(\tilde x)})=\rho_0 \circ \textrm{can}_e(\theta)\,.
\end{equation}

The choice of~$\tilde x$ naturally identifies $\textrm{Aut}(\tilde C/C)$ with $\pi_1(C,p(\tilde x))$ (see Lemma~\ref{lemma:standard}). 
Under this identification, we can write $(p(\tilde x),p(\tilde x),\eta_{\tilde x,\theta(\tilde x)})=\iota_{p(\tilde x)}(\theta)$
(see the remark following Definition~\ref{def:eta:xy:0202}). The result follows from Lemma~\ref{lem:29121529}.
\end{proof}

\subsection{Dependence of $\tilde g_{\tilde x,\pmb \beta}$ in $\tilde x$ and $\pmb\beta$}\label{ssec:dependencetildeg}

We first study the dependence on~$\tilde x$ for fixed $\pmb \beta$. We recall that, for any~$\rho$ and any $f\in\mathrm{Iso}_\infty
(\pi_1(C\setminus\infty,p(\tilde x)),\langle A_i,B_i\rangle)$, the elements $\Lambda(\rho,f)\in 
\hat{\mathfrak G}^{2h}$ defined by Lemma~\ref{lem:29121529} implicitly depend also on $\tilde x$ (and on $\pmb \beta$, but for the moment this is fixed). In particular, if~$\rho$ is fixed to be the morphism~$\rho_0$ from \S\ref{ssec:26}, and if $f$ is a representative of the class $e_{\tilde x}\in E(\tilde x)$, with~$e$ fixed, then by Proposition~\ref{prop:1:1732} the associated element ${\pmb \Lambda}(\rho_0,f)$ only depends on~$\tilde x$, and so in this section we will denote it by~${\pmb \Lambda}(\tilde x)$.
\begin{lem}
For any $\tilde y\in \tilde C\setminus p^{-1}(\infty)$ one has
\begin{equation*}
{\pmb \Lambda}(\tilde y)\,=\,{\rm Ad}_{F_{{\pmb \Lambda}(\tilde x)}(p(\tilde x),p(\tilde y),\eta_{\tilde x,\tilde y})}\big({\pmb \Lambda}(\tilde x)\big)\,.
\end{equation*}
\end{lem}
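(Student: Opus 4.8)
The plan is to track how the defining condition \eqref{identity:Lambda} for $\pmb\Lambda(\tilde y)$ transforms back to the basepoint $p(\tilde x)$, and to recognise the result as the $\mathrm{Ad}$-twist of the condition for $\pmb\Lambda(\tilde x)$.

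First I would fix a lift $f_x\in\mathrm{Iso}_\infty(\pi_1(C\setminus\infty,p(\tilde x)),\langle A_i,B_i\rangle)$ of $e_{\tilde x}$ and a lift $f_y$ of $e_{\tilde y}$. By the compatibility defining $E$ (Definition~\ref{def:1:7:2302}) together with the description of $\mathrm{bij}_{\tilde x,\tilde y}$ in Lemma~\ref{lemma:28122020}, these may be chosen so that $f_y=f_x\circ i_{\hat y,\hat x}$, where $i_{\hat y,\hat x}:\pi_1(C\setminus\infty,p(\tilde y))\to\pi_1(C\setminus\infty,p(\tilde x))$ is the change-of-basepoint isomorphism, given by conjugation $\gamma'\mapsto\delta\gamma'\delta^{-1}$ by the path $\delta\in\pi_1(C\setminus\infty;p(\tilde x),p(\tilde y))$ obtained by projecting the unique path between the chosen lifts. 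This $\delta$ projects in turn to $\eta_{\tilde x,\tilde y}$ in $\pi_1(C;p(\tilde x),p(\tilde y))$ (Definition~\ref{def:eta:xy:0202}), since $\tilde C$ is simply connected. Plugging $f_y=f_x\circ i_{\hat y,\hat x}$ into \eqref{identity:Lambda} for $\pmb\Lambda(\tilde y)$ and using \eqref{identity:Lambda} for $\pmb\Lambda(\tilde x)$ to rewrite $\rho_0\circ f_x=F_{\pmb\Lambda(\tilde x)}\circ\iota_{p(\tilde x)}$, the morphism property of $F_{\pmb\Lambda(\tilde x)}$ (Lemma~\ref{lem:2912a}, via the composition rule \eqref{eq:comppaths}) then yields, for every loop $\gamma'$ at $p(\tilde y)$,
\begin{equation*}
F_{\pmb\Lambda(\tilde y)}(p(\tilde y),p(\tilde y),\gamma')=R\cdot F_{\pmb\Lambda(\tilde x)}(p(\tilde y),p(\tilde y),\gamma')\cdot R^{-1},\qquad R:=F_{\pmb\Lambda(\tilde x)}(p(\tilde x),p(\tilde y),\eta_{\tilde x,\tilde y}),
\end{equation*}
where $R$ is well defined because $F_{\pmb\Lambda(\tilde x)}$ factors through $\Pi_1(C)\big|_{C\setminus\infty}$ (Proposition~\ref{prop:2:1741}), so its value on $\delta$ depends only on $\eta_{\tilde x,\tilde y}$.

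Next I would set $\pmb\Lambda':=\mathrm{Ad}_R(\pmb\Lambda(\tilde x))$, the conjugate of each component, which again lies in $\hat{\mathfrak G}^{2h}$ since $R\in\mathrm{exp}(\mathrm{Lie}(b_1,\ldots,b_h)^\wedge)$ (Lemma~\ref{lem:210731n1}). Because $R$ is a constant, the associated form satisfies $\gamma_{\pmb\Lambda'}=R\,\gamma_{\pmb\Lambda(\tilde x)}\,R^{-1}$, and the expansion \eqref{expansion:0202} then shows termwise that $F_{\pmb\Lambda'}(x,y,\eta)=R\,F_{\pmb\Lambda(\tilde x)}(x,y,\eta)\,R^{-1}$ on all of $\pmb\Pi_1(C\setminus\infty)$. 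Restricting to loops at $p(\tilde y)$ and comparing with the displayed identity gives $F_{\pmb\Lambda'}\circ\iota_{p(\tilde y)}=F_{\pmb\Lambda(\tilde y)}\circ\iota_{p(\tilde y)}$. Composing with $f_y^{-1}$ and taking logarithms, this is exactly the equality $\mathrm{hol}_{f_y}(\pmb\Lambda')=\mathrm{hol}_{f_y}(\pmb\Lambda(\tilde y))$ in the notation of Lemma~\ref{lem:29121529}; by the injectivity of $\mathrm{hol}_{f_y}$ established there, $\pmb\Lambda'=\pmb\Lambda(\tilde y)$, which is the claim.

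The main obstacle is purely one of conventions: I must pin down the direction of the basepoint-change isomorphism $i_{\hat y,\hat x}$ (that is, conjugation by $\delta$ rather than $\delta^{-1}$) and the order in the composition rule \eqref{eq:comppaths}, so as to land on $\mathrm{Ad}_R$ rather than $\mathrm{Ad}_{R^{-1}}$, and I must confirm that the path $\delta$ supplied by the lifts $\hat x,\hat y$ indeed projects to $\eta_{\tilde x,\tilde y}$ and not to its inverse. The analytic content—that conjugation by a constant group element commutes with the path-ordered exponential—is the elementary termwise computation with \eqref{expansion:0202} indicated above.
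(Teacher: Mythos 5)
Your proof is correct and is essentially the paper's own argument in different notation: the conjugation identity you derive from $f_y=f_x\circ i_{\hat y,\hat x}$ and the groupoid-morphism property of $F_{\pmb\Lambda(\tilde x)}$ applied to $\delta\gamma'\delta^{-1}$ is exactly the paper's eq.~\eqref{eq:211013n1}, obtained there by splitting the path from $\tilde x$ to ${\rm can}_e^{-1}(C_i)(\tilde x)$ into three pieces, and both arguments conclude via the termwise fact that $F_{{\rm Ad}_R(\pmb\Lambda)}={\rm Ad}_R\circ F_{\pmb\Lambda}$ together with the uniqueness/injectivity established in Lemma~\ref{lem:29121529}. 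Your explicit appeal to Proposition~\ref{prop:2:1741} to justify that $R$ is well defined on $\eta_{\tilde x,\tilde y}$ (rather than on a choice of $\delta$) makes precise a point the paper leaves implicit.
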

\begin{proof}
For any $\tilde x\in \tilde C\setminus p^{-1}(\infty)$ and any $i\in [\![1,2h]\!]$ let $\nu_{\tilde x}^i:\hat{\mathfrak G}^{2h}\to \exp(\hat{\mathfrak G})$ be the morphism given by
\begin{equation*}
\pmb \Lambda \,\to\, \sum_{m\geq 0}\int_{\tilde x}^{{\rm can}_e^{-1}(C_i)(\tilde x)}\underbrace{\gamma_{\pmb \Lambda}\cdots \gamma_{\pmb\Lambda}}_m\,,
\end{equation*}
where $C_i:=A_i$ for $i\in [\![1,h]\!]$ and $C_i:=B_{i-h}$ for $i\in [\![h+1,2h]\!]$.
Splitting a path from~$\tilde x$ to ${\rm can}_e^{-1}(C_i)(\tilde x)$ into the composition of a path from~$\tilde x$ to~$\tilde y$, a path from~$\tilde y$ to ${\rm can}_e^{-1}(C_i)(\tilde y)$ and a path from ${\rm can}_e^{-1}(C_i)(\tilde y)$ to ${\rm can}_e^{-1}(C_i)(\tilde x)$, and using the composition-of-paths property~\eqref{eq:comppaths} of iterated integrals, one finds that, for any ${\pmb \Lambda} \in \hat{\mathfrak G}^{2h}$,
\begin{align}\label{eq:211013n1}
\nu_{\tilde x}^i(\pmb\Lambda)&\,=\,F_{\pmb \Lambda}(p(\tilde x),p(\tilde y),\eta_{\tilde x,\tilde y})\,\nu_{\tilde y}^i(\pmb\Lambda)F_{\pmb \Lambda}(p(\tilde x),p(\tilde y),\eta_{\tilde x,\tilde y})^{-1}\\
&\,=\,{\rm Ad}_{F_{{\pmb \Lambda}(\tilde x)}(p(\tilde x),p(\tilde y),\eta_{\tilde x,\tilde y})}\big(\nu_{\tilde y}^i(\pmb\Lambda)\big)\,.\notag
\end{align}
By Remark~\ref{rem:210616}, one has $\rho_0(C_i)=\nu_{\tilde x}^i(\pmb\Lambda(\tilde x))$. Since this value does not depends on $\tilde x$, we find that $\nu_{\tilde x}^i(\pmb\Lambda(\tilde x))=\nu_{\tilde y}^i(\pmb\Lambda(\tilde y))$, and combining this identity with eq.~\eqref{eq:211013n1} we conclude that, for any $i\in [\![1,2h]\!]$,
\begin{equation}
\nu_{\tilde y}^i(\pmb\Lambda(\tilde y))\,=\,{\rm Ad}_{F_{{\pmb \Lambda}(\tilde x)}(p(\tilde x),p(\tilde y),\eta_{\tilde x,\tilde y})}\big(\nu_{\tilde y}^i(\pmb\Lambda(\tilde x))\big)\,.
\end{equation}
The statement of this lemma follows from the fact that $(\nu_{\tilde y}^1,\cdots ,\nu_{\tilde y}^{2h}):\hat{\mathfrak{G}}^{2h}\to \exp(\hat{\mathfrak{G}})^{2h}$ is a bijection, which can be inferred from the proof of Lemma~\ref{lem:29121529}.
\end{proof}
An immediate consequence of this lemma is the following result.
\begin{cor}\label{cor:211013}
For any $\tilde y\in \tilde C\setminus p^{-1}(\infty)$ one has
\begin{equation*}
\gamma_{{\pmb \Lambda}(\tilde y)}\,=\,{\rm Ad}_{F_{{\pmb \Lambda}(\tilde x)}(p(\tilde x),p(\tilde y),\eta_{\tilde x,\tilde y})}\big(\gamma_{{\pmb \Lambda}(\tilde x)}\big)\,,
\end{equation*}
where we recall that $\gamma_{\pmb \Lambda}=\sum_{i=1}^{2h}\Lambda_i\gamma_i$.
\end{cor}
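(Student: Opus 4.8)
The plan is to deduce the statement directly from the preceding lemma, the only real content being the observation that the adjoint action lives on the Lie-algebra factor and passes through the scalar 1-forms. Recall from \S\ref{def:omega} that $\gamma_1,\ldots,\gamma_{2h}$ are scalar-valued holomorphic 1-forms on $C\setminus\infty$, so that by definition $\gamma_{\pmb\Lambda}=\sum_{i=1}^{2h}\Lambda_i\gamma_i$ is an element of $\Gamma(C\setminus\infty,\Omega^1_C)\,\hat\otimes\,\hat{\mathfrak G}$ in which the entire Lie-algebra content is carried by the coefficients $\Lambda_i\in\hat{\mathfrak G}$.

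First I would write out, specialising the definition of $\gamma_{\pmb\Lambda}$ to $\pmb\Lambda=\pmb\Lambda(\tilde y)$,
\[
\gamma_{\pmb\Lambda(\tilde y)}=\sum_{i=1}^{2h}\Lambda_i(\tilde y)\,\gamma_i .
\]
Next I would substitute the component-wise conclusion of the preceding lemma, namely $\Lambda_i(\tilde y)=\mathrm{Ad}_{g}\big(\Lambda_i(\tilde x)\big)$ for each $i\in[\![1,2h]\!]$, where for brevity I set $g:=F_{\pmb\Lambda(\tilde x)}(p(\tilde x),p(\tilde y),\eta_{\tilde x,\tilde y})\in\exp(\hat{\mathfrak G})$.

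Finally, since $\mathrm{Ad}_g$ is a continuous $\mathbb C$-linear automorphism of $\hat{\mathfrak G}$ and the $\gamma_i$ are scalar 1-forms, $\mathrm{Ad}_g$ commutes with multiplication by each $\gamma_i$ and with the (convergent) sum over $i$, so that
\[
\gamma_{\pmb\Lambda(\tilde y)}=\sum_{i=1}^{2h}\mathrm{Ad}_g\big(\Lambda_i(\tilde x)\big)\,\gamma_i
=\mathrm{Ad}_g\Big(\sum_{i=1}^{2h}\Lambda_i(\tilde x)\,\gamma_i\Big)
=\mathrm{Ad}_g\big(\gamma_{\pmb\Lambda(\tilde x)}\big),
\]
which is the asserted identity. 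There is essentially no obstacle here; the single point requiring attention is that $\mathrm{Ad}_g$ acts only on the Lie-algebra factor and therefore factors through the scalar coefficients $\gamma_i$, which is exactly what promotes the component-wise statement of the lemma to the identity for the $\hat{\mathfrak G}$-valued form $\gamma_{\pmb\Lambda}$.
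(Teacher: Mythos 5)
Your proof is correct and is exactly the argument the paper intends: the paper offers no written proof, stating only that the corollary is an immediate consequence of the preceding lemma, and your expansion — reading the lemma component-wise as $\Lambda_i(\tilde y)=\mathrm{Ad}_g(\Lambda_i(\tilde x))$ and using that $\mathrm{Ad}_g$ is linear and acts only on the Lie-algebra factor, hence commutes with the scalar coefficients $\gamma_i$ — is precisely the omitted verification.
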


\begin{prop}\label{prop:211013}
For any $\tilde y\in \tilde C\setminus p^{-1}(\infty)$ and any $\pmb \beta$ one has the identity
\begin{equation*}
\tilde g_{\tilde x,\pmb \beta}(\bullet)\,=\,\tilde g_{\tilde x,\pmb \beta}(\tilde y)\,\tilde g_{\tilde y,\pmb \beta}(\bullet)\,.
\end{equation*}
\end{prop}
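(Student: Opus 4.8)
The plan is to regard both sides of the claimed identity, as functions of the variable $\bullet=\tilde z$ valued in the pro-unipotent group $\exp(\mathrm{Lie}(b_1,\ldots,b_h)^\wedge)$, as solutions of one and the same first-order linear differential equation, and then to conclude by uniqueness of solutions together with agreement at a single point. The engine of the argument is Corollary~\ref{cor:211013}, which converts the connection form based at $\tilde y$ into the one based at $\tilde x$.

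First I would record the differential equation satisfied by $\tilde g_{\tilde x,\pmb\beta}$. Writing $\phi(\tilde z):=G_{(e_{\tilde x})}(p(\tilde x),p(\tilde z),\eta_{\tilde x,\tilde z})$, so that $\tilde g_{\tilde x,\pmb\beta}(\tilde z)=\phi(\tilde z)^{-1}$ by Definition~\ref{def:g:30032021}, the identity $G_{(e_{\tilde x})}\circ\mathrm{pr}=F_{\pmb\Lambda(\tilde x)}$ coming from Proposition~\ref{prop:2:1741} and Definition~\ref{def:210731} identifies $\phi$ with the restriction of $F_{\pmb\Lambda(\tilde x)}$ to paths issued from $p(\tilde x)$. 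Consequently eq.~\eqref{eq:diffeqFLambda}, whose source term drops out since the source is held fixed, yields $d\phi=\phi\cdot\gamma_{\pmb\Lambda(\tilde x)}$ with $\phi(\tilde x)=1$; inverting, $\tilde g_{\tilde x,\pmb\beta}$ is the unique solution of
\begin{equation*}
d\,\tilde g_{\tilde x,\pmb\beta}=-\gamma_{\pmb\Lambda(\tilde x)}\cdot\tilde g_{\tilde x,\pmb\beta},\qquad \tilde g_{\tilde x,\pmb\beta}(\tilde x)=1 .
\end{equation*}
The same reasoning applied at $\tilde y$ gives $d\,\tilde g_{\tilde y,\pmb\beta}=-\gamma_{\pmb\Lambda(\tilde y)}\cdot\tilde g_{\tilde y,\pmb\beta}$ with $\tilde g_{\tilde y,\pmb\beta}(\tilde y)=1$.

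Next I would differentiate the right-hand side $\psi(\bullet):=\tilde g_{\tilde x,\pmb\beta}(\tilde y)\cdot\tilde g_{\tilde y,\pmb\beta}(\bullet)$, whose first factor is constant, obtaining $d\psi=-\tilde g_{\tilde x,\pmb\beta}(\tilde y)\,\gamma_{\pmb\Lambda(\tilde y)}\,\tilde g_{\tilde y,\pmb\beta}(\bullet)$. The crux is to express $\gamma_{\pmb\Lambda(\tilde y)}$ through $\gamma_{\pmb\Lambda(\tilde x)}$: Corollary~\ref{cor:211013} gives $\gamma_{\pmb\Lambda(\tilde y)}=\mathrm{Ad}_{F_{\pmb\Lambda(\tilde x)}(p(\tilde x),p(\tilde y),\eta_{\tilde x,\tilde y})}\big(\gamma_{\pmb\Lambda(\tilde x)}\big)$, while the identification of the preceding paragraph shows $F_{\pmb\Lambda(\tilde x)}(p(\tilde x),p(\tilde y),\eta_{\tilde x,\tilde y})=\phi(\tilde y)=\tilde g_{\tilde x,\pmb\beta}(\tilde y)^{-1}$. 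Hence $\tilde g_{\tilde x,\pmb\beta}(\tilde y)\,\gamma_{\pmb\Lambda(\tilde y)}=\gamma_{\pmb\Lambda(\tilde x)}\,\tilde g_{\tilde x,\pmb\beta}(\tilde y)$, so that $d\psi=-\gamma_{\pmb\Lambda(\tilde x)}\cdot\psi$, i.e. $\psi$ solves exactly the equation characterising $\tilde g_{\tilde x,\pmb\beta}$. Since $\psi(\tilde y)=\tilde g_{\tilde x,\pmb\beta}(\tilde y)\cdot\tilde g_{\tilde y,\pmb\beta}(\tilde y)=\tilde g_{\tilde x,\pmb\beta}(\tilde y)$, the two solutions agree at $\bullet=\tilde y$, and uniqueness in the pro-unipotent group forces $\psi=\tilde g_{\tilde x,\pmb\beta}$, which is the assertion.

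The only genuine difficulty is the crux step: transporting the connection form from base point $\tilde y$ to base point $\tilde x$ by the correct inner automorphism and recognising that the conjugating element is precisely $\tilde g_{\tilde x,\pmb\beta}(\tilde y)^{-1}$; everything else is formal. I note that the statement can equally be proved at the level of $1$-nerves: the same uniqueness argument applied to $F$ shows $G_{(e_{\tilde y})}=\mathrm{Ad}_{\phi(\tilde y)}\circ G_{(e_{\tilde x})}$, after which the identity follows by combining this with the groupoid composition property~\eqref{eq:Fcomposit} applied to the decomposition $\eta_{\tilde x,\tilde z}=\eta_{\tilde x,\tilde y}\cdot\eta_{\tilde y,\tilde z}$.
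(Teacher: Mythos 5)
Your proof is correct and follows essentially the same route as the paper: the paper likewise rewrites the claim as an identity between values of $F_{\pmb\Lambda(\tilde x)}$ and $F_{\pmb\Lambda(\tilde y)}$, invokes Corollary~\ref{cor:211013} to conjugate $\gamma_{\pmb\Lambda(\tilde y)}$ into $\gamma_{\pmb\Lambda(\tilde x)}$, and concludes by showing two solutions of the same first-order equation coincide after fixing the constant at $\tilde z=\tilde y$. The only cosmetic difference is that you work with the inverted functions $\tilde g$ and the left equation $dF=-\gamma_{\pmb\Lambda(\tilde x)}\cdot F$, whereas the paper works with $F_{\pmb\Lambda}$ itself and the right equation $d_zF=F\cdot\gamma_{\pmb\Lambda(\tilde y)}$.
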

\begin{proof}
Since for any $\tilde u,\tilde v\in  \tilde C\setminus p^{-1}(\infty)$ one has $\tilde g_{\tilde u,\pmb \beta}(\tilde v)=F_{\pmb\Lambda(\tilde u)}(p(\tilde u),p(\tilde v),\eta_{\tilde u,\tilde v})^{-1}$, then the statement is equivalent to the identity
\begin{equation}\label{eq:211013n2}
F_{\pmb\Lambda(\tilde y)}(p(\tilde y),p(\tilde z),\eta_{\tilde y,\tilde z})\,=\,F_{\pmb\Lambda(\tilde x)}(p(\tilde x),p(\tilde z),\eta_{\tilde x,\tilde z})\,F_{\pmb\Lambda(\tilde x)}(p(\tilde x),p(\tilde y),\eta_{\tilde x,\tilde y})^{-1}\,
\end{equation}
for any $\tilde x, \tilde y, \tilde z\in  \tilde C\setminus p^{-1}(\infty)$. 

On the one hand, by the definition of~$F_{\Lambda}$ we have
\begin{equation*}
d_z\,F_{\pmb\Lambda(\tilde y)}(p(\tilde y),p(\tilde z),\eta_{\tilde y,\tilde z})\,=\,F_{\pmb\Lambda(\tilde y)}(p(\tilde y),p(\tilde z),\eta_{\tilde y,\tilde z})\,\gamma_{\pmb\Lambda(\tilde y)}\,.
\end{equation*}
One the other hand, we have
\begin{align*}
&d_z\bigg({\rm Ad}_{F_{{\pmb \Lambda}(\tilde x)}(p(\tilde x),p(\tilde y),\eta_{\tilde x,\tilde y})}\big(F_{\pmb\Lambda(\tilde x)}(p(\tilde x),p(\tilde z),\eta_{\tilde x,\tilde z})\big)\bigg)\\
&=\,{\rm Ad}_{F_{{\pmb \Lambda}(\tilde x)}(p(\tilde x),p(\tilde y),\eta_{\tilde x,\tilde y})}\big(d_zF_{\pmb\Lambda(\tilde x)}(p(\tilde x),p(\tilde z),\eta_{\tilde x,\tilde z})\big)\\
&=\,{\rm Ad}_{F_{{\pmb \Lambda}(\tilde x)}(p(\tilde x),p(\tilde y),\eta_{\tilde x,\tilde y})}\big(F_{\pmb\Lambda(\tilde x)}(p(\tilde x),p(\tilde z),\eta_{\tilde x,\tilde z})\,\gamma_{\pmb\Lambda(\tilde x)}\big)\\
&=\,{\rm Ad}_{F_{{\pmb \Lambda}(\tilde x)}(p(\tilde x),p(\tilde y),\eta_{\tilde x,\tilde y})}\big(F_{\pmb\Lambda(\tilde x)}(p(\tilde x),p(\tilde z),\eta_{\tilde x,\tilde z})\big)\,{\rm Ad}_{F_{{\pmb \Lambda}(\tilde x)}(p(\tilde x),p(\tilde y),\eta_{\tilde x,\tilde y})}\big(\gamma_{\pmb\Lambda(\tilde x)}\big)\\
&=\,{\rm Ad}_{F_{{\pmb \Lambda}(\tilde x)}(p(\tilde x),p(\tilde y),\eta_{\tilde x,\tilde y})}\big(F_{\pmb\Lambda(\tilde x)}(p(\tilde x),p(\tilde z),\eta_{\tilde x,\tilde z})\big)\,\gamma_{\pmb\Lambda(\tilde y)}\,,
\end{align*}
where the second equality follows by the definition of~$F_{\Lambda}$, and the fourth by Corollary~\ref{cor:211013}.

Therefore both $F_{\pmb\Lambda(\tilde y)}(p(\tilde y),p(\tilde z),\eta_{\tilde y,\tilde z})$ and ${\rm Ad}_{F_{{\pmb \Lambda}(\tilde x)}(p(\tilde x),p(\tilde y),\eta_{\tilde x,\tilde y})}\big(F_{\pmb\Lambda(\tilde x)}(p(\tilde x),p(\tilde z),\eta_{\tilde x,\tilde z})\big)$ satisfy the differential equation $d_zF=F\cdot\gamma_{\pmb\Lambda(\tilde y)}$, and therefore there exists an element $C\in \exp(\hat{\mathfrak G})$ which does not depend on $\tilde z$ such that
\begin{equation}\label{eq:211013n3}
F_{\pmb\Lambda(\tilde y)}(p(\tilde y),p(\tilde z),\eta_{\tilde y,\tilde z})\,=\,C\,{\rm Ad}_{F_{{\pmb \Lambda}(\tilde x)}(p(\tilde x),p(\tilde y),\eta_{\tilde x,\tilde y})}\big(F_{\pmb\Lambda(\tilde x)}(p(\tilde x),p(\tilde z),\eta_{\tilde x,\tilde z})\big)\,.
\end{equation}
Specialising this identity at $\tilde z=\tilde y$ implies that $C=F_{{\pmb \Lambda}(\tilde x)}(p(\tilde x),p(\tilde y),\eta_{\tilde x,\tilde y})^{-1}$, and substituting this value into~\eqref{eq:211013n3} proves the identity~\eqref{eq:211013n2}.
\end{proof}
In particular, this shows that $\tilde g_{\tilde x,\pmb \beta}\tilde g_{\tilde y,\pmb \beta}^{-1}$ is  constant. If now we want to vary both $\tilde x$ and $\pmb \beta$, we obtain the following:
\begin{prop}\label{prop:211013n2}
For any $\tilde x,\tilde y\in \tilde C\setminus p^{-1}(\infty)$, any two families ${\pmb \beta}, {\pmb \beta}'$ as in \S\ref{def:omega}, the function $\tilde g_{\tilde x,\pmb \beta}\tilde g_{\tilde y,\pmb \beta'}^{-1}$ on $\tilde C\setminus p^{-1}(\infty)$ is the pull-back of a function defined on $C\setminus \infty$.
\end{prop}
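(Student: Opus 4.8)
The plan is to establish that the map $\tilde g_{\tilde x,\pmb \beta}\tilde g_{\tilde y,\pmb \beta'}^{-1}:\tilde C\setminus p^{-1}(\infty)\to\mathrm{exp}(\mathrm{Lie}(b_1,\ldots,b_h)^\wedge)$ is invariant under every deck transformation $\theta\in\mathrm{Aut}(\tilde C/C)$. Indeed, the restriction of $p$ to a map $\tilde C\setminus p^{-1}(\infty)\to C\setminus\infty$ is a covering whose group of deck transformations is $\mathrm{Aut}(\tilde C/C)$ (each $\theta$ permutes the fibre $p^{-1}(\infty)$ and hence preserves $\tilde C\setminus p^{-1}(\infty)$), and for such a normal covering a map on the total space is the pull-back of a map on $C\setminus\infty$ if and only if it is $\mathrm{Aut}(\tilde C/C)$-invariant. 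So it suffices to verify this invariance.

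First I would invoke Proposition~\ref{prop:tildeg} for each of the two factors. Applied to $(\tilde x,\pmb\beta)$ it gives $\theta^*\tilde g_{\tilde x,\pmb \beta}=\tilde g_{\tilde x,\pmb \beta}\cdot(\rho_0\circ\mathrm{can}_e(\theta))^{-1}$, and applied to $(\tilde y,\pmb\beta')$ it gives $\theta^*\tilde g_{\tilde y,\pmb \beta'}=\tilde g_{\tilde y,\pmb \beta'}\cdot(\rho_0\circ\mathrm{can}_e(\theta))^{-1}$, for the same $\theta$. The crucial point I would make explicit is that the right-multiplication factor $\rho_0\circ\mathrm{can}_e(\theta)$ is \emph{identical} in the two identities: the morphism $\rho_0$ is fixed once and for all in \S\ref{ssec:26}, and the isomorphism $\mathrm{can}_e:\mathrm{Aut}(\tilde C/C)\to\langle A_i,B_i\rangle/(\prod_i(A_i,B_i))$ depends only on the topological datum $e\in E$ by Lemma~\ref{lemma:28122020b}. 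In particular this factor depends neither on the base point (whether $\tilde x$ or $\tilde y$) nor on the choice of forms (whether $\pmb\beta$ or $\pmb\beta'$).

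With this observation the computation is immediate, and reads
\begin{align*}
\theta^*\big(\tilde g_{\tilde x,\pmb \beta}\tilde g_{\tilde y,\pmb \beta'}^{-1}\big)
&=(\theta^*\tilde g_{\tilde x,\pmb \beta})\,(\theta^*\tilde g_{\tilde y,\pmb \beta'})^{-1}\\
&=\tilde g_{\tilde x,\pmb \beta}\cdot(\rho_0\circ\mathrm{can}_e(\theta))^{-1}\cdot(\rho_0\circ\mathrm{can}_e(\theta))\cdot\tilde g_{\tilde y,\pmb \beta'}^{-1}
=\tilde g_{\tilde x,\pmb \beta}\tilde g_{\tilde y,\pmb \beta'}^{-1},
\end{align*}
the two twists cancelling by associativity since they sit, equal, on the right of the respective factors. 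Thus $\tilde g_{\tilde x,\pmb \beta}\tilde g_{\tilde y,\pmb \beta'}^{-1}$ is $\mathrm{Aut}(\tilde C/C)$-invariant and therefore descends to $C\setminus\infty$, as claimed.

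The substantive content of the statement is thus entirely carried by Proposition~\ref{prop:tildeg}; the only point here demanding genuine care, and the one I would argue most explicitly, is the independence of $\rho_0\circ\mathrm{can}_e(\theta)$ from $\tilde x$ and from $\pmb\beta$. Were this twist to depend on the chosen forms, the factors would fail to cancel and the quotient would descend only up to a nontrivial cocycle. I would also remark that Proposition~\ref{prop:211013} already covers the special case $\pmb\beta=\pmb\beta'$, where $\tilde g_{\tilde x,\pmb \beta}\tilde g_{\tilde y,\pmb \beta}^{-1}$ is in fact constant; the present statement is the natural common generalisation in which the forms are allowed to vary, and the descent conclusion is then the best one can expect since the quotient is no longer constant in general.
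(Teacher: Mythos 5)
Your proof is correct and follows essentially the same route as the paper: both apply Proposition~\ref{prop:tildeg} to each factor, observe that the twist $\rho_0\circ\mathrm{can}_e(\theta)$ is the same for all choices of $\tilde x$ and $\pmb\beta$, and cancel it to get $\mathrm{Aut}(\tilde C/C)$-invariance, hence descent to $C\setminus\infty$. Your added remarks (the covering-space justification of descent and the explicit independence of the twist from the data) merely spell out points the paper leaves implicit.
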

\begin{proof}
By Proposition~\ref{prop:tildeg}, for any $\theta\in {\rm Aut}(\tilde C/C)$ one has $\theta^*\tilde g_{\tilde x,\pmb \beta}=\tilde g_{\tilde x,\pmb \beta}\,(\rho_0\circ {\rm can}_e(\theta))^{-1}$ for any $\tilde x$ and any $\pmb \beta$. This implies that 
$$
\theta^*\big(\tilde g_{\tilde x,\pmb \beta}\,\tilde g_{\tilde y,\pmb \beta'}^{-1}\big)\,=\,\tilde g_{\tilde x,\pmb \beta}\,(\rho_0\circ {\rm can}_e(\theta))^{-1}\,(\rho_0\circ {\rm can}_e(\theta))\,g_{\tilde y,\pmb \beta'}^{-1}\,=\,\tilde g_{\tilde x,\pmb \beta}\,\tilde g_{\tilde y,\pmb \beta'}^{-1}\,.
$$
\end{proof}

\subsection{Trivialising the restriction of $\mathcal{P}$ to $(C\setminus\infty)^n$}\label{ssec:trivialisation}

\subsubsection{The Lie algebra $\mathfrak t_{h,n}$}

Let~$n$ be an integer~$\geq1$ and~$\mathfrak t_{h,n}$ be the Lie algebra generated by elements
$a_i^{(r)},b_i^{(r)}$, with $i\in[\![1,h]\!]$, $r\in[\![1,n]\!]$, and by elements~$t_{rs}$, with $r,s\in[\![1,n]\!]$, $r\neq s$, subject to the following relations:
\begin{itemize}
\item[(i)] For any $i\in[\![1,h]\!]$ and for pairwise distinct $r,l,m\in[\![1,n]\!]$, 
\begin{equation*}
[a_i^{(r)},t_{lm}]\,=\,[b_i^{(r)},t_{lm}]\,=\,0\,.
\end{equation*}
\item[(ii)] For any $r\in[\![1,n]\!]$,
\begin{equation}\label{reltgnii}
\sum_{i\in[\![1,h]\!]}\,[b^{(r)}_i,a^{(r)}_i]\,+\,\sum_{s\in[\![1,n]\!]\setminus\{r\}}t_{rs}\,=0.
\end{equation}
\item[(iii)] For any $i,j\in[\![1,h]\!]$ and any $r, s\in[\![1,n]\!]$, $r\neq s$,
\begin{align*}
&[b^{(r)}_i,a^{(s)}_j]\,=\,  -[a^{(r)}_i,b^{(s)}_j] \,=\,   \delta_{ij}\,t_{rs}\,,\\
%&[a^{(r)}_i,b^{(s)}_j]\,=\,-\delta_{ij}\,t_{rs}\,,\\
&[a_i^{(r)},a_j^{(s)}]\,=\,[b_i^{(r)},b_j^{(s)}]\,=\,0\,.
\end{align*}
\end{itemize}

These relations imply $t_{rs}=t_{sr}$ for any $r\neq s$ (see \cite{EnrConfSp}, Lemma~18~(1)). 
The Lie algebra~$\mathfrak t_{h,n}$ is equipped with a $\mathbb Z_{\geq0}$-grading by 
$\mathrm{deg}(a_i^{(r)})=\mathrm{deg}(b_i^{(r)})=1$, $\mathrm{deg}(t_{rs})=2$; for $d\geq0$, 
we denote by $\mathfrak t_{h,n}[d]$ its degree-$d$ component. We define $\hat{\mathfrak t}_{h,n}$ 
to be the degree completion of $\mathfrak t_{h,n}$. The degree completion $U(\mathfrak t_{h,n})^\wedge$ 
of the enveloping algebra of $\mathfrak t_{h,n}$ is a topological Hopf algebra; the exponential map 
$\mathrm{exp}:U(\mathfrak t_{h,n})^\wedge_+\to U(\mathfrak t_{h,n})^\wedge$ (where the source is the 
product of positive degree components of $U(\mathfrak t_{h,n})^\wedge$) sets up a bijection between 
$\hat{\mathfrak t}_{h,n}$ and its group $\mathcal G(U(\mathfrak t_{h,n})^\wedge)$
of group-like elements, which will henceforth be denoted $\mathrm{exp}(\hat{\mathfrak t}_{h,n})$.

\subsubsection{Construction of $\mathcal{P}$}\label{sssec:consP}

\begin{lem}\label{lemconf}
For any $r\in[\![1,n]\!]$, there is a unique morphism of graded Lie algebras $(r):\mathfrak{G}\to\mathfrak t_{h,n}$, 
denoted $X\mapsto X^{(r)}$, such that $a_i\mapsto a_i^{(r)}$ and $b_i\mapsto b_i^{(r)}$ for any $i\in[\![1,h]\!]$. It induces a morphism of topological Hopf algebras $(r):U(\mathfrak{G})^\wedge\to U(\mathfrak t_{h,n})^\wedge$ and a morphism of groups $(r):\mathrm{exp}(\hat{\mathfrak{G}})\to\mathrm{exp}(\hat{\mathfrak t}_{h,n})$. 
\end{lem}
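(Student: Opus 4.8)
The plan is to construct the Lie algebra morphism $(r):\mathfrak{G}\to\mathfrak{t}_{h,n}$ using the universal property of the free Lie algebra, then promote it successively to the enveloping algebra, the completion, and finally to the group of group-like elements. Recall that $\mathfrak{G}=\mathrm{Lie}(a_1,\ldots,a_h,b_1,\ldots,b_h)$ is free on its $2h$ generators. First I would invoke the universal property: since $\mathfrak{G}$ is free, any assignment of the generators $a_i,b_i$ to elements of a target Lie algebra extends uniquely to a Lie algebra morphism. Applying this to the target $\mathfrak{t}_{h,n}$ with $a_i\mapsto a_i^{(r)}$ and $b_i\mapsto b_i^{(r)}$ yields the unique morphism $X\mapsto X^{(r)}$. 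I would then check that this morphism respects the gradings: since $\mathrm{deg}(a_i)=\mathrm{deg}(b_i)=1$ in $\mathfrak{G}$ and $\mathrm{deg}(a_i^{(r)})=\mathrm{deg}(b_i^{(r)})=1$ in $\mathfrak{t}_{h,n}$, the generators are sent to elements of the same degree, and since brackets add degrees in both Lie algebras, the morphism is graded. Note that no relations need to be verified on the source side precisely because $\mathfrak{G}$ is free.

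Next I would extend $(r)$ to the enveloping algebras. By the universal property of the universal enveloping algebra functor, the Lie morphism $\mathfrak{G}\to\mathfrak{t}_{h,n}$ induces a morphism of associative algebras $U(\mathfrak{G})\to U(\mathfrak{t}_{h,n})$; since the generators $c_i$ are primitive in both Hopf algebras and a morphism of enveloping algebras sending primitives to primitives automatically commutes with the coproducts, counits, and antipodes, this is a morphism of Hopf algebras. Because the morphism is graded and raises no degrees below those present, it is compatible with the filtrations given by degree, hence passes to the degree completions, yielding $(r):U(\mathfrak{G})^\wedge\to U(\mathfrak{t}_{h,n})^\wedge$ as a morphism of topological Hopf algebras.

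Finally I would restrict to group-like elements. A morphism of topological Hopf algebras carries group-like elements to group-like elements, since the group-like condition $\Delta(x)=x\otimes x$, $\varepsilon(x)=1$ is preserved under any coalgebra morphism. Therefore $(r)$ restricts to a map of groups $\mathcal{G}(U(\mathfrak{G})^\wedge)\to\mathcal{G}(U(\mathfrak{t}_{h,n})^\wedge)$, that is, a morphism $\mathrm{exp}(\hat{\mathfrak{G}})\to\mathrm{exp}(\hat{\mathfrak{t}}_{h,n})$; this map is a group homomorphism because the set of group-like elements forms a group under the (associative) product and the Hopf morphism is multiplicative. Using the bijection $\exp:\hat{\mathfrak{G}}\tilde\to\mathrm{exp}(\hat{\mathfrak{G}})$ (and its analogue for $\mathfrak{t}_{h,n}$) established earlier, one checks that this group morphism is the exponentiation of the Lie morphism $(r):\hat{\mathfrak{G}}\to\hat{\mathfrak{t}}_{h,n}$.

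I do not expect any serious obstacle here: the entire statement is a routine application of the chain of universal properties (free Lie algebra, enveloping algebra, completion, group-like elements). The only point requiring mild care is the verification that the graded morphism is continuous for the degree-completion topologies, so that it genuinely extends to the completions; but this is immediate since the morphism strictly preserves degree and hence maps each filtration piece into the corresponding one.
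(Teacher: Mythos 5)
Your proposal is correct and follows essentially the same route as the paper, whose entire proof is the one-line observation that $\mathfrak{G}$ is freely generated by the elements $a_i,b_i$. The additional steps you spell out --- gradedness, the extension to $U(\mathfrak{G})^\wedge$ via the universal property of enveloping algebras and degree-preservation, and the restriction to group-like elements --- are exactly the routine verifications the paper leaves implicit, and they are all carried out correctly.
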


\begin{proof}
This follows from the fact that~$\mathfrak G$ is freely generated by the elements $a_i,b_i$.
\end{proof} 

Let $\mu:\mathrm{Aut}(\tilde C/C)\to \mathrm{exp}(\hat{\mathfrak G})$ be a group morphism, and let 
$\mathrm{exp}(\hat{\mathfrak t}_{h,n})_\mu$ be the principal $\mathrm{exp}(\hat{\mathfrak t}_{h,n})$-bundle over~$C^n$ attached to~$\mu$, obtained as follows. For any $\theta\in\mathrm{Aut}(\tilde C/C)$, we set~$\theta^{(r)}$ to be the automorphism of~$\tilde C^n$ acting as~$\theta$ on the $r$-th component and as the identity on all the other ones.  For $U\subset C^n$ an open subset, the set of sections $\Gamma(U,\mathrm{exp}(\hat{\mathfrak t}_{h,n})_\mu)$ is defined to be the set of holomorphic maps 
$g: (p^n)^{-1}(U)\to \mathrm{exp}(\hat{\mathfrak t}_{h,n})$ 
such that\footnote{Here one should not confuse the superscript notation $\bullet^{(r)}$ for the automorphism $\theta$ with that, introduced in Lemma~\ref{lemconf}, for $\mu(\theta)\in\mathrm{exp}(\hat{\mathfrak G})$.} $(\theta^{(r)})^*g=g \cdot (\mu(\theta)^{(r)})^{-1}$ for any $\theta\in\mathrm{Aut}(\tilde C/C)$ and any $r\in[\![1,n]\!]$.
\begin{defn}
The principal $\mathrm{exp}(\hat{\mathfrak t}_{h,n})$-bundle~$\mathcal{P}$ introduced in~\cite{EnrConfSp} is defined by setting $\mathcal{P}:=\mathrm{exp}(\hat{\mathfrak t}_{h,n})_{\rho_0\circ {\rm can}_e}$.
\end{defn}

\subsubsection{Trivialisation of $\mathcal{P}$}\label{sssec:trivP}

From now on, for any function or 1-form~$\phi$ taking values in $U(\mathrm{Lie}(b_1,\ldots,b_h))^\wedge$, and for any $r\in[\![1,n]\!]$, we will denote by~$\phi^{(r)}$ the composition $(r)\circ \phi$, which takes value in $U(\mathfrak t_{h,n})^\wedge$. Moreover, if~$\phi$ is defined on a Riemann surface $\Sigma$ and if ${\rm pr}_r:\Sigma^n\to \Sigma$ is the projection on the $r$-th component, then we set 
\begin{equation}\label{defgraffe}
\phi^{\{r\}}\,:=\,{\rm pr}_r^*\,\phi^{(r)}\,.
\end{equation}
Notice that, if~$\phi$ takes values in $\exp(\mathrm{Lie}(b_1,\ldots,b_h)^\wedge)$, then~$\phi^{(r)}$ and~$\phi^{\{r\}}$ take values in $\mathrm{exp}(\hat{\mathfrak t}_{h,n})$. In particular, since $\tilde{g}_{\tilde x,\pmb\beta}$ takes values in $\exp(\mathrm{Lie}(b_1,\ldots,b_h)^\wedge)$, one can consider the $\mathrm{exp}(\hat{\mathfrak t}_{h,n})$-valued function $\tilde{g}^{\{r\}}_{\tilde x,\pmb\beta}$.

\begin{defn}
We define $\tilde{\pmb g}_{\tilde x,\pmb\beta} : (\tilde C\setminus p^{-1}(\infty))^n\to\mathrm{exp}(\hat{\mathfrak t}_{h,n})$
by setting
\begin{equation*}
\tilde{\pmb g}_{\tilde x,\pmb\beta}\,:=\,
\tilde{g}^{\{1\}}_{\tilde x,\pmb\beta}\cdots \tilde{g}^{\{n\}}_{\tilde x,\pmb\beta}. 
\end{equation*}
\end{defn}

\begin{lem}\label{lemme:transporté}
For any $\theta\in {\rm Aut}(\tilde C/C)$ and any $r\in[\![1,n]\!]$, one has 
\begin{equation*}
(\theta^{(r)})^*\,\tilde{\pmb g}_{\tilde x,\pmb\beta}
\,=\,\tilde{\pmb g}_{\tilde x,\pmb\beta}\cdot (\rho_0\circ {\rm can}_e(\theta)^{(r)})^{-1}\,. 
\end{equation*}
\end{lem}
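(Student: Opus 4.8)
The plan is to expand $\tilde{\pmb g}_{\tilde x,\pmb\beta}=\tilde g^{\{1\}}_{\tilde x,\pmb\beta}\cdots\tilde g^{\{n\}}_{\tilde x,\pmb\beta}$, to pull back each factor separately under $(\theta^{(r)})^*$, and then to recombine. Since pullback is compatible with the pointwise group multiplication of $\exp(\hat{\mathfrak t}_{h,n})$-valued functions, it suffices to compute $(\theta^{(r)})^*\tilde g^{\{s\}}_{\tilde x,\pmb\beta}$ for each $s$. Recalling from~\eqref{defgraffe} that $\tilde g^{\{s\}}_{\tilde x,\pmb\beta}=\mathrm{pr}_s^*\,\tilde g^{(s)}_{\tilde x,\pmb\beta}$, the starting point is the identity of maps $\mathrm{pr}_s\circ\theta^{(r)}=\mathrm{pr}_s$ for $s\neq r$ and $\mathrm{pr}_r\circ\theta^{(r)}=\theta\circ\mathrm{pr}_r$, which follows immediately from the definition of $\theta^{(r)}$ as acting by $\theta$ on the $r$-th coordinate and trivially on the others. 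For $s\neq r$ this gives $(\theta^{(r)})^*\tilde g^{\{s\}}_{\tilde x,\pmb\beta}=\tilde g^{\{s\}}_{\tilde x,\pmb\beta}$, so these factors are untouched.

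For the index $s=r$ I would write $(\theta^{(r)})^*\tilde g^{\{r\}}_{\tilde x,\pmb\beta}=\mathrm{pr}_r^*\big(\theta^*\tilde g^{(r)}_{\tilde x,\pmb\beta}\big)$. Since the morphism $(r)$ of Lemma~\ref{lemconf} is applied pointwise, it commutes with the pullback $\theta^*$, and being a group homomorphism it turns the product in Proposition~\ref{prop:tildeg} into a product, so that
$$
\theta^*\tilde g^{(r)}_{\tilde x,\pmb\beta}=(r)\big(\theta^*\tilde g_{\tilde x,\pmb\beta}\big)=(r)\big(\tilde g_{\tilde x,\pmb\beta}\cdot(\rho_0\circ\mathrm{can}_e(\theta))^{-1}\big)=\tilde g^{(r)}_{\tilde x,\pmb\beta}\cdot\big((\rho_0\circ\mathrm{can}_e(\theta))^{(r)}\big)^{-1}.
$$
As the second factor is a constant element of $\exp(\hat{\mathfrak t}_{h,n})$, applying $\mathrm{pr}_r^*$ leaves it unchanged and returns $(\theta^{(r)})^*\tilde g^{\{r\}}_{\tilde x,\pmb\beta}=\tilde g^{\{r\}}_{\tilde x,\pmb\beta}\cdot\big((\rho_0\circ\mathrm{can}_e(\theta))^{(r)}\big)^{-1}$. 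Reassembling the factors then produces the right-hand side of the claim except that this constant correction term sits in the $r$-th position rather than at the far right.

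The final, and main, step is therefore to commute $\big((\rho_0\circ\mathrm{can}_e(\theta))^{(r)}\big)^{-1}$ past all factors $\tilde g^{\{s\}}_{\tilde x,\pmb\beta}$ with $s>r$. Here I would use that both $\rho_0\circ\mathrm{can}_e(\theta)$ and $\tilde g_{\tilde x,\pmb\beta}$ take values in $\exp(\mathrm{Lie}(b_1,\ldots,b_h)^\wedge)$ (by the construction of $\rho_0$ and by Lemma~\ref{lem:210731n1}), so that the correction term lies in the image under $(r)$ of the subalgebra generated by $b_1^{(r)},\ldots,b_h^{(r)}$, while each $\tilde g^{\{s\}}_{\tilde x,\pmb\beta}$ lies in the image under $(s)$ of the subalgebra generated by $b_1^{(s)},\ldots,b_h^{(s)}$. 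The relation~(iii), namely $[b_i^{(r)},b_j^{(s)}]=0$ for $r\neq s$, shows that these two Lie subalgebras of $\hat{\mathfrak t}_{h,n}$ commute elementwise, hence so do the corresponding subgroups of $\exp(\hat{\mathfrak t}_{h,n})$. This commutation lets me slide the correction term to the right past every factor of index $s\neq r$, yielding $\tilde{\pmb g}_{\tilde x,\pmb\beta}\cdot\big((\rho_0\circ\mathrm{can}_e(\theta))^{(r)}\big)^{-1}$, which is the assertion. The only genuine subtlety, and the place I expect to need the most care, is precisely this last commutation, since it is the single point at which the defining relations of $\mathfrak t_{h,n}$ are used; everything preceding it is formal manipulation of pullbacks together with the homomorphism property of $(r)$.
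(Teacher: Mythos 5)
Your proof is correct and follows essentially the same route as the paper's: factor-by-factor pullback of $\tilde{\pmb g}_{\tilde x,\pmb\beta}$, Proposition~\ref{prop:tildeg} applied through the morphism $(r)$ to the $r$-th factor, and the relation $[b_i^{(r)},b_j^{(s)}]=0$ for $r\neq s$ to slide the constant correction term $(\rho_0\circ\mathrm{can}_e(\theta)^{(r)})^{-1}$ to the far right. You have correctly identified the commutation step as the only place where the defining relations of $\mathfrak t_{h,n}$ enter, which is exactly what the paper's proof invokes at its fourth equality.
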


\begin{proof}
One has 
\begin{align*}
(\theta^{(r)})^*\,\tilde{\pmb g}_{\tilde x,\pmb\beta}
&\,=\,\tilde{g}^{\{1\}}_{\tilde x,\pmb\beta}
\cdots
{\rm pr}_r^*\,((r)\,\circ\,\theta^*\,\tilde{g}_{\tilde x,\pmb\beta})
\cdots \tilde{g}^{\{n\}}_{\tilde x,\pmb\beta}
\\ & 
\,=\,\tilde{g}^{\{1\}}_{\tilde x,\pmb\beta}
\cdots
{\rm pr}_r^*\,((r)\,\circ \,\tilde{g}_{\tilde x,\pmb\beta}\cdot (\rho_0\circ {\rm can}_e(\theta))^{-1})
\cdots \tilde{g}^{\{n\}}_{\tilde x,\pmb\beta}
\\ & \,=\,\tilde{g}^{\{1\}}_{\tilde x,\pmb\beta}
\cdots \tilde{g}^{\{r\}}_{\tilde x,\pmb\beta}\cdot (\rho_0\circ {\rm can}_e(\theta)^{(r)})^{-1}
\cdots \tilde{g}^{\{n\}}_{\tilde x,\pmb\beta}
\\ & 
\,=\,\tilde{g}^{\{1\}}_{\tilde x,\pmb\beta}
\cdots \tilde{g}^{\{r\}}_{\tilde x,\pmb\beta}
\cdots \tilde{g}^{\{n\}}_{\tilde x,\pmb\beta}\cdot(\rho_0\circ {\rm can}_e(\theta)^{(r)})^{-1}
\,=\,\tilde{\pmb g}_{\tilde x,\pmb\beta}
\cdot (\rho_0\circ {\rm can}_e(\theta)^{(r)})^{-1}
\end{align*}
where the second equality follows from Proposition~\ref{prop:tildeg}, and the fourth equality follows from the fact that $\rho_0$ takes values in $\exp(\mathrm{Lie}(b_1,\ldots,b_h)^\wedge)$ and that~$b_i^{(r)}$ commutes with the image of $\exp(\mathrm{Lie}(b_1,\ldots,b_h)^\wedge)$ by~$(s)$ for any $s\neq r$.
\end{proof}

\begin{thm}\label{thm:2:28}
The restriction of the principal $\mathrm{exp}(\hat{\mathfrak t}_{h,n})$-bundle~$\mathcal{P}$ to $(C\setminus\infty)^n$ is trivial. 
\end{thm}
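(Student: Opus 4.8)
The plan is to exhibit an explicit trivialising section of~$\mathcal{P}$ over $(C\setminus\infty)^n$, using the function~$\tilde{\pmb g}_{\tilde x,\pmb\beta}$ built in the previous subsection. Recall that by definition a section of $\mathcal{P}=\mathrm{exp}(\hat{\mathfrak t}_{h,n})_{\rho_0\circ {\rm can}_e}$ over an open set $U\subset C^n$ is a holomorphic map $g:(p^n)^{-1}(U)\to \mathrm{exp}(\hat{\mathfrak t}_{h,n})$ satisfying the equivariance condition $(\theta^{(r)})^*g=g\cdot(\rho_0\circ {\rm can}_e(\theta)^{(r)})^{-1}$ for every $\theta\in\mathrm{Aut}(\tilde C/C)$ and every $r\in[\![1,n]\!]$. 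To trivialise~$\mathcal{P}$ over $(C\setminus\infty)^n$ it suffices to produce a \emph{nowhere-vanishing} (here, everywhere-defined and group-valued, hence automatically invertible) global section over this open set, since a principal bundle admitting a global section is trivial.

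First I would observe that $\tilde{\pmb g}_{\tilde x,\pmb\beta}$ is exactly such a map: it is defined and holomorphic on $(\tilde C\setminus p^{-1}(\infty))^n=(p^n)^{-1}((C\setminus\infty)^n)$ and takes values in $\mathrm{exp}(\hat{\mathfrak t}_{h,n})$, because each factor $\tilde g^{\{r\}}_{\tilde x,\pmb\beta}$ does. The equivariance condition required of a section is precisely the content of Lemma~\ref{lemme:transporté}, which states that $(\theta^{(r)})^*\tilde{\pmb g}_{\tilde x,\pmb\beta}=\tilde{\pmb g}_{\tilde x,\pmb\beta}\cdot(\rho_0\circ {\rm can}_e(\theta)^{(r)})^{-1}$ for all $\theta$ and~$r$. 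Therefore $\tilde{\pmb g}_{\tilde x,\pmb\beta}$ defines an element of $\Gamma((C\setminus\infty)^n,\mathcal{P})$.

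The remaining step is to record that the existence of this global section forces triviality. Since $\tilde{\pmb g}_{\tilde x,\pmb\beta}$ is group-valued, it is invertible at every point, so the assignment sending the constant function~$1$ on $(p^n)^{-1}(U)$ to $\tilde{\pmb g}_{\tilde x,\pmb\beta}$ over $U\subset (C\setminus\infty)^n$ identifies the restriction $\mathcal{P}|_{(C\setminus\infty)^n}$ with the trivial $\mathrm{exp}(\hat{\mathfrak t}_{h,n})$-bundle $\mathcal{P}_{\rm trv}$; concretely, $g\mapsto g\cdot\tilde{\pmb g}_{\tilde x,\pmb\beta}^{-1}$ sends a local section~$g$ to a genuinely $\mathrm{Aut}(\tilde C/C)$-invariant map, i.e.\ to the pull-back of a holomorphic $\mathrm{exp}(\hat{\mathfrak t}_{h,n})$-valued function on $(C\setminus\infty)^n$, which is the defining data of a section of $\mathcal{P}_{\rm trv}$.

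I do not expect any serious obstacle here: all the analytic and equivariance work has already been carried out in the construction of~$\tilde g_{\tilde x,\pmb\beta}$ (Proposition~\ref{prop:tildeg}) and in Lemma~\ref{lemme:transporté}. The only point that deserves care is verifying that the equivariance condition established in Lemma~\ref{lemme:transporté} matches exactly the defining condition of a section of $\mathcal{P}=\mathrm{exp}(\hat{\mathfrak t}_{h,n})_{\rho_0\circ {\rm can}_e}$ given in~\S\ref{sssec:consP}, including the correct placement of the inverse and the twist by~$(r)$; this is a direct comparison of the two formulas, so the proof is essentially immediate once Lemma~\ref{lemme:transporté} is in hand.
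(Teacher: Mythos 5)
Your proposal is correct and follows essentially the same route as the paper: the paper likewise identifies isomorphisms $\mathcal{P}_{\rm trv}\to\mathcal{P}$ over $(C\setminus\infty)^n$ with maps $g:(\tilde C\setminus p^{-1}(\infty))^n\to\mathrm{exp}(\hat{\mathfrak t}_{h,n})$ satisfying $(\theta^{(r)})^*g=g\cdot(\rho_0\circ{\rm can}_e(\theta)^{(r)})^{-1}$, and concludes by noting that $\tilde{\pmb g}_{\tilde x,\pmb\beta}$ belongs to this set by Lemma~\ref{lemme:transporté}. Your extra remarks (group-valuedness giving invertibility, and the explicit identification $g\mapsto g\cdot\tilde{\pmb g}_{\tilde x,\pmb\beta}^{-1}$ with $\mathrm{Aut}(\tilde C/C)$-invariant maps) merely spell out what the paper leaves implicit.
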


\begin{proof}
Let~$\mathcal{P}_{\rm trv}$ be the trivial principal bundle over $(C\setminus\infty)^n$ with group $\mathrm{exp}(\hat{\mathfrak t}_{h,n})$. Then the set of isomorphisms of principal bundles over $(C\setminus\infty)^n$ from $\mathcal{P}_{\rm trv}$ to $\mathcal{P}$ is $\Gamma((\tilde C\setminus p^{-1}(\infty))^n,\mathcal{P})=\{g:(\tilde C\setminus p^{-1}(\infty))^n\to \mathrm{exp}(\hat{\mathfrak t}_{h,n}) \,|\, \forall r \in [\![1,n]\!], \forall \theta \in {\rm Aut}(\tilde C/C),(\theta^{(r)})^*g=g\cdot (\rho_0\circ {\rm can}_e(\theta)^{(r)})^{-1}\}$. By Lemma~\ref{lemme:transporté}, $\tilde{\pmb g}_{\tilde x,\pmb \beta}$ belongs to this set.
\end{proof}

\section{Construction of the Maurer-Cartan elements ${\pmb J}_{\tilde x,\pmb \beta}$}\label{sec3}

In this section we construct a Maurer-Cartan element ${\pmb J}_{\tilde x,\pmb \beta}$ on $C_n(C\setminus\infty)$, making use of the function $\tilde g_{\tilde x,\pmb \beta}$ from the previous section.

In the background sections \S\ref{Schottkyformalism} and \S\ref{sec:backgroundres} we introduce the Schottky cover $\pi:\hat C\to C$, we recall some facts about Poincaré residues and we introduce the fundamental forms of second and third kind on closed Riemann surfaces. In \S\ref{sec:introofformK} we present the Maurer-Cartan element~$\pmb K$ on $C_n(\hat C)$ which gives rise to the connection $\nabla_\mathcal{P}$ on $\mathcal{P}$ from~\cite{EnrConfSp}. We devote \S\ref{sec:finalcorrection} and \S\ref{ssec:connecJ} to the construction of the element~${\pmb J}_{\tilde x,\pmb \beta}$, which is obtained from~$\pmb K$ by gauge conjugation. In \S\ref{sec:notationchange} we discuss the dependence of~${\pmb J}_{\tilde x,\pmb \beta}$ on~$\tilde x$ and~$\pmb \beta$.

\subsection{The Schottky formalism}\label{Schottkyformalism}

Throughout this section, we set $x:=p(\tilde x)\in C$.

\subsubsection{A fundamental domain of $C$ in $\tilde C$}
In \S\ref{sect:RS}, we made the choice of an element $e\in E$, a universal covering $p:\tilde C\to C$ and a point $\infty\in C$. This choice induces an isomorphism $\mathrm{can}_e:\mathrm{Aut}(\tilde C/C)\tilde{\to}\langle A_i,B_i\rangle/(\prod_i(A_i,B_i))$ (see Lemma~\ref{lemma:28122020b}). 
For $i\in[\![1,h]\!]$ define $\mathsf{A}_i,\mathsf{B}_i\in\mathrm{Aut}(\tilde C/C)$ as $\mathrm{can}_e^{-1}(A_i),\mathrm{can}_e^{-1}(B_i)$.

By Definitions~\ref{def:1:7:2302} and~\ref{def:1:5:2403}, the choice of $\tilde x\in \tilde C \setminus p^{-1}(\infty)$ gives rise to an element $e_{\tilde x}\in \mathrm{Iso}_\infty(\pi_1(C\setminus\infty,x),\langle A_i,B_i\rangle)/(\prod_i(A_i,B_i))$, where $\mathrm{Iso}_\infty(\pi_1(C\setminus\infty,x),\langle A_i,B_i\rangle)$ was defined as the set of isomorphisms $\pi_1(C\setminus\infty,x)
\to\langle A_i,B_i\rangle$ which take the conjugacy class attached to~$\infty$ to the conjugacy class of $\prod_i(A_i,B_i)$, and $(\prod_i(A_i,B_i))$ acts on it by conjugation.

Let $\tilde e_{\tilde x}\in\mathrm{Iso}_\infty(\pi_1(C\setminus\infty,x),\langle A_i,B_i\rangle)$ 
be a lift of $e_{\tilde x}$. For $i\in\mathbb Z/g\mathbb Z$, let $\mathcal A_i,\mathcal B_i$ be  loops in 
$C\setminus\infty$, based at~$x$ and intersecting only at 
this point, representing the elements $\tilde e_{\tilde x}^{-1}(A_{\tilde i}),\tilde e_{\tilde x}^{-1}(B_{\tilde i})\in\pi_1(C\setminus\infty,x)$, where~$\tilde i$ is the representative of~$i$ in $[\![1,h]\!]$ . 
Denote by $D$ the complement in $C$ of the union of these loops; it is a simply-connected open subset of~$C$.
See Figure~\ref{fig1} for a picture of~$D$ and the identifications of~$\partial D$ for~$h=2$.

\begin{figure}[h!]
\begin{center}
\tikzpicture[scale=0.9]
\scope[xshift=-5cm,yshift=-0.4cm,decoration={
    markings,
    mark=at position 0.6 with {\arrow{>}}}]
\draw[thick][postaction={decorate}] (0,0) node{$\bullet$} ..controls (1,0) .. (2,0) node{$\bullet$} ;
\draw[thick][postaction={decorate}] (2,0) node{$\bullet$} ..controls (2.705,0.705) .. (3.41,1.41) node{$\bullet$} ;
\draw[thick][postaction={decorate}] (3.41,1.41) node{$\bullet$} ..controls (3.41,2.41) .. (3.41,3.41) node{$\bullet$} ;
\draw[thick][postaction={decorate}] (3.41,3.41) node{$\bullet$} ..controls (2.705,4.115) .. (2,4.82) node{$\bullet$} ;
\draw[thick][postaction={decorate}] (2,4.82) node{$\bullet$} ..controls (1,4.82) .. (0,4.82) node{$\bullet$} ;
\draw[thick][postaction={decorate}] (0,4.82) node{$\bullet$} ..controls (-0.705,4.115) .. (-1.41,3.41) node{$\bullet$} ;
\draw[thick][postaction={decorate}] (-1.41,3.41) node{$\bullet$} ..controls (-1.41,2.41) .. (-1.41,1.41) node{$\bullet$} ;
\draw[thick][postaction={decorate}] (-1.41,1.41) node{$\bullet$} ..controls (-0.705,0.705) .. (0,0) node{$\bullet$} ;
\node at (-0.1,-0.3) {$x$};
\node at (2.1,-0.3) {$x$};
\node at (3.71,1.31) {$x$};
\node at (3.71,3.51) {$x$};
\node at (2.1,5.12) {$x$};
\node at (-0.1,5.12) {$x$};
\node at (-1.71,3.51) {$x$};
\node at (-1.71,1.31) {$x$};
\node at (1.5,2.91) {$\bullet$};
\node at (1.9,3.11) {$\infty$};
\node at (0.6,1.91) {${\pmb D}$};
\node at (1,-0.45) {$\mathcal{A}_1$};
\node at (3.1,0.4) {$\mathcal{B}_1$};
\node at (4,2.41) {$\mathcal{A}^{-1}_1$};
\node at (3.22,4.45) {$\mathcal{B}^{-1}_1$};
\node at (1,5.25) {$\mathcal{A}_2$};
\node at (-1,4.45) {$\mathcal{B}_2$};
\node at (-2,2.41) {$\mathcal{A}^{-1}_2$};
\node at (-1.2,0.4) {$\mathcal{B}^{-1}_2$};
\endscope
\endtikzpicture
\caption{The domain $D\subset C$ as the interior of a polygon with boundary identifications.}\label{fig1}
\end{center}
\end{figure}

Set $\theta_i:=\prod_{j=1}^{i-1}(\mathsf{A}_j,\mathsf{B}_j)\in
\mathrm{Aut}(\tilde C/C)$, and consider in~$\tilde C$ the collection of points $u_i := \theta_i (\tilde x)$, 
$v_i := \theta_i \mathsf{A}_i (\tilde x)$, 
$w_i := \theta_i \mathsf{A}_i \mathsf{B}_i (\tilde x)$, 
$z_i := \theta_i \mathsf{A}_i \mathsf{B}_i \mathsf{A}^{-1}_i (\tilde x)$
for $i\in\mathbb Z/g\mathbb Z$, where we confuse elements of~$\mathbb Z/g\mathbb Z$ with 
their representatives in $[\![1,h]\!]$; in particular, $u_1=\tilde x$. 
These are all elements of~$p^{-1}(x)$.

\begin{lem}\label{lemma:4:1:02032021}
For any $i\in\mathbb Z/g\mathbb Z$, the lift of~$\mathcal A_{i}$ (resp.~$\mathcal B_i$,~$\mathcal A_i^{-1}$,~$\mathcal B_i^{-1}$) 
starting at~$u_i$ (resp. $v_i$, $w_i$, $z_i$) is a path in~$\tilde C$ ending at~$v_i$ (resp. $w_i$, $z_i$, $u_{i+1}$). 
This path will be denoted $\tilde{\mathcal A}_i$ (resp.~$\tilde{\mathcal B}_i$,~$\widetilde{\mathcal A_i^{-1}}$,~$\widetilde{\mathcal B_i^{-1}}$).  
\end{lem}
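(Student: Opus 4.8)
The plan is to deduce all four assertions from the two compatible actions on the fiber $p^{-1}(x)$ recalled in Lemma~\ref{lemma:standard}: the right monodromy action $(\tilde u,\eta)\mapsto\tilde u\cdot\eta$ of $\pi_1(C,x)$, whose value is by definition the endpoint of the lift of $\eta$ starting at $\tilde u$, and the left action of $\mathrm{Aut}(\tilde C/C)$. The crux is to first identify $\mathsf A_i$ and $\mathsf B_i$ with the deck transformations produced by $\mathrm{iso}_{\tilde x}$ out of the loops $\mathcal A_i,\mathcal B_i$ seen in $C$. Writing $\overline{\mathcal A_i},\overline{\mathcal B_i}\in\pi_1(C,x)$ for the images of the classes $\tilde e_{\tilde x}^{-1}(A_i),\tilde e_{\tilde x}^{-1}(B_i)\in\pi_1(C\setminus\infty,x)$, the commutativity of diagram~\eqref{dcetdet:02032021} yields $e_{\tilde x}^{\rm fill}(\overline{\mathcal A_i})=A_i$ and $e_{\tilde x}^{\rm fill}(\overline{\mathcal B_i})=B_i$ in $\langle A_j,B_j\rangle/(\prod_j(A_j,B_j))$; combined with the formula $\mathrm{can}_e=e_{\tilde x}^{\rm fill}\circ\mathrm{iso}_{\tilde x}^{-1}$ from~\eqref{triangle:02032021}, this gives the key identities $\mathsf A_i=\mathrm{aut}_{\overline{\mathcal A_i}}^{\tilde x}$ and $\mathsf B_i=\mathrm{aut}_{\overline{\mathcal B_i}}^{\tilde x}$.

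Next I would use that the two actions commute: by~\eqref{TaggedIdentity}, $\theta(\tilde u\cdot\eta)=\theta(\tilde u)\cdot\eta$ for every $\theta\in\mathrm{Aut}(\tilde C/C)$, and by Lemma~\ref{lemma:standard}(b), $\tilde x\cdot\eta=\mathrm{aut}_\eta^{\tilde x}(\tilde x)$. Applying this with $\tilde u=\theta_i(\tilde x)$ one obtains $\theta_i(\tilde x)\cdot\eta=\theta_i(\mathrm{aut}_\eta^{\tilde x}(\tilde x))$, which turns each endpoint computation into a product of deck transformations applied to $\tilde x$. The four lifts are then read off in turn: the lift of $\mathcal A_i$ from $u_i=\theta_i(\tilde x)$ ends at $\theta_i\mathsf A_i(\tilde x)=v_i$; the lift of $\mathcal B_i$ from $v_i=\theta_i\mathsf A_i(\tilde x)$ ends at $\theta_i\mathsf A_i\mathsf B_i(\tilde x)=w_i$; using the homomorphism property $\mathrm{iso}_{\tilde x}(\overline{\mathcal A_i}^{-1})=\mathsf A_i^{-1}$, the lift of $\mathcal A_i^{-1}$ from $w_i$ ends at $\theta_i\mathsf A_i\mathsf B_i\mathsf A_i^{-1}(\tilde x)=z_i$; finally the lift of $\mathcal B_i^{-1}$ from $z_i$ ends at $\theta_i\mathsf A_i\mathsf B_i\mathsf A_i^{-1}\mathsf B_i^{-1}(\tilde x)=\theta_i(\mathsf A_i,\mathsf B_i)(\tilde x)=\theta_{i+1}(\tilde x)=u_{i+1}$.

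To confirm the consistency of the cyclic indexing by $\mathbb Z/h\mathbb Z$, I would note that since $\mathrm{can}_e^{-1}$ is a group isomorphism and $\prod_j(A_j,B_j)=1$ in its source, $\theta_{h+1}=\prod_{j=1}^h(\mathsf A_j,\mathsf B_j)=\mathrm{can}_e^{-1}(\prod_j(A_j,B_j))=\mathrm{id}=\theta_1$, whence $u_{h+1}=u_1=\tilde x$, as required.

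The step that demands genuine care is the first one, the identification $\mathsf A_i=\mathrm{aut}_{\overline{\mathcal A_i}}^{\tilde x}$, because it is where the abstract definition of $\mathsf A_i$ via $\mathrm{can}_e$ must be reconciled with the concrete monodromy of the chosen boundary loops, passing through the hole-filling construction of Lemma~\ref{lemma110}. Everything afterwards is a mechanical chaining of lifts along $\partial D$ once the commutation~\eqref{TaggedIdentity} of the two actions is in hand, so I do not anticipate any further obstacle.
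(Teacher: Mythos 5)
Your proof is correct and takes essentially the same route as the paper's: both reduce the endpoint computation to the monodromy action, identifying $\mathsf A_i=\mathrm{can}_e^{-1}(A_i)$ with $\mathrm{aut}^{\tilde x}_{\overline{\mathcal A_i}}$ via the commutativity of~\eqref{dcetdet:02032021} and the formula~\eqref{triangle:02032021}, and then chaining lifts using Lemma~\ref{lemma:standard}~(a),(b) together with the commutation~\eqref{TaggedIdentity}. The only difference is one of completeness: the paper writes out only the $\mathcal A_i$/$u_i$ case, while you treat all four boundary arcs explicitly and add the worthwhile wrap-around check $\theta_{h+1}=\mathrm{can}_e^{-1}\big(\prod_j(A_j,B_j)\big)=\mathrm{id}$ justifying the $\mathbb Z/h\mathbb Z$ indexing.
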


\begin{proof} We prove this statement only in the case of~$u_i$. Then $u_i=\theta_i(\tilde x)$. The class of~$\mathcal A_i$ in $\pi_1(C,x)$ is the image 
under the morphism $\pi_1(C\setminus\infty,x)\to\pi_1(C,x)$ of $\tilde e_{\tilde x}^{-1}(A_i)$ which by~\eqref{dcetdet:02032021} is equal to $(e^{\mathrm{fill}}_{\tilde x})^{-1}(A_i)$. The endpoint of the lift to~$\tilde C$ 
of~$\mathcal A_i$ starting at~$u_i$ is then, by Lemma~\ref{lemma:standard} (a), $u_i\cdot (e^{\mathrm{fill}}_{\tilde x})^{-1}(A_i)$. 
Since $u_i=\theta_i(\tilde x)$, eq.~\eqref{TaggedIdentity} implies that this point is equal to 
$\theta_i(\tilde x\cdot (e^{\mathrm{fill}}_{\tilde x})^{-1}(A_i))$, which by Lemma~\ref{lemma:standard} (b) is equal to 
$\theta_i(\mathrm{aut}^{\tilde x}_{(e^{\mathrm{fill}}_{\tilde x})^{-1}(A_i)}(\tilde x))$. By the definition of ${\rm can}_e$ (see eq.~\eqref{triangle:02032021}),
$\mathrm{aut}^{\tilde x}_{(e^{\mathrm{fill}}_{\tilde x})^{-1}(A_i)}=\mathrm{can}_e^{-1}(A_i)$, which is equal to~$\mathsf{A}_i$.  
Therefore the endpoint is equal to $\theta_i\circ\mathsf{A}_i(\tilde x)=v_i$. 
\end{proof}

\begin{lem}\label{lem:1829:02032021}
The paths~$\widetilde{\mathcal A_i^{-1}}$ (resp.~$\widetilde{\mathcal B_i^{-1}}$)  and the inverses of the paths~$\tilde{\mathcal A}_i$ (resp.~$\tilde{\mathcal B}_i$)  are pairwise identified, up to change of orientation, through the actions of elements of $\mathrm{Aut}(\tilde C/C)$ as follows:
$$
\widetilde{\mathcal A_i^{-1}}={\rm Ad}_{\theta_i\mathsf{A}_i } \mathsf{B}_i
\big( \tilde{\mathcal A}_i^{-1}\big)\quad\quad\quad {\rm (resp.  }\,\,\widetilde{\mathcal B_i^{-1}}={\rm Ad}_{\theta_i\mathsf{A}_i\mathsf{B}_i}(\mathsf{A}_i^{-1})\big(\tilde{\mathcal B}_i^{-1}\big){ \rm)}\,.
$$ 
\end{lem}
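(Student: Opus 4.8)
The plan is to exploit the two defining features of any deck transformation $\theta\in\mathrm{Aut}(\tilde C/C)$: it commutes with the projection $p$, so it carries every lift of a loop in $C$ to another lift of the same loop; and such lifts are uniquely determined by their starting point. Consequently, for each of the two identities it will suffice to verify that the automorphism on the right-hand side sends the starting point of the reversed path $\tilde{\mathcal A}_i^{-1}$ (resp.\ $\tilde{\mathcal B}_i^{-1}$) to the starting point of $\widetilde{\mathcal A_i^{-1}}$ (resp.\ $\widetilde{\mathcal B_i^{-1}}$), and then to invoke uniqueness of path lifting. The computation of endpoints from Lemma~\ref{lemma:4:1:02032021} is then automatically reproduced, since the automorphism is a bijection on $p^{-1}(x)$.

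First I would treat the $\mathcal A$-case. By Lemma~\ref{lemma:4:1:02032021} the path $\tilde{\mathcal A}_i$ runs from $u_i$ to $v_i$, so $\tilde{\mathcal A}_i^{-1}$ starts at $v_i$, whereas $\widetilde{\mathcal A_i^{-1}}$ starts at $w_i$. Writing $g:=\mathrm{Ad}_{\theta_i\mathsf{A}_i}\mathsf{B}_i=\theta_i\mathsf{A}_i\mathsf{B}_i\mathsf{A}_i^{-1}\theta_i^{-1}$ and using $v_i=\theta_i\mathsf{A}_i(\tilde x)$, a one-line group computation gives $g(v_i)=\theta_i\mathsf{A}_i\mathsf{B}_i(\tilde x)=w_i$. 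Since $\tilde{\mathcal A}_i^{-1}$ projects to $\mathcal A_i^{-1}$ and $g$ commutes with $p$, the path $g(\tilde{\mathcal A}_i^{-1})$ is a lift of $\mathcal A_i^{-1}$ whose starting point is $g(v_i)=w_i$; by uniqueness of lifts it coincides with $\widetilde{\mathcal A_i^{-1}}$, which is the first identity.

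The $\mathcal B$-case is entirely parallel: here $\tilde{\mathcal B}_i^{-1}$ starts at $w_i$ and $\widetilde{\mathcal B_i^{-1}}$ starts at $z_i$, and setting $g':=\mathrm{Ad}_{\theta_i\mathsf{A}_i\mathsf{B}_i}(\mathsf{A}_i^{-1})$ one checks from $w_i=\theta_i\mathsf{A}_i\mathsf{B}_i(\tilde x)$ that $g'(w_i)=\theta_i\mathsf{A}_i\mathsf{B}_i\mathsf{A}_i^{-1}(\tilde x)=z_i$; the same lifting argument then concludes. As an internal consistency check I would also record that $g$ and $g'$ match the endpoints predicted by Lemma~\ref{lemma:4:1:02032021}, namely $g(u_i)=z_i$ and $g'(v_i)=\theta_i\mathsf{A}_i\mathsf{B}_i\mathsf{A}_i^{-1}\mathsf{B}_i^{-1}(\tilde x)=u_{i+1}$.

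There is no substantial obstacle in this argument; the conceptual content is just the interplay between deck transformations and unique path lifting. The only point demanding genuine care is the group-theoretic bookkeeping of the conjugations, and in particular remembering that $\theta_{i+1}=\theta_i\,(\mathsf{A}_i,\mathsf{B}_i)=\theta_i\mathsf{A}_i\mathsf{B}_i\mathsf{A}_i^{-1}\mathsf{B}_i^{-1}$, so that the endpoint of $\widetilde{\mathcal B_i^{-1}}$ is correctly identified with $u_{i+1}$.
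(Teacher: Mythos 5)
Your proof is correct and takes essentially the same route as the paper's: the paper verifies the identities by checking that the stated elements of $\mathrm{Aut}(\tilde C/C)$ carry the endpoints $(v_i,u_i)$ of $\tilde{\mathcal A}_i^{-1}$ to $(w_i,z_i)$ and the endpoints $(w_i,v_i)$ of $\tilde{\mathcal B}_i^{-1}$ to $(z_i,u_{i+1})$, leaving the unique-path-lifting step implicit, which you spell out explicitly by noting that matching the starting point already suffices. Your computations $g(v_i)=w_i$, $g'(w_i)=z_i$ and the consistency checks $g(u_i)=z_i$, $g'(v_i)=u_{i+1}$ all agree with the paper (whose proof contains the harmless typo $x_{i+1}$ for $u_{i+1}$).
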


\begin{proof} 
In the first (resp. second) case, one checks that the said element maps the endpoints $(v_i,u_i)$ of~$\tilde{\mathcal A}_i^{-1}$ to the endpoints $(w_i,z_i)$ of~$\widetilde{\mathcal A_i^{-1}}$ (resp. the endpoints $(w_i,v_i)$ of~$\tilde{\mathcal B}_i^{-1}$ to the endpoints $(z_i,x_{i+1})$ of~$\widetilde{\mathcal B_i^{-1}}$).
\end{proof}

\begin{lem}\label{lemma43}
(a) The collection of paths in~$\tilde C$ defined in Lemma~\ref{lemma:4:1:02032021} defines a non-intersecting loop in~$\tilde C$.

(b) There exists a unique continuous map $D\to\tilde C$ such that $D\to\tilde C\to C$ is the canonical injection
and such that the boundary of its image~$\tilde D$ is the loop from (a). The restriction of $p:\tilde C\to C$ to 
$\tilde D$ induces a bijection $D\to\tilde D$. One may construct a fundamental domain of~$C$ in~$\tilde C$ as the union of~$\tilde D$ with a suitable subset of its boundary.  

(c) The intersection $\tilde D\cap p^{-1}(\infty)$ contains exactly one point, which will be denoted~$\tilde\infty$. 
\end{lem}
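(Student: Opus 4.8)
The plan is to treat the three parts in order, using the loop structure already pinned down in Lemmas~\ref{lemma:4:1:02032021} and~\ref{lem:1829:02032021}, and then deducing the topological conclusions from the fact that $p:\tilde C\to C$ is a covering map together with the simple connectivity of $D$.

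For part~(a), I would first note that the paths $\tilde{\mathcal A}_i,\tilde{\mathcal B}_i,\widetilde{\mathcal A_i^{-1}},\widetilde{\mathcal B_i^{-1}}$, concatenated in the cyclic order dictated by Lemma~\ref{lemma:4:1:02032021}, form a closed path: indeed the endpoint of $\widetilde{\mathcal B_i^{-1}}$ is $u_{i+1}$, which is the starting point of $\tilde{\mathcal A}_{i+1}$, and $u_{h+1}=\theta_{h+1}(\tilde x)$; since $\theta_{h+1}=\prod_{j=1}^h(\mathsf A_j,\mathsf B_j)$ is $\mathrm{can}_e^{-1}(\prod_i(A_i,B_i))=1$ in $\mathrm{Aut}(\tilde C/C)$ by definition of $\mathrm{can}_e$ and the relation $\prod_i(A_i,B_i)=1$ in the quotient, one has $u_{h+1}=\tilde x=u_1$, closing the loop. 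The non-self-intersection is where I would locate the main work. The projections of these paths to $C$ trace out the loops $\mathcal A_i,\mathcal B_i$, which by construction meet only at $x$; hence two lifted paths can intersect only over a point of $p^{-1}(x)$, i.e.\ at one of the marked vertices $u_i,v_i,w_i,z_i$. So the claim reduces to verifying that the list of starting/ending vertices, read off from Lemma~\ref{lemma:4:1:02032021}, consists of pairwise distinct points except for the forced coincidences at shared endpoints of consecutive edges. This in turn amounts to checking that the elements of $\mathrm{Aut}(\tilde C/C)$ appearing as prefixes ($\theta_i,\theta_i\mathsf A_i,\theta_i\mathsf A_i\mathsf B_i,\theta_i\mathsf A_i\mathsf B_i\mathsf A_i^{-1}$) are distinct in the relevant ranges, which is a standard fact about the standard $4h$-gon presentation of the surface group and can be cited from the covering-space picture underlying Figure~\ref{fig1}.

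For part~(b), I would invoke the simple connectivity of $D$ (established in \S\ref{Schottkyformalism}) together with the lifting criterion for maps into a covering space: the inclusion $D\hookrightarrow C$ lifts through $p$ precisely because $D$ is simply connected, and the lift is unique once we fix the image of one interior point. I would fix that base point so that the lift extends continuously to the boundary with image the loop constructed in~(a); uniqueness of the continuous extension, given that $\partial D$ is traced by the prescribed concatenation, pins down the map. That $p$ restricts to a bijection $D\to\tilde D$ follows since $p\circ(\text{lift})$ is the identity inclusion on $D$, so the lift is injective and $p$ inverts it. The statement about a fundamental domain is then the usual bookkeeping: $\tilde D$ together with a choice of half of its boundary edges (one of each identified pair, as in Lemma~\ref{lem:1829:02032021}) gives a set of representatives for the $\mathrm{Aut}(\tilde C/C)$-action, which I would assert rather than verify in detail.

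Finally, for part~(c), I would use that $\infty\in C$ is a single point lying in the interior of $D$ (it is not on any of the loops $\mathcal A_i,\mathcal B_i$, which live in $C\setminus\infty$), as depicted in Figure~\ref{fig1}; since the bijection $D\to\tilde D$ of~(b) restricts $p$ to an honest bijection, the fiber $p^{-1}(\infty)$ meets $\tilde D$ in exactly the single preimage of $\infty$ under this bijection, which I name $\tilde\infty$. The principal obstacle is the non-self-intersection in part~(a): everything else is a direct application of covering-space theory, but ruling out spurious coincidences among the lifted vertices requires care with the word-combinatorics of the prefixes $\theta_i\mathsf A_i^{\pm}\mathsf B_i^{\pm}$ in $\mathrm{Aut}(\tilde C/C)$.
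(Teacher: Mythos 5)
Your parts (a) and (c) follow the paper's own line and are, if anything, more detailed than the printed proof: for (a) the paper only records that the loops $\mathcal A_i,\mathcal B_i$ meet pairwise only at $x$ and that the deck transformations of Lemma~\ref{lem:1829:02032021} identifying partner edges are nontrivial, whereas you additionally verify the closing condition $\theta_{h+1}=\mathrm{can}_e^{-1}\big(\prod_i(A_i,B_i)\big)=1$ and correctly reduce non-self-intersection to the pairwise distinctness of the $4h$ prefixes $\theta_i,\ \theta_i\mathsf{A}_i,\ \theta_i\mathsf{A}_i\mathsf{B}_i,\ \theta_i\mathsf{A}_i\mathsf{B}_i\mathsf{A}_i^{-1}$ in $\mathrm{Aut}(\tilde C/C)$, a standard property of the surface group (your citation of it is legitimate, though be aware that the usual proofs of this fact are close cousins of the statement being proved). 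Part (c) coincides with the paper's argument.

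The genuine gap is in (b), and it sits exactly where the paper invokes the uniformization theorem. The paper's proof uses that $\tilde C$ is analytically isomorphic to the upper half-plane, hence homeomorphic to $\mathbb{R}^2$; the non-self-intersecting loop from (a) is then a Jordan curve, it separates $\tilde C$, and by Jordan--Schoenflies it bounds a disk whose interior is the sought lift $\tilde D$ of $D$. Your argument replaces this with the covering-space lifting criterion, but that criterion only produces the family of lifts of $D$, one for each component of $p^{-1}(D)$, each mapped homeomorphically onto $D$. The sentence ``fix the base point so that the lift extends continuously to the boundary with image the loop constructed in (a)'' is precisely the assertion that needs proof, not a choice you are free to make: a priori the $4h$ edges $\tilde{\mathcal A}_i,\tilde{\mathcal B}_i,\widetilde{\mathcal A_i^{-1}},\widetilde{\mathcal B_i^{-1}}$ could be adjacent to several different components of $p^{-1}(D)$, in which case no lift at all would have boundary equal to the whole loop, and no base point would rescue this. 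Ruling that out --- and likewise proving that at most one component has this boundary (your uniqueness step), and that adjoining half of the identified boundary edges yields a fundamental domain --- requires knowing that $\tilde C$ is a plane, i.e.\ exactly the uniformization input your proof never uses. Everything else in your (b) (injectivity of the lift because $p\circ(\text{lift})$ is the inclusion, hence the bijection $D\to\tilde D$) is fine, but without some substitute for the Jordan curve theorem on $\tilde C$, part (b) remains unproved.
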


\begin{proof} (a) follows from the fact that any pair of paths among the $\mathcal A_i,\mathcal B_i$ meet only at~$x$, and from the fact that the elements of $\mathrm{Aut}(\tilde C/C)$ taking~$\tilde{\mathcal A}_i^{-1}$ to~$\widetilde{\mathcal A_i^{-1}}$ and~$\tilde{\mathcal B}_i^{-1}$ to~$\widetilde{\mathcal B_i^{-1}}$ are nontrivial by Lemma~\ref{lem:1829:02032021}. 

(b) follows from the uniformization theorem, i.e. from the fact that~$\tilde C$ is isomorphic, as an analytic variety, 
to the complex upper half-plane. 

(c) follows from (b) and from the fact that the paths~$\mathcal A_i$ and~$\mathcal B_i$ do not meet the point~$\infty$.
\end{proof}

%We have: \widetilde{A_i^{-1}} = {\rm Ad}_{\prod_{j=1}^{i-1} (\alpha_j,\beta_j)\alpha_i } \beta_i
%times \tilde A_i^{-1}

The domain~$\tilde D$, its boundary components and their identifications are illustrated for~$h=2$ in Figure~\ref{fig2}. 

\begin{figure}[h!]
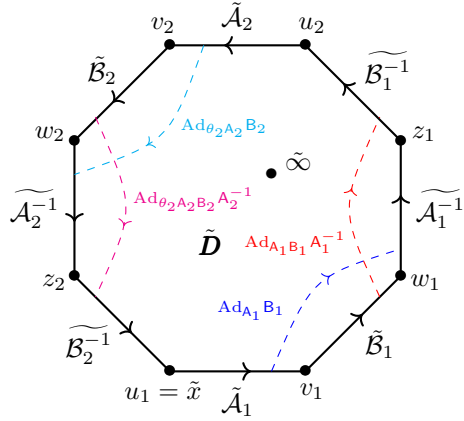

\begin{center}
\tikzpicture[scale=0.9]
\scope[xshift=-5cm,yshift=-0.4cm,decoration={
    markings,
    mark=at position 0.6 with {\arrow{>}}}]
\draw[thick][postaction={decorate}] (0,0) node{$\bullet$} ..controls (1,0) .. (2,0) node{$\bullet$} ;
\draw[thick][postaction={decorate}] (2,0) node{$\bullet$} ..controls (2.705,0.705) .. (3.41,1.41) node{$\bullet$} ;
\draw[thick][postaction={decorate}] (3.41,1.41) node{$\bullet$} ..controls (3.41,2.41) .. (3.41,3.41) node{$\bullet$} ;
\draw[thick][postaction={decorate}] (3.41,3.41) node{$\bullet$} ..controls (2.705,4.115) .. (2,4.82) node{$\bullet$} ;
\draw[thick][postaction={decorate}] (2,4.82) node{$\bullet$} ..controls (1,4.82) .. (0,4.82) node{$\bullet$} ;
\draw[thick][postaction={decorate}] (0,4.82) node{$\bullet$} ..controls (-0.705,4.115) .. (-1.41,3.41) node{$\bullet$} ;
\draw[thick][postaction={decorate}] (-1.41,3.41) node{$\bullet$} ..controls (-1.41,2.41) .. (-1.41,1.41) node{$\bullet$} ;
\draw[thick][postaction={decorate}] (-1.41,1.41) node{$\bullet$} ..controls (-0.705,0.705) .. (0,0) node{$\bullet$} ;
\node at (-0.1,-0.3) {$u_1=\tilde x$};
\node at (2.1,-0.3) {$v_1$};
\node at (3.78,1.31) {$w_1$};
\node at (3.75,3.51) {$z_1$};
\node at (2.1,5.12) {$u_2$};
\node at (-0.1,5.12) {$v_2$};
\node at (-1.77,3.51) {$w_2$};
\node at (-1.71,1.31) {$z_2$};
\node at (1.5,2.91) {$\bullet$};
\node at (1.9,3.11) {$\tilde\infty$};
\node at (0.6,1.91) {$\tilde{\pmb D}$};
\node at (1,-0.45) {$\tilde{\mathcal{A}}_1$};
\node at (3.1,0.4) {$\tilde{\mathcal{B}}_1$};
\node at (4,2.41) {$\widetilde{\mathcal{A}^{-1}_1}$};
\node at (3.22,4.45) {$\widetilde{\mathcal{B}^{-1}_1}$};
\node at (1,5.25) {$\tilde{\mathcal{A}}_2$};
\node at (-1,4.45) {$\tilde{\mathcal{B}}_2$};
\node at (-2,2.41) {$\widetilde{\mathcal{A}^{-1}_2}$};
\node at (-1.2,0.4) {$\widetilde{\mathcal{B}^{-1}_2}$};
\draw[dashed][blue][postaction={decorate}] (1.5,0) ..controls (2.1,1.4) .. (3.41,1.81)  ;
\node[blue] at (1.2,0.9) {\tiny{${\rm Ad}_{\mathsf{A}_1}\mathsf{B}_1$}};
\draw[dashed][red][postaction={decorate}] (3.1,1.1) ..controls (2.5,2.41) .. (3.1,3.76)  ;
\node[red] at (1.85,1.9) {\tiny{${\rm Ad}_{\mathsf{A}_1\mathsf{B}_1}\mathsf{A}^{-1}_1$}};
\draw[dashed][cyan][postaction={decorate}] (0.5,4.82) ..controls (0,3.4) .. (-1.41,2.91)  ;
\node[cyan] at (0.8,3.6) {\tiny{${\rm Ad}_{\theta_2\mathsf{A}_2}\mathsf{B}_2$}};
\draw[dashed][magenta][postaction={decorate}] (-1.1,3.76) ..controls (-0.5,2.41) .. (-1.1,1.1)  ;
\node[magenta] at (0.35,2.55) {\tiny{${\rm Ad}_{\theta_2\mathsf{A}_2\mathsf{B}_2}\mathsf{A}_2^{-1}$}};
\endscope
\endtikzpicture
\caption{The domain $\tilde D\subset \tilde C$.}\label{fig2}
\end{center}
\end{figure}

\subsubsection{Construction of the Schottky covering $\hat C$ of $C$}

Let $N\triangleleft\langle A_i,B_i\rangle/(\prod_i(A_i,B_i))$ be the normal subgroup generated by the elements $A_i$ ($i\in[\![1,h]\!]$). 
Its preimage $\mathrm{can}_e^{-1}(N)$ under the isomorphism $\mathrm{can}_e$ is a normal subgroup of $\mathrm{Aut}(\tilde C/C)$.

%By Lemma \ref{lemma:28122020b}, the choice of $e\in E$ gives rise to an isomorphism $\mathrm{can}_e:\mathrm{Aut}(\tilde C/C)\to
%\langle A_i,B_i\rangle/(\prod_i(A_i,B_i))$. \textcolor{red}{FZ: already mentioned in the previous section, perhaps better: "}

\begin{defn}We  set $\hat C:=\tilde C/\mathrm{can}_e^{-1}(N)$. The natural map $\pi:\hat C\to C$
is called the Schottky covering of $C$. 
\end{defn}

Since $N$ is normal in $\langle A_i,B_i\rangle/(\prod_i(A_i,B_i))$, and since the quotient group is freely generated by 
the classes of the elements $B_i$ ($i\in[\![1,h]\!]$), the projection $\hat C\to C$ is a Galois covering and 
$\mathrm{Aut}(\hat C/C)$ is isomorphic to the free group $\langle B_i\rangle$. 

\subsubsection{Some geometry in $\hat C$}

Let $\hat p:\tilde C\to\hat C$ be the canonical projection. There is a group morphism 
$$
\mathrm{Aut}(\tilde C/C)\to
\mathrm{Aut}(\hat C/C)\,,\qquad\qquad \theta\mapsto\hat\theta \,,
$$
where $\hat\theta\in\mathrm{Aut}(\hat C/C)$ is uniquely determined by $\hat p\circ\theta=\hat\theta\circ\hat p$. 

\begin{lem}\label{lemma45}
(a) There is a unique group isomorphism $\mathrm{can}_e^\wedge:\mathrm{Aut}(\hat C/C)\tilde\to\langle B_i\rangle$, such that the 
following diagram commutes:
\begin{equation*}
\begin{tikzcd}
\mathrm{Aut}(\tilde C/C) \arrow[r,"{\mathrm{can}_e}"] \arrow[d]
& \langle A_i,B_i\rangle/(\prod_i(A_i,B_i)) \arrow[d]  \\
\mathrm{Aut}(\hat C/C) \arrow[r,"{\mathrm{can}_e^\wedge}"]
& \langle B_i\rangle
\end{tikzcd}
\end{equation*}

(b) One has $\hat{\mathsf{A}}_i=1$ and $\hat{\mathsf{B}}_i=(\mathrm{can}_e^\wedge)^{-1}(B_i)$.  
\end{lem}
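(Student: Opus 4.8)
The plan is to read off both claims from the standard theory of quotients of Galois coverings, using the isomorphism $\mathrm{can}_e$ to transport the topological construction of $\hat C$ into the purely group-theoretic statement. The one geometric input is the identification of the left vertical map $\theta\mapsto\hat\theta$ with a canonical projection onto a quotient group; everything else is formal.

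First I would pin down this left vertical map. Since $\mathrm{can}_e^{-1}(N)$ is normal in $\mathrm{Aut}(\tilde C/C)$ and $\hat C=\tilde C/\mathrm{can}_e^{-1}(N)$, the covering $\pi:\hat C\to C$ is Galois, and the group morphism $\theta\mapsto\hat\theta$ recalled before the statement is the quotient homomorphism $q_1:\mathrm{Aut}(\tilde C/C)\to\mathrm{Aut}(\hat C/C)$ realizing $\mathrm{Aut}(\hat C/C)=\mathrm{Aut}(\tilde C/C)/\mathrm{can}_e^{-1}(N)$. Indeed, the defining relation $\hat p\circ\theta=\hat\theta\circ\hat p$ shows that $\hat\theta=\mathrm{id}$ exactly when $\theta$ preserves every $\mathrm{can}_e^{-1}(N)$-orbit in $\tilde C$, that is when $\theta\in\mathrm{can}_e^{-1}(N)$; hence $\ker q_1=\mathrm{can}_e^{-1}(N)$, and surjectivity follows from the Galois property.

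Next, for part (a), I would observe that the right vertical map $q_2:\langle A_i,B_i\rangle/(\prod_i(A_i,B_i))\to\langle B_i\rangle$ is the projection with kernel $N$ followed by the identification recalled just before the statement; this identification is legitimate because the normal closure of the $A_i$ already contains $\prod_i(A_i,B_i)$ (modulo the $A_i$, each commutator $(A_i,B_i)$ becomes trivial), so the double quotient is free on the classes of the $B_i$. Because $\mathrm{can}_e$ is an isomorphism carrying $\ker q_1=\mathrm{can}_e^{-1}(N)$ onto $\ker q_2=N$, the composite $q_2\circ\mathrm{can}_e$ kills $\ker q_1$ and therefore factors uniquely through $q_1$, producing a homomorphism $\mathrm{can}_e^\wedge$ with $\mathrm{can}_e^\wedge\circ q_1=q_2\circ\mathrm{can}_e$; it is an isomorphism since $\mathrm{can}_e$ is one and it takes kernel to kernel. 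Uniqueness is immediate from the surjectivity of $q_1$, which forces $\mathrm{can}_e^\wedge$ to be determined on all of $\mathrm{Aut}(\hat C/C)$.

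Finally, part (b) is a direct computation from the commutative square. Writing $\mathsf{A}_i=\mathrm{can}_e^{-1}(A_i)$ and $\mathsf{B}_i=\mathrm{can}_e^{-1}(B_i)$, one has $\hat{\mathsf{A}}_i=q_1(\mathsf{A}_i)$ and $\hat{\mathsf{B}}_i=q_1(\mathsf{B}_i)$, so commutativity gives $\mathrm{can}_e^\wedge(\hat{\mathsf{A}}_i)=q_2(A_i)$ and $\mathrm{can}_e^\wedge(\hat{\mathsf{B}}_i)=q_2(B_i)$. Since $A_i\in N=\ker q_2$ the former is trivial, and injectivity of $\mathrm{can}_e^\wedge$ forces $\hat{\mathsf{A}}_i=1$; while $q_2(B_i)=B_i$ under the chosen identification, giving $\hat{\mathsf{B}}_i=(\mathrm{can}_e^\wedge)^{-1}(B_i)$. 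The only genuine subtlety, and the step I would take most care over, is the first one: correctly identifying $\theta\mapsto\hat\theta$ as the quotient map with kernel exactly $\mathrm{can}_e^{-1}(N)$, which rests on the normality of $\mathrm{can}_e^{-1}(N)$ and the Galois property of $\hat C\to C$. Once this is in place, the remainder is nothing more than the universal property of quotient groups.
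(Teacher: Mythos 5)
Your proposal is correct and follows essentially the same route as the paper: both identify the kernel of $\theta\mapsto\hat\theta$ with $\mathrm{can}_e^{-1}(N)$ (via $\hat\theta=1\Leftrightarrow\hat p\circ\theta=\hat p$ and the presentation $\hat C=\tilde C/\mathrm{can}_e^{-1}(N)$) and then invoke the universal property of quotient groups, with (b) read off from the commutative square and the definitions $\mathsf{A}_i=\mathrm{can}_e^{-1}(A_i)$, $\mathsf{B}_i=\mathrm{can}_e^{-1}(B_i)$. Your write-up merely makes explicit some points the paper leaves terse, such as the free action of the deck group behind the kernel identification and the surjectivity of the quotient map.
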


\begin{proof}
For the proof of (a), recall that $\langle B_i\rangle = \big(\langle A_i,B_i\rangle/(\prod_i(A_i,B_i))\big)/N$, so we need to prove that ${\rm can}_e$ takes the kernel of the left vertical morphism ${\rm Aut}(\tilde C/C)\to {\rm Aut}(\hat C/C)$ to the normal subgroup $N\triangleleft\langle A_i,B_i\rangle/(\prod_i(A_i,B_i))$. 

By definition of $\hat\theta$, and because $\hat p$ is surjective, $\hat\theta =1$ if and only if $\hat p\circ\theta =\hat p$. Therefore ${\rm ker}({\rm Aut}(\tilde C/C)\to {\rm Aut}(\hat C/C))=\{\theta\in{\rm Aut}(\tilde C/C)\,|\,\,\hat p\,\circ\,\theta =\hat p\}$, which is in turn equal to ${\rm can}_e^{-1}(N)$, because $\hat p: \tilde C\twoheadrightarrow \hat C=\tilde C/{\rm can}_e^{-1}(N)$.

(b) follows from (a) and from the fact that we have defined $\mathsf{A}_i$ (resp. $\mathsf{B}_i$) as ${\rm can}_e^{-1}(A_i)$ (resp. ${\rm can}_e^{-1}(B_i)$).
\end{proof}

The group $\mathrm{Aut}(\hat C/C)$ is then freely generated by the elements $\hat{\mathsf{B}}_i$, $i\in[\![1,h]\!]$. 

\begin{defn}
(a) We set $\hat x:=\hat p(\tilde x)$; this is a point in $\hat C$.

(b) For $i\in[\![1,h]\!]$, we set $\hat{\mathcal A}_i:=\hat p(\tilde{\mathcal A}_i)$, 
$\hat{\mathcal B}_i:=\hat p(\tilde{\mathcal B}_i)$; these are paths in $\hat C$. 
\end{defn}

\begin{lem}\label{lemma47}
(a) For $i\in[\![1,h]\!]$, $\hat{\mathcal A}_i$ is a loop in $\hat C$ based at $\hat x$. 

(b) For $i\in[\![1,h]\!]$, $\hat{\mathcal B}_i$ is a path in $\hat C$ starting at $\hat x$ and ending at $\hat{\mathsf{B}}_i(\hat x)$. 

(c) For $i\in[\![1,h]\!]$, $\hat p(\widetilde{\mathcal A_i^{-1}})=\hat{\mathsf{B}}_i(\hat{\mathcal A}_i)^{-1}$ and 
$\hat p(\widetilde{\mathcal B_i^{-1}})=\hat{\mathcal B}_i^{-1}$

(d) The nontrivial intersections between the paths $\hat{\mathcal A}_i$, $\hat{\mathsf{B}}_i(\hat{\mathcal A}_i)$, 
$\hat{\mathcal B}_i$, $i\in[\![1,h]\!]$ are: 

$\forall i\neq j$, $\hat{\mathcal A}_i\cap \hat{\mathcal A}_j=\hat{\mathcal B}_i\cap \hat{\mathcal B}_j=\{\hat x\}$; 
$\forall i,j$, $\hat{\mathcal A}_i\cap\hat{\mathcal B}_j=\{\hat x\}$; 
$\forall $i, $\hat{\mathsf{B}}_i(\hat{\mathcal A}_i)\cap\hat{\mathcal B}_i=\{\hat{\mathsf{B}}_i(\hat x)\}$. 
\end{lem}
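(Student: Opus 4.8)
The plan is to reduce every assertion to the behaviour of the covering projection $\hat p:\tilde C\to\hat C$ on the corner points $u_i,v_i,w_i,z_i$ and on the boundary arcs of $\tilde D$, using throughout the group morphism $\theta\mapsto\hat\theta$ (characterised by $\hat p\circ\theta=\hat\theta\circ\hat p$) together with Lemma~\ref{lemma45}, which gives $\hat{\mathsf A}_i=1$ and $\hat{\mathsf B}_i=(\mathrm{can}_e^\wedge)^{-1}(B_i)$.

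For (a) and (b) I would first record that $\hat p(\theta(\tilde x))=\hat\theta(\hat x)$ for every $\theta\in\mathrm{Aut}(\tilde C/C)$, so that it suffices to compute $\hat\theta_i$, $\widehat{\theta_i\mathsf A_i}$ and $\widehat{\theta_i\mathsf A_i\mathsf B_i}$. Since $\theta\mapsto\hat\theta$ is a morphism and $\hat{\mathsf A}_j=1$, each commutator $(\hat{\mathsf A}_j,\hat{\mathsf B}_j)$ is trivial, whence $\hat\theta_i=\prod_{j<i}(\hat{\mathsf A}_j,\hat{\mathsf B}_j)=1$; therefore $\hat p(u_i)=\hat p(v_i)=\hat x$ and $\hat p(w_i)=\hat p(z_i)=\hat{\mathsf B}_i(\hat x)$. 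Inserting these into the endpoints of $\tilde{\mathcal A}_i$ (from $u_i$ to $v_i$) and $\tilde{\mathcal B}_i$ (from $v_i$ to $w_i$) recorded in Lemma~\ref{lemma:4:1:02032021} gives exactly (a) and (b).

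For (c) I would apply $\hat p$ to the two edge-identification identities of Lemma~\ref{lem:1829:02032021}, using $\hat p\circ\psi=\hat\psi\circ\hat p$ for $\psi\in\mathrm{Aut}(\tilde C/C)$ together with the compatibility of $\psi\mapsto\hat\psi$ with ${\rm Ad}$ and with path reversal. Thus $\hat p(\widetilde{\mathcal A_i^{-1}})=\widehat{{\rm Ad}_{\theta_i\mathsf A_i}\mathsf B_i}(\hat{\mathcal A}_i^{-1})$, and since $\widehat{{\rm Ad}_{\theta_i\mathsf A_i}\mathsf B_i}={\rm Ad}_{\hat\theta_i\hat{\mathsf A}_i}\hat{\mathsf B}_i=\hat{\mathsf B}_i$ this equals $\hat{\mathsf B}_i(\hat{\mathcal A}_i)^{-1}$; likewise $\widehat{{\rm Ad}_{\theta_i\mathsf A_i\mathsf B_i}(\mathsf A_i^{-1})}={\rm Ad}_{\hat{\mathsf B}_i}(\hat{\mathsf A}_i^{-1})=1$, which forces $\hat p(\widetilde{\mathcal B_i^{-1}})=\hat{\mathcal B}_i^{-1}$.

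For (d) the key observation is that by Lemma~\ref{lemma43}(a) the boundary $\partial\tilde D$ is a non-self-intersecting loop in $\tilde C$, so any coincidence among the images $\hat p(\tilde{\mathcal A}_i)$, $\hat p(\widetilde{\mathcal A_i^{-1}})$, $\hat p(\tilde{\mathcal B}_i)$, $\hat p(\widetilde{\mathcal B_i^{-1}})$ — which by (c) are precisely the curves $\hat{\mathcal A}_i$, $\hat{\mathsf B}_i(\hat{\mathcal A}_i)$, $\hat{\mathcal B}_i$ appearing in the statement — can arise only from pairs of boundary points related by $\mathrm{can}_e^{-1}(N)$, i.e. from the face-pairings of Lemma~\ref{lem:1829:02032021} whose pairing automorphism lies in $\mathrm{can}_e^{-1}(N)$, together with the induced corner identifications. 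I would then check, applying $\mathrm{can}_e$ and projecting to $\langle B_i\rangle$, that the $A$-pairing ${\rm Ad}_{\theta_i\mathsf A_i}\mathsf B_i$ maps to $B_i\neq1$ (so it is not realised in $\hat C$, and $\hat{\mathcal A}_i$, $\hat{\mathsf B}_i(\hat{\mathcal A}_i)$ remain distinct embedded arcs), whereas the $B$-pairing ${\rm Ad}_{\theta_i\mathsf A_i\mathsf B_i}(\mathsf A_i^{-1})$ maps to $1$ (so it is realised, gluing $\tilde{\mathcal B}_i$ to $\widetilde{\mathcal B_i^{-1}}$ into the single embedded arc $\hat{\mathcal B}_i$). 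Since no interior point of an $A$-edge is $\mathrm{can}_e^{-1}(N)$-identified with any other boundary point, all remaining intersections occur at the corner images $\hat x$ and $\hat{\mathsf B}_i(\hat x)$; reading off the incidences from the computations in (a)--(c), and using that $\hat x,\hat{\mathsf B}_1(\hat x),\ldots,\hat{\mathsf B}_h(\hat x)$ are pairwise distinct because $\mathrm{Aut}(\hat C/C)\cong\langle B_i\rangle$ acts freely on $\hat C$, yields exactly the three families of intersections claimed (and the emptiness of all other pairings). I expect the main obstacle to lie in this bookkeeping for (d): one must use that $\tilde D$ is a genuine fundamental domain, so that two boundary points share an $\mathrm{Aut}(\tilde C/C)$-image only through the explicit edge-pairings and the single corner orbit, then track which of these survive the passage to the intermediate quotient $\hat C=\tilde C/\mathrm{can}_e^{-1}(N)$; the verification that the $h+1$ vertex images are genuinely distinct (freeness of the Schottky deck group) is precisely what excludes spurious extra intersections.
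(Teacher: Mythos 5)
Your proof is correct, and for parts (a)--(c) it is essentially the paper's own argument: both rest on the morphism $\theta\mapsto\hat\theta$, the identities $\hat{\mathsf{A}}_i=1$ and hence $\hat\theta_i=1$ from Lemma~\ref{lemma45}(b), the endpoint data of Lemma~\ref{lemma:4:1:02032021}, and the application of $\hat p$ to the edge identifications of Lemma~\ref{lem:1829:02032021}. For part (d), however, you take a genuinely different route. The paper disposes of (d) in one line: any intersection among the curves projects under $\pi$ to an intersection of the loops $\mathcal{A}_i,\mathcal{B}_i$ on $C$, which meet only at $p(\tilde x)$, so all intersections in $\hat C$ lie in the fibre over that point and are then read off from the endpoints computed in (a)--(b). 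You instead work upstairs in $\tilde C$, viewing $\hat C=\tilde C/\mathrm{can}_e^{-1}(N)$ and asking which face-pairings of the fundamental domain survive the quotient: the $A$-pairing $\mathrm{Ad}_{\theta_i\mathsf{A}_i}\mathsf{B}_i$ maps to $\hat{\mathsf{B}}_i\neq 1$ while the $B$-pairing maps to $1$. Your version costs more bookkeeping --- you must invoke the standard fact that boundary points of a fundamental domain are $\mathrm{Aut}(\tilde C/C)$-identified only through the edge pairings and the corner orbit, which you rightly flag as the delicate step --- but it buys a point that the paper's one-liner glosses over: for the pair $\hat{\mathcal{A}}_i$ versus $\hat{\mathsf{B}}_i(\hat{\mathcal{A}}_i)$, both curves project to the \emph{same} loop $\mathcal{A}_i$ on $C$, so projecting to $C$ does not by itself separate their interior points; one needs precisely the freeness of the deck action of $\mathrm{Aut}(\hat C/C)\simeq\langle B_i\rangle$ (equivalently, your observation that the $A$-pairing is not realised in $\hat C$), which you make explicit and the paper leaves implicit. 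The same freeness is what guarantees that $\hat x,\hat{\mathsf{B}}_1(\hat x),\ldots,\hat{\mathsf{B}}_h(\hat x)$ are pairwise distinct, excluding spurious intersections in both arguments; so both are sound, with yours the more self-contained treatment of (d).
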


\begin{proof}
By Lemma~\ref{lemma:4:1:02032021},~$\hat{\mathcal{A}}_i$ (resp.~$\hat{\mathcal{B}}_i$) starts at the point~$\hat p(u_i)$ (resp.~$\hat p(v_i)$) and ends at the point~$\hat p(v_i)$ (resp.~$\hat p(w_i)$). Recall that $u_i=\theta_i(\tilde x)$, $v_i=\theta_i\mathsf{A}_i(\tilde x)$ and $u_i=\theta_i\mathsf{A}_i\mathsf{B}_i(\tilde x)$, where $\theta_i=\prod_{j=1}^{i-1}(\mathsf{A}_i,\mathsf{B}_i)$. Statements~(a) and~(b) immediately follow from the fact (see point (b) of Lemma~\ref{lemma45}) that $\hat{\mathsf{A}}_i=1$, which implies that also $\hat{\theta_i}=1$ and so that $\hat p(u_i)=\hat p(v_i)=\hat x$ and that $\hat p(w_i)=\hat{\mathsf{B}}(\hat x)$.

Moreover, combining the fact that $\hat{\mathsf{A}}_i=\hat{\theta_i}=1$ with Lemma~\ref{lem:1829:02032021} we obtain the statement (c).

Finally,~(d) follows from~(a) and~(b) and from the fact that the only common intersection of the loops $\mathcal{A}_i,\mathcal{B}_i$ on the curve~$C$ is the point~$p(\tilde x)$.
\end{proof}

\begin{defn}
We define $\hat D:=\hat p(\tilde D)$. This is a subset of~$\hat C$. 
\end{defn}

\begin{lem} 
$\hat D$ is an open subset of~$\hat C$, whose boundary is equal to $\partial\hat D=\cup_i(\hat{\mathcal A}_i \cup \hat{\mathsf{B}}_i(\hat{\mathcal A}_i)\cup\hat{\mathcal B}_i)$.  
The projection $\pi:\hat C\to C$ sets up a bijection $\hat D\to D$. 
\end{lem}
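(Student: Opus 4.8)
The plan is to transport everything known about $\tilde D\subset\tilde C$ along the covering map $\hat p:\tilde C\to\hat C$, using the factorisation $p=\pi\circ\hat p$ of the two projections. Openness is essentially formal: $\hat p$ is the quotient map for the free, properly discontinuous action of $\mathrm{can}_e^{-1}(N)\triangleleft\mathrm{Aut}(\tilde C/C)$, hence an open map, and since $\tilde D$ is open in $\tilde C$ its image $\hat D=\hat p(\tilde D)$ is open in $\hat C$.

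For the bijection $\pi:\hat D\to D$, I would argue directly from $p=\pi\circ\hat p$ and from Lemma~\ref{lemma43}(b), which gives that $p$ restricts to a bijection $\tilde D\to D$. Surjectivity is immediate, since $\pi(\hat D)=\pi(\hat p(\tilde D))=p(\tilde D)=D$. For injectivity, given $\hat a,\hat b\in\hat D$ with $\pi(\hat a)=\pi(\hat b)$, I would choose lifts $\tilde a,\tilde b\in\tilde D$ with $\hat p(\tilde a)=\hat a$ and $\hat p(\tilde b)=\hat b$; then $p(\tilde a)=p(\tilde b)$, so $\tilde a=\tilde b$ by injectivity of $p|_{\tilde D}$, whence $\hat a=\hat b$. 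Note that this argument does not even require $\hat p|_{\tilde D}$ to be injective (although that holds as well, by the disjointness used below).

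For the boundary I would prove the two inclusions separately. Since $\partial\tilde D$ is a non-self-intersecting loop (Lemma~\ref{lemma43}(a)) in $\tilde C\cong\mathbb R^2$, the closed region $\overline{\tilde D}=\tilde D\cup\partial\tilde D$ is compact; hence $\hat p|_{\overline{\tilde D}}$ is a closed map and its image $\hat p(\overline{\tilde D})=\hat D\cup\hat p(\partial\tilde D)$ is closed, so it contains $\overline{\hat D}$. This yields $\partial\hat D=\overline{\hat D}\setminus\hat D\subseteq\hat p(\partial\tilde D)$. For the reverse inclusion I would invoke that $\tilde D$ is, up to boundary, a fundamental domain (Lemma~\ref{lemma43}(b)), so the translates $\{\theta(\tilde D)\}_{\theta\in\mathrm{Aut}(\tilde C/C)}$ are pairwise disjoint open sets. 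If a point $\tilde b\in\partial\tilde D$ had $\hat p(\tilde b)\in\hat D$, then $\tilde b\in\gamma(\tilde D)$ for some $\gamma\in\mathrm{can}_e^{-1}(N)$; as $\gamma(\tilde D)$ is open and every neighbourhood of $\tilde b$ meets $\tilde D$, this forces $\tilde D\cap\gamma(\tilde D)\neq\emptyset$, impossible for $\gamma\neq1$, while $\gamma=1$ contradicts $\tilde b\notin\tilde D$. Hence $\hat p(\partial\tilde D)\cap\hat D=\emptyset$, and since continuity gives $\hat p(\partial\tilde D)\subseteq\overline{\hat D}$, we get $\hat p(\partial\tilde D)\subseteq\partial\hat D$. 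Finally I would compute $\hat p(\partial\tilde D)$ explicitly from $\partial\tilde D=\bigcup_i(\tilde{\mathcal A}_i\cup\tilde{\mathcal B}_i\cup\widetilde{\mathcal A_i^{-1}}\cup\widetilde{\mathcal B_i^{-1}})$: by Lemma~\ref{lemma47}(c) and the definitions $\hat{\mathcal A}_i=\hat p(\tilde{\mathcal A}_i)$, $\hat{\mathcal B}_i=\hat p(\tilde{\mathcal B}_i)$, one has $\hat p(\widetilde{\mathcal A_i^{-1}})=\hat{\mathsf B}_i(\hat{\mathcal A}_i)^{-1}$ and $\hat p(\widetilde{\mathcal B_i^{-1}})=\hat{\mathcal B}_i^{-1}$, so as point sets $\hat p(\partial\tilde D)=\bigcup_i(\hat{\mathcal A}_i\cup\hat{\mathsf B}_i(\hat{\mathcal A}_i)\cup\hat{\mathcal B}_i)$, as claimed.

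The main obstacle is the boundary analysis, and specifically ruling out that a point of $\partial\tilde D$ can map into the \emph{open} set $\hat D$; this is exactly where the fundamental-domain (tiling) property of Lemma~\ref{lemma43}(b), i.e. the pairwise disjointness of the deck translates of $\tilde D$, is indispensable. By contrast, openness and the bijection are comparatively formal consequences of $p=\pi\circ\hat p$ and the fact that $\hat p$ is an open covering map.
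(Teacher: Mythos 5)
Your proof is correct and takes essentially the same route as the paper, whose one-sentence proof likewise derives all the statements from Lemma~\ref{lemma43}(b) (which supplies the bijection $D\to\tilde D$ and hence the pairwise disjointness of the deck translates of $\tilde D$ that your boundary analysis rests on), combined with Lemma~\ref{lemma47}(c) to identify $\hat p(\partial\tilde D)$ with $\cup_i(\hat{\mathcal A}_i\cup\hat{\mathsf{B}}_i(\hat{\mathcal A}_i)\cup\hat{\mathcal B}_i)$. You have merely made explicit the point-set details (openness of the quotient map $\hat p$, and the two inclusions for $\partial\hat D$) that the paper leaves to the reader.
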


\begin{proof}
All the statements follow from part~(b) of Lemma~\ref{lemma43}, which must be combined with part~(c) of Lemma~\ref{lemma47} to obtain $\partial\hat D$.
\end{proof}

\begin{defn}
We define~$\overline{\hat D}$ to be the closure of~$\hat D$ in~$\hat C$, so 
$\overline{\hat D}=\hat D\cup\partial\hat D$. 
\end{defn}

\begin{lem}
The boundary of~$\overline{\hat D}$  in~$\hat C$ is equal to 
$\partial\overline{\hat D}=\cup_i(\hat{\mathcal A}_i \cup \hat{\mathsf{B}}_i(\hat{\mathcal A}_i))$.  
\end{lem}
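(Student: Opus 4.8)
\emph{Plan.} The plan is to first observe that $\partial\overline{\hat D}\subseteq\partial\hat D$, and then to decide, curve by curve, which points of $\partial\hat D$ survive in $\partial\overline{\hat D}$. Since $\hat D$ is open we have $\hat D\subseteq\mathrm{int}(\overline{\hat D})$, so $\partial\overline{\hat D}=\overline{\hat D}\setminus\mathrm{int}(\overline{\hat D})\subseteq\overline{\hat D}\setminus\hat D=\partial\hat D$. By the preceding lemma, $\partial\hat D=\cup_i(\hat{\mathcal A}_i\cup\hat{\mathsf B}_i(\hat{\mathcal A}_i)\cup\hat{\mathcal B}_i)$, and by Lemma~\ref{lemma47}(d) the relative interior of each $\hat{\mathcal B}_i$ meets none of the arcs $\hat{\mathcal A}_j,\hat{\mathsf B}_j(\hat{\mathcal A}_j)$. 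It therefore suffices to prove that each point in the relative interior of $\hat{\mathcal B}_i$ lies in $\mathrm{int}(\overline{\hat D})$, while every point of $\hat{\mathcal A}_i$ and of $\hat{\mathsf B}_i(\hat{\mathcal A}_i)$ lies in $\partial\overline{\hat D}$; the endpoints $\hat x,\hat{\mathsf B}_i(\hat x)$ of $\hat{\mathcal B}_i$ lie on the $\mathcal A$-arcs and are handled by the second case.

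The central input is the edge-pairing of the polygon $\tilde D$ recorded in Lemma~\ref{lem:1829:02032021}, together with its projection computed in Lemma~\ref{lemma47}(c). The pairing $\widetilde{\mathcal B_i^{-1}}=\mathrm{Ad}_{\theta_i\mathsf A_i\mathsf B_i}(\mathsf A_i^{-1})(\tilde{\mathcal B}_i^{-1})$ involves the automorphism $h_i:=\mathrm{Ad}_{\theta_i\mathsf A_i\mathsf B_i}(\mathsf A_i^{-1})$, a conjugate of $\mathsf A_i^{-1}=\mathrm{can}_e^{-1}(A_i)^{-1}$; since $N$ is the normal subgroup generated by the $A_i$, we get $h_i\in\mathrm{can}_e^{-1}(N)=\ker(\mathrm{Aut}(\tilde C/C)\to\mathrm{Aut}(\hat C/C))$, so $h_i$ acts trivially on $\hat C$. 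Consequently the tile $h_i^{-1}(\tilde D)$ adjacent to $\tilde D$ across the edge $\tilde{\mathcal B}_i$ also projects onto $\hat D$. Lifting a point $\hat q$ of the relative interior of $\hat{\mathcal B}_i$ to $\tilde q\in\tilde{\mathcal B}_i$ and choosing a small disk around $\tilde q$ on which $\hat p$ is a homeomorphism, that disk is covered by $\overline{\tilde D}\cup h_i^{-1}(\overline{\tilde D})$; both pieces project into $\overline{\hat D}$, so $\hat q$ has a full neighborhood in $\overline{\hat D}$ and hence lies in $\mathrm{int}(\overline{\hat D})$.

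For the $\hat{\mathcal A}_i$ I would use the other pairing $\widetilde{\mathcal A_i^{-1}}=\mathrm{Ad}_{\theta_i\mathsf A_i}\mathsf B_i(\tilde{\mathcal A}_i^{-1})$, whose automorphism $g_i:=\mathrm{Ad}_{\theta_i\mathsf A_i}\mathsf B_i$ projects, using $\hat{\mathsf A}_i=1$ and $\hat\theta_i=1$ from Lemma~\ref{lemma45}(b), to $\hat g_i=\hat{\mathsf B}_i\neq1$. Thus the tile $g_i^{-1}(\tilde D)$ adjacent to $\tilde D$ across $\tilde{\mathcal A}_i$ projects to $\hat{\mathsf B}_i^{-1}(\hat D)$, a translate of $\hat D$ by a nontrivial element of $\mathrm{Aut}(\hat C/C)$. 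Since $\overline{\hat D}=\hat p(\overline{\tilde D})$ is a fundamental domain for $\mathrm{Aut}(\hat C/C)=\langle\hat{\mathsf B}_i\rangle$ acting on $\hat C$ — which follows from Lemma~\ref{lemma43}(b) (that $\overline{\tilde D}$ is a fundamental domain for $\mathrm{Aut}(\tilde C/C)$) together with the normality of $N$ — the open set $\hat{\mathsf B}_i^{-1}(\hat D)$ is disjoint from $\hat D$, and hence (being open and missing $\hat D$) disjoint from $\overline{\hat D}$. Therefore near any $\hat q\in\hat{\mathcal A}_i$ the far side of the edge lies in $\hat C\setminus\overline{\hat D}$, so every neighborhood of $\hat q$ meets the complement of $\overline{\hat D}$ and $\hat q\in\partial\overline{\hat D}$. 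The same argument, or applying the deck transformation $\hat{\mathsf B}_i$, handles $\hat{\mathsf B}_i(\hat{\mathcal A}_i)$. Combining the two cases yields $\partial\overline{\hat D}=\cup_i(\hat{\mathcal A}_i\cup\hat{\mathsf B}_i(\hat{\mathcal A}_i))$.

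The main obstacle is the clean justification that $\overline{\hat D}$ is a fundamental domain for $\mathrm{Aut}(\hat C/C)$ — equivalently, that distinct $\langle\hat{\mathsf B}_i\rangle$-translates of $\hat D$ are disjoint and that $\hat p|_{\tilde D}$ is injective. Both reduce to the disjointness of the $\mathrm{Aut}(\tilde C/C)$-translates of $\tilde D$ (Lemma~\ref{lemma43}(b)) and the normality of $\mathrm{can}_e^{-1}(N)$: an overlap $\hat{\mathsf B}_i^{-1}(\hat D)\cap\hat D\neq\emptyset$ would lift to an overlap $g(\tilde D)\cap\tilde D\neq\emptyset$ with $g$ projecting to $\hat{\mathsf B}_i^{-1}\neq1$, i.e.\ $g\notin\mathrm{can}_e^{-1}(N)$, forcing $g=1$ and a contradiction. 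Once this disjointness is secured, the remainder is the local half-disk analysis carried out above.
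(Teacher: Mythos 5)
Your proof is correct and takes essentially the same route as the paper's: both rest on the edge identifications of the loop bounding $\hat D$, with the two occurrences of $\hat{\mathcal B}_i$ (as $\hat{\mathcal B}_i$ and $\hat{\mathcal B}_i^{-1}$) gluing its relative-interior points into $\mathrm{int}(\overline{\hat D})$, the single occurrence of each of $\hat{\mathcal A}_i$, $\hat{\mathsf B}_i(\hat{\mathcal A}_i)$ placing those arcs in the boundary, and the vertices $\hat x$, $\hat{\mathsf B}_i(\hat x)$ recovered at the end. Your deck-transformation computations --- that the $\mathcal B$-edge pairing $h_i=\mathrm{Ad}_{\theta_i\mathsf A_i\mathsf B_i}(\mathsf A_i^{-1})$, being a conjugate of $\mathsf A_i^{-1}$, lies in $\mathrm{can}_e^{-1}(N)$ and so acts trivially on $\hat C$, while the $\mathcal A$-edge pairing descends to $\hat{\mathsf B}_i\neq 1$, whose translate $\hat{\mathsf B}_i^{-1}(\hat D)$ is disjoint from $\overline{\hat D}$ by the fundamental-domain property --- simply make explicit what the paper leaves implicit via parts (c) and (d) of Lemma~\ref{lemma47} and the picture of the bounding loop.
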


\begin{proof}
Lemma~\ref{lemma43} implies that the domain~$\hat D$ is bounded by the loop $\prod_{i=1}^h\hat{\mathcal{A}}_i\hat{\mathcal{B}}_i\hat{\mathsf{B}}_i(\hat{\mathcal A}_i)^{-1}\hat{\mathcal{B}}_i^{-1}$ (see Figure~\ref{fig3} for the $h=2$ case). The fact that both~$\hat{\mathcal{B}}_i$ and~$\hat{\mathcal{B}}_i^{-1}$ belong to this loop implies that any point in $\hat{\mathcal{B}}_i\setminus\{\hat x,\hat{\mathsf{B}}_i(\hat x)\}$ has a neighborhood which is completely contained in~$\overline{\hat D}$, and therefore they do not belong to~$\partial\overline{\hat D}$. On the other hand, any point in $\hat{\mathcal{A}}_i\setminus\{\hat x\}$ or $\hat{\mathsf{B}}_i(\hat{\mathcal{A}}_i)\setminus\{\hat{\mathsf{B}}_i(\hat x)\}$ belongs to the boundary~$\partial\overline{\hat D}$ because, by part~(d) of Lemma~\ref{lemma47}, they only appear once in the loop bounding~$\hat D$. Finally, the points $\hat x$ and $\hat{\mathsf{B}}_i(\hat x)$ must also belong to~$\partial\overline{\hat D}$ because otherwise~$\partial\overline{\hat D}$ would not be closed.
\end{proof}

A fundamental domain of~$C$ in~$\hat C$ is $\overline{\hat D}-\cup_i\hat{\mathsf{B}}_i(\hat{\mathcal A}_i)$, which 
both contains~$\hat D$ and is contained in~$\overline{\hat D}$ . 
We will set $\hat\infty:=\hat p(\tilde\infty)$. Since~$\tilde\infty\in\tilde D$,~$\hat\infty$ belongs to~$\hat D$. 
The domain~$\hat D$, its boundary components and their identifications are illustrated for~$h=2$ in Figures~\ref{fig3} and~\ref{fig4}.

\begin{figure}[h!]
\begin{center}
\tikzpicture[scale=0.9]
\scope[xshift=-5cm,yshift=-0.4cm,decoration={
    markings,
    mark=at position 0.6 with {\arrow{>}}}]
\draw[thick][postaction={decorate}] (0,0) node{$\bullet$} ..controls (1,0) .. (2,0) node{$\bullet$} ;
\draw[thick][postaction={decorate}] (2,0) node{$\bullet$} ..controls (2.705,0.705) .. (3.41,1.41) node{$\bullet$} ;
\draw[thick][postaction={decorate}] (3.41,1.41) node{$\bullet$} ..controls (3.41,2.41) .. (3.41,3.41) node{$\bullet$} ;
\draw[thick][postaction={decorate}] (3.41,3.41) node{$\bullet$} ..controls (2.705,4.115) .. (2,4.82) node{$\bullet$} ;
\draw[thick][postaction={decorate}] (2,4.82) node{$\bullet$} ..controls (1,4.82) .. (0,4.82) node{$\bullet$} ;
\draw[thick][postaction={decorate}] (0,4.82) node{$\bullet$} ..controls (-0.705,4.115) .. (-1.41,3.41) node{$\bullet$} ;
\draw[thick][postaction={decorate}] (-1.41,3.41) node{$\bullet$} ..controls (-1.41,2.41) .. (-1.41,1.41) node{$\bullet$} ;
\draw[thick][postaction={decorate}] (-1.41,1.41) node{$\bullet$} ..controls (-0.705,0.705) .. (0,0) node{$\bullet$} ;
\node at (-0.1,-0.3) {$\hat x$};
\node at (2.1,-0.3) {$\hat x$};
\node at (4,1.31) {$\hat{\mathsf{B}}_1(\hat x)$};
\node at (4.05,3.51) {$\hat{\mathsf{B}}_1(\hat x)$};
\node at (2.1,5.12) {$\hat x$};
\node at (-0.1,5.12) {$\hat x$};
\node at (-2,3.51) {$\hat{\mathsf{B}}_2(\hat x)$};
\node at (-2,1.31) {$\hat{\mathsf{B}}_2(\hat x)$};
\node at (1.5,2.91) {$\bullet$};
\node at (1.9,3.11) {$\hat\infty$};
\node at (0.6,1.91) {$\hat{\pmb D}$};
\node at (1,-0.45) {$\hat{\mathcal{A}}_1$};
\node at (3.1,0.4) {$\hat{\mathcal{B}}_1$};
\node at (4.35,2.41) {$\hat{\mathsf{B}}_1(\hat{\mathcal{A}}_1)^{-1}$};
\node at (3.22,4.45) {$\hat{\mathcal{B}}^{-1}_1$};
\node at (1,5.25) {$\hat{\mathcal{A}}_2$};
\node at (-1,4.45) {$\hat{\mathcal{B}}_2$};
\node at (-2.35,2.41) {$\hat{\mathsf{B}}_2(\hat{\mathcal{A}}_2)^{-1}$};
\node at (-1.2,0.4) {$\hat{\mathcal{B}}^{-1}_2$};
\draw[dashed][blue][postaction={decorate}] (1.5,0) ..controls (2.1,1.4) .. (3.41,1.81)  ;
\node[blue] at (1.55,0.9) {\tiny{$\hat{\mathsf{B}}_1$}};
\draw[dashed][cyan][postaction={decorate}] (0.5,4.82) ..controls (0,3.4) .. (-1.41,2.91)  ;
\node[cyan] at (0.35,3.6) {\tiny{$\hat{\mathsf{B}}_2$}};
\endscope
\endtikzpicture
\caption{The domain $\hat D\subset \hat C$ as the interior of a polygon with boundary identifications.}\label{fig3}
\end{center}
\end{figure}

\begin{figure}[h!]
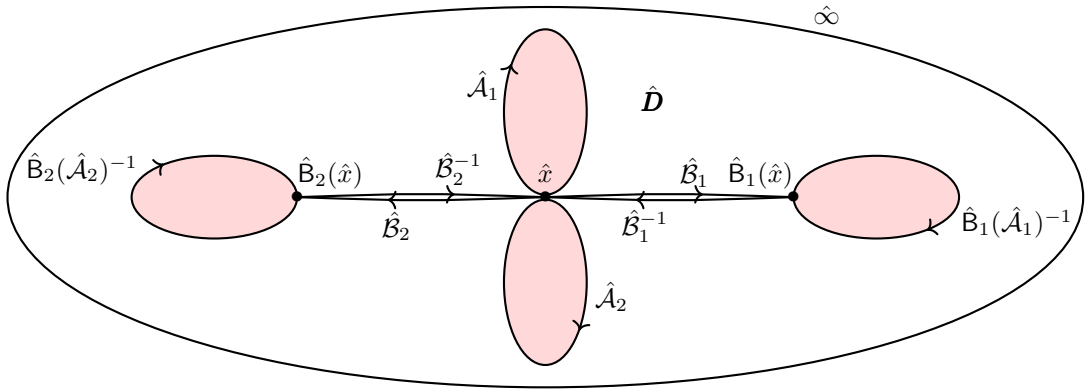

\begin{center}
\tikzpicture[scale=1.1]
\scope[xshift=-5cm,yshift=-0.4cm,decoration={
    markings,
    mark=at position 0.4 with {\arrow{<}}}]
\filldraw[color=black, fill=red!15, thick][postaction={decorate}] (0,-1.03) ellipse (-0.5 and -1);
\filldraw[color=black, fill=red!15, thick][postaction={decorate}] (0,1.03) ellipse (0.5 and 1);
\draw[thick][postaction={decorate}] (3,0) ..controls (1.5,0.05) .. (0,0) ;
\draw[thick][postaction={decorate}] (0,0) ..controls (1.5,-0.05) .. (3,0) ;
\draw[thick][postaction={decorate}] (-3,0) ..controls (-1.5,-0.05) .. (0,0) ;
\draw[thick][postaction={decorate}] (0,0) ..controls (-1.5,0.05) .. (-3,0) ;
\filldraw[color=black, fill=red!15, thick][postaction={decorate}] (-4,0) ellipse (1 and 0.5);
\filldraw[color=black, fill=red!15, thick][postaction={decorate}] (4,0) ellipse (-1 and -0.5);
\draw[thick] (0,0) ellipse (6.5 and 2.3);
\node at (0,0) {$\bullet$};
\node at (3,0) {$\bullet$};
\node at (-3,0) {$\bullet$};
\node at (0,0.3) {$\hat x$};
\node at (2.6,0.3) {$\hat{\mathsf{B}}_1(\hat x)$};
\node at (-2.6,0.3) {$\hat{\mathsf{B}}_2(\hat x)$};
\node at (-0.75,1.35) {$\hat{\mathcal{A}}_1$};
\node at (0.8,-1.2) {$\hat{\mathcal{A}}_2$};
\node at (1.8,0.3) {$\hat{\mathcal{B}}_1$};
\node at (1.2,-0.35) {$\hat{\mathcal{B}}_1^{-1}$};
\node at (-5.6,0.35) {$\hat{\mathsf{B}}_2(\hat{\mathcal{A}}_2)^{-1}$};
\node at (5.7,-0.3) {$\hat{\mathsf{B}}_1(\hat{\mathcal{A}}_1)^{-1}$};
\node at (-1.8,-0.35) {$\hat{\mathcal{B}}_2$};
\node at (-1.05,0.35) {$\hat{\mathcal{B}}_2^{-1}$};
\node at (1.3,1.2) {$\hat{\pmb D}$};
%\node at (2.7,1.4) {$\bullet$};
\node at (3.4,2.2) {$\hat{\infty}$};
\endscope
\endtikzpicture
\caption{The external boundary should be thought of as the point $\hat{\infty}$. The domain $\hat D\subset \hat C$ is the complement of the coloured regions.}\label{fig4}
\end{center}
\end{figure}

The Riemann surface structure on~$C$ induces a Riemann surface structure on~$\hat C$. The residue theorem on $\overline{\hat D}$ implies that for any meromorphic differential $\omega$ on $\hat C$
without poles on~$\partial\overline{\hat D}$, one has\footnote{We set 
$\mathrm i:=\sqrt{-1}$.}
\begin{equation}\label{residue:formula}
2\pi\mathrm{i}\sum_{P\in\hat D}\mathrm{res}_P(\omega)+\sum_{i=1}^h\int_{\hat{\mathcal A}_i}(\hat{\mathsf{B}}_i^*-1)(\omega)=0. 
\end{equation}

\subsection{Residues along diagonals and fundamental forms}\label{sec:backgroundres}

\subsubsection{Poincaré residues}\label{sec:res}

Let~$X$ be a complex manifold, $i:Y\hookrightarrow X$ be a smooth analytic hypersurface, and~$\omega_X$,~$\omega_Y$ be the respective canonical bundles. The adjunction
formula says that\footnote{For~$E$ a holomorphic vector bundle on~$X$ and~$D=\sum n_iY_i$ a divisor ($Y_i\subset X$ irreducible analytic hypersurfaces, $n_i\in\mathbb{Z}$), we set $E(D):=E\otimes \mathcal{O}_X(D)$, where $\mathcal{O}_X(D)$ is the line bundle of meromorphic functions with poles of order at most $n_i$ at $Y_i$ if $n_i$ is positive, and zeros of order at least $-n_i$ at $Y_i$ if $n_i$ is negative.} $i^*(\omega_X(Y)) \simeq \omega_Y$. It follows that, if~$E$ is a vector bundle on~$X$, then $i^*(E \otimes \omega_X(Y)) \simeq i^*(E) \otimes
\omega_Y$. This implies the existence of a map
$\textrm{res}_Y^E : \Gamma(X,E \otimes \omega_X(Y))\rightarrow \Gamma(Y,i^*(E) \otimes \omega_Y)$, called the \emph{Poincaré residue} (we will sometimes drop~$E$ from the notation).

\subsubsection{Residue for $(\Omega^1_C)^{\boxtimes 2}(2\Delta)/(\Omega^1_C)^{\boxtimes 2}$}

Let~$C$ be a closed Riemann surface, so that $\omega_C=\Omega^1_C$, and let~$\Delta$ be the diagonal in~$C^2$. Consider the exact sequence of sheaves of $\mathcal{O}_{C^2}$-modules
\begin{equation}\label{eq:210712n1}
0\longrightarrow (\Omega^1_C)^{\boxtimes 2}\longrightarrow (\Omega^1_C)^{\boxtimes 2}(2\Delta)\longrightarrow (\Omega^1_C)^{\boxtimes 2}(2\Delta)/(\Omega^1_C)^{\boxtimes 2}\longrightarrow 0\,.
\end{equation}
Observe that if $U \subset C$ is an open subset, then the class $[d_1d_2\log(F_U^{(1)}-F_U^{(2)})] \in \Gamma(U,(\Omega^1_C)^{\boxtimes 2}(2\Delta)/(\Omega^1_C)^{\boxtimes 2})$ is
independent of the choice of $F_U\in\Gamma(U,\mathcal{O}_C)$ such that~$dF_U$ does not vanish on~$U$; here $F_U^{(1)},F_U^{(2)} \in \Gamma(U^2,\mathcal{O}_{C^2})$ denote $F_U \otimes 1$ and $1\otimes F_U$, respectively, and $d_1,d_2$ denote the differentials on the first and on the second copy of~$C$ in~$C^2$, respectively. Moreover, these various sections glue together and define a canonical section in $\Gamma(C^2,(\Omega^1_C)^{\boxtimes 2}(2\Delta)/(\Omega^1_C)^{\boxtimes 2})$. Multiplication by this section gives rise to an isomorphism of sheaves of $\mathcal{O}_{C^2}$-modules
\begin{equation}\label{eq:210712n2}
\mathcal{O}_{C^2}/\mathcal{O}_{C^2}(-2\Delta)\,\,\tilde{\rightarrow} \,\,(\Omega^1_C)^{\boxtimes 2}(2\Delta)/(\Omega^1_C)^{\boxtimes 2}\,.
\end{equation}
A splitting of the exact sequence 
$$
0\longrightarrow \mathcal{O}_{C^2}(-\Delta)/\mathcal{O}_{C^2}(-2\Delta)\longrightarrow\mathcal{O}_{C^2}/\mathcal{O}_{C^2}(-2\Delta)\longrightarrow\mathcal{O}_{C^2}/\mathcal{O}_{C^2}(-\Delta)\longrightarrow 0
$$
of sheaves of vector spaces over~$C^2$ can be constructed as follows. For $U \subset C$ open subset, let $A_U:=\Gamma(U,\mathcal{O}_C)$ and $J_U:=\mathrm{ker}(m_U:A_U \otimes A_U
\to A_U)$. Then $\Gamma(U^2,\mathcal{O}_{C^2}/\mathcal{O}_{C^2}(-2\Delta)) \simeq (A_U \otimes A_U)/J_U^2$, while $\Gamma(U^2,\mathcal{O}_{C^2}/\mathcal{O}_{C^2}(-\Delta)) \simeq (A_U
\otimes A_U)/J_U \simeq A_U$, the last identification being induced by $m_U$. The map 
$$
\Gamma(U^2,\mathcal{O}_{C^2}/\mathcal{O}_{C^2}(-\Delta)) \simeq A_U \rightarrow (A_U\otimes A_U)/J_U^2 \simeq \Gamma(U^2,\mathcal{O}_{C^2}/\mathcal{O}_{C^2}(-2\Delta))\,,
$$
where the middle map is $a\mapsto a\otimes 1$, is a section of the canonical map
$$
\Gamma(U^2,\mathcal{O}_{C^2}/\mathcal{O}_{C^2}(-2\Delta))\rightarrow \Gamma(U^2,\mathcal{O}_{C^2}/\mathcal{O}_{C^2}(-\Delta))
$$
which induces an isomorphism
\begin{equation}\label{eq:210712n3}
\mathcal{O}_{C^2}/\mathcal{O}_{C^2}(-2\Delta) \simeq \mathcal{O}_{C^2}/\mathcal{O}_{C^2}(-\Delta) \oplus \mathcal{O}_{C^2}(-\Delta)/\mathcal{O}_{C^2}(-2\Delta)\,.
\end{equation}
of sheaves of vector spaces over $C^2$.

Combining the second morphism in~\eqref{eq:210712n1}, the inverse of~\eqref{eq:210712n2} and the isomorphism~\eqref{eq:210712n3}, one obtains a morphism of sheaves of vector spaces over $C^2$
\begin{equation*}
(\Omega^1_C)^{\boxtimes 2}(2\Delta)\,\rightarrow \,\mathcal{O}_{C^2}/\mathcal{O}_{C^2}(-\Delta) \oplus \mathcal{O}_{C^2}(-\Delta)/\mathcal{O}_{C^2}(-2\Delta)
\end{equation*}
which, upon taking global sections and identifying $\Gamma(C^2,\mathcal{O}_{C^2}/\mathcal{O}_{C^2}(-\Delta))\simeq \Gamma(C,\mathcal{O}_C)$ and $\Gamma(C^2,\mathcal{O}_{C^2}(-\Delta)/\mathcal{O}_{C^2}(-2\Delta))\simeq \Gamma(C,\Omega^1_C)$, induces a linear map
\begin{align*}
&\mathcal{R}\mathrm{es} : \Gamma(C^2,(\Omega^1_C)^{\boxtimes 2}(2\Delta))\,\rightarrow
\Gamma(C,\mathcal{O}_C) \oplus \Gamma(C,\Omega^1_C)\,.
\end{align*}

Using the isomorphism\footnote{Let~$N_{Y/X}$ denote the normal bundle of a complex submanifold $Y\subset X$, $\Delta$ be the diagonal in $X^2$ and $T^{1,0}\Delta$ be its holomorphic tangent bundle, then $N_{\Delta/X^2}\simeq T^{1,0}\Delta$. If the case of a Riemann surface~$C$, this implies that $N^*_{\Delta/C^2}\simeq \Omega^1_\Delta$. On the other hand, if $i:Y\hookrightarrow X$ is a smooth analytic hypersurface of a complex manifold $X$, then by the adjunction formula $N^*_{Y/X}\simeq i^*(\mathcal{O}_X(-Y))$, and therefore $N^*_{\Delta/C^2}\simeq i^*(\mathcal{O}_{C^2}(-\Delta))$. Combining these two remarks gives $i^*(\mathcal{O}_{C^2}(-\Delta)) \simeq \Omega^1_C$.} $i^*(\mathcal{O}_{C^2}(-\Delta))\simeq \Omega^1_C$, where $i:\Delta\hookrightarrow C^2$, one checks that the first component of $\mathcal{R}\mathrm{es}$ is equal to the Poincaré residue $\mathrm{res}_\Delta^{O_{C^2}(\Delta)}$.

If $\alpha\in \Gamma(C^2,(\Omega^1_C)^{\boxtimes 2}(2\Delta))$, then in a neighborhood of the diagonal one has $\alpha=a_{-2}(z)dzdw/(w-z)^2+a_{-1}(z)dzdw/(w-z)+O(1)dzdw$ in terms of local coordinates $z,w$ and holomorphic functions~$a_{-2}$ and~$a_{-1}$. One can check that $\mathcal{R}\mathrm{es}(\alpha)=(a_{-2}(z),a_{-1}(z)dz)$, so in particular $a_{-2}(z)$ and $a_{-1}(z)dz$ do not depend on the choice of local coordinates. For this reason, we will call $\mathcal{R}\mathrm{es}_{-2}(\alpha)$ and $\mathcal{R}\mathrm{es}_{-1}(\alpha)$ the first and second components of $\mathcal{R}\mathrm{es}(\alpha)$, respectively.

\subsubsection{Fundamental forms of the second and third kind}\label{sec:fundformIIkind}

In~\cite{Fay}, one defines an element $\varphi \in \Gamma(C^2,(\Omega^1_C)^{\boxtimes 2}(2\Delta))$ (denoted by $\omega$ in~\cite{Fay}, and by $\psi^{\underline{z}\underline{w}}$ in~\cite{EnrConfSp}), which satisfies $\mathcal{R}\mathrm{es}(\varphi)=(1,0)$, called the \emph{fundamental form of the second kind}.
It follows from~\cite{Fay}, eq.~(28) and~(ii) p.~16, that~$\varphi$ is invariant under exchanging the two factors of $C^2$. We will need two properties of $\varphi$, proven in~\cite{Fay}, Cor.~2.6~(iii), which we reformulate as follows. 

For any $(P,v)\in T^{1,0}C$, there is a well-defined map $\big|_{C\times (P,v)}: \Gamma(C^2,(\Omega^1_C)^{\boxtimes 2}(2\Delta))\to\Gamma(C,\Omega^1_C(2P))$ given by the pairing with $(P,v)$ in the second variable. Moreover, by the residue theorem, elements of $\Gamma(C,\Omega^1_C(2P))$ are differentials of the second kind, so that integration gives a pairing $H_1(C,\mathbb{Q})\times\Gamma(C,\Omega^1_C(2P))\to \mathbb{C}$ (for $\eta$ a loop in $C$ avoiding $P$, and $\alpha\in\Gamma(C,\Omega^1_C(2P))$, one has $\langle [\eta],\alpha\rangle=\int_\eta \alpha)$). Then one has $\langle A_i, \varphi\big|_{C\times (P,v)}\rangle =0$, $\langle B_i, \varphi\big|_{C\times (P,v)}\rangle =\omega_i(P,v)$, where $\omega_i(P,v)\in\mathbb{C}$ denotes the pairing between the holomorphic 1-form $\omega_i\in\Gamma(C,\Omega^1_C)$ introduced in \S\ref{def:omega} and $(P,v)$. We will therefore write $\int^{(1)}_{\mathcal{A}_i}\varphi=0$ and $\int^{(1)}_{\mathcal{B}_i}\varphi=\omega_i$, where the superscript $(1)$ means that we integrate in the first coordinate. Since~$\varphi$ is symmetric in its two arguments, one also has $\int^{(2)}_{\mathcal{A}_i}\varphi=0$ and $\int^{(2)}_{\mathcal{B}_i}\varphi=\omega_i$. 

Now for $r,s\in[\![1,n]\!]$ with $r\neq s$, let $\Delta_{rs}\subset C^n$ be the divisor associated with the diagonal $z_r=z_s$. 
For $r\in[\![1,n]\!]$, set $\Delta_r:=\sum_{s\neq r}\Delta_{rs}$  and let $\Delta:=\sum_{r<s}\Delta_{rs}$. 
Let~$\hat\Delta_{rs}$,~$\hat\Delta_r$ and~$\hat\Delta$ be the divisors of~$\hat C^n$, respectively defined as the
pull-backs of $\Delta_{rs}$, $\Delta_r$ and~$\Delta$ under the projection $\pi^n:\hat C^n\to C^n$. Let also $\Delta_{rs}(\hat C)$ be the divisor of~$\hat C^n$ associated 
with the diagonal $\hat z_r=\hat z_s$, so that $\hat \Delta_{rs}=\sum_{\theta\in\mathrm{Aut}(\hat C/C)}
\theta^{(r)}(\Delta_{rs}(\hat C))$, where $\theta^{(r)}$ denotes the automorphism of~$\hat C^n$ acting as~$\theta$ on the $r$-th component and as the identity on all the other ones.  

Consider the restriction $\varphi\big|_{(P,v)\times C}\in \Gamma(C,\Omega^1_{C}(2P))$. Since $\int_{\hat{\mathcal{A}}_i}\varphi\big|_{(P,v)\times C}={\rm res}_{\infty}\varphi\big|_{(P,v)\times C}=0$, then the assignment $(\hat{Q},\hat{R})\to\int^{\hat Q}_{\hat R}\varphi\big|_{(P,v)\times C}$ gives rise to an element of $\Gamma(\hat C^2,\mathcal{O}_{\hat C}^{\boxtimes 2}(\pi^{-1}(\pi(P))\times \hat C+\hat C\times\pi^{-1}(\pi(P))-\Delta(\hat C)))$, which we denote by $\int^\bullet_\bullet\varphi\big|_{(P,v)\times C}$. Moreover, the map $(P,v)\to \int^\bullet_\bullet\varphi\big|_{(P,v)\times C}$ is linear in~$v$ and analytic in~$(P,v)$, so it gives rise to an element in $\Gamma(\hat C^3,\Omega^1_{\hat C}\boxtimes\mathcal{O}_{\hat C}^{\boxtimes 2}(\hat \Delta_{12}+\hat \Delta_{13}-\Delta_{23}(\hat C)))$, which fixing the second and third coordinate is the pull-back of a meromorphic 1-form on~$C$. This element, which we denote by $\psi$ or by $(\int^\bullet_{\bullet})^{(2)}\varphi$, is called the \emph{fundamental form of the third kind}. Moreover, for any $P\in\hat C$ let
\begin{equation}
\psi_{P}\,:=\,\psi\big|_{\hat C^2\times P}\,=\,\Big(\int^\bullet_{P}\Big)^{(2)}\varphi\,\in\Gamma(\hat C^2,\Omega^1_{\hat C}\boxtimes\mathcal{O}_{\hat C}(\hat \Delta_{12}+\pi^{-1}(\pi(\hat{P}))\times\hat C-\hat C\times\hat P))\,.
\end{equation}
In the rest of the article, we will only be interested in $\psi_P$, especially in the case where $P=\hat\infty$, and we will call also such restrictions ``fundamental forms of the third kind''.

\subsubsection{Residues on the Schottky covering}\label{sec:resSchottky}

Let $\Delta(\hat C)$ denote the diagonal in $\hat C^2$, and let $\hat\Delta:=\pi^{-1}(\Delta)=\bigsqcup_{\theta\in\mathrm{Aut}(\hat C/C)}(\theta,1)(\Delta(\hat C))$. Pulling back by the map $\pi:\hat C\to C$ the first morphism of~\eqref{eq:210712n1}, which remains injective, and identifying $(\Omega^1_{\hat C})^{\boxtimes 2}\simeq \pi^*((\Omega^1_C)^{\boxtimes 2})$ and $(\Omega^1_{\hat C})^{\boxtimes 2}(2\hat\Delta)\simeq \pi^*((\Omega^1_C)^{\boxtimes 2}(2\Delta))$, one obtains a morphism
\begin{equation}\label{eq:210712n1'}
(\Omega^1_{\hat C})^{\boxtimes 2}(2\hat\Delta)\,\rightarrow\,(\Omega^1_{\hat C})^{\boxtimes 2}(2\hat\Delta)/ (\Omega^1_{\hat C})^{\boxtimes 2}\,.
\end{equation}
If we identify $\mathcal{O}_{\hat C^2}/\mathcal{O}_{\hat C^2}(-2\hat\Delta)\simeq (\pi^2)^*(\mathcal{O}_{C^2}/\mathcal{O}_{C^2}(-2\Delta))$, then the isomorphism~\eqref{eq:210712n2} gives rise to an isomorphism
\begin{equation}\label{eq:210712n2'}
\mathcal{O}_{\hat C^2}/\mathcal{O}_{\hat C^2}(-2\hat\Delta)\,\tilde{\rightarrow}\,(\Omega^1_{\hat C})^{\boxtimes 2}(2\hat\Delta)/(\Omega^1_{\hat C})^{\boxtimes 2}\,.
\end{equation}
A splitting of the exact sequence
$$
0\rightarrow \mathcal{O}_{\hat C^2}(-\hat\Delta)/\mathcal{O}_{\hat C^2}(-2\hat\Delta)\rightarrow \mathcal{O}_{\hat C^2}/\mathcal{O}_{\hat C^2}(-2\hat\Delta)\rightarrow \mathcal{O}_{\hat C^2}/\mathcal{O}_{\hat C^2}(-\hat\Delta)\rightarrow 0
$$
of sheaves of vector spaces over $\hat C^2$ can be constructed as follows. For $U\subset C$ open subset and $U_1,U_2$ lifts to the first and the second component of $\hat C^2$, let $A_U^i:
=\Gamma(U_i,\mathcal{O}_{\hat C})$ and $J_U^{12}:=\textrm{ker}(A_U^1 \otimes A_U^2 \to A_U)$. Then $\Gamma(U_1\times U_2,\mathcal{O}_{\hat C^2}/\mathcal{O}_{\hat C^2}(-2\hat\Delta)) \simeq (A_U^1
\otimes A_U^2)/(J_U^{12})^2$, while $\Gamma(U_1\times U_2,\mathcal{O}_{\hat C^2}/\mathcal{O}_{\hat C^2}(-\hat\Delta)) \simeq (A_U^1 \otimes A_U^2)/J_U \simeq A_U$. The map
$$
\Gamma(U_1\times U_2,\mathcal{O}_{\hat C^2}/\mathcal{O}_{\hat C^2}(-\hat\Delta)) \simeq A_U\, \to \,(A_U \otimes A_U)/J_U^2 \simeq \Gamma(U_1\times U_2,\mathcal{O}_{\hat C^2}/\mathcal{O}_{\hat C^2}(-2\hat\Delta))\,,
$$ 
where the middle map is $a\mapsto a \otimes 1$, is a section of the canonical map 
$$
\Gamma(U_1\times U_2,\mathcal{O}_{\hat C^2}/\mathcal{O}_{\hat C^2}(-2\hat\Delta))\to
\Gamma(U_1\times U_2,\mathcal{O}_{\hat C^2}/\mathcal{O}_{\hat C^2}(-\hat\Delta))\,.
$$
It follows that there is an isomorphism
\begin{equation}\label{eq:210712n3'}
\mathcal{O}_{\hat C^2}/\mathcal{O}_{\hat C^2}(-2\hat\Delta) \simeq \mathcal{O}_{\hat C^2}/\mathcal{O}_{\hat C^2}(-\hat\Delta) \oplus \mathcal{O}_{\hat C^2}(-\hat\Delta)/\mathcal{O}_{\hat C^2}(-2\hat\Delta)
\end{equation}
of sheaves of vector spaces over $\hat C^2$. 

Combining the second morphism in~\eqref{eq:210712n1'}, the inverse of~\eqref{eq:210712n2'} and the isomorphism~\eqref{eq:210712n3'}, one obtains a morphism of sheaves of vector spaces over $\hat C^2$
\begin{equation*}
(\Omega^1_{\hat C})^{\boxtimes 2}(2\hat\Delta)\,\to\,\mathcal{O}_{\hat C^2}/\mathcal{O}_{\hat C^2}(-\hat \Delta) \oplus \mathcal{O}_{\hat C^2}(-\hat \Delta)/\mathcal{O}_{\hat C^2}(-2\hat\Delta)
\end{equation*}
which, upon taking global sections and identifying\footnote{The notation $\Gamma(\ldots)^{\mathrm{Aut}(\hat C/C)}$ is to be intended in the set-theoretical sense. The map to the component of the target indexed by $\theta\in\mathrm{Aut}(\hat C/C)$ is induced by the restriction to the shifted diagonal $\{(P,\theta (P)), P \in \hat C\}$.} $\Gamma(\hat C^2,\mathcal{O}_{\hat C^2}/\mathcal{O}_{\hat C^2}(-\Delta))\simeq \Gamma(\hat C,\mathcal{O}_{\hat C})^{\mathrm{Aut}(\hat C/C)}$ and $\Gamma(\hat C^2,\mathcal{O}_{\hat C^2}(-\hat \Delta)/\mathcal{O}_{\hat C^2}(-2\hat \Delta))\simeq \Gamma(\hat C,(\Omega^1_{\hat C})^{\mathrm{Aut}(\hat C/C)}$, induces a linear map
$$
\widehat{\mathcal{R}\mathrm{es}}=\oplus_{\theta \in\mathrm{Aut}(\hat C/C)}\widehat{\mathcal{R}\mathrm{es}}^\theta : \Gamma(\hat C^2,(\Omega^1_{\hat C})^{\boxtimes 2}(2\hat\Delta))\,\to\, \Gamma(\hat C,\mathcal{O}_{\hat C})^{\mathrm{Aut}(\hat C/C)} \oplus \Gamma(\hat C,\Omega^1_{\hat C})^{\mathrm{Aut}(\hat C/C)}\,,
$$ 
whose first and second component will be denoted $\widehat{\mathcal{R}\mathrm{es}}_{-2}=\oplus_{\theta \in\mathrm{Aut}(\hat C/C)}\widehat{\mathcal{R}\mathrm{es}}_{-2}^\theta$ and $\widehat{\mathcal{R}\mathrm{es}}_{-1}=\oplus_{\theta \in\mathrm{Aut}(\hat C/C)}\widehat{\mathcal{R}\mathrm{es}}_{-1}^\theta$, respectively.

If $\alpha\in\Gamma(\hat C^2,(\Omega^1_{\hat C})^{\boxtimes 2}(2\hat\Delta))$, then in a neighborhood of any shifted diagonal $\{(P,\theta(P)), P \in \hat C\}$ there are holomorphic functions~$a^\theta_{-2}$ and~$a^\theta_{-1}$ such that, in local coordinates, $\alpha=a^{\theta}_{-2}(z)dzdw/(w-z)^2+a_{-1}^\theta (z)dzdw/(w-z)+O(1)dzdw$. One can check that $\widehat{\mathcal{R}\mathrm{es}}^\theta(\alpha)=(a^{\theta}_{-2}(z),a_{-1}^\theta (z)dz)$, and therefore $a^{\theta}_{-2}(z)=\widehat{\mathcal{R}\mathrm{es}}^\theta_{-2}(\alpha)$ and $a_{-1}^\theta (z)dz=\widehat{\mathcal{R}\mathrm{es}}^\theta_{-1}(\alpha)$ do not depend on the choice of local coordinates. We conclude with an explicitly coordinate-independent formula for $\widehat{\mathcal{R}\mathrm{es}}^\theta_{-1}$:
\begin{lem}\label{lem:210714n1}
Let $\mathcal{C}(P)$ denote a positive oriented small circle around a point~$P$. If $\alpha\in\Gamma(\hat C^2,(\Omega^1_{\hat C})^{\boxtimes 2}(2\hat \Delta))$, $\theta\in\mathrm{Aut}(\hat C/C)$ and $(P_0,v_0)\in T^{1,0}C$, then
\begin{equation*}
\widehat{\mathcal{R}\mathrm{es}}^\theta_{-1}(\alpha)(P_0,v_0)=\int_{\mathcal{C}(\theta(P_0))}\alpha\big|_{(P_0,v_0)\times C}\,.
\end{equation*}
\end{lem}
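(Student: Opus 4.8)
The plan is to reduce the asserted identity to a purely local residue computation at the single point $\theta(P_0)$ of the Schottky cover, and then to invoke Cauchy's theorem. First I would fix a lift $\hat P_0\in\pi^{-1}(P_0)$ of~$P_0$ to~$\hat C$, together with the tangent vector at~$\hat P_0$ induced from~$v_0$ through the local biholomorphism~$\pi$, and observe that $\alpha\big|_{(P_0,v_0)\times C}$ is a globally defined meromorphic $1$-form in the second variable on~$\hat C$, whose polar set is the discrete collection $\{\theta'(\hat P_0)\mid\theta'\in\mathrm{Aut}(\hat C/C)\}$ at which the slice $\{\hat P_0\}\times\hat C$ meets the various shifted diagonals composing~$\hat\Delta$, with a pole of order at most~$2$ at each such point. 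Consequently, for a sufficiently small circle $\mathcal C(\theta(P_0))$ enclosing only the pole at $\theta(\hat P_0)$, the integral $\int_{\mathcal C(\theta(P_0))}\alpha\big|_{(P_0,v_0)\times C}$ recovers the residue of this $1$-form at $\theta(\hat P_0)$, the contour-integral normalisation being the one compatible with the residue theorem~\eqref{residue:formula}.

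Next I would compute this residue in adapted coordinates. Choosing a holomorphic coordinate~$z$ near~$\hat P_0$ in the first factor and the coordinate $w:=z\circ\theta^{-1}$ near $\theta(\hat P_0)$ in the second factor turns the $\theta$-shifted diagonal $\{(P,\theta(P))\}$ into the locus $\{w=z\}$, so that the expansion recorded just before the statement applies:
\[
\alpha=a^\theta_{-2}(z)\,\frac{dz\,dw}{(w-z)^2}+a^\theta_{-1}(z)\,\frac{dz\,dw}{w-z}+O(1)\,dz\,dw,
\]
with $\widehat{\mathcal{R}\mathrm{es}}^\theta_{-1}(\alpha)=a^\theta_{-1}(z)\,dz$. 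Setting $z=z(\hat P_0)=:z_0$ and contracting the first differential against~$v_0$ then exhibits the local form of $\alpha\big|_{(P_0,v_0)\times C}$ near $w=z_0=w(\theta(\hat P_0))$, namely
\[
dz(v_0)\,\Big(a^\theta_{-2}(z_0)\,(w-z_0)^{-2}+a^\theta_{-1}(z_0)\,(w-z_0)^{-1}+O(1)\Big)\,dw.
\]
The double-pole and holomorphic terms have vanishing residue at $w=z_0$, so the residue equals $a^\theta_{-1}(z_0)\,dz(v_0)$, which is exactly $\widehat{\mathcal{R}\mathrm{es}}^\theta_{-1}(\alpha)(P_0,v_0)$.

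Two points require care, and one of them is the real obstacle. The routine point is that restricting the first variable to~$\hat P_0$ and contracting against~$v_0$ is an intrinsic operation on the slice $\{\hat P_0\}\times\hat C$, so that the residue of the resulting $1$-form at $\theta(\hat P_0)$ is coordinate-independent and may legitimately be evaluated in the convenient coordinate~$w$ above; the smallness of $\mathcal C(\theta(P_0))$ ensures it isolates the single relevant pole among the discrete polar set. The main thing to justify is precisely that the simple-pole coefficient extracted by the contour integral is the \emph{same} datum as $\widehat{\mathcal{R}\mathrm{es}}^\theta_{-1}(\alpha)$: a priori a generic change of the $w$-coordinate would mix the double-pole term into the simple-pole term, and one must use the coordinate-independence of $a^\theta_{-1}(z)\,dz$ asserted in the paragraph preceding the statement, together with the special compatibility $w=z\circ\theta^{-1}$ that forces the shifted diagonal to be $\{w=z\}$, to guarantee that the $(w-z)^{-2}$ term produces no spurious residue after the specialisation $z=z_0$. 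Once this is secured, the identity follows by combining the localisation of the first step with the residue computation of the second.
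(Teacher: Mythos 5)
Your proof is correct and takes essentially the same route as the paper's: you choose a coordinate $z$ near a lift of $P_0$, transport it by $\theta$ to a coordinate $w$ near $\theta(\hat P_0)$ so that the shifted diagonal becomes $\{w=z\}$, read off the local expansion to identify $\widehat{\mathcal{R}\mathrm{es}}^\theta_{-1}(\alpha)=a^\theta_{-1}(z)\,dz$, and evaluate the (implicitly $\tfrac{1}{2\pi\mathrm i}$-normalised) circle integral by Cauchy's theorem, exactly as in the paper. Your extra care --- fixing the lift $\hat P_0\in\pi^{-1}(P_0)$ explicitly, contracting against $v_0$ via $dz(v_0)$ rather than writing $v_0=\partial/\partial w$, and invoking the coordinate-independence of $a^\theta_{-1}(z)\,dz$ to rule out contamination from the double-pole term --- only makes explicit points the paper's proof leaves implicit.
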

\begin{proof}
Let $\hat U$ be a neighborhood of $P_0$, and fix a local coordinate on $\hat U$ which vanishes at $P_0$. This induces a local coordinate $w$ on $\theta(\hat U)$, and we obtain a coordinate system $(z,w)$ on $\hat U\times \theta(\hat U)$ such that, locally, $\alpha=a^{\theta}_{-2}(z)dzdw/(w-z)^2+a_{-1}^\theta (z)dzdw/(w-z)+O(1)dzdw$. On the one hand we know that $\widehat{\mathcal{R}\mathrm{es}}^\theta_{-1}(\alpha)=a_{-1}^\theta (z)dz$, and therefore if $v_0=\partial/\partial z$ one finds $\widehat{\mathcal{R}\mathrm{es}}^\theta_{-1}(\alpha)(P_0,v_0)=a_{-1}^\theta(0)$. On the other hand, if $v_0=\partial/\partial w$ then $\alpha\big|_{(P_0,v_0)\times C}=a^{\theta}_{-2}(0)dw/w^2+a_{-1}^\theta (0)dw/w+O(1)dw$, and therefore its integral over $\mathcal{C}(\theta(P_0))$ is also equal to $a_{-1}^\theta(0)$, by Cauchy's theorem, thus proving our statement.
\end{proof}

\subsection{The $1$-form $\pmb K$}\label{sec:introofformK}

\subsubsection{The $1$-form $\pmb K^{[1]}$}

\begin{prop}[see\footnote{In \cite{EnrConfSp} the 1-form $\pmb K^{[1]}$ is denoted by $\alpha_1$.} \cite{EnrConfSp}, Prop. 11]\label{prop:412:2303}
There exists a unique
$$
\pmb K^{[1]}\in\Gamma(\hat C^n,\Omega^1_{\hat C}\boxtimes\mathcal O_{\hat C}^{\boxtimes n-1}(\hat\Delta_1))
\,\hat\otimes\,\hat{\mathfrak t}_{h,n}, 
$$ satisfying the following conditions:\footnote{Here and elsewhere, when we consider the action of $(\hat{\mathsf{B}}_i^{(r)})^*$ on $\pmb K^{[1]}$, as well as its residue and its integral, we are considering by abuse of notation $\pmb K^{[1]}$ as a $\hat{\mathfrak t}_{h,n}$-valued 1-form, rather than an element of $\Gamma((\hat C^n,\Omega^1_{\hat C}\boxtimes\mathcal O_{\hat C}^{\boxtimes n-1}(\hat\Delta_1))
\,\hat\otimes\,\hat{\mathfrak t}_{h,n}$.}\\
(a) For every $r\in[\![1,n]\!]$ and for every $i\in[\![1,h]\!]$
\begin{equation}\label{tag1:2302}
(\hat{\mathsf{B}}_i^{(r)})^*\,\pmb K^{[1]}\,=\,e^{\mathrm{ad}(b_i^{(r)})}\,(\pmb K^{[1]})\,.
\end{equation}
(b) For every $r\in[\![2,n]\!]$
\begin{equation}\label{tag2:2302}
\mathrm{res}_{\Delta_{1r}(\hat C)}(\pmb K^{[1]})\,=\,{1\over{2\pi\mathrm{i}}}\,t_{1r}\,.
\end{equation}
(c) If\footnote{The statement of Prop.~11 should be corrected by the inclusion of this condition.} 
$\textbf{P}\in\hat D^{n-1}\cap \mathrm{Cf}_{n-1}(\hat C)$, for every $i\in[\![1,h]\!]$
\begin{equation}\label{tag3:2302}
\int_{\hat{\mathcal A}_i}\pmb K^{[1]}\big|_{\hat C\times \{\pmb P\}}\,=\,{{\mathrm{ad}(b_i^{(1)})}\over{e^{\mathrm{ad}(b_i^{(1)})}-1}}(a_i^{(1)})\,.
\end{equation}
\end{prop}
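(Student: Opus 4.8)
The plan is to prove existence and uniqueness simultaneously by induction on the degree $d$ for the grading of $\hat{\mathfrak t}_{h,n}$, writing $\pmb K^{[1]}=\sum_{d\geq1}\pmb K^{[1]}_d$ with each $\pmb K^{[1]}_d$ valued in the finite-dimensional space $\mathfrak t_{h,n}[d]$. Since $b_i^{(r)}$ has degree $1$, the operator $e^{\mathrm{ad}(b_i^{(r)})}-1$ raises degrees, so in degree $d$ condition (a) prescribes the additive quasi-period $(\hat{\mathsf B}_i^{(r)})^*\pmb K^{[1]}_d-\pmb K^{[1]}_d$ in terms of the already-constructed terms $\pmb K^{[1]}_{<d}$; condition (b) prescribes a simple pole along each $\Delta_{1r}(\hat C)$ with residue $\tfrac1{2\pi\mathrm i}t_{1r}$, which is nonzero only in degree $2$ because $t_{1r}$ is homogeneous of degree $2$; and condition (c) prescribes the $\hat{\mathcal A}_i$-period to be the degree-$d$ part of $\tfrac{\mathrm{ad}(b_i^{(1)})}{e^{\mathrm{ad}(b_i^{(1)})}-1}(a_i^{(1)})$. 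Thus at fixed $\pmb P\in\hat D^{n-1}\cap\mathrm{Cf}_{n-1}(\hat C)$ each step is the problem of producing a meromorphic $1$-form on $\hat C$, equivariant in the sense of (a), with prescribed principal parts at the points $\hat z_2,\dots,\hat z_n$ of $\hat D$ and prescribed $\hat{\mathcal A}_i$-periods.

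For uniqueness I would take the difference $\delta$ of two solutions. It obeys (a) with the same exponential law and has vanishing residues and vanishing $\hat{\mathcal A}_i$-periods. Let $d_0$ be the lowest degree with $\delta_{d_0}\neq0$; the correction terms in (a) involve only strictly lower degrees, which vanish, so $(\hat{\mathsf B}_i^{(r)})^*\delta_{d_0}=\delta_{d_0}$ for all $i,r$. Hence $\delta_{d_0}|_{\hat C\times\{\pmb P\}}$ is a $1$-form on $\hat C$ with no diagonal poles (the vanishing residue kills the only admissible, simple, poles) and no poles at $\hat\infty$ (not allowed by the ambient sheaf), invariant under $\mathrm{Aut}(\hat C/C)=\langle\hat{\mathsf B}_i\rangle$; it therefore descends through the Galois covering $\pi$ to a holomorphic $1$-form on the compact curve $C$ all of whose $A_i$-periods vanish. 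As the $\omega_i$ are dual to the $A_i$-cycles, such a form is $0$, contradicting the choice of $d_0$; so $\delta=0$.

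For existence I would construct $\pmb K^{[1]}_d$ inductively, the sole obstruction at each step being the global residue relation~\eqref{residue:formula} in the first variable, i.e. $2\pi\mathrm i\sum_{r=2}^n\mathrm{res}_{\Delta_{1r}(\hat C)}(\pmb K^{[1]})+\sum_i\int_{\hat{\mathcal A}_i}\big((\hat{\mathsf B}_i^{(1)})^*-1\big)(\pmb K^{[1]})=0$. The crucial observation is that, once (a), (b), (c) are imposed, the left-hand side telescopes: $(\hat{\mathsf B}_i^{(1)})^*-1=e^{\mathrm{ad}(b_i^{(1)})}-1$ cancels the denominator in the period dictated by (c), so the whole expression collapses to $\sum_{r=2}^n t_{1r}+\sum_i[b_i^{(1)},a_i^{(1)}]$, which is exactly $0$ by relation (ii), eq.~\eqref{reltgnii}, for $r=1$. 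Because the collapsed quantity lives only in degree $2$, the obstruction vanishes identically in every degree $d\neq2$ and reduces to~\eqref{reltgnii} in degree $2$; hence every inductive step is solvable. Concretely the holomorphic degree-$1$ term is $\sum_i(\mathrm{pr}_1^*\omega_i)\,a_i^{(1)}$; the polar part in degree $2$ is realized by $\tfrac1{2\pi\mathrm i}\sum_{r\geq2}\psi_{\hat\infty}\,t_{1r}$ placed in the first and $r$-th variables (the fundamental form of the third kind from~\S\ref{sec:fundformIIkind} has a simple pole of residue $1$ and vanishing $\hat{\mathcal A}_i$-periods); and at each degree the quasi-periods and $\hat{\mathcal A}_i$-periods not yet accounted for are matched by adding suitable equivariant differentials, the resulting freedom being pinned down uniquely just as in the uniqueness step.

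The step I expect to be the main obstacle is precisely this compatibility check: showing that conditions (a), (b), (c) do not over-determine $\pmb K^{[1]}$. Its heart is the telescoping computation above, in which the Bernoulli-type generating function $\tfrac{x}{e^x-1}$ appearing in (c) is engineered so that the residue relation~\eqref{residue:formula} reduces, in all degrees, to the single Lie-algebra relation~\eqref{reltgnii}. A secondary technical point is to realize the exponential transformation law (a) exactly, to all orders, rather than only to leading order; this I would handle by assembling the building blocks $\varphi$ and $\psi_{\hat\infty}$ together with their known behaviour under the deck transformations $\hat{\mathsf B}_i$ and descent through the Schottky covering $\pi:\hat C\to C$.
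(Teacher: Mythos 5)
Your uniqueness argument and your compatibility computation are both sound, and the latter is exactly the mechanism this paper has in mind: the remark immediately following the proposition says that, via~\eqref{residue:formula}, conditions (a) and (b) (or (a) and (c)) suffice to determine $\pmb K^{[1]}$, and your telescoping $(e^{\mathrm{ad}(b_i^{(1)})}-1)\,\tfrac{\mathrm{ad}(b_i^{(1)})}{e^{\mathrm{ad}(b_i^{(1)})}-1}(a_i^{(1)})=[b_i^{(1)},a_i^{(1)}]$, which together with $2\pi\mathrm{i}\sum_P\mathrm{res}_P=\sum_{r\geq 2}t_{1r}$ collapses the residue relation to~\eqref{reltgnii}, is precisely why the Bernoulli-type generating function appears in (c) (compare the $\mathsf b_{m-1}$ in~\eqref{eq:210516n1}). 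Be aware, though, that the paper does not reprove this statement at all: it imports it from \cite{EnrConfSp}, Prop.~11, where existence is established not by an abstract degree-by-degree obstruction argument but by an explicit construction — the families $\omega_{P,i_1\ldots i_m}$ and $\psi_{P,i_1\ldots i_m}$ built recursively out of Fay's $\varphi$, with transformation laws~\eqref{eq:210503n1}, \eqref{eq:210503n2}, \eqref{eq:210503n3} — and $\pmb K^{[1]}$ is then the explicit sum over nested brackets recorded in~\eqref{eq:decompK1} and Lemma~\ref{lem:210503multi}(a).

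The genuine gap in your proposal sits at the sentence ``the quasi-periods and $\hat{\mathcal A}_i$-periods not yet accounted for are matched by adding suitable equivariant differentials.'' That every prescribed additive quasi-period defect — a cocycle for the free group $\langle\hat{\mathsf B}_i\rangle=\mathrm{Aut}(\hat C/C)$ with values in $1$-forms on $\hat C$, holomorphic in the remaining variables — is realized by an actual global meromorphic $1$-form, with the freedom to also adjust $\hat{\mathcal A}_i$-periods, is the entire analytic content of the existence proof, and it is not automatic: the naive approach of averaging or forming Poincaré series over the infinite group runs into convergence problems for weight-one series on a Schottky cover, which is exactly why \cite{EnrConfSp} proceeds via integral-kernel recursions against $\varphi$ and $\psi$ rather than by soft cohomological reasoning. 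Your degree-$1$ term $\sum_i(\mathrm{pr}_1^*\omega_i)\,a_i^{(1)}$ and the degree-$2$ polar part $\tfrac{1}{2\pi\mathrm{i}}\psi_{\hat\infty}^{\{1r\}}\,t_{1r}$ are correct, and you point to the right building blocks, but as written the general inductive step is asserted rather than proved. Two smaller points to make explicit: first, condition (b) prescribes residues only along $\Delta_{1r}(\hat C)$ while the sheaf allows simple poles along all translates $\theta^{(1)}(\Delta_{1r}(\hat C))$, so you should note that (a) propagates the residue data to the translates, and that your count $2\pi\mathrm{i}\sum_{P\in\hat D}\mathrm{res}_P=\sum_{r\geq2}t_{1r}$ is valid because for $\pmb P\in\hat D^{n-1}$ only the identity translate meets $\hat D$; second, since (c) is imposed only for $\pmb P\in\hat D^{n-1}\cap\mathrm{Cf}_{n-1}(\hat C)$, your uniqueness step should invoke analytic continuation in $\pmb P$ to conclude $\delta_{d_0}=0$ on all of $\hat C^n$ — a one-line fix, but currently missing.
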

\begin{rem}
Using formula~\eqref{residue:formula}, one can show that the conditions (a) and (b) (or (a) and (c)) are sufficient to uniquely determine $\pmb K^{[1]}$.
\end{rem}
 
\subsubsection{The $1$-forms $\pmb K^{[r]}$ and $\pmb K$}

The symmetric group~$S_n$ acts by permutation on~$\hat C^n$, and this action lifts to an action on the bundle  
$\Omega^1_{\hat C^n}$. Consider the decomposition $\Omega^1_{\hat C^n}=\oplus_{r=1}^n(\Omega^1_{\hat C})^{[r]}$, 
where $(\Omega^1_{\hat C})^{[r]}:=\mathcal O_{\hat C}^{\boxtimes r-1}\boxtimes
\Omega^1_{\hat C}\boxtimes\mathcal O_{\hat C}^{\boxtimes n-r}$ , then any $\sigma\in S_n$ sets up an isomorphism 
$(\Omega^1_{\hat C})^{[r]}\to (\Omega^1_{\hat C})^{[\sigma(r)]}$. This decomposition extends to a diagram of 
bundles $\oplus_{r=1}^n(\Omega^1_{\hat C})^{[r]}(\hat\Delta_r)\hookrightarrow
\Omega^1_{\hat C^n}(\hat\Delta)$. 
%, with $\hat\Delta_r$ as in~\eqref{eqDeltai} and
%$$
%\hat\Delta:=\sum_{\substack{1\leq r<s\leq n\\ \theta\in
%\mathrm{Aut}(\hat C/C)}}\theta^{(r)}(\hat\Delta_{rs}).
%$$ 
The 
action of~$S_n$ extends to  
$\Omega^1_{\hat C^n}(\hat\Delta)$ as well as to isomorphisms 
$(\Omega^1_{\hat C})^{[r]}(\hat\Delta_r)\to (\Omega^1_{\hat C})^{[\sigma(r)]}(\hat\Delta_{\sigma(r)})$. 

The group~$S_n$ also acts by Lie algebra automorphisms of $\hat{\mathfrak t}_{h,n}$ (permutation of the indices in $[\![1,n]\!]$). 
It follows that~$S_n$ acts on both sides of the inclusion 
$$
\oplus_{r=1}^n\Gamma_r\hookrightarrow
\Gamma(\hat C^n,\Omega^1_{\hat C^n}(\hat\Delta))\,\hat\otimes\,\hat{\mathfrak t}_{h,n}, 
$$
where $\Gamma_r:=\Gamma(\hat C^n,(\Omega^1_{\hat C})^{[r]}(\hat\Delta_r))\,\hat\otimes\,
\hat{\mathfrak t}_{h,n}$, and that $\sigma\in S_n$ induces an isomorphism $\Gamma_r\to\Gamma_{\sigma(r)}$. 

View~$S_{n-1}$ as the subgroup of~$S_n$ of all permutations which leave $1\in[\![1,n]\!]$ fixed. Then~$S_{n-1}$ acts on~$\Gamma_1$. We recall that $\pmb K^{[1]}\in\Gamma_1$.

\begin{lem}\label{Omega1:invt}
$\pmb K^{[1]}$ is~$S_{n-1}$-invariant.  
\end{lem}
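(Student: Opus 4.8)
The plan is to invoke the uniqueness statement of the Remark following Proposition~\ref{prop:412:2303}: since $\pmb K^{[1]}$ is the \emph{only} element of $\Gamma_1$ satisfying conditions~(a) and~(b), it suffices to fix $\sigma\in S_{n-1}$ and to check that the transported section $\sigma(\pmb K^{[1]})$ again lies in $\Gamma_1$ and satisfies~(a) and~(b); uniqueness then forces $\sigma(\pmb K^{[1]})=\pmb K^{[1]}$. The membership $\sigma(\pmb K^{[1]})\in\Gamma_1$ is exactly the assertion, recalled just before the statement, that $\sigma$ carries $\Gamma_r$ to $\Gamma_{\sigma(r)}$, applied with $r=1$ using $\sigma(1)=1$: concretely, $\sigma$ permutes the components $\hat\Delta_{1s}$, $s\neq 1$, of the pole divisor $\hat\Delta_1$ among themselves, so the transported section still has its $1$-form in the first slot and poles only along $\hat\Delta_1$. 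It is precisely here that the restriction to $S_{n-1}\subset S_n$, rather than all of $S_n$, is used.

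The verification of~(a) rests on two compatibilities of the $S_n$-action. Geometrically, $\sigma$ conjugates the automorphism $\hat{\mathsf{B}}_i^{(r)}$ of $\hat C^n$ to $\hat{\mathsf{B}}_i^{(\sigma(r))}$; on the Lie algebra side, $\sigma$ sends $b_i^{(r)}$ to $b_i^{(\sigma(r))}$, hence intertwines $e^{\mathrm{ad}(b_i^{(r)})}$ with $e^{\mathrm{ad}(b_i^{(\sigma(r))})}$. Writing the action of $\sigma$ as the composite of the pullback along the coordinate permutation with the index permutation on $\hat{\mathfrak t}_{h,n}$, these two operators commuting since they act on different tensor factors, one computes
\begin{equation*}
(\hat{\mathsf{B}}_i^{(r)})^*\,\sigma(\pmb K^{[1]})\,=\,\sigma\big((\hat{\mathsf{B}}_i^{(\sigma^{-1}(r))})^*\,\pmb K^{[1]}\big)\,=\,\sigma\big(e^{\mathrm{ad}(b_i^{(\sigma^{-1}(r))})}(\pmb K^{[1]})\big)\,=\,e^{\mathrm{ad}(b_i^{(r)})}\big(\sigma(\pmb K^{[1]})\big),
\end{equation*}
where the middle equality is~(a) for $\pmb K^{[1]}$ at the index $\sigma^{-1}(r)$, valid for every $r\in[\![1,n]\!]$. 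This is~(a) for $\sigma(\pmb K^{[1]})$.

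For~(b) one uses the $S_{n-1}$-equivariance of the residue together with the fact that, since $\sigma(1)=1$, the diagonal $\Delta_{1r}(\hat C)$ is carried to $\Delta_{1\sigma(r)}(\hat C)$ and the generator $t_{1r}$ to $t_{1\sigma(r)}$. Because $\sigma$ permutes $[\![2,n]\!]$, applying condition~(b) for $\pmb K^{[1]}$ at the index $\sigma^{-1}(r)\in[\![2,n]\!]$ and transporting by $\sigma$ gives
\begin{equation*}
\mathrm{res}_{\Delta_{1r}(\hat C)}\big(\sigma(\pmb K^{[1]})\big)\,=\,\sigma\Big(\mathrm{res}_{\Delta_{1,\sigma^{-1}(r)}(\hat C)}(\pmb K^{[1]})\Big)\,=\,\sigma\Big({1\over{2\pi\mathrm{i}}}\,t_{1,\sigma^{-1}(r)}\Big)\,=\,{1\over{2\pi\mathrm{i}}}\,t_{1r},
\end{equation*}
which is~(b) for $\sigma(\pmb K^{[1]})$ and holds for every $r\in[\![2,n]\!]$. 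Hence $\sigma(\pmb K^{[1]})$ satisfies~(a) and~(b), and by the uniqueness in the Remark following Proposition~\ref{prop:412:2303} it equals $\pmb K^{[1]}$; as $\sigma\in S_{n-1}$ was arbitrary, this proves the invariance. The one point requiring care is the precise bookkeeping of the equivariance of the residue $\mathrm{res}_{\Delta_{1r}(\hat C)}$ under the combined geometric-and-Lie-algebraic action, in particular checking that the transport of the pole divisor and the permutation of the $t_{1s}$ shift indices by the same amount; this is exactly where the hypothesis $\sigma(1)=1$ enters decisively.
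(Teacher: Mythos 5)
Your proof is correct and is exactly the paper's argument: the paper's one-line proof (``this follows from the invariance of the conditions defining~$\pmb K^{[1]}$'') is precisely your strategy of transporting $\pmb K^{[1]}$ by $\sigma\in S_{n-1}$, checking that $\sigma(\pmb K^{[1]})$ still lies in $\Gamma_1$ and satisfies conditions~(a) and~(b), and invoking the uniqueness from the Remark after Proposition~\ref{prop:412:2303}. Your version merely makes explicit the equivariance bookkeeping (conjugation of $\hat{\mathsf{B}}_i^{(r)}$ to $\hat{\mathsf{B}}_i^{(\sigma(r))}$, the residue computation, and the role of $\sigma(1)=1$) that the paper leaves implicit.
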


\proof This follows from the invariance of the conditions defining~$\pmb K^{[1]}$. \hfill\qed\medskip

\begin{defn}\label{defn:K}
For $r\in[\![2,n]\!]$, we define $\pmb K^{[r]}:=(1r)\,\pmb K^{[1]}\in\Gamma_r$, and we set
$$
\pmb K:=\sum_{r=1}^n \pmb K^{[r]}\in
\Gamma(\hat C^n,\Omega^1_{\hat C^n}(\hat\Delta))\,\hat\otimes\,\hat{\mathfrak t}_{h,n}.
$$ 
\end{defn}

\begin{lem}
$\pmb K$ is~$S_n$-invariant.
\end{lem}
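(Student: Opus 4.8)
The plan is to deduce the $S_n$-invariance of $\pmb K=\sum_{r=1}^n\pmb K^{[r]}$ from the single equivariance identity
\begin{equation*}
\sigma\,\pmb K^{[r]}=\pmb K^{[\sigma(r)]}\qquad\text{for all }\sigma\in S_n\text{ and all }r\in[\![1,n]\!].
\end{equation*}
Once this is established, applying $\sigma$ termwise and reindexing the resulting sum by $s=\sigma(r)$ yields $\sigma\,\pmb K=\sum_{r}\pmb K^{[\sigma(r)]}=\sum_s\pmb K^{[s]}=\pmb K$, which is exactly the asserted invariance. Thus the whole argument reduces to proving the displayed equivariance.

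To prove it, I would first record that Definition~\ref{defn:K} may be written uniformly as $\pmb K^{[r]}=(1r)\,\pmb K^{[1]}$ for every $r\in[\![1,n]\!]$, adopting the convention $(11)=\mathrm{id}$ so that the case $r=1$ is included tautologically. Applying $\sigma$ and using that the $S_n$-action is a (left) action, the identity to be proven becomes $\bigl(\sigma\,(1r)\bigr)\pmb K^{[1]}=\bigl(1\,\sigma(r)\bigr)\pmb K^{[1]}$, equivalently
\begin{equation*}
\tau\,\pmb K^{[1]}=\pmb K^{[1]},\qquad \tau:=\bigl(1\,\sigma(r)\bigr)\,\sigma\,(1r),
\end{equation*}
where I have used that a transposition is its own inverse.

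The key computation is then that $\tau$ fixes the index $1$: reading the composition right-to-left, $(1r)$ sends $1\mapsto r$, then $\sigma$ sends $r\mapsto\sigma(r)$, and finally $(1\,\sigma(r))$ sends $\sigma(r)\mapsto 1$, so that $\tau(1)=1$. Hence $\tau$ lies in the subgroup $S_{n-1}\subset S_n$ stabilising $1$, and Lemma~\ref{Omega1:invt} gives $\tau\,\pmb K^{[1]}=\pmb K^{[1]}$, completing the proof of the equivariance identity and thus of the lemma.

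I do not expect any serious obstacle here; the content is entirely bookkeeping. The only points requiring care are the two conventions on which everything hinges: that $\sigma$ carries $\Gamma_r$ isomorphically onto $\Gamma_{\sigma(r)}$ (so that $\sigma\,\pmb K^{[r]}$ naturally lands in $\Gamma_{\sigma(r)}$ and can legitimately be compared with $\pmb K^{[\sigma(r)]}$), and that the action is composed as a left action so that $\sigma\,(1r)$ really is the product I claim. A correct handling of the degenerate case $r=1$, and more generally of permutations $\sigma$ that move $1$, via the convention $(11)=\mathrm{id}$ is the one place where a careless write-up could slip.
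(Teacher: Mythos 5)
Your proof is correct and is precisely the argument the paper intends: the paper's proof consists of the single line that the lemma ``follows from Lemma~\ref{Omega1:invt}'', and your reduction of $\sigma\,\pmb K^{[r]}=\pmb K^{[\sigma(r)]}$ to the fact that $\tau=(1\,\sigma(r))\,\sigma\,(1r)$ stabilises $1$ is exactly the bookkeeping that citation leaves implicit. Your handling of the degenerate cases via the convention $(11)=\mathrm{id}$ and your remark that $\sigma$ maps $\Gamma_r$ to $\Gamma_{\sigma(r)}$ are the right points of care.
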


\proof This follows from Lemma~\ref{Omega1:invt}. \hfill\qed\medskip 

\begin{prop}\label{propospropertiesK}
$\pmb K$ has the following properties: 

(a) $\forall r\in[\![1,n]\!],\,\forall i\in[\![1,h]\!]$, $(\hat{\mathsf{B}}_i^{(r)})^*\pmb K=e^{\mathrm{ad}(b_i^{(r)})}(\pmb K)$, 

(b) $\forall r<s\in[\![1,n]\!]$, $\mathrm{res}_{\Delta_{rs}(\hat C)}(\pmb K)={1\over{2\pi\mathrm{i}}}t_{rs}$, 

(c) $d\pmb K=\pmb K\wedge \pmb K=0$. 
\end{prop}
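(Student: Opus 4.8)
The plan is to deduce all three properties from the corresponding facts about $\pmb K^{[1]}$ in Proposition~\ref{prop:412:2303}, exploiting that $\pmb K=\sum_{r=1}^n\pmb K^{[r]}$ with $\pmb K^{[r]}=(1r)\,\pmb K^{[1]}$ and that the combined $S_n$-action (permutation of the factors of $\hat C^n$ together with permutation of the upper indices of $\hat{\mathfrak t}_{h,n}$) is compatible with the operations appearing in the statement. Concretely, for $\sigma\in S_n$ one has the intertwining relations $(\hat{\mathsf B}_i^{(r)})^*\circ\sigma=\sigma\circ(\hat{\mathsf B}_i^{(\sigma^{-1}(r))})^*$ and $\sigma\circ e^{\mathrm{ad}(b_i^{(r)})}=e^{\mathrm{ad}(b_i^{(\sigma(r))})}\circ\sigma$, while $\sigma$ maps the diagonal $\Delta_{1u}(\hat C)$ to $\Delta_{\sigma(1)\sigma(u)}(\hat C)$; these are the only inputs required for (a) and (b).

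For part (a) I would argue termwise. Writing $\sigma=(1s)$, so that $\pmb K^{[s]}=\sigma\cdot\pmb K^{[1]}$ and $\sigma=\sigma^{-1}$, the intertwining relations give
\begin{equation*}
(\hat{\mathsf B}_i^{(r)})^*\pmb K^{[s]}
=\sigma\cdot(\hat{\mathsf B}_i^{(\sigma(r))})^*\pmb K^{[1]}
=\sigma\cdot e^{\mathrm{ad}(b_i^{(\sigma(r))})}(\pmb K^{[1]})
=e^{\mathrm{ad}(b_i^{(r)})}(\sigma\cdot\pmb K^{[1]})
=e^{\mathrm{ad}(b_i^{(r)})}(\pmb K^{[s]}),
\end{equation*}
the second equality being Proposition~\ref{prop:412:2303}(a) applied to the index $\sigma(r)$. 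Summing over $s$ yields (a). It is crucial here that Proposition~\ref{prop:412:2303}(a) holds for \emph{every} index $r$, not merely $r=1$.

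For part (b) I would first observe that among the summands only $\pmb K^{[r]}$ and $\pmb K^{[s]}$ can be singular along $\Delta_{rs}(\hat C)$, since $\pmb K^{[t]}\in\Gamma_t$ has poles only along $\hat\Delta_t$, whose diagonal components are the $\Delta_{tu}(\hat C)$. By $S_n$-equivariance and Proposition~\ref{prop:412:2303}(b), the polar part of $\pmb K^{[r]}$ along $\Delta_{rs}(\hat C)$ is $\tfrac{1}{2\pi\mathrm i}\,t_{rs}\,d\hat z_r/(\hat z_r-\hat z_s)$ and that of $\pmb K^{[s]}$ is $\tfrac{1}{2\pi\mathrm i}\,t_{sr}\,d\hat z_s/(\hat z_s-\hat z_r)$; using $t_{rs}=t_{sr}$ (a consequence of the relations of $\hat{\mathfrak t}_{h,n}$), their sum is $\tfrac{1}{2\pi\mathrm i}\,t_{rs}\,d(\hat z_r-\hat z_s)/(\hat z_r-\hat z_s)$, whose Poincaré residue along $\Delta_{rs}(\hat C)$ is $\tfrac{1}{2\pi\mathrm i}\,t_{rs}$. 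The point demanding care is exactly this matching of the two naive one-variable residues into a single logarithmic differential: neither $\pmb K^{[r]}$ nor $\pmb K^{[s]}$ possesses a Poincaré residue along $\Delta_{rs}(\hat C)$ in isolation, as their $d\hat z_s$-, resp.\ $d\hat z_r$-components are not logarithmic, and it is precisely their antisymmetric combination that is.

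Part (c) is the main obstacle, being the flatness statement underlying the connection $\nabla_{\mathcal P}$ of~\cite{EnrConfSp}, and I would split it into the two independent assertions $d\pmb K=0$ and $\pmb K\wedge\pmb K=0$ (for a $1$-form the latter equals $\tfrac12[\pmb K,\pmb K]$, so their sum is the Maurer--Cartan relation). Writing $\pmb K^{[r]}=f^{[r]}\,d\hat z_r$ with meromorphic coefficient, one has $d\pmb K=\sum_{r<s}\big(\partial_{\hat z_s}f^{[r]}-\partial_{\hat z_r}f^{[s]}\big)\,d\hat z_s\wedge d\hat z_r$, so $d\pmb K=0$ amounts to the integrability identities $\partial_{\hat z_s}f^{[r]}=\partial_{\hat z_r}f^{[s]}$, which follow from the symmetry of the fundamental form of the second kind $\varphi$ recalled in \S\ref{sec:fundformIIkind} (this symmetry governs the mixed dependence of $\pmb K^{[r]}$ on $\hat z_s$). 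On the other hand $\pmb K\wedge\pmb K=\sum_{r<s}[f^{[r]},f^{[s]}]\,d\hat z_r\wedge d\hat z_s$, and its vanishing follows, in the manner of the classical KZ and elliptic KZB flatness computations, from the defining relations (i)--(iii) of $\hat{\mathfrak t}_{h,n}$ together with the residue and period properties of the fundamental forms. Both computations reproduce the content behind the flatness of $\nabla_{\mathcal P}$ established in~\cite{EnrConfSp}, and I expect the genuine work of the proof to reside entirely in this last step.
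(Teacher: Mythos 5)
Your proposal is correct and takes essentially the same route as the paper, whose proof simply observes that (a) and (b) are direct consequences of Proposition~\ref{prop:412:2303} -- via the $S_n$-equivariant definition $\pmb K^{[r]}=(1r)\,\pmb K^{[1]}$, which is exactly the transport argument you spell out -- and cites \cite{EnrConfSp}, \S 6 for (c). Your elaborations (that Proposition~\ref{prop:412:2303}(a) must hold for every index $r$, and that along $\Delta_{rs}(\hat C)$ the polar parts of $\pmb K^{[r]}$ and $\pmb K^{[s]}$ only combine into a logarithmic differential thanks to $t_{rs}=t_{sr}$) faithfully fill in what the paper leaves implicit, while your outline of (c) ultimately defers, as the paper itself does, to the flatness computation of \cite{EnrConfSp}.
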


\proof (a) and~(b) are direct consequences of Proposition~\ref{prop:412:2303}, while~(c) is contained in~\cite{EnrConfSp},~\S6. 
\hfill\qed\medskip  

As a consequence, $d+\pmb K$ is a regular singular meromorphic flat connection on the trivial 
$\mathrm{exp}(\hat{\mathfrak t}_{h,n})$-bundle on~$\hat C^n$, and it is the lift of a regular singular meromorphic flat connection~$\nabla_\mathcal{P}$ on the 
$\mathrm{exp}(\hat{\mathfrak t}_{h,n})$-bundle~$\mathcal{P}$ over~$C^n$ defined in \S\ref{sssec:consP}.

\subsection{Construction of $g_{\tilde x,\pmb\beta}$ and 
$\pmb g_{\tilde x,\pmb\beta}$}\label{sec:finalcorrection}

In \S\ref{ssec:defgtildexbeta} we have constructed a map $\tilde g_{\tilde x,\pmb\beta}:\tilde C\setminus p^{-1}(\infty)\to \exp({\rm Lie}(b_1,\ldots ,b_h)^\wedge)$ such that, for any $i\in[\![1,h]\!]$, 
\begin{align}
\mathsf A_i^*\,\tilde g_{\tilde x,\pmb\beta}&\,=\,\tilde g_{\tilde x,\pmb\beta}\,,\label{eq:31032021A}\\
\mathsf B_i^*\,\tilde g_{\tilde x,\pmb\beta}&\,=\,\tilde g_{\tilde x,\pmb\beta}\,e^{-b_i}. \label{eq:31032021B}
\end{align}

\begin{lem}\label{lemma:hatg:06042021}
There exists a unique map $g_{\tilde x,\pmb\beta}:\hat C\setminus\pi^{-1}(\infty)\to\mathrm{exp}(\mathrm{Lie}(b_1,
\ldots,b_h)^\wedge)$, such that $\hat p^*g_{\tilde x,\pmb\beta}=\tilde g_{\tilde x,\pmb\beta}$. 
\end{lem}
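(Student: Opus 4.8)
The plan is to show that $\tilde g_{\tilde x,\pmb\beta}$ is invariant under the group of deck transformations of the covering $\hat p:\tilde C\to\hat C$, so that it descends along $\hat p$; the map $g_{\tilde x,\pmb\beta}$ is then furnished by the universal property of the quotient, and its uniqueness is immediate because $\hat p$ is surjective, hence $\hat p^*$ is injective on functions.

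First I would set up the covering of punctured surfaces. Since $\hat C=\tilde C/\mathrm{can}_e^{-1}(N)$, the deck group $\mathrm{Aut}(\tilde C/\hat C)$ equals $\mathrm{can}_e^{-1}(N)$, where $N\triangleleft\langle A_i,B_i\rangle/(\prod_i(A_i,B_i))$ is the normal subgroup generated by the classes of the $A_i$. The factorisation $p=\pi\circ\hat p$ gives $p^{-1}(\infty)=\hat p^{-1}(\pi^{-1}(\infty))$, so $\hat p$ restricts to a covering $\tilde C\setminus p^{-1}(\infty)\to\hat C\setminus\pi^{-1}(\infty)$ with the same deck group $\mathrm{can}_e^{-1}(N)$; this is the covering along which I want to descend $\tilde g_{\tilde x,\pmb\beta}$.

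The key step is to verify this invariance. By Proposition~\ref{prop:tildeg}, for every $\theta\in\mathrm{Aut}(\tilde C/C)$ one has $\theta^*\tilde g_{\tilde x,\pmb\beta}=\tilde g_{\tilde x,\pmb\beta}\cdot(\rho_0\circ\mathrm{can}_e(\theta))^{-1}$, so it suffices to prove that $\rho_0\circ\mathrm{can}_e(\theta)=1$ for every $\theta\in\mathrm{can}_e^{-1}(N)$, that is, that $N\subset\ker(\rho_0)$. Now $\ker(\rho_0)$ is a normal subgroup of $\langle A_i,B_i\rangle/(\prod_i(A_i,B_i))$ containing each $A_i$, since $\rho_0(A_i)=1$ by the definition of $\rho_0$ in \S\ref{ssec:26}; as $N$ is the normal subgroup generated by the $A_i$, the inclusion $N\subset\ker(\rho_0)$ follows. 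Note that although~\eqref{eq:31032021A} already gives invariance of $\tilde g_{\tilde x,\pmb\beta}$ under each generator $\mathsf A_i$ individually, the normal closure $\mathrm{can}_e^{-1}(N)$ also contains conjugates of the $\mathsf A_i$, so one genuinely needs the covariance law of Proposition~\ref{prop:tildeg} here. We conclude that $\theta^*\tilde g_{\tilde x,\pmb\beta}=\tilde g_{\tilde x,\pmb\beta}$ for all $\theta\in\mathrm{can}_e^{-1}(N)$.

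To finish, the invariance just established makes the assignment sending $\hat y$ to $\tilde g_{\tilde x,\pmb\beta}(\tilde y)$, for any $\tilde y\in\hat p^{-1}(\hat y)$, well defined, producing the desired map $g_{\tilde x,\pmb\beta}$ with $\hat p^*g_{\tilde x,\pmb\beta}=\tilde g_{\tilde x,\pmb\beta}$; holomorphicity of $g_{\tilde x,\pmb\beta}$ follows from that of $\tilde g_{\tilde x,\pmb\beta}$ because $\hat p$ is a local biholomorphism. There is no analytic obstacle: the entire content of the lemma is the group-theoretic inclusion $N\subset\ker(\rho_0)$. The one point where I would be careful is the bookkeeping of the groups, namely confirming that the deck group of $\hat p$ is exactly $\mathrm{can}_e^{-1}(N)$ and that $\rho_0$ is evaluated on $\mathrm{can}_e(\theta)\in N$ rather than on $\theta$ itself, so that $N\subset\ker(\rho_0)$ is indeed the relevant reduction.
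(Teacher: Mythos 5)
Your proof is correct and follows essentially the same route as the paper, which simply cites the invariance $\mathsf A_i^*\,\tilde g_{\tilde x,\pmb\beta}=\tilde g_{\tilde x,\pmb\beta}$ of~\eqref{eq:31032021A} together with the identification $\hat C=\tilde C/\mathrm{can}_e^{-1}(N)$. In fact you are more careful than the paper's one-line argument: you explicitly close the gap between invariance under the generators $\mathsf A_i$ and invariance under their normal closure $\mathrm{can}_e^{-1}(N)$, via the covariance law of Proposition~\ref{prop:tildeg} and the inclusion $N\subset\ker(\rho_0)$, which is exactly the point the paper leaves implicit.
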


\proof This follows from~\eqref{eq:31032021A} and from the identification of~$\hat C$ with the quotient of~$\tilde C$ by the normal subgroup of $\mathrm{Aut}(\tilde C/C)$ generated by the~$\mathsf A_i$, $i\in[\![1,h]\!]$.  
\hfill\qed\medskip 

\begin{defn}
We define $\pmb g_{\tilde x,\pmb\beta} : (\hat C\setminus \pi^{-1}(\infty))^n\to\mathrm{exp}(\hat{\mathfrak t}_{h,n})$
by setting
\begin{equation*}
\pmb g_{\tilde x,\pmb\beta}\,:=\,
g^{\{1\}}_{\tilde x,\pmb\beta}\cdots g^{\{n\}}_{\tilde x,\pmb\beta}, 
\end{equation*}
where the notation $\bullet^{\{r\}}$ was introduced in \S\ref{sssec:trivP}.
\end{defn}

\begin{lem}\label{lemmamonodromycorrection}
For any $r\in[\![1,n]\!]$ and $i\in[\![1,h]\!]$, one has 
\begin{equation*}
(\hat{\mathsf B}_i^{(r)})^*\,\pmb g_{\tilde x,\pmb\beta}
\,=\,\pmb g_{\tilde x,\pmb\beta}\,e^{-b_i^{(r)}}\,. 
\end{equation*}
\end{lem}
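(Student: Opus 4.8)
The plan is to reduce the statement for the multivariable function $\pmb g_{\tilde x,\pmb\beta}=g^{\{1\}}_{\tilde x,\pmb\beta}\cdots g^{\{n\}}_{\tilde x,\pmb\beta}$ to a single-variable monodromy relation for $g_{\tilde x,\pmb\beta}$ on $\hat C\setminus\pi^{-1}(\infty)$, exactly mirroring the argument of Lemma~\ref{lemme:transporté}. First I would establish that
\begin{equation*}
\hat{\mathsf B}_i^*\,g_{\tilde x,\pmb\beta}\,=\,g_{\tilde x,\pmb\beta}\,e^{-b_i}
\end{equation*}
as maps $\hat C\setminus\pi^{-1}(\infty)\to\exp(\mathrm{Lie}(b_1,\ldots,b_h)^\wedge)$. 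To see this, I would pull back by $\hat p:\tilde C\to\hat C$ and use the intertwining $\hat p\circ\mathsf B_i=\hat{\mathsf B}_i\circ\hat p$ together with Lemma~\ref{lemma:hatg:06042021}: this gives $\hat p^*(\hat{\mathsf B}_i^*g_{\tilde x,\pmb\beta})=\mathsf B_i^*(\hat p^*g_{\tilde x,\pmb\beta})=\mathsf B_i^*\tilde g_{\tilde x,\pmb\beta}$, which by~\eqref{eq:31032021B} equals $\tilde g_{\tilde x,\pmb\beta}\,e^{-b_i}=\hat p^*(g_{\tilde x,\pmb\beta}\,e^{-b_i})$. Since $\hat p$ is a surjective covering and both sides descend to $\hat C$, the relation follows.

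Next I would compute the effect of $(\hat{\mathsf B}_i^{(r)})^*$ on each factor $g^{\{s\}}_{\tilde x,\pmb\beta}=\mathrm{pr}_s^*\,g^{(s)}_{\tilde x,\pmb\beta}$. Because $\hat{\mathsf B}_i^{(r)}$ acts as the identity on all components except the $r$-th, the factors with $s\neq r$ are unchanged: $(\hat{\mathsf B}_i^{(r)})^*g^{\{s\}}_{\tilde x,\pmb\beta}=g^{\{s\}}_{\tilde x,\pmb\beta}$. For $s=r$, using $\mathrm{pr}_r\circ\hat{\mathsf B}_i^{(r)}=\hat{\mathsf B}_i\circ\mathrm{pr}_r$ and the fact that the map $(r):\exp(\hat{\mathfrak G})\to\exp(\hat{\mathfrak t}_{h,n})$ from Lemma~\ref{lemconf} is a group morphism, the displayed single-variable relation gives
\begin{equation*}
(\hat{\mathsf B}_i^{(r)})^*\,g^{\{r\}}_{\tilde x,\pmb\beta}\,=\,\mathrm{pr}_r^*\big((r)\circ(g_{\tilde x,\pmb\beta}\,e^{-b_i})\big)\,=\,g^{\{r\}}_{\tilde x,\pmb\beta}\cdot e^{-b_i^{(r)}}\,,
\end{equation*}
the constant $e^{-b_i^{(r)}}$ being pulled out to the right.

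Finally I would assemble these into
\begin{equation*}
(\hat{\mathsf B}_i^{(r)})^*\,\pmb g_{\tilde x,\pmb\beta}\,=\,g^{\{1\}}_{\tilde x,\pmb\beta}\cdots\big(g^{\{r\}}_{\tilde x,\pmb\beta}\,e^{-b_i^{(r)}}\big)\cdots g^{\{n\}}_{\tilde x,\pmb\beta}\,,
\end{equation*}
and commute the factor $e^{-b_i^{(r)}}$ to the far right past $g^{\{r+1\}}_{\tilde x,\pmb\beta},\ldots,g^{\{n\}}_{\tilde x,\pmb\beta}$. This last move is the only point requiring care, and it is where I expect the real content to sit: each $g^{\{s\}}_{\tilde x,\pmb\beta}$ with $s\neq r$ takes values in the subgroup generated by the $b_j^{(s)}$, and relation~(iii) of $\mathfrak t_{h,n}$ gives $[b_i^{(r)},b_j^{(s)}]=0$ for $r\neq s$, so $e^{-b_i^{(r)}}$ commutes with all such factors. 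This yields $(\hat{\mathsf B}_i^{(r)})^*\,\pmb g_{\tilde x,\pmb\beta}=\pmb g_{\tilde x,\pmb\beta}\,e^{-b_i^{(r)}}$, as desired.
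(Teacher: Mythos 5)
Your proposal is correct and follows essentially the same route as the paper's proof: descending the relation $\mathsf B_i^*\tilde g_{\tilde x,\pmb\beta}=\tilde g_{\tilde x,\pmb\beta}\,e^{-b_i}$ to $\hat C$ via Lemma~\ref{lemma:hatg:06042021} and~\eqref{eq:31032021B}, applying $(\hat{\mathsf B}_i^{(r)})^*$ factor by factor, and sliding $e^{-b_i^{(r)}}$ to the right using $[b_i^{(r)},b_j^{(s)}]=0$ for $s\neq r$. The only difference is that you spell out the covering-space descent argument that the paper leaves implicit in its citation of Lemma~\ref{lemma:hatg:06042021}.
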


\proof One has %\textcolor{red}{FZ: modified}  
\begin{align*}
(\hat{\mathsf B}_i^{(r)})^*\,\pmb g_{\tilde x,\pmb\beta}
&\,=\,g^{\{1\}}_{\tilde x,\pmb\beta}
\cdots
{\rm pr}_r^*\,((r)\,\circ\,\hat{\mathsf B}_i^*\,g_{\tilde x,\pmb\beta})
\cdots g^{\{n\}}_{\tilde x,\pmb\beta}
\\ & 
\,=\,g^{\{1\}}_{\tilde x,\pmb\beta}
\cdots
{\rm pr}_r^*\,((r)\,\circ \,g_{\tilde x,\pmb\beta}\,e^{-b_i})
\cdots g^{\{n\}}_{\tilde x,\pmb\beta}
\\ & \,=\,g^{\{1\}}_{\tilde x,\pmb\beta}
\cdots g^{\{r\}}_{\tilde x,\pmb\beta}\,e^{-b_i^{(r)}}
\cdots g^{\{n\}}_{\tilde x,\pmb\beta}
\\ & 
\,=\,g^{\{1\}}_{\tilde x,\pmb\beta}
\cdots g^{\{r\}}_{\tilde x,\pmb\beta}
\cdots g^{\{n\}}_{\tilde x,\pmb\beta}\,e^{-b_i^{(r)}}
\,=\,\pmb g_{\tilde x,\pmb\beta}
\,e^{-b_i^{(r)}}
\end{align*}
where the second equality follows from Lemma~\ref{lemma:hatg:06042021} and~\eqref{eq:31032021B}, and the fourth equality follows from the fact that~$b_i^{(r)}$ commutes with the image of~$(s)$ for any $s\neq r$. 
\hfill\qed\medskip 

\subsection{The connection $d+\pmb J_{\tilde x,\pmb\beta}$}\label{ssec:connecJ}

We define\footnote{\label{footnote210506}For any $\xi\in \exp(\hat{\mathfrak t}_{h,n})$, let~${\rm Ad}_\xi$ denote the image of~$\xi$ via the group morphism ${\rm Ad}:\exp(\hat{\mathfrak t}_{h,n})\to {\rm GL}(\hat{\mathfrak t}_{h,n})$ induced by the adjoint action of $\exp(\hat{\mathfrak t}_{h,n})$ on~$\hat{\mathfrak t}_{h,n}$, so that ${\rm Ad}_\xi(X)=\xi X\xi^{-1}$ for any $X\in \hat{\mathfrak t}_{h,n}$.}
\begin{align*}
\pmb I_{\tilde x,\pmb\beta}\,&:=\,\pmb g_{\tilde x,\pmb\beta}
\,d(\pmb g_{\tilde x,\pmb\beta})^{-1}\,,\\
\pmb H_{\tilde x,\pmb\beta}\,&:=\,{\rm Ad}_{\pmb g_{\tilde x,\pmb\beta}}\pmb K\,.
\end{align*}

The fact that $\pmb g_{\tilde x,\pmb\beta}$ is a regular map $(\hat C\setminus\pi^{-1}(\infty))^n\to
\mathrm{exp}(\hat{\mathfrak t}_{h,n})$ implies that $\pmb I_{\tilde x,\pmb\beta}$ belongs to 
$\Gamma((\hat C\setminus\pi^{-1}(\infty))^n,
\Omega^1_{\hat C^n})\,\hat\otimes\,\hat{\mathfrak t}_{h,n}$, while the fact that $\pmb K$ belongs to 
$\Gamma(\hat C^n,\Omega^1_{\hat C^n}(\hat\Delta))\,\hat\otimes\,\hat{\mathfrak t}_{h,n}$ implies that 
$\pmb H_{\tilde x,\pmb\beta}$ belongs to $\Gamma((\hat C\setminus\pi^{-1}(\infty))^n,
\Omega^1_{\hat C^n}(\hat\Delta))\,\hat\otimes\,\hat{\mathfrak t}_{h,n}$.  The lemma below shows that stronger statements hold true. 
%Let $\Delta$ be the divisor of $(C\setminus\infty)^n$ of equations $z_r=z_s$ for any $1\leq r<s\leq n$, so that $\hat \Delta=(\pi^n)^{-1}(\Delta)$.
\begin{lem}\label{lemmamonIH}
(a) $\pmb I_{\tilde x,\pmb\beta}$ belongs\footnote{This is a slight abuse of notation, justified by the existence of a natural injection from $\Gamma((C\setminus\infty)^n,
\Omega^1_{C^n})$ into $\Gamma((\hat C\setminus\pi^{-1}(\infty))^n,
\Omega^1_{\hat C^n})$.} to $\Gamma((C\setminus\infty)^n,
\Omega^1_{C^n})\,\hat\otimes\,\hat{\mathfrak t}_{h,n}$. 

(b) $\pmb H_{\tilde x,\pmb\beta}$ belongs\footnote{This is a slight abuse of notation, justified by the existence of a natural injection from $\Gamma((C\setminus\infty)^n,\Omega^1_{C^n}(\Delta))$ into $\Gamma((\hat C\setminus\infty)^n,\Omega^1_{\hat C^n}(\hat\Delta))$.} to $\Gamma((C\setminus\infty)^n,\Omega^1_{C^n}(\Delta))\,\hat\otimes\,\hat{\mathfrak t}_{h,n}$. 
\end{lem}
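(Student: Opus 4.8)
The plan is to exploit that $\pi\colon\hat C\to C$ is a Galois covering with deck group $\mathrm{Aut}(\hat C/C)$; removing $\pi^{-1}(\infty)$ upstairs and $\infty$ downstairs restricts it to a Galois covering $\hat C\setminus\pi^{-1}(\infty)\to C\setminus\infty$ with the same group, and taking $n$-fold products yields a Galois covering $(\hat C\setminus\pi^{-1}(\infty))^n\to(C\setminus\infty)^n$ with deck group $\mathrm{Aut}(\hat C/C)^n$, the $r$-th factor acting through $\theta\mapsto\theta^{(r)}$. By descent, a $\hat{\mathfrak t}_{h,n}$-valued (possibly meromorphic) holomorphic form on the total space is the pull-back of a form on the base if and only if it is invariant under this deck group. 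Since $\mathrm{Aut}(\hat C/C)$ is freely generated by the $\hat{\mathsf B}_i$, the product group is generated by the $\hat{\mathsf B}_i^{(r)}$ ($i\in[\![1,h]\!]$, $r\in[\![1,n]\!]$), and because $\theta\mapsto(\theta^{(r)})^*$ turns composition into anti-composition of pull-backs, invariance under these generators already implies invariance under all of $\mathrm{Aut}(\hat C/C)^n$. As the statements preceding the lemma already place $\pmb I_{\tilde x,\pmb\beta}$ and $\pmb H_{\tilde x,\pmb\beta}$ in $\Gamma((\hat C\setminus\pi^{-1}(\infty))^n,\Omega^1_{\hat C^n})\,\hat\otimes\,\hat{\mathfrak t}_{h,n}$ and $\Gamma((\hat C\setminus\pi^{-1}(\infty))^n,\Omega^1_{\hat C^n}(\hat\Delta))\,\hat\otimes\,\hat{\mathfrak t}_{h,n}$ respectively, it remains only to verify this generator-wise invariance.

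For part (a), I would compute $(\hat{\mathsf B}_i^{(r)})^*\pmb I_{\tilde x,\pmb\beta}$ directly. Writing $c:=e^{-b_i^{(r)}}$, which is a \emph{constant} element of $\exp(\hat{\mathfrak t}_{h,n})$ (so $dc=0$), Lemma~\ref{lemmamonodromycorrection} gives $(\hat{\mathsf B}_i^{(r)})^*\pmb g_{\tilde x,\pmb\beta}=\pmb g_{\tilde x,\pmb\beta}\,c$. Since pull-back commutes with $d$ and with group inversion, and $(\pmb g_{\tilde x,\pmb\beta}\,c)^{-1}=c^{-1}\pmb g_{\tilde x,\pmb\beta}^{-1}$,
\begin{align*}
(\hat{\mathsf B}_i^{(r)})^*\pmb I_{\tilde x,\pmb\beta}
&=\pmb g_{\tilde x,\pmb\beta}\,c\;d\big(c^{-1}\,\pmb g_{\tilde x,\pmb\beta}^{-1}\big)
=\pmb g_{\tilde x,\pmb\beta}\,c\,c^{-1}\,d\big(\pmb g_{\tilde x,\pmb\beta}^{-1}\big)
=\pmb I_{\tilde x,\pmb\beta},
\end{align*}
the constant factor dropping out because it is annihilated by $d$. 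This proves the invariance, hence part (a).

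For part (b), I would similarly use that $\mathrm{Ad}$ is a group homomorphism. Combining Lemma~\ref{lemmamonodromycorrection} with Proposition~\ref{propospropertiesK}(a), which reads $(\hat{\mathsf B}_i^{(r)})^*\pmb K=e^{\mathrm{ad}(b_i^{(r)})}(\pmb K)=\mathrm{Ad}_{e^{b_i^{(r)}}}\pmb K$, one gets
\begin{align*}
(\hat{\mathsf B}_i^{(r)})^*\pmb H_{\tilde x,\pmb\beta}
&=\mathrm{Ad}_{\pmb g_{\tilde x,\pmb\beta}\,e^{-b_i^{(r)}}}\big(\mathrm{Ad}_{e^{b_i^{(r)}}}\pmb K\big)
=\mathrm{Ad}_{\pmb g_{\tilde x,\pmb\beta}}\,\mathrm{Ad}_{e^{-b_i^{(r)}}}\mathrm{Ad}_{e^{b_i^{(r)}}}\pmb K
=\mathrm{Ad}_{\pmb g_{\tilde x,\pmb\beta}}\pmb K
=\pmb H_{\tilde x,\pmb\beta},
\end{align*}
the two monodromy factors cancelling exactly. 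Thus $\pmb H_{\tilde x,\pmb\beta}$ is deck-invariant and descends. For the pole structure, I would invoke that $\hat\Delta=(\pi^n)^{-1}(\Delta)$ is itself deck-invariant and maps onto $\Delta$, so the descended form acquires poles only along $\Delta$, yielding a section of $\Omega^1_{C^n}(\Delta)$ over $(C\setminus\infty)^n$.

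I do not expect a serious obstacle: the whole argument is the descent principle together with two short cancellations. The only points requiring care are the bookkeeping that the deck group of $(\hat C\setminus\pi^{-1}(\infty))^n\to(C\setminus\infty)^n$ is exactly $\mathrm{Aut}(\hat C/C)^n$ with the $\theta^{(r)}$-action, and the correct matching of left-multiplication (in (a)) versus conjugation (in (b)) factors so that the $e^{\pm b_i^{(r)}}$ terms genuinely cancel; these are precisely the places where Lemma~\ref{lemmamonodromycorrection} and Proposition~\ref{propospropertiesK}(a) enter.
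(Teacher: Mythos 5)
Your proposal is correct and follows essentially the same route as the paper: the paper's proof likewise reduces both parts to invariance under the generators $\hat{\mathsf B}_i^{(r)}$, deducing (a) from the fact that $\pmb g_{\tilde x,\pmb\beta}^{-1}(\hat{\mathsf B}_i^{(r)})^*\pmb g_{\tilde x,\pmb\beta}=e^{-b_i^{(r)}}$ is constant (Lemma~\ref{lemmamonodromycorrection}) and (b) from combining that lemma with Proposition~\ref{propospropertiesK}(a), exactly your two cancellations. Your explicit spelling-out of the descent principle for the product cover with deck group $\mathrm{Aut}(\hat C/C)^n$ and of the pole bookkeeping along $\hat\Delta=(\pi^n)^{-1}(\Delta)$ merely makes explicit what the paper leaves implicit.
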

\begin{proof}
We need to prove that $(\hat{\mathsf B}_i^{(r)})^*\pmb I_{\tilde x,\pmb\beta}=\pmb I_{\tilde x,\pmb\beta}$ and that 
$(\hat{\mathsf B}_i^{(r)})^*\pmb H_{\tilde x,\pmb\beta}=\pmb H_{\tilde x,\pmb\beta}$ for any $r\in[\![1,n]\!]$ and any 
$i\in[\![1,h]\!]$. For the first statement it is enough to show that $\pmb g_{\tilde x,\pmb\beta}^{-1}(\hat{\mathsf B}_i^{(r)})^*\pmb g_{\tilde x,\pmb\beta}$ is constant, which follows from Lemma~\ref{lemmamonodromycorrection}. The second statement follows by combining Lemma~\ref{lemmamonodromycorrection} with part (a) of Proposition~\ref{propospropertiesK}.
\end{proof}

\begin{defn} We set
\begin{equation*}
\pmb J_{\tilde x,\pmb\beta}\,:=\,\pmb I_{\tilde x,\pmb\beta}\,+\,\pmb H_{\tilde x,\pmb\beta}\,
\in\,\Gamma((C\setminus\infty)^n,\Omega^1_{C^n}(\Delta))\,\hat\otimes\,\hat{\mathfrak t}_{h,n}. 
\end{equation*}
\end{defn}
In particular, 
\begin{equation*}
d\,+\,\pmb J_{\tilde x,\pmb\beta}\,=\,\pmb g_{\tilde x,\pmb\beta}\,(d+\pmb K)\,(\pmb g_{\tilde x,\pmb\beta})^{-1}\,.
\end{equation*}
The previous lemma implies the following:
\begin{thm}\label{thmJflat}
$d+\pmb J_{\tilde x,\pmb\beta}$ is a flat connection on the trivial 
$\mathrm{exp}(\hat{\mathfrak t}_{h,n})$-bundle over~$(C\setminus\infty)^n$.
\end{thm}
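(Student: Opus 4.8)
The plan is to deduce flatness directly from the gauge-conjugation identity $d+\pmb J_{\tilde x,\pmb\beta}=\pmb g_{\tilde x,\pmb\beta}(d+\pmb K)\pmb g_{\tilde x,\pmb\beta}^{-1}$ established just before the statement, combined with the flatness of $d+\pmb K$ recorded in Proposition~\ref{propospropertiesK}(c), using that the curvature of a connection is covariant under gauge conjugation. First I would recall that for a connection $d+A$ on the trivial $\mathrm{exp}(\hat{\mathfrak t}_{h,n})$-bundle, with $A$ an $\hat{\mathfrak t}_{h,n}$-valued $1$-form, flatness is the vanishing of the curvature $2$-form $F_A:=dA+A\wedge A$ (equivalently $dA+\tfrac12[A,A]=0$), and that for a gauge $g$ valued in $\mathrm{exp}(\hat{\mathfrak t}_{h,n})$ one has $A^g=g(dg^{-1})+gAg^{-1}$ and $F_{A^g}=\mathrm{Ad}_g(F_A)$. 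The latter is just the operator identity $(g(d+A)g^{-1})^2=g(d+A)^2g^{-1}$ together with the fact that $(d+A)^2$ is multiplication by $F_A$; it holds degree-by-degree in $U(\hat{\mathfrak t}_{h,n})^\wedge$ since $\pmb g_{\tilde x,\pmb\beta}$ takes values in the prounipotent group and is invertible there.

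Next I would run this computation on the Schottky cover. On $(\hat C\setminus\pi^{-1}(\infty))^n$ the map $\pmb g_{\tilde x,\pmb\beta}$ is holomorphic and invertible, and the choices $A=\pmb K$, $g=\pmb g_{\tilde x,\pmb\beta}$ realize exactly the identity $d+\pmb J_{\tilde x,\pmb\beta}=\pmb g_{\tilde x,\pmb\beta}(d+\pmb K)\pmb g_{\tilde x,\pmb\beta}^{-1}$. By Proposition~\ref{propospropertiesK}(c) one has $F_{\pmb K}=d\pmb K+\pmb K\wedge\pmb K=0$, so the covariance formula yields $F_{\pmb J_{\tilde x,\pmb\beta}}=\mathrm{Ad}_{\pmb g_{\tilde x,\pmb\beta}}(F_{\pmb K})=0$ as an identity of $\hat{\mathfrak t}_{h,n}$-valued meromorphic $2$-forms on $(\hat C\setminus\pi^{-1}(\infty))^n$ (with possible poles along $\hat\Delta$, which cause no trouble since $\pmb g_{\tilde x,\pmb\beta}$ extends holomorphically and invertibly across the diagonals).

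Finally I would descend to $(C\setminus\infty)^n$. By Lemma~\ref{lemmamonIH}, $\pmb J_{\tilde x,\pmb\beta}$ lies in $\Gamma((C\setminus\infty)^n,\Omega^1_{C^n}(\Delta))\,\hat\otimes\,\hat{\mathfrak t}_{h,n}$, i.e. it is the pullback under $\pi^n$ of a meromorphic $1$-form on $(C\setminus\infty)^n$; hence so is $F_{\pmb J_{\tilde x,\pmb\beta}}$. Since $\pi$ is a local biholomorphism, $(\pi^n)^*$ is injective on meromorphic forms, so the vanishing obtained on the cover descends to the vanishing of the curvature on $(C\setminus\infty)^n$, which is precisely the asserted flatness. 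I expect no genuine obstacle: the real content was already supplied by the construction of $\pmb g_{\tilde x,\pmb\beta}$ and by the descent statement of Lemma~\ref{lemmamonIH}, and the flatness itself is a formal consequence of gauge covariance. The only point requiring minor care is that flatness is being asserted for a \emph{meromorphic} connection, with poles along $\Delta$, so the curvature identity must be read as an equality of meromorphic $2$-forms, and not merely as an identity on the configuration space $C_n(C\setminus\infty)$; this reading is legitimate precisely because $\pmb g_{\tilde x,\pmb\beta}$ is regular and invertible along the diagonals, so the conjugation of curvatures is valid there as well.
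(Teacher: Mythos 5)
Your proposal is correct and takes essentially the same route as the paper, whose proof consists of the displayed gauge identity $d+\pmb J_{\tilde x,\pmb\beta}=\pmb g_{\tilde x,\pmb\beta}\,(d+\pmb K)\,(\pmb g_{\tilde x,\pmb\beta})^{-1}$ together with the flatness of $d+\pmb K$ (Proposition~\ref{propospropertiesK}(c)) and the descent to $(C\setminus\infty)^n$ supplied by Lemma~\ref{lemmamonIH}. You have only made explicit what the paper leaves implicit: the gauge covariance of curvature, $F_{A^g}=\mathrm{Ad}_g(F_A)$, and the injectivity of $(\pi^n)^*$ on meromorphic forms, both of which are standard and correctly handled (including the regularity and invertibility of $\pmb g_{\tilde x,\pmb\beta}$ along the diagonals).
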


\begin{defn}\label{def:210611}
For $r\in[\![1,n]\!]$, set 
\begin{align*}
\pmb I_{\tilde x,\pmb \beta}^{[r]}&\,:=\,g^{\{r\}}_{\tilde x,\pmb\beta}\,d((g^{\{r\}}_{\tilde x,\pmb\beta})^{-1})\,\in\,\Gamma((\hat C\setminus\pi^{-1}(\infty))^n,(\Omega^1_{\hat C})^{[r]})\,\hat\otimes\,\hat{\mathfrak t}_{h,n}\,,
\\
\pmb H_{\tilde x,\pmb \beta}^{[r]}&\,:=\,\mathrm{Ad}_{\pmb g_{\tilde x,\pmb\beta}}\,(\pmb K^{[r]})\,\in\,\Gamma((\hat C\setminus\pi^{-1}(\infty))^n,(\Omega^1_{\hat C})^{[r]}(\hat\Delta_r))\,\hat\otimes\,\hat{\mathfrak t}_{h,n}\,,
\\
\pmb J_{\tilde x,\pmb \beta}^{[r]}&\,:=\,\pmb I_{\tilde x,\pmb \beta}^{[r]}\,+\,\pmb H_{\tilde x,\pmb \beta}^{[r]}\,\in\,\Gamma((\hat C\setminus\pi^{-1}(\infty))^n,(\Omega^1_{\hat C})^{[r]}(\hat\Delta_r))\,\hat\otimes\,\hat{\mathfrak t}_{h,n}\,.
\end{align*} 
\end{defn}

\begin{lem}\label{lemma:decomp}
(a) For $r\in[\![1,n]\!]$,~$\pmb I_{\tilde x,\pmb \beta}^{[r]}$ belongs to 
$\Gamma((C\setminus\infty)^n,(\Omega^1_C)^{[r]})\,\hat\otimes\,\hat{\mathfrak t}_{h,n}$
while~$\pmb H_{\tilde x,\pmb \beta}^{[r]}$ and~$\pmb J_{\tilde x,\pmb \beta}^{[r]}$ belong to 
$\Gamma((C\setminus\infty)^n,(\Omega^1_C)^{[r]}(\Delta_r))\,\hat\otimes\,\hat{\mathfrak t}_{h,n}$.

(b) One has $\pmb I_{\tilde x,\pmb \beta}=\sum_{r=1}^n \pmb I_{\tilde x,\pmb \beta}^{[r]}$, $\pmb H_{\tilde x,\pmb \beta}=\sum_{r=1}^n \pmb H_{\tilde x,\pmb \beta}^{[r]}$ and 
$\pmb J_{\tilde x,\pmb \beta}=\sum_{r=1}^n \pmb J_{\tilde x,\pmb \beta}^{[r]}$. 
\end{lem}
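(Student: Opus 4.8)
The plan is to treat part~(b) as an algebraic identity and part~(a) as a descent statement, both resting on the single observation that the factors $g^{\{r\}}_{\tilde x,\pmb\beta}$ commute pairwise. Indeed, $g^{\{r\}}_{\tilde x,\pmb\beta}=\mathrm{pr}_r^*\,g^{(r)}_{\tilde x,\pmb\beta}$ takes values in the subgroup $(r)(\exp(\mathrm{Lie}(b_1,\ldots,b_h)^\wedge))\subset\exp(\hat{\mathfrak t}_{h,n})$, and by relation~(iii) of~$\mathfrak t_{h,n}$ one has $[b_i^{(r)},b_j^{(s)}]=0$ whenever $r\neq s$; hence for $r\neq s$ the values of $g^{\{r\}}_{\tilde x,\pmb\beta}$ and $g^{\{s\}}_{\tilde x,\pmb\beta}$ commute, and the Lie-algebra-valued form $(dg^{\{r\}}_{\tilde x,\pmb\beta})(g^{\{r\}}_{\tilde x,\pmb\beta})^{-1}$ commutes with every $g^{\{s\}}_{\tilde x,\pmb\beta}$, $s\neq r$.

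For part~(b), the decomposition of $\pmb H_{\tilde x,\pmb\beta}$ is immediate from the linearity of $\mathrm{Ad}_{\pmb g_{\tilde x,\pmb\beta}}$ together with the identity $\pmb K=\sum_r\pmb K^{[r]}$ of Definition~\ref{defn:K}. For $\pmb I_{\tilde x,\pmb\beta}$ I would write $\pmb I_{\tilde x,\pmb\beta}=-(d\pmb g_{\tilde x,\pmb\beta})\,\pmb g_{\tilde x,\pmb\beta}^{-1}$ and expand $d\pmb g_{\tilde x,\pmb\beta}$ by the Leibniz rule over the product $g^{\{1\}}_{\tilde x,\pmb\beta}\cdots g^{\{n\}}_{\tilde x,\pmb\beta}$; the $r$-th term equals $\mathrm{Ad}_{g^{\{1\}}_{\tilde x,\pmb\beta}\cdots g^{\{r-1\}}_{\tilde x,\pmb\beta}}$ applied to $(dg^{\{r\}}_{\tilde x,\pmb\beta})(g^{\{r\}}_{\tilde x,\pmb\beta})^{-1}$, and by the commutation just recalled each of these adjoint actions is trivial. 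This leaves $\pmb I_{\tilde x,\pmb\beta}=-\sum_r(dg^{\{r\}}_{\tilde x,\pmb\beta})(g^{\{r\}}_{\tilde x,\pmb\beta})^{-1}=\sum_r\pmb I^{[r]}_{\tilde x,\pmb\beta}$, and summing with the $\pmb H$-decomposition gives that of $\pmb J_{\tilde x,\pmb\beta}$.

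For part~(a), the terms $\pmb I^{[r]}_{\tilde x,\pmb\beta}$, $\pmb H^{[r]}_{\tilde x,\pmb\beta}$, $\pmb J^{[r]}_{\tilde x,\pmb\beta}$ already lie in the asserted spaces over $(\hat C\setminus\pi^{-1}(\infty))^n$ by Definition~\ref{def:210611}, so it remains to show that each descends to $(C\setminus\infty)^n$, i.e. is invariant under every $(\hat{\mathsf{B}}_j^{(s)})^*$ (recall $\mathrm{Aut}(\hat C/C)$ is freely generated by the $\hat{\mathsf{B}}_i$). For $\pmb I^{[r]}_{\tilde x,\pmb\beta}$ and $s\neq r$ invariance is automatic, since $\mathrm{pr}_r\circ\hat{\mathsf{B}}_j^{(s)}=\mathrm{pr}_r$ fixes $g^{\{r\}}_{\tilde x,\pmb\beta}$; for $s=r$ I would use Lemma~\ref{lemma:hatg:06042021} and~\eqref{eq:31032021B} to get $(\hat{\mathsf{B}}_j^{(r)})^*g^{\{r\}}_{\tilde x,\pmb\beta}=g^{\{r\}}_{\tilde x,\pmb\beta}\,e^{-b_j^{(r)}}$, whence the constant factors $e^{\mp b_j^{(r)}}$ cancel in $(dg^{\{r\}}_{\tilde x,\pmb\beta})(g^{\{r\}}_{\tilde x,\pmb\beta})^{-1}$. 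For $\pmb H^{[r]}_{\tilde x,\pmb\beta}$ I would combine Lemma~\ref{lemmamonodromycorrection}, $(\hat{\mathsf{B}}_j^{(s)})^*\pmb g_{\tilde x,\pmb\beta}=\pmb g_{\tilde x,\pmb\beta}\,e^{-b_j^{(s)}}$, with the summand-wise form of Proposition~\ref{propospropertiesK}(a), namely $(\hat{\mathsf{B}}_j^{(s)})^*\pmb K^{[r]}=e^{\mathrm{ad}(b_j^{(s)})}(\pmb K^{[r]})$, obtained by projecting onto the $\Gamma_r$-summand (which both operations preserve); since $\mathrm{Ad}_{e^{-b_j^{(s)}}}=e^{-\mathrm{ad}(b_j^{(s)})}$, the two exponentials cancel and $(\hat{\mathsf{B}}_j^{(s)})^*\pmb H^{[r]}_{\tilde x,\pmb\beta}=\pmb H^{[r]}_{\tilde x,\pmb\beta}$. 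Invariance of $\pmb J^{[r]}_{\tilde x,\pmb\beta}$ follows by addition, and pole orders are unchanged under descent, giving the claimed memberships.

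The only genuinely non-formal point is the additive decomposition of the logarithmic derivative in part~(b): for an arbitrary product of group-valued maps this is false, and it holds here precisely because relation~(iii) of~$\mathfrak t_{h,n}$ forces the factors $g^{\{r\}}_{\tilde x,\pmb\beta}$ to commute. I expect this commutation observation, rather than the subsequent invariance bookkeeping, to be the crux of the argument.
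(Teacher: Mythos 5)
Your proposal is correct and follows essentially the same route as the paper: part~(b) rests on the Leibniz expansion of the logarithmic derivative together with the pairwise commutation of the factors $g^{\{r\}}_{\tilde x,\pmb\beta}$ forced by $[b_i^{(r)},b_j^{(s)}]=0$, which is exactly the paper's argument (the paper expands $d(\pmb g^{-1})$ rather than $d\pmb g$, an immaterial difference), and part~(a) is the same $(\hat{\mathsf{B}}_j^{(s)})^*$-invariance check that the paper delegates to the proof of Lemma~\ref{lemmamonIH}, carried out summand-wise via Lemma~\ref{lemmamonodromycorrection} and the summand-wise form of Proposition~\ref{propospropertiesK}(a). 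You also correctly identify the commutation of the factors as the one non-formal ingredient.
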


\begin{proof}
The proof of (a) is the same as that of Lemma~\ref{lemmamonIH}. 

The only non-trivial part of (b) is the identity $\pmb I_{\tilde x,\pmb \beta}=\sum_{r=1}^n \pmb I_{\tilde x,\pmb \beta}^{[r]}$. By definition, we have
\begin{align}\label{eq210425}
\pmb I_{\tilde x,\pmb \beta}&\,=\,\pmb g_{\tilde x,\pmb \beta}{\rm d}(\pmb g_{\tilde x,\pmb \beta})^{-1}\notag\\
&\,=\,\sum_{r=1}^n g^{\{1\}}_{\tilde x,\pmb \beta}\cdots g^{\{n\}}_{\tilde x,\pmb \beta}(g^{\{n\}}_{\tilde x,\pmb \beta})^{-1}\cdots {\rm d}(g^{\{r\}}_{\tilde x,\pmb \beta})^{-1}\cdots (g^{\{1\}}_{\tilde x,\pmb \beta})^{-1}.
\end{align}
Recall that $g^{\{r\}}_{\tilde x,\pmb \beta}$ takes values in $\exp({\rm Lie}(b_1^{(r)},\ldots ,b_h^{(r)})^\wedge)\leq\exp(\hat{\mathfrak t}_{h,n})$. This, together with the relations $[b_i^{(r)},b_j^{(s)}]=0$ for any $i,j\in[\![1,h]\!]$ and any $r,s\in[\![1,n]\!]$ with $r\neq s$, implies that $g^{\{r\}}_{\tilde x,\pmb \beta}g^{\{s\}}_{\tilde x,\pmb \beta}=g^{\{s\}}_{\tilde x,\pmb \beta}g^{\{r\}}_{\tilde x,\pmb \beta}$ for any $r,s\in[\![1,n]\!]$. The statement follows by combining this fact with~\eqref{eq210425}.
\end{proof}

Notice that part~(b) of Lemma~\ref{lemma:decomp} corresponds to the fact that there is a direct sum decomposition $\Gamma((C\setminus\infty)^n,\Omega^1_{C^n}(\Delta))=
\oplus_{r=1}^n\Gamma((C\setminus\infty)^n,(\Omega^1_C)^{[r]}(\Delta_r))$, which induces a direct sum 
decomposition of $\Gamma((C\setminus\infty)^n,\Omega^1_{C^n}(\Delta))\,\hat\otimes\,\hat{\mathfrak t}_{h,n}$. 

\subsection{Dependence of ${\pmb J}_{\tilde x,\pmb \beta}$ in $\tilde x$ and $\pmb \beta$}\label{sec:notationchange}

\begin{prop}
For any $\tilde x,\tilde y\in \tilde C\setminus p^{-1}(\infty)$ and any two families ${\pmb \beta}, {\pmb \beta}'$ as in \S\ref{def:omega}, the Maurer-Cartan elements~${\pmb J}_{\tilde x,\pmb \beta}$ and~${\pmb J}_{\tilde y,\pmb \beta'}$ are related by gauge conjugation by a holomorphic function $\pmb h:(C\setminus\infty)^n\to\exp(\hat{\mathfrak{t}}_{h,n})$. If $\pmb \beta=\pmb \beta'$ then this function is constant.
\end{prop}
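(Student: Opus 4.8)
The plan is to exhibit the gauge transformation explicitly as $\pmb h := \pmb g_{\tilde y,\pmb\beta'}(\pmb g_{\tilde x,\pmb\beta})^{-1}$ and then verify its two required properties — descent to $(C\setminus\infty)^n$, and constancy in the case $\pmb\beta=\pmb\beta'$. First I would exploit the defining identity $d+\pmb J_{\tilde x,\pmb\beta}=\pmb g_{\tilde x,\pmb\beta}(d+\pmb K)(\pmb g_{\tilde x,\pmb\beta})^{-1}$ from \S\ref{ssec:connecJ}. Crucially, the $1$-form $\pmb K$ of \S\ref{sec:introofformK} is the pullback to $\hat C^n$ of the fixed connection $\nabla_\mathcal{P}$, hence does not depend on $\tilde x$ or $\pmb\beta$, so the same $\pmb K$ appears in the identity for $(\tilde y,\pmb\beta')$. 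Solving the $(\tilde x,\pmb\beta)$-identity for $d+\pmb K$ and substituting into the $(\tilde y,\pmb\beta')$-identity gives, on $(\hat C\setminus\pi^{-1}(\infty))^n$,
\[
d+\pmb J_{\tilde y,\pmb\beta'}=\pmb h\,(d+\pmb J_{\tilde x,\pmb\beta})\,\pmb h^{-1},\qquad \pmb h=\pmb g_{\tilde y,\pmb\beta'}(\pmb g_{\tilde x,\pmb\beta})^{-1}.
\]

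Next I would check that $\pmb h$ descends to a holomorphic function on $(C\setminus\infty)^n$. By Lemma~\ref{lemmamonodromycorrection} both factors satisfy $(\hat{\mathsf B}_i^{(r)})^*\pmb g=\pmb g\,e^{-b_i^{(r)}}$ for all $r,i$, so
\[
(\hat{\mathsf B}_i^{(r)})^*\pmb h=\pmb g_{\tilde y,\pmb\beta'}\,e^{-b_i^{(r)}}\,e^{b_i^{(r)}}\,(\pmb g_{\tilde x,\pmb\beta})^{-1}=\pmb h.
\]
Since the $\hat{\mathsf B}_i$ freely generate $\mathrm{Aut}(\hat C/C)$ (Lemma~\ref{lemma45}) acting componentwise, $\pmb h$ is invariant under $\mathrm{Aut}(\hat C^n/C^n)$ and therefore descends. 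As $\pmb J_{\tilde x,\pmb\beta}$ and $\pmb J_{\tilde y,\pmb\beta'}$ already live on $(C\setminus\infty)^n$, the displayed gauge relation descends there as well.

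Finally, for the case $\pmb\beta=\pmb\beta'$, I would invoke the constancy of $\tilde g_{\tilde x,\pmb\beta}\tilde g_{\tilde y,\pmb\beta}^{-1}$ recorded after Proposition~\ref{prop:211013}: writing $\tilde g_{\tilde x,\pmb\beta}=c\,\tilde g_{\tilde y,\pmb\beta}$ with $c\in\exp(\mathrm{Lie}(b_1,\ldots,b_h)^\wedge)$ constant, Lemma~\ref{lemma:hatg:06042021} yields $g_{\tilde x,\pmb\beta}=c\,g_{\tilde y,\pmb\beta}$ and hence $g^{\{r\}}_{\tilde x,\pmb\beta}=c^{(r)}g^{\{r\}}_{\tilde y,\pmb\beta}$ with $c^{(r)}:=(r)(c)$ constant. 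The relations $[b_i^{(r)},b_j^{(s)}]=0$ for $r\neq s$ allow each $c^{(r)}$ to commute past every $g^{\{s\}}_{\tilde y,\pmb\beta}$ with $s\neq r$, so gathering the factors gives
\[
\pmb g_{\tilde x,\pmb\beta}=\Big(\textstyle\prod_{r=1}^n c^{(r)}\Big)\,\pmb g_{\tilde y,\pmb\beta},\qquad\text{whence}\qquad \pmb h=\Big(\textstyle\prod_{r=1}^n c^{(r)}\Big)^{-1}
\]
is constant.

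I expect the two delicate points to be, first, the assertion that a single $\pmb K$ governs both sets of data — which hinges on $\pmb K$ being the pullback of the fixed $\nabla_\mathcal{P}$ rather than an object depending on the auxiliary choices $\tilde x,\pmb\beta$ — and, second, the commutation step in the last paragraph, which relies essentially on the vanishing brackets $[b_i^{(r)},b_j^{(s)}]=0$ across distinct tensor slots and would fail without it.
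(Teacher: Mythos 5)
Your proposal is correct and is essentially the paper's own argument: your $\pmb h=\pmb g_{\tilde y,\pmb\beta'}(\pmb g_{\tilde x,\pmb\beta})^{-1}$ coincides (up to inversion) with the paper's $\pmb h=h^{\{1\}}\cdots h^{\{n\}}$, $h=g_{\tilde x,\pmb\beta}\,g_{\tilde y,\pmb\beta'}^{-1}$, because the slot factors commute, and your constancy argument for $\pmb\beta=\pmb\beta'$ via Proposition~\ref{prop:211013} is exactly the paper's. The only cosmetic difference is that you verify descent to $(C\setminus\infty)^n$ by hand through the cancellation in Lemma~\ref{lemmamonodromycorrection}, where the paper instead cites Proposition~\ref{prop:211013n2}, whose proof is the same monodromy cancellation one cover lower; your observation that a single $\pmb K$ serves both data sets is also right, since $\pmb K^{[1]}$ is pinned down by conditions depending only on $e$ (Proposition~\ref{prop:412:2303} and the remark following it).
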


\begin{proof}
By Proposition~\ref{prop:211013n2} there exists $h: C\setminus\infty\to \exp(\mathrm{Lie}(b_1,\ldots ,b_h)^\wedge)$ such that\footnote{Throughout this proof we write $h$ to denote also its pull-back $\pi^*h$.} $g_{\tilde x,\pmb \beta}=h\cdot g_{\tilde y,\pmb \beta'}$. Let $\pmb h:=h^{\{1\}}\cdots h^{\{n\}}$. Since 
$g_{y,\pmb \beta'}$ takes values in $\exp(\mathrm{Lie}(b_1,\ldots ,b_h)^\wedge)$ and since $[b_i^{(r)},b_j^{(s)}]=0$ for $r\neq s$, then 
$\pmb g_{\tilde x,\pmb \beta}=\pmb h\cdot \pmb g_{\tilde y,\pmb \beta'}$. Then 
\begin{align*}
d+{\pmb J}_{\tilde x,\pmb \beta}&\,=\,{\rm Ad}_{\pmb h\cdot \pmb g_{\tilde y,\pmb \beta'}}(d+\pmb K)={\rm Ad}_{\pmb h} \circ {\rm Ad}_{\pmb g_{\tilde y,\pmb \beta'}}(d+\pmb K)\\
&\,=\,{\rm Ad}_{\pmb h}(d+{\pmb J}_{\tilde y,\pmb \beta'})\,.
\end{align*}
If moreover $\pmb \beta=\pmb \beta'$, then it follows from Proposition~\ref{prop:211013} that~$\pmb h$ is constant.
\end{proof}

\section{Decompositions of $\pmb K$ and ${\pmb J}_{\tilde x,\pmb \beta}$}\label{sec4}

In \S\ref{ssec:41} and \S\ref{ssec:42} we introduce spaces of multi-valued (i.e. defined on~$C_n(\hat C)$) and single-valued (i.e. defined on~$C_n(C)$) forms, respectively, and we explain how in some cases these forms can be decomposed in terms of forms defined only on one or two copies of the curve. In \S\ref{ssec:43} we explain how to map such multi-valued forms to such single-valued forms using the function~$g_{\tilde x,\beta}$. In \S\ref{ssec:44} we make use of results from~\cite{EnrConfSp} to show that~$\pmb K^{[1]}$, which is the main constituent of $\pmb K$ (see Definition~\ref{defn:K}) and which belongs to the space of multi-valued forms, admits a decomposition in the sense of \S\ref{ssec:41}. From this we infer in \S\ref{ssec:45} a decomposition for~$\pmb H_{\tilde x,\beta}^{[1]}$, which is the main constituent of ${\pmb J}_{\tilde x,\pmb \beta}$ (see Definition~\ref{def:210611}), and which belongs to the space of single-valued forms.

Throughout this section we consider~$\tilde x$ and~$\pmb \beta$ as fixed, so we will in general drop the subscript $(\tilde x,\pmb \beta)$, e.g. we will write~$\pmb J$ and~$g$ instead of~$\pmb J_{\tilde x,\beta}$ and~$g_{\tilde x,\beta}$.

\subsection{Decompositions of multi-valued forms}\label{ssec:41}

For~$\Sigma$ a Riemann surface, let~$\Omega^1_{\Sigma,\log}$ be the sheaf of differentials over~$\Sigma$ whose space of sections on an 
open subset~$U$ is the space of meromorphic 1-forms on~$U$ with at most simple poles.   
\begin{defn}
(a) Let $\mathcal Forms$ be the vector subspace of $\Gamma(\hat C,\Omega^1_{\hat C,\log})\,\hat\otimes\,
U(\mathrm{Lie}(b_1,\ldots ,b_h))^\wedge$
%\textcolor{red}{FZ: shouldn't we allow simple poles?} 
spanned by elements~$\delta$ such that  $\hat{\mathsf B}_i^*\delta=e^{b_i}\delta$ for any $i\in[\![1,h]\!]$.
% \textcolor{red}{FZ: why don't we write $\kappa$ instead $\delta$?}

(b) Let $\mathcal Tuples$ be the complex vector space spanned by tuples $(\kappa_1,\ldots,\kappa_h,\underline\kappa)$, constituted by $\kappa_1,\ldots \kappa_h\in\mathcal Forms$
and by $\underline\kappa\in\Gamma(\hat C^2,(\Omega^1_{\hat C,\log})^{[1]}(\hat\Delta))\,\hat\otimes \,U(\mathrm{Lie}(b_1,\ldots ,b_h))^\wedge$ 
%\textcolor{red}{FZ: + simple poles in 1st variable} 
such that, for any $i\in[\![1,h]\!]$, one has\footnote{Equation \eqref{eq280421mon2} contains a slight abuse of notation: $\kappa_i$ stands for ${\rm pr}_1^*\kappa_i$.}
\begin{align}
(\hat{\mathsf B}_i^{(1)})^*\,\underline\kappa&\,=\,e^{b_i}\,\underline\kappa, \label{eq280421mon1}\\
(\hat{\mathsf B}_i^{(2)})^*\,\underline\kappa&\,=\,\underline\kappa\,e^{-b_i}\,+\,\kappa_i\,{1-e^{-b_i}\over b_i}.\label{eq280421mon2} 
\end{align}

(c) Let $\pmb{\mathcal Forms}$ be the vector subspace of $\Gamma(\hat C^n,
(\Omega^1_{\hat C,\log})^{[1]}%\Omega^1_{\hat C^n}
(\hat\Delta_1))\,\hat\otimes\,\hat{\mathfrak t}_{h,n}$ spanned by elements~$\pmb\kappa$ such that $(\hat{\mathsf B}_i^{(r)})^*\pmb\kappa=e^{{\rm ad}(b_i^{(r)})}(\pmb\kappa)$
for any $r\in[\![1,n]\!]$ and any $i\in[\![1,h]\!]$.

\end{defn}

We refer to the differential forms which belong to these spaces as \emph{multi-valued}, meaning that they are not the pull-backs of differential forms defined on the curve~$C$ (or its powers), i.e. they are not invariant under the action of ${\rm Aut}(\hat C/C)$ (or its powers). Note that, because of Proposition~\ref{prop:412:2303}, $\pmb K^{[1]}\in\pmb{\mathcal Forms}$. 

Recall from \S\ref{sssec:trivP} that, if any function or 1-form~$\phi$ is $U(\mathrm{Lie}(b_1,\ldots,b_h))^\wedge$-valued and if $r\in[\![1,n]\!]$, we have introduced the notation $\phi^{(r)}:=(r)\circ \phi$. If $\phi$ is defined on $\Sigma^2$, and if ${\rm pr}_{rs}:\Sigma^n\to \Sigma^2$ denotes the projection on product of the $r$-th and the $s$-th component, we set
\begin{equation*}
\phi^{\{rs\}}\,:=\,{\rm pr}_{rs}^*\,\phi^{(r)}\,.
\end{equation*}

\begin{lem}\label{lemma:210428multi}
(a) There is a linear map $\mathcal Tuples\to\pmb{\mathcal Forms}$, defined 
by\footnote{We denote by $x\mapsto\mathrm{ad}(x)$ the degree completion of the algebra morphism $U(\mathfrak t_{h,n})\to
\mathrm{End}(\mathfrak t_{h,n})$ attached to the adjoint action of $\mathfrak t_{h,n}$. This morphism restricts to the group morphism ${\rm Ad}:\exp(\hat{\mathfrak t}_{h,n})\to {\rm GL}(\hat{\mathfrak t}_{h,n})$ defined in the footnote~\ref{footnote210506}.}
\begin{equation}\label{eq280421def1}
(\kappa_1,\ldots,\kappa_h,\underline\kappa)\,\mapsto \,\pmb\kappa\,:=\,\sum_{i=1}^h
\mathrm{ad}(\kappa_i^{\{1\}})(a_i^{(1)})
\,+\,\sum_{r=2}^n\mathrm{ad}(\underline\kappa^{\{1r\}})(t_{1r})\,,
\end{equation}
the meaning of the superscript $\{r\}$ being as in~\eqref{defgraffe}.
 
(b) There is an injective linear map $\mathcal Forms\to\mathcal Tuples$, defined by 
$\delta\mapsto (\delta b_1,\ldots,\delta b_h,\mathrm{pr}_1^*\delta)$. 

(c) The composed map $\mathcal Forms\to\mathcal Tuples\to\pmb{\mathcal Forms}$ is zero.  
\end{lem}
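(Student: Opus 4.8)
The plan is to treat the three parts in turn, reducing each to the monodromy (equivariance) conditions defining the three spaces, the only non-formal inputs being the defining relations of $\mathfrak t_{h,n}$ and the fact that $\mathrm{ad}:U(\mathfrak t_{h,n})^\wedge\to\mathrm{End}(\hat{\mathfrak t}_{h,n})$ is an algebra morphism, so that $\mathrm{ad}(e^{b_i^{(r)}}X)=e^{\mathrm{ad}(b_i^{(r)})}\circ\mathrm{ad}(X)$. For part~(a) I would first check that $\pmb\kappa$ from \eqref{eq280421def1} lands in $\Gamma(\hat C^n,(\Omega^1_{\hat C,\log})^{[1]}(\hat\Delta_1))\,\hat\otimes\,\hat{\mathfrak t}_{h,n}$: each $\kappa_i^{\{1\}}=\mathrm{pr}_1^*\kappa_i^{(1)}$ is the pull-back to the first factor of a form with at most simple poles, and $\underline\kappa^{\{1r\}}$ carries in addition a simple pole along $\Delta_{1r}(\hat C)\subset\hat\Delta_1$; linearity is evident. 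The substance is the identity $(\hat{\mathsf B}_i^{(r)})^*\pmb\kappa=e^{\mathrm{ad}(b_i^{(r)})}(\pmb\kappa)$. For $r=1$, using $\mathrm{pr}_1\circ\hat{\mathsf B}_i^{(1)}=\hat{\mathsf B}_i\circ\mathrm{pr}_1$ together with $\hat{\mathsf B}_i^*\kappa_j=e^{b_i}\kappa_j$ and \eqref{eq280421mon1} gives $(\hat{\mathsf B}_i^{(1)})^*\kappa_j^{\{1\}}=e^{b_i^{(1)}}\kappa_j^{\{1\}}$ and $(\hat{\mathsf B}_i^{(1)})^*\underline\kappa^{\{1r\}}=e^{b_i^{(1)}}\underline\kappa^{\{1r\}}$, and the algebra-morphism property of $\mathrm{ad}$ then factors $e^{\mathrm{ad}(b_i^{(1)})}$ out of every term.

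The hard case, and the main obstacle, is $r=s\in[\![2,n]\!]$. Here the first sum of \eqref{eq280421def1} is unchanged (its coefficients are pulled back from the first factor), while in the second sum only the $r=s$ term is affected, through \eqref{eq280421mon2}. Expanding $(\hat{\mathsf B}_i^{(s)})^*\pmb\kappa$ and $e^{\mathrm{ad}(b_i^{(s)})}\pmb\kappa$ and using $e^{\mathrm{ad}(b_i^{(s)})}a_i^{(1)}=a_i^{(1)}+\tfrac{e^{\mathrm{ad}(b_i^{(s)})}-1}{\mathrm{ad}(b_i^{(s)})}(t_{1s})$, the equality of the two sides reduces, after noting that $\mathrm{ad}(b_i^{(1)})$ and $\mathrm{ad}(b_i^{(s)})$ commute on $t_{1s}$, to the single relation
\[
[b_i^{(1)}+b_i^{(s)},\,t_{1s}]=0 .
\]
I would derive this from the defining relations: writing $t_{1s}=[b_i^{(1)},a_i^{(s)}]$ and using $[b_i^{(s)},b_i^{(1)}]=0$ (relation~(iii)) gives $[b_i^{(s)},t_{1s}]=[b_i^{(1)},[b_i^{(s)},a_i^{(s)}]]$; substituting $[b_i^{(s)},a_i^{(s)}]$ from relation \eqref{reltgnii} for the copy $s$, and killing the cross terms $[b_i^{(1)},t_{sm}]$ for $m\neq 1$ by relation~(i), yields $[b_i^{(1)},[b_i^{(s)},a_i^{(s)}]]=-[b_i^{(1)},t_{1s}]$, whence the claim (this is also recorded in \cite{EnrConfSp}). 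By the $S_n$-symmetry of $\hat{\mathfrak t}_{h,n}$ the relation holds for every pair, which settles~(a).

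For part~(b) I would set $\kappa_i:=\delta b_i$ and $\underline\kappa:=\mathrm{pr}_1^*\delta$. From $\hat{\mathsf B}_j^*\delta=e^{b_j}\delta$ it is immediate that $\delta b_i\in\mathcal Forms$ and that \eqref{eq280421mon1} holds for $\underline\kappa$. The point to verify is \eqref{eq280421mon2}: since $\hat{\mathsf B}_i^{(2)}$ acts on the second factor only, $(\hat{\mathsf B}_i^{(2)})^*\mathrm{pr}_1^*\delta=\mathrm{pr}_1^*\delta=\underline\kappa$, while the right-hand side collapses, via $e^{-b_i}+b_i\cdot\tfrac{1-e^{-b_i}}{b_i}=1$, to $\underline\kappa$ as well (here $\kappa_i$ in \eqref{eq280421mon2} denotes $\mathrm{pr}_1^*(\delta b_i)=\underline\kappa\,b_i$). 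Injectivity follows because $\mathrm{pr}_1^*$ is injective, and linearity is clear.

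Finally, for part~(c) I would feed the tuple $(\delta b_1,\ldots,\delta b_h,\mathrm{pr}_1^*\delta)$ into \eqref{eq280421def1}. Using $(\delta b_i)^{\{1\}}=\delta^{\{1\}}b_i^{(1)}$ and $(\mathrm{pr}_1^*\delta)^{\{1r\}}=\delta^{\{1\}}$ (the latter being independent of $r$ since $\mathrm{pr}_1\circ\mathrm{pr}_{1r}=\mathrm{pr}_1$), and again the algebra-morphism property of $\mathrm{ad}$, one factors out $\mathrm{ad}(\delta^{\{1\}})$ to obtain
\[
\pmb\kappa=\mathrm{ad}(\delta^{\{1\}})\Big(\sum_{i=1}^h[b_i^{(1)},a_i^{(1)}]+\sum_{r=2}^n t_{1r}\Big).
\]
The argument of $\mathrm{ad}(\delta^{\{1\}})$ is exactly the left-hand side of relation \eqref{reltgnii} for $r=1$, hence vanishes, so the composite map is zero.
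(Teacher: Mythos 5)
Your proposal is correct and takes essentially the same route as the paper's proof: part (a) reduces, exactly as in the paper, to the relation $[b_i^{(1)}+b_i^{(s)},t_{1s}]=0$ combined with the defining relations of $\mathfrak t_{h,n}$ (the paper asserts this relation without derivation, while you derive it from \eqref{reltgnii} together with relations (i) and (iii)), and parts (b) and (c) coincide with the paper's computations, with (c) resting on \eqref{reltgnii} for $r=1$. The only cosmetic difference is that in (a) you expand $e^{\mathrm{ad}(b_i^{(s)})}(\pmb\kappa)$ directly and compare it with $(\hat{\mathsf B}_i^{(s)})^*\pmb\kappa$, whereas the paper transforms the latter step by step into the former.
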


\begin{proof} (a) We need to prove that $(\hat{\mathsf B}_j^{(s)})^*\pmb\kappa=e^{{\rm ad}(b_j^{(s)})}(\pmb\kappa)$ for any $s\in[\![1,n]\!]$ and any $j\in[\![1,h]\!]$. If $s=1$ the statement immediately follows from~\eqref{eq280421mon1}. Suppose that $s\neq 1$, then 
\begin{align*}
(\hat{\mathsf B}_j^{(s)})^*\,\pmb\kappa&\,=\,\sum_{i=1}^h
\mathrm{ad}(\kappa_i^{\{1\}})(a_i^{(1)})
+\sum_{\substack{r\in[\![1,n]\!]\\r\neq s}}\mathrm{ad}(\underline\kappa^{\{1r\}})(t_{1r})+\mathrm{ad}\bigg(\underline\kappa^{\{1s\}}e^{-b_j^{(1)}}+\kappa_j^{\{1\}}\,{1-e^{-b_j^{(1)}}\over b_j^{(1)}}\bigg)(t_{1s})\\
&\,=\,\sum_{i=1}^h
\mathrm{ad}(\kappa_i^{\{1\}})(a_i^{(1)})
+\sum_{\substack{r\in[\![1,n]\!]\\r\neq s}}\mathrm{ad}(\underline\kappa^{\{1r\}})(t_{1r})+\mathrm{ad}\bigg(\underline\kappa^{\{1s\}}e^{b_j^{(s)}}+\kappa_j^{\{1\}}\,\frac{e^{b_j^{(s)}}-1}{b_j^{(s)}}\bigg)(t_{1s})\\
&\,=\,\pmb\kappa \,+\, \mathrm{ad}\big(\underline\kappa^{\{1s\}}(e^{b_j^{(s)}}-1)\big)(t_{1s})\,+\,\mathrm{ad}\big(\kappa_j^{\{1\}}(e^{b_j^{(s)}}-1)\big)(a_j^{(1)})\\
&\,=\,\pmb\kappa \,+\, (e^{{\rm ad}(b_j^{(s)})}-1)\,\big(\mathrm{ad}(\underline\kappa^{\{1s\}})(t_{1s})\,+\,\mathrm{ad}(\kappa_j^{\{1\}})(a_j^{(1)})\big)\\
&\,=\,\pmb\kappa +(e^{{\rm ad}(b_j^{(s)})}-1)\,\pmb\kappa\,=\,e^{{\rm ad}(b_j^{(s)})}\pmb\kappa\,,
\end{align*}
where the first identity follows from~\eqref{eq280421mon2} and \eqref{eq280421def1}, the second identity follows from 
$[b_j^{(1)},t_{1s}]=-[b_j^{(s)},t_{1s}]$, the third identity follows from $[b_j^{(s)},a_j^{(1)}]=t_{1s}$, the fourth identity follows from $[b_i^{(1)},b_j^{(s)}]=0$ for any~$i$,
%(so that $\mathrm{ad}(\underline{\kappa}^{\{1s\}}e^{b_j^{(s)}})={\rm ad}(\underline{\kappa}^{\{1s\}})e^{{\rm ad}(b_j^{(s)})}=e^{{\rm ad}(b_j^{(s)})}{\rm ad}(\underline{\kappa}^{\{1s\}})$, and similarly $\mathrm{ad}(\kappa_j^{\{1\}}e^{b_j^{(s)}})=e^{{\rm ad}(b_j^{(s)})}\mathrm{ad}(\kappa_j^{\{1\}})$)
and the fifth identity follows from $[b_j^{(s)},a_i^{(1)}]=0$ for $i\neq j$ (together with $[b_i^{(s)},b_j^{(1)}]=0$, this implies that $(e^{{\rm ad}(b_j^{(s)})}-1)\mathrm{ad}(\kappa_i^{\{1\}})(a_i^{(1)})=0$) and from $[b_j^{(s)},t_{1r}]=0$ for $r\neq s$ (together with $[b_i^{(s)},b_j^{(1)}]=0$, this implies that $(e^{{\rm ad}(b_j^{(s)})}-1)\mathrm{ad}(\underline{\kappa}^{\{1r\}})(t_{1r})=0$).

(b) Since $\delta\in\mathcal Forms$, also $\delta b_i\in\mathcal Forms$ for any~$i\in[\![1,h]\!]$. For the same reason, it is obvious that $(\hat{\mathsf B}_i^{(1)})^*{\rm pr}_1^*\delta=e^{b_i}{\rm pr}_1^*\delta$. We need to verify that  ${\rm pr}_1^*\delta$ satisfies the condition~\eqref{eq280421mon2}. Because $(\hat{\mathsf{B}}_i^{(2)})^*{\rm pr}_1^*\delta={\rm pr}_1^*\delta$, we can write
$(\hat{\mathsf{B}}_i^{(2)})^*{\rm pr}_1^*\delta-{\rm pr}_1^*\delta e^{-b_i}={\rm pr}_1^*\delta (1-e^{-b_j})$. The right hand side can be rewritten as $({\rm pr}_1^*\delta b_j)\tfrac{1-e^{-b_j}}{b_j}$, thus concluding the proof that the image of the map is contained in $\mathcal Tuples$. The injectivity is obvious.

(c) The image of~$\delta$ by the composed map is 
$\sum_{i=1}^h\mathrm{ad}((\delta b_i)^{\{1\}})(a_i^{(1)})
+\sum_{r=2}^n\mathrm{ad}((\mathrm{pr}_1^*\delta)^{\{1r\}})(t_{1r})$. 
One has $(\delta b_i)^{\{1\}}=\delta^{\{1\}}b_i^{(1)}$ and 
$(\mathrm{pr}_1^*\delta)^{\{1r\}}=\delta^{\{1\}}$, so the image of~$\delta$ is $\mathrm{ad}(\delta^{\{1\}})\big(\sum_{i=1}^h[b_i^{(1)},a_i^{(1)}]+\sum_{r=2}^n t_{1r}\big)$, and $\sum_{i=1}^h[b_i^{(1)},a_i^{(1)}]+\sum_{r=2}^n t_{1r}=0$ by~\eqref{reltgnii}. 
\end{proof}

\subsection{Decompositions of single-valued forms}\label{ssec:42}

\begin{defn}
(a) Let $\mathrm{Forms}:=\Gamma(C\setminus\infty,\Omega^1_{C,\log})\,\hat\otimes\,
U(\mathrm{Lie}(b_1,\ldots ,b_h))^\wedge$.

(b) Let $\mathrm{Tuples}$ be the complex vector space spanned by tuples $(\eta_1,\ldots,\eta_h,\underline\eta)$ constituted by $\eta_1,\ldots \eta_h\in\mathrm{Forms}$
and by $\underline\eta\in\Gamma(C^2,(\Omega^1_{C,\log})^{[1]}(\Delta))\,\hat\otimes \,U(\mathrm{Lie}(b_1,\ldots ,b_h))^\wedge$.
 
(c) Let $\pmb{\mathrm{Forms}}:=\Gamma((C\setminus\infty)^n,(\Omega^1_{C,\log})^{[1]}(\Delta_1)))\,\hat\otimes\,
\hat{\mathfrak t}_{h,n}$.
\end{defn}

The pull-back to $\hat C$ (or its powers) of the differential forms which belong to these spaces is invariant under the action of ${\rm Aut}(\hat C/C)$ (or its powers). To stress this difference with the multi-valued differential forms considered in the previous section, we refer to the differential forms of this section as \emph{single-valued}. Note that, because of Lemma~\ref{lemma:decomp}, $\pmb H^{[1]}:=\pmb H^{[1]}_{\tilde x, \pmb\beta}\in\pmb{\rm Forms}$.

%We refer to the elements of these spaces as single-valued forms, meaning that they are well-defined 1-forms on the curve~$C$, as opposed to those considered in the previous section.

\begin{lem}
(a) There is a linear map $\mathrm{Tuples}\to\pmb{\mathrm{Forms}}$, defined by 
\begin{equation*}
(\eta_1,\ldots,\eta_h,\underline\eta)\,\mapsto\, \pmb\eta\,:=\,\sum_{i=1}^h
\mathrm{ad}(\eta_i^{\{1\}})(a_i^{(1)})
\,+\,\sum_{r=2}^n\mathrm{ad}(\underline\eta^{\{1r\}})(t_{1r}). 
\end{equation*}

(b) There is an injective linear map $\mathrm{Forms}\to\mathrm{Tuples}$, defined by $\epsilon\mapsto (\epsilon b_1,\ldots,\epsilon b_h,\mathrm{pr}_1^*\epsilon)$.

(c) The composed map $\mathrm{Forms}\to\mathrm{Tuples}\to\pmb{\mathrm{Forms}}$ is zero.  
\end{lem}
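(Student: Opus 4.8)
The plan is to treat this as the single-valued counterpart of Lemma~\ref{lemma:210428multi}: the three maps are the exact transcriptions of those of that lemma, and the three assertions follow by the same reasoning. The arguments simplify considerably, however, because the spaces $\mathrm{Forms}$, $\mathrm{Tuples}$ and $\pmb{\mathrm{Forms}}$ are cut out by conditions on the form type and on pole orders alone, the forms living directly over $C$, $C^2$ and $(C\setminus\infty)^n$; there is no monodromy constraint to be verified, contrary to $\mathcal{Forms}$, $\mathcal{Tuples}$ and $\pmb{\mathcal{Forms}}$, which carried the transformation laws \eqref{eq280421mon1}--\eqref{eq280421mon2}. For part~(a) I would simply check that the right-hand side lies in $\pmb{\mathrm{Forms}}=\Gamma((C\setminus\infty)^n,(\Omega^1_{C,\log})^{[1]}(\Delta_1))\,\hat\otimes\,\hat{\mathfrak t}_{h,n}$. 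Since $\eta_i\in\mathrm{Forms}$ is a log $1$-form on $C\setminus\infty$, each $\eta_i^{\{1\}}$ is a section of $(\Omega^1_{C,\log})^{[1]}$ over $(C\setminus\infty)^n$, so $\mathrm{ad}(\eta_i^{\{1\}})(a_i^{(1)})$ is $\hat{\mathfrak t}_{h,n}$-valued with no pole along $\Delta_1$; and since $\underline\eta$ has at most a simple pole along $\Delta$, each $\underline\eta^{\{1r\}}$ has at most a simple pole along $\Delta_{1r}$, so $\mathrm{ad}(\underline\eta^{\{1r\}})(t_{1r})$ contributes a simple pole along $\Delta_{1r}\subset\Delta_1$. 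Summing gives $\pmb\eta\in\pmb{\mathrm{Forms}}$, and linearity in the tuple is immediate.

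For part~(b), well-definedness reduces to noting that $\epsilon b_i\in\mathrm{Forms}$ for each $i$, and that $\mathrm{pr}_1^*\epsilon\in\Gamma(C^2,(\Omega^1_{C,\log})^{[1]})\,\hat\otimes\,U(\mathrm{Lie}(b_1,\ldots,b_h))^\wedge$ sits inside the space allowing a pole along $\Delta$; hence the tuple lies in $\mathrm{Tuples}$ (here there is nothing analogous to \eqref{eq280421mon2} to impose). Injectivity then follows at once from the injectivity of $\mathrm{pr}_1^*$ applied to the last component. For part~(c), the computation is identical to that of Lemma~\ref{lemma:210428multi}(c): using $(\epsilon b_i)^{\{1\}}=\epsilon^{\{1\}}b_i^{(1)}$ and $(\mathrm{pr}_1^*\epsilon)^{\{1r\}}=\epsilon^{\{1\}}$, the image of $\epsilon$ under the composite is
\[
\mathrm{ad}(\epsilon^{\{1\}})\Big(\sum_{i=1}^h[b_i^{(1)},a_i^{(1)}]+\sum_{r=2}^n t_{1r}\Big),
\]
which vanishes by the relation \eqref{reltgnii} taken at $r=1$.

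I do not expect a genuine obstacle here: the whole content is the form-type and pole bookkeeping of~(a)--(b) together with the single Lie-algebra identity of~(c). The only point requiring care is the consistent handling of the superscripts $(r)$, $\{r\}$ and $\{rs\}$, and of the convention by which $\mathrm{ad}$ extends to an algebra morphism on $U(\mathfrak t_{h,n})^\wedge$, which is what yields $\mathrm{ad}(\epsilon^{\{1\}}b_i^{(1)})(a_i^{(1)})=\mathrm{ad}(\epsilon^{\{1\}})([b_i^{(1)},a_i^{(1)}])$ and $\mathrm{ad}(\epsilon^{\{1\}})(t_{1r})$ in the two sums above.
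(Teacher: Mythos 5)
Your proposal is correct and matches the paper's own treatment: the paper dismisses (a) and (b) as obvious (precisely because, as you note, no analogue of the monodromy conditions \eqref{eq280421mon1}--\eqref{eq280421mon2} needs checking) and proves (c) by exactly your computation, reducing via $(\epsilon b_i)^{\{1\}}=\epsilon^{\{1\}}b_i^{(1)}$ and $(\mathrm{pr}_1^*\epsilon)^{\{1r\}}=\epsilon^{\{1\}}$ to $\mathrm{ad}(\epsilon^{\{1\}})\big(\sum_{i=1}^h[b_i^{(1)},a_i^{(1)}]+\sum_{r=2}^n t_{1r}\big)=0$ by relation \eqref{reltgnii}, as in Lemma~\ref{lemma:210428multi}(c). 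Your extra bookkeeping on pole types and the injectivity of $\mathrm{pr}_1^*$ simply fills in what the paper leaves implicit.
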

\begin{proof}
The statements~(a) and~(b) are obvious, while the proof of statement~(c) is the same as that of statement~(c) in Lemma~\ref{lemma:210428multi}.
\end{proof}

\subsection{Mapping multi-valued forms to single-valued forms}\label{ssec:43}

Let $\gamma:\hat C\setminus\pi^{-1}(\infty)\to \exp(\mathrm{Lie}(b_1,\ldots ,b_h)^\wedge)$ be a holomorphic function such that $\hat{\mathsf{B}}_i^*\gamma=\gamma e^{-b_i}$ for any $i\in[\![1,h]\!]$. We know that the space of such functions is non-empty, because it contains~$g:=g_{\tilde x,\pmb\beta}$.

To each $\gamma$ as above, we attach a morphism of complexes from $\mathcal Forms\to\mathcal Tuples\to\pmb{\mathcal Forms}$ to $\mathrm{Forms}\to\mathrm{Tuples}\to\pmb{\mathrm{Forms}}$ as follows.

\begin{lem}\label{lem:210503sv}
(a) The assignment $\delta\mapsto \gamma\delta$ defines a linear map $s_\gamma:{\mathcal Forms}\to{\mathrm{Forms}}$.\footnote{We identify $\mathrm{Forms}$ with the subspace of $\Gamma(\hat C\setminus\pi^{-1}(\infty),\Omega^1_{\hat C,\log})\,\hat\otimes\,
U(\mathrm{Lie}(b_1,\ldots ,b_h))^\wedge$ of 1-forms which are invariant under the action of ${\rm Aut}(\hat C/C)$. The same kind of identification is understood also for the other statements.}

(b) Let~$a$ be the antipode map of the Hopf algebra $U(\mathrm{Lie}(b_1,\ldots ,b_h))^\wedge$, induced by $b_{i_1}\cdots b_{i_m}\mapsto a(b_{i_1}\cdots b_{i_m}):=(-1)^mb_{i_m}\cdots b_{i_1}$. For $i\in [\![1,h]\!]$, define $\partial_i$ to be the unique linear endomorphism of $U(\mathrm{Lie}(b_1,\ldots ,b_h))^\wedge$ such that $u=\varepsilon(u)1+\sum_{i=1}^h \partial_i(u)b_i$ for any $u\in U(\mathrm{Lie}(b_1,\ldots ,b_h))^\wedge$, where~$\varepsilon$ is the counit map. Both~$a$ and~$\partial_i$ extend to endomorphisms of any space of functions with values in $U(\mathrm{Lie}(b_1,\ldots ,b_h))^\wedge$. Then the assignments $\kappa_i \mapsto \gamma\kappa_i$ and
\begin{equation}\label{eq:210508uk}
\underline{\kappa}\,\mapsto\, ({\rm pr}_1^*\gamma)\,\underline{\kappa}\,({\rm pr}_2^*\gamma)^{-1}\,+\,\sum_{i=1}^h\,({\rm pr}_1^*\gamma)\,\kappa_i\,({\rm pr}_2^*\,a(\partial_i(\gamma)))
\end{equation}
define a linear map $S_\gamma:{\mathcal Tuples}\to\mathrm{Tuples}$.

(c) The assignment $\pmb\kappa \mapsto {\rm Ad}_{\gamma^{\{1\}}\cdots \gamma^{\{n\}}}(\pmb \kappa)$ defines a linear map $\pmb s_\gamma:\pmb{\mathcal Forms}\to\pmb{\mathrm{Forms}}$.

(d) The two squares in the following diagram are commutative:
\[
\begin{tikzcd}
\mathcal Forms \arrow[r, hook] \arrow[d,"{s_\gamma}"]
& {\mathcal Tuples} \arrow[d, "{S_\gamma}"]  \arrow[r] & \pmb{\mathcal Forms} \arrow[d, "{\pmb s_\gamma}"]  \\
\mathrm{Forms} \arrow[r, hook]
& \mathrm{Tuples} \arrow[r] & \pmb{\mathrm{Forms}}
\end{tikzcd}
\]

\end{lem}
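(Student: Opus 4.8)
The plan is to treat the three vertical maps $s_\gamma$, $S_\gamma$, $\pmb s_\gamma$ one at a time, in each case first checking that the proposed formula produces a form invariant under $\mathrm{Aut}(\hat C/C)$ (resp. its powers acting on $\hat C^n$), so that it descends to $C$, and then noting that the pole orders are as required. Since $\mathrm{Aut}(\hat C/C)$ is freely generated by the $\hat{\mathsf{B}}_i$, it suffices to test invariance under each $\hat{\mathsf{B}}_i^{(r)}$. For $s_\gamma$ in (a) this is immediate: from $\hat{\mathsf{B}}_i^*\gamma=\gamma e^{-b_i}$ and $\hat{\mathsf{B}}_i^*\delta=e^{b_i}\delta$ one gets $\hat{\mathsf{B}}_i^*(\gamma\delta)=\gamma e^{-b_i}e^{b_i}\delta=\gamma\delta$, and $\gamma\delta$ has at most a simple pole over $C\setminus\infty$ because $\gamma$ is holomorphic there. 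For $\pmb s_\gamma$ in (c), writing $\Gamma:=\gamma^{\{1\}}\cdots\gamma^{\{n\}}$ and using that the factors commute, one finds $(\hat{\mathsf{B}}_j^{(s)})^*\Gamma=\Gamma e^{-b_j^{(s)}}$; combined with $(\hat{\mathsf{B}}_j^{(s)})^*\pmb\kappa=\mathrm{Ad}_{e^{b_j^{(s)}}}\pmb\kappa$ for $\pmb\kappa\in\pmb{\mathcal Forms}$ this gives $(\hat{\mathsf{B}}_j^{(s)})^*\mathrm{Ad}_\Gamma\pmb\kappa=\mathrm{Ad}_{\Gamma e^{-b_j^{(s)}}e^{b_j^{(s)}}}\pmb\kappa=\mathrm{Ad}_\Gamma\pmb\kappa$, while $\mathrm{Ad}_\Gamma$ preserves the pole along $\hat\Delta_1$.

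The first genuinely computational point is the $\underline\kappa$-component of $S_\gamma$ in (b). Invariance of the form $\underline\eta$ in \eqref{eq:210508uk} under $(\hat{\mathsf{B}}_j^{(1)})^*$ is again a one-line cancellation of $e^{-b_j}e^{b_j}$, using \eqref{eq280421mon1}. Invariance under $(\hat{\mathsf{B}}_j^{(2)})^*$ is the heart of (b): here $\underline\kappa$ transforms by \eqref{eq280421mon2}, $({\rm pr}_2^*\gamma)^{-1}$ by $e^{b_j}({\rm pr}_2^*\gamma)^{-1}$, and ${\rm pr}_2^*a(\partial_i\gamma)$ by ${\rm pr}_2^*a(\partial_i(\gamma e^{-b_j}))$. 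I would compute the last pull-back from the identity $\partial_i(\gamma e^{-b_j})=\partial_i\gamma+\delta_{ij}\,\gamma\,\tfrac{e^{-b_j}-1}{b_j}$ (valid because $\gamma(e^{-b_j}-1)$ ends in the letter $b_j$) together with $a(\gamma\tfrac{e^{-b_j}-1}{b_j})=-\tfrac{e^{b_j}-1}{b_j}\gamma^{-1}$. Substituting, the two extra contributions $\pm\,{\rm pr}_1^*\gamma\cdot\kappa_j\cdot\tfrac{e^{b_j}-1}{b_j}\cdot({\rm pr}_2^*\gamma)^{-1}$, coming respectively from the $\kappa_j\tfrac{1-e^{-b_j}}{b_j}$ summand of \eqref{eq280421mon2} and from the $i=j$ correction to $a(\partial_j\gamma)$, cancel, yielding $(\hat{\mathsf{B}}_j^{(2)})^*\underline\eta=\underline\eta$.

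Finally, the two squares in (d). For the left square one feeds $\delta\mapsto(\delta b_1,\ldots,\delta b_h,{\rm pr}_1^*\delta)$ into $S_\gamma$; the $\underline\eta$-component becomes ${\rm pr}_1^*(\gamma\delta)\cdot\big(({\rm pr}_2^*\gamma)^{-1}+\sum_i b_i\,{\rm pr}_2^*a(\partial_i\gamma)\big)$, and the bracket equals $1$ by the group-like identity $\gamma^{-1}+\sum_i b_i\,a(\partial_i\gamma)=1$ (apply the antipode to $\gamma=1+\sum_i\partial_i(\gamma)b_i$ and use $a(\gamma)=\gamma^{-1}$); since the $\eta_i$-components are visibly $(\gamma\delta)b_i$, the square commutes. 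The right square is the main obstacle. The plan is to use that $\mathrm{Ad}_\Gamma$ is an algebra automorphism satisfying $\mathrm{Ad}_\Gamma(\mathrm{ad}(u)(X))=\mathrm{ad}(\mathrm{Ad}_\Gamma u)(\mathrm{Ad}_\Gamma X)$, apply it term by term to \eqref{eq280421def1}, and match the result with the assembly of $S_\gamma(\kappa_1,\ldots,\kappa_h,\underline\kappa)$. Because $\kappa_i^{\{1\}}$ and $\underline\kappa^{\{1r\}}$ involve only coordinate-$1$ (resp. coordinate-$1$-and-$r$) letters, the factors of $\Gamma$ in the remaining coordinates drop out of $\mathrm{Ad}_\Gamma$ applied to them, whereas $\mathrm{Ad}_\Gamma a_i^{(1)}$ and $\mathrm{Ad}_\Gamma t_{1r}$ must be expanded using the relations \eqref{reltgnii} and (i)--(iii) of $\mathfrak t_{h,n}$. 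The delicate step is that the leftover conjugating factor $(\gamma^{\{1\}})^{-1}$ and the cross terms so produced must coincide exactly with the terms assembled, via $\mathrm{ad}(\underline\eta^{\{1r\}})(t_{1r})$, from the correction $\sum_l{\rm pr}_1^*\gamma\,\kappa_l\,{\rm pr}_2^*a(\partial_l\gamma)$ of $\underline\eta$; I expect the key simplifications to come once more from the derivation rule for $\partial_i$ together with the relation $\sum_i[b_i^{(1)},a_i^{(1)}]+\sum_{r\geq2}t_{1r}=0$, closing the bookkeeping.
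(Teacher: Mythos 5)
Your parts (a), (b), (c) and the left square of (d) are complete and coincide step for step with the paper's proof: the transformation law $\hat{\mathsf{B}}_j^*\partial_i(\gamma)=\partial_i(\gamma)+\delta_{ij}\,\gamma\,\frac{e^{-b_j}-1}{b_j}$, the cancellation of the two extra terms $\pm\,({\rm pr}_1^*\gamma)\,\kappa_j\,\frac{e^{b_j}-1}{b_j}\,({\rm pr}_2^*\gamma)^{-1}$ via $a\big(\gamma\,\frac{e^{-b_j}-1}{b_j}\big)=-\frac{e^{b_j}-1}{b_j}\,\gamma^{-1}$, and the group-like identity $\gamma^{-1}+\sum_{i=1}^h b_i\,a(\partial_i(\gamma))=1$ for the left square are exactly the ingredients the paper uses.

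The right square of (d), however, is where you stop proving and start hoping, and the plan as stated would not close. The relation you say you expect to need, namely \eqref{reltgnii}, plays no role in this verification: in the paper it is used only for part (c) of Lemma~\ref{lemma:210428multi} (vanishing of the composed map $\mathcal Forms\to\pmb{\mathcal Forms}$), not here. The identity that actually drives the computation is $[b_j^{(r)},t_{1r}]=-[b_j^{(1)},t_{1r}]$, which you never mention. Iterating it, together with $[b_i^{(1)},b_j^{(r)}]=0$, gives ${\rm ad}(u^{(r)})(t_{1r})={\rm ad}(a(u)^{(1)})(t_{1r})$ for every $u\in U(\mathrm{Lie}(b_1,\ldots,b_h))^\wedge$, and this is precisely where the antipode in \eqref{eq:210508uk} comes from; no amount of applying the derivation rule for $\partial_i$ alone will generate it. Concretely, writing $\Gamma:=\gamma^{\{1\}}\cdots\gamma^{\{n\}}$ and using the operator identity ${\rm Ad}_\Gamma\circ{\rm ad}(\kappa)={\rm ad}(\Gamma\kappa)$ (which also dissolves your worry about a leftover factor $(\gamma^{\{1\}})^{-1}$, since one never needs to expand ${\rm Ad}_\Gamma(a_i^{(1)})$ or ${\rm Ad}_\Gamma(t_{1r})$ separately), one expands ${\rm ad}(\gamma^{\{r\}})(a_i^{(1)})=a_i^{(1)}+{\rm ad}(\partial_i(\gamma)^{\{r\}})(t_{1r})$ using $[b_j^{(r)},a_i^{(1)}]=\delta_{ij}t_{1r}$; the resulting cross terms ${\rm ad}\big(\gamma^{\{1\}}\kappa_i^{\{1\}}\,\partial_i(\gamma)^{\{r\}}\big)(t_{1r})$ are then converted by the flip identity into ${\rm ad}\big(({\rm pr}_1^*\gamma^{(1)})\,\kappa_i^{\{1\}}\,({\rm pr}_r^*\,a(\partial_i(\gamma))^{(1)})\big)(t_{1r})$, which is exactly the correction term of $\underline\eta$, while ${\rm ad}(\gamma^{\{r\}})(t_{1r})={\rm ad}\big(({\rm pr}_r^*\gamma^{(1)})^{-1}\big)(t_{1r})$ yields the main term $({\rm pr}_1^*\gamma^{(1)})\,\underline\kappa^{\{1r\}}\,({\rm pr}_r^*\gamma^{(1)})^{-1}$. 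Until the identity $[b_j^{(r)},t_{1r}]=-[b_j^{(1)},t_{1r}]$ is invoked, the matching you assert between the conjugation cross terms and the $\sum_i({\rm pr}_1^*\gamma)\,\kappa_i\,({\rm pr}_2^*\,a(\partial_i(\gamma)))$ correction cannot be carried out, so this step is a genuine gap rather than routine bookkeeping.
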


\begin{proof} (a) One needs to check that $\hat{\mathsf{B}}_j^*\gamma\delta=\gamma\delta$ for any $j\in[\![1,h]\!]$, which follows immediately from the definition of $\gamma$ and $\delta$. 

(b) By (a) we already know that $\hat{\mathsf{B}}_j^*\gamma\kappa_i=\gamma\kappa_i$. Let us denote by $\underline{\eta}$ the image of $\underline\kappa$ via $S_\gamma$ defined by eq.~\eqref{eq:210508uk}; we are left with checking that $(\hat{\mathsf{B}}^{(1)}_j)^*\underline{\eta}=\underline{\eta}$ and that $(\hat{\mathsf{B}}^{(2)}_j)^*\underline{\eta}=\underline{\eta}$ for any $j\in[\![1,h]\!]$. The first identity immediately follows from the definition of $\gamma$, $\kappa_i$ and $\underline{\kappa}$. In order to verify the second identity, let us first compute $\hat{\mathsf{B}}_j^*\partial_i(\gamma)$.

By definition, $\gamma=1+\sum_{i=1}^h\partial_i(\gamma)\,b_i$. On the one hand, this implies that 
\begin{equation*}
\hat{\mathsf{B}}_j^*\gamma\,=\,1\,+\,\sum_{i=1}^h\hat{\mathsf{B}}_j^*\partial_i(\gamma)\,b_i\,.
\end{equation*}
On the other hand, one can write
\begin{equation*}
\gamma \,e^{-b_j}\,=\,\gamma\,+\,\gamma\,\frac{e^{-b_j}-1}{b_j}\,b_j\,=\,1\,+\,\sum_{i=1}^h\bigg(\partial_i(\gamma)\,+\,\delta_{ij}\,\gamma\,\frac{e^{-b_j}-1}{b_j}\bigg)\,b_i\,.
\end{equation*}
Since we know that $\hat{\mathsf{B}}_j^*\gamma=\gamma e^{-b_j}$, we can compare these two equations and deduce that
\begin{equation*}
\hat{\mathsf{B}}_j^*\partial_i(\gamma)\,=\,\partial_i(\gamma)\,+\,\delta_{ij}\,\gamma\,\frac{e^{-b_j}-1}{b_j}\,.
\end{equation*}

Using this, we get
\begin{align*}
&(\hat{\mathsf{B}}^{(2)}_j)^*\underline{\eta}\,=\,({\rm pr}_1^*\gamma)\,(\hat{\mathsf{B}}^{(2)}_j)^*\underline{\kappa}\,(\hat{\mathsf{B}}^{(2)}_j)^*({\rm pr}_2^*\gamma)^{-1}\,+\,\sum_{i=1}^h\,({\rm pr}_1^*\gamma)\,\kappa_i\,(\hat{\mathsf{B}}^{(2)}_j)^*({\rm pr}_2^*\,a(\partial_i(\gamma)))\\
&\,=\,({\rm pr}_1^*\gamma)\,\bigg(\underline\kappa\,e^{-b_j}+\kappa_j\,{1-e^{-b_j}\over b_j}\bigg)\,e^{b_j}\,({\rm pr}_2^*\gamma)^{-1}+\sum_{i=1}^h({\rm pr}_1^*\gamma)\,\kappa_i\,\Bigg({\rm pr}_2^*\,a\bigg(\partial_i(\gamma)+\delta_{ij}\,\gamma\,\frac{e^{-b_j}-1}{b_j}\bigg)\Bigg)\\
&\,=\,\underline{\eta}\,+\,({\rm pr}_1^*\gamma)\,\bigg(\kappa_j\,{e^{b_j}-1\over b_j}\bigg)\,({\rm pr}_2^*\gamma)^{-1}\,+\,({\rm pr}_1^*\gamma)\,\kappa_j\,a\bigg({\rm pr}_2^*\gamma\,{e^{-b_j}-1\over b_j}\bigg)\\
&\,=\,\underline{\eta}\,,
\end{align*}
where the last identity follows from the following properties of the antipode: $a(uv)=a(v)a(u)$ for any $u,v$, $a(b_j)=-b_j$ for any~$j$, and $a(u)=u^{-1}$ for any~$u$ which is group-like.

(c) One needs to check that $(\hat{\mathsf{B}}^{(r)}_i)^*{\rm Ad}_{\gamma^{\{1\}}\cdots \gamma^{\{n\}}}(\pmb \kappa)={\rm Ad}_{\gamma^{\{1\}}\cdots \gamma^{\{n\}}}(\pmb \kappa)$ for any $i\in[\![1,h]\!]$ and for any $r\in[\![1,n]\!]$. This follows from the fact that the action of $\hat{\mathsf{B}}^{(r)}_i$ is trivial on~$\gamma^{\{s\}}$ for $s\neq r$, from $(\hat{\mathsf{B}}^{(r)}_i)^*\circ{\rm Ad}_{\gamma^{\{r\}}}={\rm Ad}_{\gamma^{\{r\}}}\circ e^{-{\rm ad}(b_i^{(r)})}$ and from the fact that $[b_i^{(r)},b_j^{(s)}]=0$. 

(d) To prove that the square on the left commutes, one needs to check for any element $\delta\in\mathcal Forms$ that $S_\gamma(\delta b_1,\dots ,\delta b_h,{\rm pr}_1^*\delta)=(\gamma\delta b_1,\ldots ,\gamma\delta b_h,{\rm pr}_1^*\gamma\delta)$. This is obviously true for the first~$h$ components. We are left with showing that
\begin{equation*}
({\rm pr}_1^*\gamma)\,{\rm pr}_1^*\delta\,({\rm pr}_2^*\gamma)^{-1}\,+\,\sum_{i=1}^h\,({\rm pr}_1^*\gamma)\,\delta\,b_i\,({\rm pr}_2^*\,a(\partial_i(\gamma)))\,=\,{\rm pr}_1^*\gamma\delta\,.
\end{equation*}
This is a consequence of the fact that $\gamma^{-1}=a(\gamma)=1-\sum_{i=1}^hb_ia(\partial_i(\gamma))$.

To prove that the square on the right commutes, one needs to check for any element $(\kappa_1\ldots ,\kappa_h,\underline{\kappa})\in\mathcal Tuples$ that
\begin{multline}\label{eq:210510}
\sum_{i=1}^h\,{\rm Ad}_{\gamma^{\{1\}}\cdots \gamma^{\{n\}}}\,\mathrm{ad}(\kappa_i^{\{1\}})(a_i^{(1)})\,+\,\sum_{r=2}^n\,{\rm Ad}_{\gamma^{\{1\}}\cdots \gamma^{\{n\}}}\,\mathrm{ad}(\underline\kappa^{\{1r\}})(t_{1r})\,=\,\sum_{i=1}^h\mathrm{ad}(\gamma^{\{1\}}\kappa_i^{\{1\}})(a_i^{(1)})\\
\,+\,\sum_{r=2}^n\mathrm{ad}\bigg(({\rm pr}_1^*\gamma^{(1)})\underline{\kappa}^{\{1r\}}({\rm pr}_r^*\gamma^{(1)})^{-1}+\sum_{i=1}^h\,({\rm pr}_1^*\gamma^{(1)})\kappa_i^{\{1\}}({\rm pr}_r^*a(\partial_i(\gamma))^{(1)})\bigg)(t_{1r}).
\end{multline}

First of all, notice that ${\rm Ad}_{\gamma^{\{1\}}\cdots \gamma^{\{n\}}}\,\mathrm{ad}(\kappa_i^{\{1\}})=\mathrm{ad}(\gamma^{\{1\}}\cdots \gamma^{\{n\}}\kappa_i^{\{1\}})$. Since $[b_j^{(r)},b_i^{(1)}]=0$ for any $r\in [\![2,n]\!]$ and any $i,j\in [\![1,h]\!]$, we have $\gamma^{\{r\}}\kappa^{\{1\}}_i=\kappa_i^{\{1\}}\gamma^{\{r\}}$ for any $r\in [\![2,n]\!]$. Moreover, since $[b_j^{(r)},a_i^{(1)}]=\delta_{ij}t_{1r}$ for any $r\in [\![2,n]\!]$ and any $i,j\in [\![1,h]\!]$, we have ${\rm ad}(\gamma^{\{r\}})(a_i^{(1)})=a_i^{(1)}+{\rm ad}(\gamma^{\{r\}}-1)(a_i^{(1)})=a_i^{(1)}+{\rm ad}(\partial_i(\gamma)^{\{r\}})(t_{1r})$ for any $r\in [\![2,n]\!]$. Finally, the fact that $[b_j^{(s)},t_{1r}]=0$ for any $s\neq 1,r$ implies that the first term on the l.h.s. of~\eqref{eq:210510} coincides with
\begin{equation}\label{eq:210510b}
\sum_{i=1}^h\,\mathrm{ad}(\gamma^{\{1\}}\kappa_i^{\{1\}})(a_i^{(1)})\,+\,\sum_{r=2}^n\,\sum_{i=1}^h\,{\rm ad}(\gamma^{\{1\}}\kappa^{\{1\}}_i(\partial_i(\gamma))^{\{r\}})(t_{1r})\,.
\end{equation}
The fact that $[b_j^{(r)},t_{1r}]=-[b_j^{(1)},t_{1r}]$ implies that ${\rm ad}(\partial_i(\gamma))^{(r)}(t_{1r})={\rm ad}(a(\partial_i(\gamma))^{(1)})(t_{1r})$, and therefore we can rewrite~\eqref{eq:210510b}, i.e. the first term on the l.h.s. of~\eqref{eq:210510}, as\footnote{Recall that, by definition, $\gamma^{\{r\}}={\rm pr}_r^*\gamma^{(r)}$.}
\begin{equation*}
\sum_{i=1}^h\,\mathrm{ad}(\gamma^{\{1\}}\kappa_i^{\{1\}})(a_i^{(1)})\,+\,\sum_{r=2}^n\,\sum_{i=1}^h\,{\rm ad}\Big(({\rm pr}_1^*\gamma^{(1)})\kappa_i^{\{1\}}({\rm pr}_r^*a(\partial_i(\gamma))^{(1)})\Big)(t_{1r})\,.
\end{equation*}
Comparing this expression with the r.h.s. of~\eqref{eq:210510}, we are left with proving that
\begin{equation}\label{eq:210510c}
{\rm Ad}_{\gamma^{\{1\}}\cdots \gamma^{\{n\}}}\,\mathrm{ad}(\underline\kappa^{\{1r\}})(t_{1r})\,=\,\mathrm{ad}\Big(({\rm pr}_1^*\gamma^{(1)})\underline{\kappa}^{\{1r\}}({\rm pr}_r^*\gamma^{(1)})^{-1}\Big)(t_{1r})\,
\end{equation}
for any $r\in [\![2,n]\!]$.

Using that ${\rm Ad}_{\gamma^{\{1\}}\cdots \gamma^{\{n\}}}\,\mathrm{ad}(\underline\kappa^{\{1r\}})=\mathrm{ad}(\gamma^{\{1\}}\cdots \gamma^{\{n\}}\underline\kappa^{\{1r\}})$, that $[b_j^{(r)},b_i^{(1)}]=0$ for any $r\in [\![2,n]\!]$ and any $i,j\in [\![1,h]\!]$, and that $[b_j^{(s)},t_{1r}]=0$ for any $s\neq 1,r$, we can rewrite the l.h.s. of~\eqref{eq:210510c} as ${\rm ad}(\gamma^{\{1\}}\underline\kappa^{\{1r\}}\gamma^{\{r\}})(t_{1r})$. The identity~\eqref{eq:210510c} is then a consequence of the fact that ${\rm ad}(\gamma^{\{r\}})(t_{1r})={\rm ad}(({\rm pr}_r^*\gamma^{(1)})^{-1})(t_{1r})$, which follows from $[b_j^{(r)},t_{1r}]=-[b_j^{(1)},t_{1r}]$.
\end{proof}

\subsection{Decompositions of $\pmb K^{[1]}$}\label{ssec:44}

Let $P\in\hat C$. A key step in~\cite{EnrConfSp} towards constructing the 1-form~$\pmb K^{[1]}$ was the introduction of two families of 1-forms\footnote{In~\cite{EnrConfSp} the forms $\omega_{P,i_1\ldots i_m}(z)$ are denoted $\omega^{\underline{z}P}_{i_1\ldots i_m}$, while the forms $\psi_{P,i_1\ldots i_m}(z_1,z_2)$ are denoted $\psi^{\underline{z}_1Pz_2}_{i_1\ldots i_m}$.} 
$$
\Big(\omega_{P,i_1\ldots i_m}\Big)_{\substack{m\geq 1\\i_1,\ldots ,i_m\in [\![1,h]\!]}}\,,\quad \quad \quad \Big(\psi_{P,i_1\ldots i_m}\Big)_{\substack{m\geq 0\\i_1,\ldots ,i_m\in [\![1,h]\!]}}\,.
$$  

The first family is contained in $\Gamma(\hat C,\Omega^1_{\hat C}(\pi^{-1}(\pi(P))))\subset \Gamma(\hat C,\Omega^1_{\hat C,\log})$; it satisfies, and is uniquely determined by, the following properties:
\begin{itemize}
\item If $m=1$ then the forms $\omega_{P,i}$ do not depend on~$P$, and they coincide with (the pullback to~$\hat C$ of) the holomorphic 1-forms~$\omega_i$ introduced in \S\ref{def:omega} (see Lemma~6 of~\cite{EnrConfSp}).
\item For any $j\in [\![1,h]\!]$ one has (see Lemma~6 of~\cite{EnrConfSp})
\begin{equation}\label{eq:210503n1}
\hat{\mathsf{B}}_j^*\,\omega_{P,i_1\ldots i_m}\,=\,\sum_{l\geq 0} \,\frac{1}{l!}\,\delta_{ji_1\cdots i_l}\,\omega_{P,i_{l+1}\ldots i_m}\,.
\end{equation}
\item One has (see Lemma~6 of~\cite{EnrConfSp})
\begin{equation*}
{\rm res}_{P}\,\omega_{P,i_1\ldots i_m}\,=\,-\frac{\delta_{m2} \,\delta_{i_1i_2}}{2\pi\mathrm{i}}\,.
\end{equation*}
\item For $P\in \hat{D}$, one has (see the proof of Lemma~6 of~\cite{EnrConfSp})
\begin{equation}\label{eq:210516n1}
\int_{\hat{\mathcal{A}}_j}\,\omega_{P,i_1\ldots i_m}\,=\,\frac{\mathsf{b}_{m-1}}{(m-1)!}\,\delta_{ji_1\cdots i_m}\,,
\end{equation}
where $\mathsf{b}_{m-1}$ is the $(m-1)$-th Bernoulli number\footnote{The Bernoulli numbers are defined by $\sum_{m\geq 0}\mathsf{b}_m\tfrac{t^m}{m!}:=\tfrac{t}{e^t-1}$.}.
\end{itemize}

The second family is contained in $\Gamma(\hat C^2,(\Omega^1_{\hat C}(\pi^{-1}(\pi(P))))^{[1]}(\hat\Delta))\subset \Gamma(\hat C^2,(\Omega^1_{\hat C,\log})^{[1]}(\hat\Delta))$; it satisfies, and is uniquely determined by, the following properties:
\begin{itemize}
\item If $m=0$ then~$\psi_P$ is the fundamental form of the third kind defined in \S\ref{sec:fundformIIkind}.
\item For any $m\geq 1$ the forms $\psi_{P,i_1\ldots i_m}$ belong to $\Gamma\big(\hat C^2,(\Omega^1_{\hat C})^{[1]}\big)$, and the integrals along the loops $\hat{\mathcal{A}}_{i}$ of their restrictions on $\hat C\times\{Q\}$ for any $Q\in\hat C$ vanish identically (see Lemma~8 of~\cite{EnrConfSp}).
\item For any $j\in [\![1,h]\!]$ one has (see Prop.~10 of~\cite{EnrConfSp})
\begin{equation}\label{eq:210503n2}
(\hat{\mathsf{B}}^{(1)}_j)^*\,\psi_{P,i_1\ldots i_m}\,=\,\sum_{l\geq 0} \,\frac{1}{l!}\,\delta_{ji_1\cdots i_l}\,\psi_{P,i_{l+1}\ldots i_m}\,.
\end{equation}
\item For any $j\in [\![1,h]\!]$ one has (see Prop.~10 of~\cite{EnrConfSp})
\begin{multline}\label{eq:210503n3}
(\hat{\mathsf{B}}^{(2)}_j)^*\,\psi_{P,i_1\ldots i_m}\,=\\
\sum_{l\geq 0} \,\frac{(-1)^l}{l!}\,\delta_{ji_m\cdots i_{m-l+1}}\,\psi_{P,i_{1}\ldots i_{m-l}}\,+\,\sum_{l\geq 1} \,\frac{(-1)^{l-1}}{l!}\,\delta_{ji_m\cdots i_{m-l+2}}\,\omega_{P,i_{1}\ldots i_{m-l+1}j}\,.
\end{multline}
\end{itemize}

Furthermore, these families are related by the following relation: for any $j\in [\![1,h]\!]$ and for any $P'\in\hat C$ one has (see Prop.~10 of~\cite{EnrConfSp})
\begin{equation}\label{eq:210503n4}
\omega_{P,i_1\ldots i_m}-\omega_{P',i_1\ldots i_m}\,=\,\delta_{i_{m-1}i_m}\psi_{P,i_1\ldots i_{m-2}}\big|_{\hat C\times\{P'\}}\,.
\end{equation}

The dependence of these two families on $P\in \hat C$ is described in Lemma~9 of~\cite{EnrConfSp}.

\begin{defn}\label{def210514n1}
For $P,P'\in\hat C$ and for $j\in [\![1,h]\!]$ we set
\begin{align*}
D_{P,P'}\,&:=\,\sum_{\substack{m\geq 0\\i_1,\ldots ,i_m\in [\![1,h]\!]}}\,\psi_{P,i_1\ldots i_m}\big|_{\hat C\times \{P'\}}\,\otimes\,b_{i_1}\cdots b_{i_m}\,,\\
K_{P,j}\,&:=\,\sum_{\substack{m\geq 0\\i_1\ldots i_m\in [\![1,h]\!]}}\,\omega_{P,i_1\ldots i_m,j}\,\otimes\,b_{i_1}\cdots b_{i_m}\,,\\
\underline{K}_{P}\,&:=\,\sum_{\substack{m\geq 0\\i_1,\ldots ,i_m\in [\![1,h]\!]}}\,\psi_{P,i_1\ldots i_m}\,\otimes\,b_{i_1}\cdots b_{i_m}\,,
\end{align*}
so that
\begin{align*}
D_{P,P'}\,&\in\,\Gamma(\hat C,\Omega^1_{\hat C}(\pi^{-1}(\pi(P))+\pi^{-1}(\pi(P'))))\,\hat\otimes\,U(\mathrm{Lie}(b_1,\ldots ,b_h))^\wedge\,,\\
K_{P,j}\,&\in\,\Gamma(\hat C,\Omega^1_{\hat C}(\pi^{-1}(\pi(P))))\,\hat\otimes\,U(\mathrm{Lie}(b_1,\ldots ,b_h))^\wedge\,,\\
\underline{K}_{P}\,&\in\,\Gamma(\hat C^2,(\Omega^1_{\hat C}(\pi^{-1}(\pi(P))))^{[1]}(\hat\Delta))\,\hat\otimes\,U(\mathrm{Lie}(b_1,\ldots ,b_h))^\wedge\,.
\end{align*}
\end{defn}

\begin{lem}\label{lem:210503multi}
(a) For any $P\in\hat C$ one has $(K_{P,1},\ldots,K_{P,h},\underline K_P)\in\mathcal Tuples$, and the 
image of this element in $\pmb{\mathcal Forms}$ is~$\pmb K^{[1]}$. 

(b) For any $P,P'\in\hat C$ one has $D_{P,P'}\in\mathcal Forms$, and the image of this element in $\mathcal Tuples$ is $(K_{P,1},\ldots,K_{P,h},\underline K_P)-(K_{P',1},\ldots,K_{P',h},\underline K_{P'})$.

(c) For any $P,P',P''\in \hat C$ one has $D_{P,P'}+D_{P',P''}=D_{P,P''}$.
\end{lem}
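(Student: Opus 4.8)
The plan is to prove the three statements in order, leaning throughout on the transformation and residue formulas \eqref{eq:210503n1}--\eqref{eq:210503n4} and \eqref{eq:210516n1} for the families $\omega_{P,i_1\ldots i_m}$ and $\psi_{P,i_1\ldots i_m}$, on the characterisation of $\pmb K^{[1]}$ in Proposition~\ref{prop:412:2303}, and on the structural maps of Lemma~\ref{lemma:210428multi}. For (a) I would first check that $(K_{P,1},\ldots,K_{P,h},\underline K_P)\in\mathcal Tuples$. Membership in the correct spaces is immediate from the stated ranges of $\omega_{P,\ldots}$ and $\psi_{P,\ldots}$, all of whose poles are simple. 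The monodromy conditions are generating-function bookkeeping: inserting \eqref{eq:210503n1} into the definition of $K_{P,j}$ and collapsing the Kronecker symbols $\delta_{ki_1\cdots i_l}$ (which force $b_{i_1}\cdots b_{i_l}=b_k^{l}$) gives $\hat{\mathsf B}_k^*K_{P,j}=e^{b_k}K_{P,j}$, so $K_{P,j}\in\mathcal Forms$; the same manipulation with \eqref{eq:210503n2} yields \eqref{eq280421mon1}; and feeding \eqref{eq:210503n3} into $\underline K_P$ separates into exactly the two sums $\underline K_P\,e^{-b_j}$ and $K_{P,j}\tfrac{1-e^{-b_j}}{b_j}$, the latter using $\sum_{l\ge 1}\tfrac{(-1)^{l-1}}{l!}b_j^{l}=1-e^{-b_j}$, which is \eqref{eq280421mon2}.

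Next I would identify the image $\pmb\kappa:=\sum_i\mathrm{ad}(K_{P,i}^{\{1\}})(a_i^{(1)})+\sum_{r=2}^n\mathrm{ad}(\underline K_P^{\{1r\}})(t_{1r})$ with $\pmb K^{[1]}$ by matching the defining conditions of Proposition~\ref{prop:412:2303} and invoking uniqueness. Condition \eqref{tag1:2302} holds automatically since $\pmb\kappa\in\pmb{\mathcal Forms}$ by Lemma~\ref{lemma:210428multi}(a). To apply uniqueness I must place $\pmb\kappa$ in $\Gamma_1$, i.e. kill the apparent poles along $\pi^{-1}(\pi(P))$: only the length-$2$ part of each $K_{P,j}$ and the $m=0$ part $\psi_P$ of $\underline K_P$ contribute there, and from $\mathrm{res}_P K_{P,j}=-\tfrac1{2\pi\mathrm i}b_j$ together with $\mathrm{res}_P\psi_P=-\tfrac1{2\pi\mathrm i}$ (from the residue theorem \eqref{residue:formula} and the normalisation of $\psi_P$) the total residue at $P$ equals $-\tfrac1{2\pi\mathrm i}\big(\sum_i[b_i^{(1)},a_i^{(1)}]+\sum_{r=2}^n t_{1r}\big)$, which vanishes by relation \eqref{reltgnii}. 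Then I verify \eqref{tag3:2302}: the $\hat{\mathcal A}_i$-periods of all $\underline K_P$-terms restricted to $\hat C\times\{\pmb P\}$ vanish, while \eqref{eq:210516n1} with the Bernoulli generating series $\sum_m\tfrac{\mathsf b_m}{m!}t^m=\tfrac{t}{e^t-1}$ gives $\int_{\hat{\mathcal A}_i}K_{P,j}=\delta_{ij}\tfrac{b_i}{e^{b_i}-1}$, whence $\int_{\hat{\mathcal A}_i}\pmb\kappa|_{\hat C\times\{\pmb P\}}=\tfrac{\mathrm{ad}(b_i^{(1)})}{e^{\mathrm{ad}(b_i^{(1)})}-1}(a_i^{(1)})$. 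By the uniqueness in Proposition~\ref{prop:412:2303}, $\pmb\kappa=\pmb K^{[1]}$.

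For (b), $D_{P,P'}\in\mathcal Forms$ follows as above from \eqref{eq:210503n2} applied in the first variable. Its image $(D_{P,P'}b_1,\ldots,D_{P,P'}b_h,\mathrm{pr}_1^*D_{P,P'})$ under Lemma~\ref{lemma:210428multi}(b) is computed by \eqref{eq:210503n4}: right-multiplying $D_{P,P'}$ by $b_j$ and reindexing reproduces $K_{P,j}-K_{P',j}$ exactly, the factor $\delta_{i_mj}$ in \eqref{eq:210503n4} converting $b_{i_m}$ into $b_j$; and the $P$-dependence of the $\psi_{P,\ldots}$ from Lemma~9 of~\cite{EnrConfSp}, namely $\psi_{P,i_1\ldots i_m}-\psi_{P',i_1\ldots i_m}=\mathrm{pr}_1^*\big(\psi_{P,i_1\ldots i_m}|_{\hat C\times\{P'\}}\big)$, gives $\underline K_P-\underline K_{P'}=\mathrm{pr}_1^*D_{P,P'}$. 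Statement (c) is then a formal consequence of (b): the elements $D_{P,P'}+D_{P',P''}$ and $D_{P,P''}$ both lie in $\mathcal Forms$ and, by (b) and linearity, have the same image $(K_{P,1}-K_{P'',1},\ldots,\underline K_P-\underline K_{P''})$ under the injective map $\mathcal Forms\to\mathcal Tuples$ of Lemma~\ref{lemma:210428multi}(b), so they coincide.

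I expect the main obstacle to be the identification in part (a): the vanishing of the spurious residue along $\pi^{-1}(\pi(P))$ through relation \eqref{reltgnii}, and the matching of the $\hat{\mathcal A}_i$-periods, are the points where the precise normalisations of the forms of~\cite{EnrConfSp} and the defining relations of $\hat{\mathfrak t}_{h,n}$ genuinely enter. By contrast, the monodromy computations in (a) and the whole of (b)--(c) reduce to routine reindexing and the injectivity already established.
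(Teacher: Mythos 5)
Your proposal is correct, and parts (b) and (c) coincide with the paper's own proof (including the appeal to Lemma~9 of~\cite{EnrConfSp} for the $P$-dependence of the $\psi_{P,i_1\ldots i_m}$, which you rightly make explicit where the paper's citation of only~\eqref{eq:210503n4} is terse). The one place where you genuinely diverge is the identification in (a): the paper's primary argument is a one-liner, namely that the image of $(K_{P,1},\ldots,K_{P,h},\underline K_P)$ in $\pmb{\mathcal Forms}$, written out in the basis of nested brackets, \emph{is} the defining formula of $\pmb K^{[1]}$ in \cite{EnrConfSp}, \S4.3; you instead execute the uniqueness route that the paper only mentions as an alternative, checking the characterising conditions of Proposition~\ref{prop:412:2303}. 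Your execution is sound --- in particular the cancellation of the residue at $P$ via $\mathrm{res}_P K_{P,j}=-\tfrac{1}{2\pi\mathrm{i}}b_j$, $\mathrm{res}_P\psi_P=-\tfrac{1}{2\pi\mathrm{i}}$ and relation~\eqref{reltgnii}, and the Bernoulli generating series matching \eqref{eq:210516n1} with \eqref{tag3:2302}; and checking only conditions (a) and (c) of Proposition~\ref{prop:412:2303} is legitimate by the remark following that proposition. Two small points deserve to be made explicit, though neither is fatal. First, vanishing of the residue at the single point $P$ only kills the poles along all of $\pi^{-1}(\pi(P))$ after you invoke the $\hat{\mathsf B}_i^{(1)}$-equivariance of $\pmb\kappa$ (the individual forms $\omega_{P,i_1\ldots i_m}$ \emph{do} have nonzero residues at translates of $P$, by \eqref{eq:210503n1}). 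Second, the period formula \eqref{eq:210516n1} --- and indeed condition (c) of Proposition~\ref{prop:412:2303} itself --- is stated for $P\in\hat D$, so your verification proves (a) only for such $P$; the case of arbitrary $P\in\hat C$ then follows from your part (b) together with Lemma~\ref{lemma:210428multi}(c) (the image of $\mathcal Forms$ in $\pmb{\mathcal Forms}$ vanishes, so the image of the tuple is independent of $P$), a chaining step you should state since (b) does not depend on (a). What your route buys is independence from the explicit formula in \cite{EnrConfSp}: it re-derives the decomposition~\eqref{eq:decompK1} purely from the axiomatic characterisation, at the cost of the extra residue and period bookkeeping above.
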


\begin{proof}
(a) First of all, note that the property~\eqref{eq:210503n1} implies that $\hat{\mathsf{B}}_i^*K_{P,j}=e^{b_i}K_{P,j}$ for any $i,j\in[\![1,h]\!]$, and therefore that $K_{P,j}\in\mathcal Forms$ for any $j\in[\![1,h]\!]$. Moreover, the properties~\eqref{eq:210503n2} and~\eqref{eq:210503n3} imply that~$\underline{K}_P$ satisfies~\eqref{eq280421mon1} and~\eqref{eq280421mon2}, respectively, from which we conclude that $(K_{P,1},\ldots,K_{P,h},\underline K_P)\in\mathcal Tuples$. 

The image of this element in $\pmb{\mathcal Forms}$ is\footnote{The notation $[b_{i_1},\ldots [b_{i_m},a_j]]$ stands for the nested Lie bracket expression ${\rm ad}(b_{i_1})\cdots {\rm ad}(b_{i_m})(a_j)$.}
\begin{equation*}
\sum_{j=1}^h\sum_{\substack{m\geq 0\\i_1,\ldots ,i_m\in [\![1,h]\!]}}\omega_{P,i_1\ldots i_m,j}\,\otimes\,[b^{(1)}_{i_1},\ldots [b^{(1)}_{i_m},a^{(1)}_j]]\,+\,\sum_{s=2}^n\sum_{\substack{m\geq 0\\i_1,\ldots ,i_m\in [\![1,h]\!]}}\psi_{P,i_1\ldots i_m}\,\otimes\,[b^{(1)}_{i_1},\ldots [b^{(1)}_{i_m},t_{1s}]],
\end{equation*}
and this is precisely the definition of~$\pmb K^{[1]}$ (denoted $\alpha_1$ in the reference) from~\cite{EnrConfSp},~\S4.3. Alternatively, one can also check that this expression satisfies the properties listed in Proposition~\ref{prop:412:2303}, which uniquely determine~$\pmb K^{[1]}$. 

(b) The fact that $D_{P,P'}\in \mathcal Forms$ follows from~\eqref{eq:210503n2}, while the fact that its image in $\mathcal Tuples$ is $(K_{P,1},\ldots,K_{P,h},\underline K_P)-(K_{P',1},\ldots,K_{P',h},\underline K_{P'})$ follows from~\eqref{eq:210503n4}.

(c) This follows from statement~(b) and from the injectivity of $\mathcal Forms\to\mathcal Tuples$. Alternatively, a direct proof can be deduced from the fact that $\psi_P\big|_{\hat C\times \{P'\}}$ satisfies this identity, which is true because $\psi_P\big|_{\hat C\times \{P'\}}=\big(\int_{P}^{P'}\big)^{(2)}\varphi$. 
\end{proof}
In particular, it follows from (a) that the image of $(K_{P,1},\ldots,K_{P,h},\underline K_P)$ in $\pmb{\mathcal Forms}$ does not depend on the choice of~$P$, thus leading to a family of different decompositions
\begin{equation}\label{eq:decompK1}
\pmb K^{[1]}\,=\,\sum_{j=1}^h
\mathrm{ad}(K_{P,j}^{\{1\}})(a_j^{(1)})
\,+\,\sum_{r=2}^n\mathrm{ad}(\underline{K}_P^{\{1r\}})(t_{1r})\,.
\end{equation}
Notice that the independence of the image of $(K_{P,1},\ldots,K_{P,h},\underline K_P)$ on~$P$ may also be viewed as a consequence of~(b): indeed, (b) implies that for $P,P'\in\hat C$ the image of the difference $(K_{P,1},\ldots,K_{P,h},\underline K_P)-(K_{P',1},\ldots,K_{P',h},\underline K_{P'})$ in $\pmb{\mathcal Forms}$ is the image of~$D_{w,w'}$ in $\pmb{\mathcal Forms}$ via the composed map $\mathcal Forms\to\mathcal Tuples\to\pmb{\mathcal Forms}$, which vanishes by Lemma~\ref{lemma:210428multi},~(c).

\subsection{Decompositions of $\pmb H^{[1]}$}\label{ssec:45}

\begin{defn}
(a) For $P,P'\in\hat C$ we set\footnote{$g=g_{\tilde x,\pmb \beta}:\hat C\setminus\pi^{-1}(\infty)\to \exp(\mathrm{Lie}(b_1,\ldots ,b_h)^\wedge)$ is the map constructed in \S\ref{sec:finalcorrection}.} $E_{P,P'}:=s_g(D_{P,P'})\in\mathrm{Forms}$. 

(b) For $P\in\hat C$ we set $(H_{P,1},\ldots,H_{P,h},\underline H_P):=S_g(K_{P,1},\ldots,K_{P,h},\underline K_P)
\in\mathrm{Tuples}$.
\end{defn}

\begin{lem}\label{lem49}
(a) For $P\in\hat C$, the image of $(H_{P,1},\ldots,H_{P,h},\underline H_P)$ in $\pmb{\mathrm{Forms}}$ is $\pmb H^{[1]}$.

(b) For $P,P'\in\hat C$, the image of $E_{P,P'}$ in $\mathrm{Tuples}$ is 
 $(H_{P,1},\ldots,H_{P,h},\underline H_P)-(H_{P',1},\ldots,H_{P',h},\underline H_{P'})$. 

(c) For $P,P',P''\in\hat C$, $E_{P,P'}+E_{P',P''}=E_{P,P''}$. 
\end{lem}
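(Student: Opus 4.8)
The plan is to deduce all three statements from the commutativity of the morphism of complexes established in Lemma~\ref{lem:210503sv}(d) (applied with $\gamma=g$), together with the multi-valued decompositions of Lemma~\ref{lem:210503multi} and the linearity of the maps $s_g$, $S_g$, $\pmb s_g$. The observation that ties everything together is that, since $\pmb g_{\tilde x,\pmb\beta}=g^{\{1\}}_{\tilde x,\pmb\beta}\cdots g^{\{n\}}_{\tilde x,\pmb\beta}$, the map $\pmb s_g$ of Lemma~\ref{lem:210503sv}(c) is precisely $\pmb\kappa\mapsto\mathrm{Ad}_{\pmb g_{\tilde x,\pmb\beta}}(\pmb\kappa)$, so that by Definition~\ref{def:210611} one has $\pmb s_g(\pmb K^{[1]})=\pmb H^{[1]}$.

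For part (a), I would start from the definition $(H_{P,1},\ldots,H_{P,h},\underline H_P)=S_g(K_{P,1},\ldots,K_{P,h},\underline K_P)$ and chase it through the right-hand square of the diagram in Lemma~\ref{lem:210503sv}(d). Commutativity gives that the image of this tuple in $\pmb{\mathrm{Forms}}$ equals $\pmb s_g$ applied to the image of $(K_{P,1},\ldots,K_{P,h},\underline K_P)$ in $\pmb{\mathcal Forms}$. By Lemma~\ref{lem:210503multi}(a) the latter image is $\pmb K^{[1]}$, whence the result is $\pmb s_g(\pmb K^{[1]})=\pmb H^{[1]}$, as claimed.

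For part (b), I would use the left-hand square. By definition $E_{P,P'}=s_g(D_{P,P'})$, and commutativity yields that the image of $E_{P,P'}$ in $\mathrm{Tuples}$ equals $S_g$ of the image of $D_{P,P'}$ in $\mathcal Tuples$. By Lemma~\ref{lem:210503multi}(b) this image is the difference $(K_{P,1},\ldots,K_{P,h},\underline K_P)-(K_{P',1},\ldots,K_{P',h},\underline K_{P'})$; applying the linear map $S_g$ and recalling the definition of the $H$-tuples produces exactly $(H_{P,1},\ldots,H_{P,h},\underline H_P)-(H_{P',1},\ldots,H_{P',h},\underline H_{P'})$. Part (c) is then immediate from Lemma~\ref{lem:210503multi}(c) and the linearity of $s_g$, namely $E_{P,P'}+E_{P',P''}=s_g(D_{P,P'}+D_{P',P''})=s_g(D_{P,P''})=E_{P,P''}$.

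There is no genuine obstacle at this stage: the substantive work—verifying that $s_g$, $S_g$, $\pmb s_g$ are well defined and that the two squares commute—was already carried out in Lemma~\ref{lem:210503sv}, while the decomposition of $\pmb K^{[1]}$ into classical second- and third-kind forms is the content of Lemma~\ref{lem:210503multi}. The present statement merely transports those facts through the gauge transformation $g$, so the only point requiring attention is the bookkeeping identification $\pmb s_g(\pmb K^{[1]})=\pmb H^{[1]}$ noted above, which guarantees that the gauge-conjugated object $\pmb H^{[1]}$ inherits the same decomposition structure as $\pmb K^{[1]}$.
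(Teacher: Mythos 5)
Your proposal is correct and follows essentially the same route as the paper: the paper's proof likewise combines Lemma~\ref{lem:210503multi} with the commutative diagram of Lemma~\ref{lem:210503sv}(d) and the definitional identity $\pmb H^{[1]}=\pmb s_g(\pmb K^{[1]})$. Your direct linearity argument for part~(c) matches one of the two justifications the paper gives (the other being part~(b) plus injectivity of $\mathrm{Forms}\to\mathrm{Tuples}$), so there is nothing to add.
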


\begin{proof} Statement~(a) of this lemma follows by combining statement~(a) of Lemma~\ref{lem:210503multi} with statement~(d) of Lemma~\ref{lem:210503sv} and with the fact that, by definition, $\pmb H^{[1]}=\pmb s_g(\pmb K^{[1]})$.

Statement~(b) (resp.~(c)) of this lemma follows by combining statement~(b) (resp.~(c)) of Lemma~\ref{lem:210503multi} with statement~(d) of Lemma~\ref{lem:210503sv}. Statement~(c) can also be proven by combining statement~(b) of this lemma with the fact that ${\rm Forms}\to {\rm Tuples}$ is injective.
\end{proof}

%BACKGROUND 
%
%Pbm : infty // omega^infty_{i_1,…,i_n}. Formule globale pour omega^infty_{i_1,…,i_n} en infty avec outils locaux en infty : dépend de (tilde x,beta) avec beta famille de formes avec sings en infty. // 
%
%Objets : KK multival sur C // K_{infty,i} forme sur hat C avec ps en infty et B_j^*K_{infty,i}=e^{b_j}*K_{infty,i} // g fonction sur hat C avec pole en infty B_j^*g=g*e^{-b_j} , dépend de tilde x,beta // H_{infty,i}=g*K_{infty,i} 1-val sur C, sing en infty, dépend de tilde x,beta.  // 

\section{Algorithms for the computation of $\pmb K$ and ${\pmb J}_{\tilde x,\pmb \beta}$}\label{sec5}

In this section we provide algorithms to write some of the forms and functions considered in this article in terms of iterated integrals or residues of classical differential forms on Riemann surfaces. In particular, one obtains explicit formulas for~$\pmb K$ and~${\pmb J}_{\tilde x,\pmb \beta}$ when $n=1$. As in the previous section, from now on we will omit~$\tilde x$ and~$\pmb\beta$ from the notation.

In \S\ref{ssec:51}, we review how to compute~$g$ in terms of iterated integrals directly from its definition, and we provide an alternative algorithm to compute the logarithm of~$g$ and the differential form $I=gd(g^{-1})$. In \S\ref{ssec:52} and \S\ref{ssec:53} we recall some facts about differential forms on closed Riemann surfaces which we use in \S\ref{ssec:K} to obtain recursive formulas in terms of iterated integrals and residues for some of the constituents of~$\pmb K$ and~$\pmb H$ introduced in \S\ref{ssec:44} and \S\ref{ssec:45}, respectively. We report in \S\ref{sec:computations} examples up to degree~3 of computations performed using these methods in the case $n=1$.

\subsection{Computation of $g$ and $\pmb I$}\label{ssec:51}

\subsubsection{Computation of $\pmb \Lambda$ and $g$}\label{ssec:g}

It follows by combining Lemma~\ref{lemma:hatg:06042021}, Definition~\ref{def:g:30032021}, Proposition~\ref{prop:2:1741} and eq.~\eqref{eq:diffeqFLambda}, that the map~$g=g_{\tilde x,\pmb\beta}:\hat C\setminus\pi^{-1}(\infty)\to\exp({\rm Lie}(b_1,\ldots ,b_h)^\wedge)$ defined in \S\ref{sec:finalcorrection} satisfies the differential equation
\begin{equation}\label{eq:210616n6}
d(g^{-1})\,=\,g^{-1}\,\sum_{i=1}^{2h}\gamma_i\otimes\Lambda_i\,,
\end{equation}
where $\pmb\Lambda\in ({\rm Lie}(b_1,\ldots ,b_h)^\wedge)^{2h}$ is uniquely determined (see Remark~\ref{rem:210616}) by the system of equations
\begin{align}\label{eq:210616n5}
\sum_{m\geq 1}\int_{\mathcal{A}_i}\underbrace{\gamma_{\pmb \Lambda}\cdots \gamma_{\pmb \Lambda}}_m&\,=\,0\,,\\
\sum_{m\geq 1}\int_{\mathcal{B}_i}\underbrace{\gamma_{\pmb \Lambda}\cdots \gamma_{\pmb \Lambda}}_m&\,=\,e^{b_i}-1\,,\notag
\end{align}
with $i\in [\![1,h]\!]$, where $\gamma_{\pmb \Lambda}:=\sum_{i=1}^{2h}\gamma_i\otimes\Lambda_i$. From this it follows that
\begin{equation*}
g^{-1}\,=\,1\,+\,\sum_{m\geq 1}\sum_{i_1,\ldots ,i_m\in [\![1,2h]\!]}[i_1,\ldots,i_m|\bullet]\otimes \Lambda_{i_1}\cdots \Lambda_{i_m}\,,
\end{equation*}
where we introduce the shorthand notation\footnote{These are integrals over $\tilde C\setminus p^{-1}(\infty)$ which in general depend on the integration path and define functions on $\widetilde{C\setminus\infty}$, but we know that the special combination which gives rise to~$g^{-1}$ does not depend on the path and it is well-defined on $\tilde C\setminus p^{-1}(\infty)$, and even on $\hat C\setminus \pi^{-1}(\infty)$.}
\begin{equation}\label{eqnotttt}
[i_1,\ldots ,i_m|\bullet]\,:=\,\int_{\tilde x}^{\bullet}\gamma_{i_1}\cdots \gamma_{i_m}\,.
\end{equation}
This implies that
\begin{equation}\label{eq2:211006}
g\,=\,1\,+\,\sum_{m\geq 1}(-1)^m\sum_{i_1,\ldots ,i_m\in [\![1,2h]\!]}[i_1,\ldots,i_m|\bullet]\otimes \Lambda_{i_m}\cdots \Lambda_{i_1}\,.
\end{equation}

The only obstacle towards computing~$g$ is then inverting the relations~\eqref{eq:210616n5} to obtain the elements $\Lambda_i\in {\rm Lie}(b_1,\ldots ,b_h)^\wedge$, which can be done recursively by writing them in the form\footnote{Alternatively, in these formulas one may use the notation~\eqref{eqnotttt} and replace $[i_1,\ldots ,i_m|j]$ by $[i_1,\ldots ,i_m|\hat{\mathsf{A}}_j(\hat{x})]$ and $[i_1,\ldots ,i_m|h+j]$ by $[i_1,\ldots ,i_m|\hat{\mathsf{B}}_j(\hat{x})]$. We stress again the fact that, in general, $[i_1,\ldots ,i_m|\hat y]$ is not well-defined, but it is in this case.}
\begin{align}\label{eq1:211006}
\sum_{m\geq 1}\sum_{i_1,\ldots ,i_m\in [\![1,2h]\!]}[i_1,\ldots,i_m|j]\otimes \Lambda_{i_1}\cdots \Lambda_{i_m}&\,=\,0\,,\\
\sum_{m\geq 1}\sum_{i_1,\ldots ,i_m\in [\![1,2h]\!]}[i_1,\ldots,i_m|h+j]\otimes \Lambda_{i_1}\cdots \Lambda_{i_m}&\,=\,e^{b_j}-1\notag\,,
\end{align}
where for $j\in [\![1,h]\!]$ we introduce the shorthand notations
$$
[i_1,\ldots ,i_m|j]\,:=\,\int_{\mathcal{A}_j}\gamma_{i_1}\cdots \gamma_{i_m}\,,\quad\quad\quad\quad\quad [i_1,\ldots ,i_m|h+j]\,:=\,\int_{\mathcal{B}_j}\gamma_{i_1}\cdots \gamma_{i_m}\,.
$$ 
We will give in \S\ref{sec:computations} explicit low-degree formulas computed with this method.

\subsubsection{Computation of $\log(g)$ and $\pmb I$}\label{ssec:lambda}
We set 
$$
\lambda\,:=\,\log(g)\,\in\Gamma(\hat C\setminus\pi^{-1}(\infty),\mathcal{O}_{\hat C})\,\hat\otimes\,{\rm Lie}(b_1,\ldots ,b_h)^\wedge\,.
$$
In particular, $\lambda(\hat x)=0$ (because $g(\hat x)=1)$). We also set 
$$
I\,:=\,gd(g^{-1})\,\in \Gamma(C\setminus\infty,\Omega^1_{C})\,\hat\otimes\,
\mathrm{Lie}(b_1,\ldots,b_h)^\wedge\,.
$$
Note that, by Lemma~\ref{lemma:decomp}, $\pmb I=\pmb I^{[1]}+\cdots +\pmb I^{[n]}=g^{\{1\}}d((g^{\{1\}})^{-1})+\cdots +g^{\{n\}}d((g^{\{n\}})^{-1})$, and $g^{\{r\}}d((g^{\{r\}})^{-1})=I^{\{r\}}$, with $I$ as above. 

We have seen in the previous section how to compute~$g$, which in turn allows us to compute also~$\lambda$ and~$I$ (and therefore $\pmb I$). In this section we present an alternative approach: we obtain a recursive formula for~$\lambda$, which can then be used to compute~$g$ and~$I$. The formulas for~$\lambda$ and~$I$ obtained with this method, as opposed to those obtained by first computing~$g$, highlight the fact that~$\lambda$ and~$I$ take values in ${\rm Lie}(b_1,\ldots ,b_h)^\wedge$; this fact also makes such formulas more compact.
%Let $\lambda=\lambda[1]+\lambda[2]+\cdots $ and $I=I[1]+I[2]+\cdots $. In particular, we have $\lambda[1]=g[1]=-\sum_{i=1}^h\,[h+i|\bullet]\otimes b_i$, and $I[1]=-dg[1]=\sum_{i=1}^h\gamma_{h+i}\otimes b_i$. 
\begin{lem}\label{lemmmm}
For each $i\in[\![1,h]\!]$ we have $\int_{\hat{\mathcal{A}}_i}d\lambda=0$ and $\int_{\hat{\mathcal{B}}_i}d\lambda=-b_i$.
\end{lem}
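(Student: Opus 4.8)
The plan is to exploit the fact that $\lambda=\log(g)$ is a genuine, single-valued holomorphic function on $\hat C\setminus\pi^{-1}(\infty)$ with values in $\mathrm{Lie}(b_1,\ldots,b_h)^\wedge$. Consequently $d\lambda$ is an exact $1$-form, so its integral along any path depends only on the endpoints: for a path $\eta$ from $P$ to $Q$ one has $\int_\eta d\lambda=\lambda(Q)-\lambda(P)$. Both $\hat{\mathcal A}_i$ and $\hat{\mathcal B}_i$ are paths in $\hat C$ avoiding $\pi^{-1}(\infty)$ (their projections $\mathcal A_i,\mathcal B_i$ are loops in $C\setminus\infty$), so $\lambda$ is defined all along them and the whole computation reduces to evaluating $\lambda$ at a few points.

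For $\hat{\mathcal A}_i$, I would recall from Lemma~\ref{lemma47}~(a) that it is a loop based at $\hat x$. Its two endpoints coincide, hence $\int_{\hat{\mathcal A}_i}d\lambda=\lambda(\hat x)-\lambda(\hat x)=0$, which is the first identity.

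For $\hat{\mathcal B}_i$, I would recall from Lemma~\ref{lemma47}~(b) that it starts at $\hat x$ and ends at $\hat{\mathsf B}_i(\hat x)$, so that $\int_{\hat{\mathcal B}_i}d\lambda=\lambda(\hat{\mathsf B}_i(\hat x))-\lambda(\hat x)$. The base value is $\lambda(\hat x)=0$ since $g(\hat x)=1$. To compute the other value I would use the quasi-periodicity of $g$: from \eqref{eq:31032021B} together with the descent of Lemma~\ref{lemma:hatg:06042021} (equivalently, the $n=1$ case of Lemma~\ref{lemmamonodromycorrection}) one has $\hat{\mathsf B}_i^*g=g\,e^{-b_i}$, i.e. $g(\hat{\mathsf B}_i(\hat z))=g(\hat z)\,e^{-b_i}$ for every $\hat z\in\hat C\setminus\pi^{-1}(\infty)$. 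Evaluating at $\hat z=\hat x$ gives $g(\hat{\mathsf B}_i(\hat x))=e^{-b_i}$, hence $\lambda(\hat{\mathsf B}_i(\hat x))=-b_i$ and therefore $\int_{\hat{\mathcal B}_i}d\lambda=-b_i$.

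There is essentially no serious obstacle here: the argument is purely a matter of (i) identifying the endpoints of the two paths via Lemma~\ref{lemma47}, and (ii) transporting the functional equation $\hat{\mathsf B}_i^*g=g\,e^{-b_i}$ to an evaluation at the base point $\hat x$. The only point requiring a little care is the justification that $d\lambda$ is genuinely exact, which rests on $\lambda$ being a well-defined function on all of $\hat C\setminus\pi^{-1}(\infty)$; this holds because $g$ descends from $\tilde g_{\tilde x,\pmb\beta}$ through Lemma~\ref{lemma:hatg:06042021} and takes group-like values, so its logarithm is a single-valued $\mathrm{Lie}(b_1,\ldots,b_h)^\wedge$-valued function.
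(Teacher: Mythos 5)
Your proof is correct and is essentially the paper's own argument: exactness of $d\lambda$ on $\hat C\setminus\pi^{-1}(\infty)$ makes the integral over the closed loop $\hat{\mathcal A}_i$ vanish, and the integral over $\hat{\mathcal B}_i$ reduces to $\lambda(\hat{\mathsf B}_i(\hat x))-\lambda(\hat x)=-b_i$ using $g(\hat x)=1$ together with $\hat{\mathsf B}_i^*g=g\,e^{-b_i}$. The only cosmetic difference is that you cite Lemma~\ref{lemma47} explicitly for the endpoints of the two paths, which the paper leaves implicit.
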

\begin{proof}
The identity $\int_{\hat{\mathcal{A}}_i}d\lambda=0$ follows from the fact that $\lambda$ is a holomorphic function on $\hat C\setminus\pi^{-1}(\infty)$, and $\hat{\mathcal{A}}_i$ is a cycle in $\hat C\setminus\pi^{-1}(\infty)$.

As for the identity $\int_{\hat{\mathcal{B}}_i}d\lambda=-b_i$, note first that $\int_{\hat{\mathcal{B}}_i}d\lambda=\int_{\hat x}^{\hat{\mathsf{B}}_i(\hat x)}d\lambda=\lambda(\hat{\mathsf{B}}_i(\hat x))-\lambda(\hat x)$. We know that $\lambda(\hat x)=0$, so we only need to prove that $\lambda(\hat{\mathsf{B}}_i(\hat x))=-b_i$, or equivalently that $g(\hat{\mathsf{B}}_i(\hat x))=e^{-b_i}$. This is a consequence of the fact that $\hat{\mathsf{B}}_i^*g=ge^{-b_i}$ and that $g(\hat x)=1$.
\end{proof}

\begin{cor}\label{CorAltConstr}
For $i\in[\![1,h]\!]$ let $\hat{\mathcal{C}}_i=\hat{\mathcal{A}}_i$  and $\hat{\mathcal{C}}_{h+i}=\hat{\mathcal{B}}_i$. Then
\begin{equation*}
I=\sum_{i=1}^h\gamma_{h+i}\otimes b_i\,+\,\sum_{i=1}^{2h}\gamma_i\otimes\bigg(\int_{\hat{\mathcal{C}}_i}I+d\lambda\bigg)\,.
\end{equation*}
\end{cor}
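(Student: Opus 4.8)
The plan is to reduce the whole statement to the basic fact that, by its very definition, $I$ is a constant-coefficient combination of the cohomology representatives $\gamma_i$. Indeed, from the differential equation~\eqref{eq:210616n6} satisfied by $g=g_{\tilde x,\pmb\beta}$ one reads off immediately
\[
I\,=\,g\,d(g^{-1})\,=\,g\,\Big(g^{-1}\sum_{i=1}^{2h}\gamma_i\otimes\Lambda_i\Big)\,=\,\sum_{i=1}^{2h}\gamma_i\otimes\Lambda_i,
\]
where the $\Lambda_i\in\mathrm{Lie}(b_1,\ldots,b_h)^\wedge$ are the \emph{constant} components of the tuple $\pmb\Lambda$ determined by~\eqref{eq:210616n5}. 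Thus the asserted identity is really an identity between the two constant coefficients of $\gamma_i$, and the function $\lambda$ enters only to rewrite these coefficients in a form suited to recursion. (Here the right-hand side of the statement is to be read with $d\lambda$ \emph{inside} the integral sign, which is forced by the fact that $\int_{\hat{\mathcal C}_i}I$ is a constant whereas $d\lambda$ is a $1$-form.)

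First I would compute the periods $\int_{\hat{\mathcal C}_i}I$. Since each $\gamma_k$ is the pull-back of a closed form on $C\setminus\infty$, and each $\hat{\mathcal C}_i$ projects under $\pi$ to a loop on $C\setminus\infty$ representing the homology class $\overline C_i$ (namely $\mathcal A_i$ for $i\in[\![1,h]\!]$ and $\mathcal B_{i-h}$ for $i\in[\![h+1,2h]\!]$, by the description of these arcs in \S\ref{Schottkyformalism}), the duality relation $\int_{\overline C_i}\gamma_k=\delta_{ik}$ recalled in \S\ref{def:omega} yields $\int_{\hat{\mathcal C}_i}\gamma_k=\delta_{ik}$, and hence $\int_{\hat{\mathcal C}_i}I=\Lambda_i$ for every $i\in[\![1,2h]\!]$.

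Next I would feed in Lemma~\ref{lemmmm}, which gives the periods of $d\lambda$: they vanish along the loops $\hat{\mathcal A}_i$ and equal $-b_j$ along the paths $\hat{\mathcal B}_j$. Combining this with the previous step gives $\int_{\hat{\mathcal C}_i}(I+d\lambda)=\Lambda_i$ for $i\in[\![1,h]\!]$ and $\int_{\hat{\mathcal C}_{h+j}}(I+d\lambda)=\Lambda_{h+j}-b_j$ for $j\in[\![1,h]\!]$. Substituting these values into $\sum_{i=1}^{2h}\gamma_i\otimes\int_{\hat{\mathcal C}_i}(I+d\lambda)$ produces $\sum_{i=1}^{2h}\gamma_i\otimes\Lambda_i-\sum_{j=1}^h\gamma_{h+j}\otimes b_j=I-\sum_{j=1}^h\gamma_{h+j}\otimes b_j$, and transposing the last sum reproduces exactly the claimed formula.

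I do not expect a serious obstacle: the identity is essentially a bookkeeping rearrangement of $I=\sum_i\gamma_i\otimes\Lambda_i$, whose only genuine inputs are the differential equation~\eqref{eq:210616n6} (forcing the coefficients to be constant) and the period computation of Lemma~\ref{lemmmm}. The one point demanding a line of care is the transport of the period pairing from $C\setminus\infty$ to the Schottky cover $\hat C$; in particular, for $i=h+j$ the arc $\hat{\mathcal B}_j$ is a path rather than a loop, but its projection $\mathcal B_j$ is closed and $\gamma_k$ is a closed pulled-back form, so the period $\int_{\hat{\mathcal B}_j}\gamma_k$ is still well defined and equals $\delta_{(h+j)k}$. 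Verifying this compatibility, using the explicit identification of the $\hat{\mathcal C}_i$ from \S\ref{Schottkyformalism}, is the only step that is not purely formal.
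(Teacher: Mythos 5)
Your proof is correct and is essentially the paper's own argument: both read off $I=\sum_{i=1}^{2h}\gamma_i\otimes\Lambda_i$ from eq.~\eqref{eq:210616n6}, identify $\Lambda_i=\int_{\hat{\mathcal C}_i}I$ via the duality $\int_{\hat{\mathcal C}_i}\gamma_j=\delta_{ij}$, and then correct by the periods of $d\lambda$ supplied by Lemma~\ref{lemmmm} before substituting back. Your two extra remarks --- that the statement must be parsed as $\int_{\hat{\mathcal C}_i}(I+d\lambda)$, and that the period of a pulled-back closed form along the path $\hat{\mathcal B}_j$ is well defined because its projection $\mathcal B_j$ is a loop --- are accurate glosses on points the paper leaves implicit.
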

\begin{proof}
It follows from eq.~\eqref{eq:210616n6} that
\begin{equation}\label{eq:210616n1}
I=\sum_{i=1}^{2h}\gamma_i\otimes \Lambda_i\,.
\end{equation}  
Then $\Lambda_i=\int_{\hat{\mathcal{C}}_i}I=\int_{\hat{\mathcal{C}}_i}(I+d\lambda)+\delta_{i>h}b_{i-h}$,
where the first equality follows from~\eqref{eq:210616n1} and $\int_{\hat{\mathcal{C}}_i} \gamma_j=\delta_{ij}$, and the second equality follows from Lemma~\ref{lemmmm}. Plugging this identity in~\eqref{eq:210616n1} yields the result. 
\end{proof}
\begin{cor}\label{cor:210618n1}
One has
\begin{equation*}
\lambda\,=\,-\sum_{i=1}^h\,[h+i|\bullet]\otimes b_i\,+\,\sum_{k\geq 1}\Bigg(\sum_{i=1}^{2h}\bigg([i|\bullet]\otimes\int_{\hat{\mathcal{C}}_i}\frac{({\rm ad}(\lambda))^k}{(k+1)!}(d\lambda)\bigg)\,-\,\int_{\hat x}^z\frac{({\rm ad}(\lambda))^k}{(k+1)!}(d\lambda)\Bigg)\,.
\end{equation*}
\end{cor}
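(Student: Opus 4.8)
The plan is to derive the stated closed formula for $\lambda=\log(g)$ by expanding the Maurer-Cartan form $I=g\,d(g^{-1})$ in terms of $\lambda$ and then integrating, using Corollary~\ref{CorAltConstr} as the starting point. First I would recall the standard identity relating the logarithmic derivative of a group-like element to the differential of its logarithm. Since $g=e^{\lambda}$, the form $I=g\,d(g^{-1})=-(de^{\lambda})e^{-\lambda}$ admits the expansion
\begin{equation*}
I\,=\,-\sum_{k\geq 0}\frac{(\mathrm{ad}(\lambda))^k}{(k+1)!}(d\lambda)\,,
\end{equation*}
which is the image under $\mathrm{ad}$ of the generating series $\tfrac{1-e^{-t}}{t}$ applied to $d\lambda$. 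Isolating the $k=0$ term gives $I=-d\lambda-\sum_{k\geq 1}\tfrac{(\mathrm{ad}(\lambda))^k}{(k+1)!}(d\lambda)$, so that $d\lambda=-I-\sum_{k\geq 1}\tfrac{(\mathrm{ad}(\lambda))^k}{(k+1)!}(d\lambda)$.

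Next I would substitute the expression for $I$ supplied by Corollary~\ref{CorAltConstr}, namely $I=\sum_{i=1}^h\gamma_{h+i}\otimes b_i+\sum_{i=1}^{2h}\gamma_i\otimes\big(\int_{\hat{\mathcal{C}}_i}I+d\lambda\big)$. The key move is to replace the integrand $I+d\lambda$ appearing under $\int_{\hat{\mathcal{C}}_i}$ using the expansion of $d\lambda$ above: since $I=-d\lambda-\sum_{k\geq 1}\tfrac{(\mathrm{ad}(\lambda))^k}{(k+1)!}(d\lambda)$, one gets the cancellation $I+d\lambda=-\sum_{k\geq 1}\tfrac{(\mathrm{ad}(\lambda))^k}{(k+1)!}(d\lambda)$, so the period integrals $\int_{\hat{\mathcal{C}}_i}(I+d\lambda)$ become $-\int_{\hat{\mathcal{C}}_i}\sum_{k\geq 1}\tfrac{(\mathrm{ad}(\lambda))^k}{(k+1)!}(d\lambda)$. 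This produces an expression for $I$ in which the leading (degree-one) part is $\sum_{i=1}^h\gamma_{h+i}\otimes b_i$ and all remaining terms involve the higher powers of $\mathrm{ad}(\lambda)$.

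Then I would integrate from $\hat x$ to the variable point (denoted $\bullet$ or $z$), using $\lambda(\hat x)=0$ and the shorthand $[i|\bullet]=\int_{\hat x}^\bullet\gamma_i$. Integrating the leading term gives $-\sum_{i=1}^h[h+i|\bullet]\otimes b_i$ after accounting for the sign from $d\lambda=-I-\cdots$; integrating the coefficient $\gamma_i\otimes\big(\int_{\hat{\mathcal{C}}_i}(I+d\lambda)\big)$ term-by-term produces the double sum $\sum_{k\geq 1}\sum_{i=1}^{2h}[i|\bullet]\otimes\int_{\hat{\mathcal{C}}_i}\tfrac{(\mathrm{ad}(\lambda))^k}{(k+1)!}(d\lambda)$; and integrating the remaining exact-in-higher-degree piece $-\sum_{k\geq 1}\tfrac{(\mathrm{ad}(\lambda))^k}{(k+1)!}(d\lambda)$ directly along a path from $\hat x$ to $z$ gives the term $-\int_{\hat x}^z\tfrac{(\mathrm{ad}(\lambda))^k}{(k+1)!}(d\lambda)$. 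Collecting these three contributions yields exactly the claimed formula.

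The main obstacle I expect is bookkeeping the signs and the separation of the $k=0$ term from the $k\geq 1$ terms consistently, together with justifying that the path-dependent-looking integrals $\int_{\hat x}^z\tfrac{(\mathrm{ad}(\lambda))^k}{(k+1)!}(d\lambda)$ are in fact well-defined on $\hat C\setminus\pi^{-1}(\infty)$. The latter follows because the integrand is, by construction, exact at each recursive stage (it is a piece of $d\lambda$ with the period contributions subtracted off), so its integral depends only on the endpoint; more precisely one argues by induction on the degree in $\hat{\mathfrak G}$ that $\lambda$ is a genuine function on $\hat C\setminus\pi^{-1}(\infty)$, which is already guaranteed by Lemma~\ref{lemma:hatg:06042021}. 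The recursion is consistent because $\mathrm{ad}(\lambda)$ strictly raises the degree, so each term on the right-hand side involves $\lambda$ only in strictly lower degree than the term being computed on the left, making the formula an effective recursive computation of $\lambda$.
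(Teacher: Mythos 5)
Your proposal is correct and follows essentially the same route as the paper: it derives $I=\frac{1-e^{\mathrm{ad}(\lambda)}}{\mathrm{ad}(\lambda)}(d\lambda)$ from the derivative of the exponential, isolates the $k=0$ term to get $I+d\lambda=-\sum_{k\geq 1}\frac{(\mathrm{ad}(\lambda))^k}{(k+1)!}(d\lambda)$, substitutes this into the period integrals of Corollary~\ref{CorAltConstr}, and integrates with $\lambda(\hat x)=0$; your extra remarks on well-definedness and the degree-raising recursion are sound additions. One cosmetic slip: your prose names the generating series $\frac{1-e^{-t}}{t}$, whereas your displayed expansion (correctly) corresponds to $\frac{1-e^{t}}{t}$.
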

\begin{proof}
First of all, by the formula for the derivative of the exponential map, one has
\begin{equation}\label{eq:210616n2}
I\,=\,\frac{1-e^{{\rm ad}(\lambda)}}{{\rm ad}(\lambda)}(d\lambda)\,.
\end{equation} 

It follows that
\begin{equation}\label{eq:210616n3}
d\lambda\,=\,-\,I\,+\,\frac{1+{\rm ad}(\lambda)-e^{{\rm ad}(\lambda)}}{{\rm ad}(\lambda)}(d\lambda)\,=\,-\,I\,-\,\sum_{k\geq 1}\frac{({\rm ad}(\lambda))^k}{(k+1)!}(d\lambda)\,.
\end{equation}
Combining this with Corollary~\ref{CorAltConstr}, we get
\begin{equation}\label{eq:210616n4}
d\lambda\,=\,-\,\sum_{i=1}^h\gamma_{h+i}\otimes b_i\,-\,\sum_{i=1}^{2h}\gamma_i\otimes\bigg(\int_{\hat{\mathcal{C}}_i}I+d\lambda\bigg)-\,\sum_{k\geq 1}\frac{({\rm ad}(\lambda))^k}{(k+1)!}(d\lambda)\,.
\end{equation}
But~\eqref{eq:210616n3} also tells us that $I+d\lambda=-\sum_{k\geq 1}\frac{({\rm ad}(\lambda))^k}{(k+1)!}(d\lambda)$, and if we can plug this into eq.~\eqref{eq:210616n4} we obtain 
$$
d\lambda\,=\,-\,\sum_{i=1}^h\gamma_{h+i}\otimes b_i\,+\,\sum_{i=1}^{2h}\bigg(\gamma_i\otimes\int_{\hat{\mathcal{C}}_i}\sum_{k\geq 1}\frac{({\rm ad}(\lambda))^k}{(k+1)!}(d\lambda)\bigg)-\,\sum_{k\geq 1}\frac{({\rm ad}(\lambda))^k}{(k+1)!}(d\lambda)\,.
$$
The statement follows by integrating this equation with the initial condition $\lambda(\hat x)=0$.
\end{proof}

Let ${\rm Lie}(b_1,\ldots ,b_h)[d]$ be the degree-$d$ part of ${\rm Lie}(b_1,\ldots ,b_h)$, and let $\lambda[d]$ be the component of $\lambda$ in $\Gamma(\hat C\setminus\pi^{-1}(\infty),\mathcal{O}_{\hat C})\,\hat\otimes\,{\rm Lie}(b_1,\ldots ,b_h)[d]$, so that $\lambda=\sum_{d\geq 1}\lambda[d]$. Corollary~\ref{cor:210618n1} immediately implies that 
\begin{equation*}
\lambda[1]\,=\,-\,\sum_{i=1}^{h}\,[h+i|\bullet]\otimes b_i\,,
\end{equation*}
as well as the following recursive formula for higher-degree terms:
\begin{cor}\label{cor:210618n2}
For any $d\geq 2$ one has
\begin{align}\label{eq3:211006}
&\lambda[d]\,=\,\sum_{k\geq 1}\frac{1}{(k+1)!}\,\times\\
&\sum_{\substack{d_1,\ldots ,d_{k+1}\geq 1\\d_1+\cdots +d_{k+1}=d}}\Bigg(\sum_{i=1}^{2h}\bigg([i|\bullet]\otimes\int_{\hat{\mathcal{C}}_i}[\lambda[d_1],\ldots [\lambda[d_k],d\lambda[d_{k+1}]]]\bigg)
-\int_{\hat x}^\bullet[\lambda[d_1],\ldots [\lambda[d_k],d\lambda[d_{k+1}]]]\Bigg)\notag.
\end{align}
\end{cor}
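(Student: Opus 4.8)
The plan is to obtain the recursion by projecting the closed expression of Corollary~\ref{cor:210618n1} onto its homogeneous degree-$d$ component. Recall that $\lambda$ takes values in the completion of $\mathrm{Lie}(b_1,\ldots,b_h)$, which is graded by $\deg(b_i)=1$, so that $\lambda=\sum_{d\geq 1}\lambda[d]$ with $\lambda[d]$ of Lie-degree $d$. The structural observation underlying the argument is that the exterior differential $d$ acts only on the coefficient (function/form) factor and leaves the Lie-algebra factor untouched; hence $d\lambda[d']$ is again homogeneous of Lie-degree $d'$, and $d\lambda=\sum_{d'\geq 1}d\lambda[d']$ is the homogeneous decomposition of $d\lambda$.

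First I would isolate the degree-$1$ contribution. The term $-\sum_{i=1}^h[h+i|\bullet]\otimes b_i$ appearing in Corollary~\ref{cor:210618n1} is purely of Lie-degree $1$, since $[h+i|\bullet]$ is a scalar iterated integral and $b_i$ has degree $1$. Therefore it contributes only to $\lambda[1]$ and vanishes in every degree $d\geq 2$; this recovers the formula for $\lambda[1]$ stated just before the corollary, and shows that for $d\geq 2$ only the sum over $k\geq 1$ survives.

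Next I would expand the bracket expression. Writing $(\mathrm{ad}(\lambda))^k(d\lambda)=[\lambda,[\lambda,\ldots,[\lambda,d\lambda]]]$ with $k$ copies of $\lambda$, substituting the homogeneous decompositions $\lambda=\sum_{d'}\lambda[d']$ in each slot and $d\lambda=\sum_{d'}d\lambda[d']$, and using that the Lie bracket is additive for the grading, the degree-$d$ component is
\begin{equation*}
\Big((\mathrm{ad}(\lambda))^k(d\lambda)\Big)[d]=\sum_{\substack{d_1,\ldots,d_{k+1}\geq 1\\ d_1+\cdots+d_{k+1}=d}}[\lambda[d_1],\ldots[\lambda[d_k],d\lambda[d_{k+1}]]].
\end{equation*}
Since each $d_j\geq 1$ and there are $k+1$ factors, the constraint $d_1+\cdots+d_{k+1}=d$ forces $k\leq d-1$, so the outer sum over $k$ is automatically finite. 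Finally, I would note that the operations $\int_{\hat{\mathcal{C}}_i}$, $\int_{\hat x}^\bullet$ and the tensor multiplication by the scalar $[i|\bullet]$ act exclusively on the form factor and are $\mathbb{C}$-linear, hence commute with the projection onto the Lie-degree-$d$ subspace. Applying this projection termwise to Corollary~\ref{cor:210618n1} then yields precisely the asserted identity~\eqref{eq3:211006}. The computation is essentially bookkeeping; the only point requiring genuine care is the claim that $d$ preserves the Lie-grading and that integration commutes with it, which is exactly what guarantees that the degree-$d$ part of the right-hand side of Corollary~\ref{cor:210618n1} takes the stated nested-bracket form.
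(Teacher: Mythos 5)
Your proposal is correct and matches the paper's approach: the paper states Corollary~\ref{cor:210618n2} as an immediate consequence of Corollary~\ref{cor:210618n1}, and the intended argument is precisely your degree-by-degree projection, using that $\lambda$ is concentrated in Lie-degrees $\geq 1$ (so each $d_j\geq 1$ and $k\leq d-1$), that $d$ preserves the grading, and that $\int_{\hat{\mathcal{C}}_i}$, $\int_{\hat x}^\bullet$ and multiplication by the scalars $[i|\bullet]$ commute with the projection onto the degree-$d$ component. Nothing is missing; your bookkeeping, including the observation that the degree-one term $-\sum_{i=1}^h[h+i|\bullet]\otimes b_i$ drops out for $d\geq 2$, is exactly what the paper leaves implicit.
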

Let now $I[d]$ denotes the degree-$d$ component of $I$, so that in particular 
\begin{equation*}
I[1]\,=\,\sum_{i=1}^{h}\gamma_{h+i}\otimes b_i\,.
\end{equation*}
Then by combining Corollary~\ref{CorAltConstr} with eq.~\eqref{eq:210616n3} we find, for $d\geq 2$,
\begin{equation}\label{eq4:211006}
I[d]\,=\,-\,\sum_{k\geq 1}\frac{1}{(k+1)!}\sum_{\substack{d_1,\ldots ,d_{k+1}\geq 1\\d_1+\cdots +d_{k+1}=d}}\sum_{i=1}^{2h}\bigg(\gamma_i\otimes\int_{\hat{\mathcal{C}}_i}[\lambda[d_1],\ldots [\lambda[d_k],d\lambda[d_{k+1}]]]\bigg)
\,,
\end{equation}
hence one can plug in the values of~$\lambda[d_i]$ computed with Corollary~\ref{cor:210618n2} for all $d_i<d$ and compute~$I[d]$ (see \S\ref{sec:computations} for explicit low-degree formulas computed with this method).

\subsection{The map $\Psi$}\label{ssec:52}

Let~$t$ be a local coordinate on~$C$ at~$\infty$. The space of Laurent formal differentials at~$\infty$ is then $\mathbb C((t))dt$.  
Laurent expanding at~$\infty$ enables one to view $\Gamma(C\setminus\infty,\Omega^1_C)$ as a vector subspace of $\mathbb C((t))dt$.
Moreover, integration along the $A$-cycles defines a linear map $\Gamma(C\setminus\infty,\Omega^1_C)\to\mathbb C^h$,  which yields a decomposition $\Gamma(C\setminus\infty,\Omega^1_C)={\rm Span}_\mathbb{C}(\omega_1,\ldots ,\omega_h)\oplus\mathrm{ker}(\Gamma(C\setminus\infty,\Omega^1_C)\to\mathbb C^h)$. Note that $\mathrm{ker}(\Gamma(C\setminus\infty,\Omega^1_C)\to\mathbb C^h)={\rm Span}_\mathbb{C}(\beta_1,\ldots ,\beta_h)\oplus \textrm{d}\,\Gamma(C\setminus\infty,\mathcal{O}_C)$.

\begin{lem}\label{lemm:decomp}
There is a direct sum decomposition 
\begin{equation}\label{eq:210514n1}
\mathbb C((t))\,dt\,=\,\mathbb C[[t]]\,\frac{dt}{t}\,\oplus
\,\mathrm{ker}(\,\Gamma(C\setminus\infty,\Omega^1_C)\,\to\,\mathbb C^h\,)\,.
\end{equation} 
\end{lem}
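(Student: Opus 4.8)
The plan is to reinterpret both summands as concrete subspaces of the Laurent differentials $\mathbb{C}((t))\,dt$ and then to check that they are complementary. First I would observe that $\mathbb{C}[[t]]\frac{dt}{t}$ is exactly the subspace of $\mathbb{C}((t))\,dt$ consisting of Laurent differentials with a pole of order at most one at $t=0$: writing $f(t)\frac{dt}{t}$ with $f=\sum_{n\geq 0}c_n t^n$ yields $\sum_{m\geq -1}c_{m+1}t^m\,dt$. On the other side, every $\eta\in\Gamma(C\setminus\infty,\Omega^1_C)$ is a meromorphic differential on $C$ whose only possible pole is at $\infty$; by the residue theorem its residue there vanishes, so its Laurent expansion in $t$ has no $t^{-1}$ term. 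In particular the holomorphic differentials $\omega_1,\dots,\omega_h$ of \S\ref{def:omega} expand into $t\,\mathbb{C}[[t]]\frac{dt}{t}\subset\mathbb{C}[[t]]\frac{dt}{t}$.

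For the triviality of the intersection I would take $\eta$ lying in both summands. Being in $\mathbb{C}[[t]]\frac{dt}{t}$ and in $\Gamma(C\setminus\infty,\Omega^1_C)$, it is a meromorphic differential with at most a simple pole at $\infty$ and holomorphic elsewhere; the residue theorem forces its residue at $\infty$ to vanish, so $\eta$ is in fact holomorphic on all of $C$. Since $\eta$ also lies in $\ker(\Gamma(C\setminus\infty,\Omega^1_C)\to\mathbb{C}^h)$, all its $\overline A_i$-periods vanish, and a holomorphic differential with vanishing $A$-periods is zero, the $A$-period map $\Gamma(C,\Omega^1_C)\to\mathbb{C}^h$ being an isomorphism. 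Hence the intersection is $\{0\}$.

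For surjectivity of the sum, given $\omega=\sum_{m\geq -N}a_m t^m\,dt\in\mathbb{C}((t))\,dt$, I would split off its higher-order polar part $p(\omega):=\sum_{-N\leq m\leq -2}a_m t^m\,dt$, a finite principal part carrying no $t^{-1}$ coefficient. The key classical input is the existence of a differential of the second kind: there is $\eta\in\Gamma(C\setminus\infty,\Omega^1_C)$ whose principal part at $\infty$ equals $p(\omega)$, i.e. $\eta-p(\omega)\in\mathbb{C}[[t]]\frac{dt}{t}$ (no residue obstruction arises, since $p(\omega)$ has vanishing $t^{-1}$ coefficient and $\infty$ is the only pole). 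Correcting the periods, set $\eta':=\eta-\sum_{i=1}^h\big(\int_{\overline A_i}\eta\big)\omega_i$; then $\int_{\overline A_j}\eta'=0$ for all $j$, so $\eta'\in\ker(\Gamma(C\setminus\infty,\Omega^1_C)\to\mathbb{C}^h)$, while $\eta-\eta'$ is holomorphic and hence lies in $\mathbb{C}[[t]]\frac{dt}{t}$. Consequently
$$
\omega-\eta'=\underbrace{(\omega-p(\omega))}_{\text{pole}\,\leq\,1}\;-\;\underbrace{(\eta-p(\omega))}_{\in\,\mathbb{C}[[t]]\frac{dt}{t}}\;+\;\underbrace{(\eta-\eta')}_{\text{holomorphic}}\;\in\;\mathbb{C}[[t]]\frac{dt}{t},
$$
so that $\omega=\eta'+(\omega-\eta')$ realizes $\omega$ as an element of the sum. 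Together with the computation of the intersection, this gives the claimed direct sum.

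The only non-formal ingredients are the two classical facts about compact Riemann surfaces: the existence of a differential of the second kind with prescribed residue-free principal part at a single point, and the injectivity of the $A$-period map on holomorphic differentials. I expect the former to be the main point to state precisely; it can be cited from the standard theory (e.g. \cite{Fay}) or deduced from Riemann--Roch, and the finiteness of $p(\omega)$ ensures we only ever prescribe a finite principal part. Everything else is bookkeeping on Laurent coefficients.
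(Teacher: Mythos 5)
Your proof is correct and follows essentially the same route as the paper's: both rest on the Riemann--Roch existence of second-kind differentials on $C\setminus\infty$ matching any finite residue-free principal part at $\infty$, together with the injectivity of the $A$-period map on holomorphic differentials. The only difference is organizational: the paper first splits off the residue line via $\mathbb{C}((t))\,dt=\mathbb{C}\tfrac{dt}{t}\oplus\ker(\mathrm{res})$ and works inside $\ker(\mathrm{res})$, whereas you handle the residue constraint inline (via the residue theorem for elements of $\Gamma(C\setminus\infty,\Omega^1_C)$ and the absence of a $t^{-1}$ term in $p(\omega)$) and make the period correction $\eta\mapsto\eta-\sum_{i=1}^h\big(\int_{\overline A_i}\eta\big)\,\omega_i$ explicit.
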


\begin{proof}
The residue map $\text{res}:\mathbb{C}((t))dt\rightarrow \mathbb{C}$ yields a decomposition $\mathbb{C}((t))dt=\tfrac{dt}{t}\mathbb{C}\oplus {\rm ker}({\rm res})$. We are left with showing that ${\rm ker}({\rm res})=\mathbb C[[t]]dt\oplus\mathrm{ker}(\Gamma(C\setminus\infty,\Omega^1_C)\to\mathbb C^h)$.

It follows\footnote{Let~$\Omega^1_C(nP)$ denote the sheaf of 1-forms with a pole of order at most~$n$ at~$P$, and holomorphic elsewhere, and let~$\mathcal{O}_C(-nP)$ denote the sheaf of holomorphic functions with a zero of order at least~$n$ at~$P$. The Riemann-Roch theorem implies that ${\rm dim}(\Gamma(C,\Omega^1(nP))-{\rm dim}(\Gamma(C,\mathcal{O}_C(-nP))=h-1+n$. Since ${\rm dim}(\Gamma(C,\mathcal{O}_C(-nP))=1$ for $n=0$, and $=0$ otherwise, it follows that ${\rm dim}(\Gamma(C,\Omega^1(nP))=h$ for $n=0$ and $n=1$, and then ${\rm dim}(\Gamma(C,\Omega^1(nP))=h+n-1$ strictly increases for any $n\geq 2$, thus implying the existence of the family~$(\xi_i)$.} from Riemann-Roch's theorem that, for any point~$P$ of a closed Riemann surface~$C$ of genus $h\geq 1$, there exists a family of meromorphic 1-forms $(\xi_i)_{i\geq 2}\subset\Gamma(C\setminus P,\Omega^1_C)$ such that $\text{ord}_{P}(\xi_i)=-i$. Therefore, one can write ${\rm ker}({\rm res})=\mathbb{C}[[t]]dt+\Gamma(C\setminus\infty,\Omega^1_C)$. The statement follows from the fact that $\mathbb{C}[[t]]dt\cap\Gamma(C\setminus\infty,\Omega^1_C)\simeq \mathbb{C}^h$ via the map $\xi\rightarrow (\int_{A_1}\xi,\cdots ,\int_{A_h}\xi)$.
\end{proof}
 
\begin{defn}
We denote by $\Psi : \mathbb C((t))dt\twoheadrightarrow \mathrm{ker}(\Gamma(C\setminus\infty,\Omega^1_C)\to\mathbb C^h)$
the projection on the second component of the decomposition~\eqref{eq:210514n1}. 
\end{defn}

Consider the non-degenerate pairing $\langle\,,\,\rangle:\mathbb{C}((t))\otimes \mathbb{C}((t))dt\rightarrow \mathbb{C}$ given by $\langle f,\alpha\rangle:=\text{res}(f\alpha)$. Since $t\mathbb{C}[[t]]^\perp = \mathbb{C}[[t]]\tfrac{dt}{t}$ with respect to this pairing, it follows from~(\ref{eq:210514n1}) that
\begin{equation*}
t\,\mathbb{C}[[t]]^*\,\simeq\, \frac{\mathbb{C}((t))\,dt}{\mathbb{C}[[t]]\,\tfrac{dt}{t}} \,\simeq\, \mathrm{ker}(\,\Gamma(C\setminus\infty,\Omega^1_C)\,\to\,\mathbb C^h\,)\,.
\end{equation*}
Let $(\xi_i)_{i\geq 1}$ be a basis of regular 1-forms on $C\setminus\infty$ with vanishing $A$-cycle integrals which is dual to the standard basis $(t^i)_{i\geq 1}$ of $t\mathbb{C}[[t]]$. We define 
\begin{equation*}
\psi:=\sum_{i\geq 1}t^i\otimes \xi_i \quad \in \,t\,\mathbb{C}[[t]]\,\hat\otimes\,\mathrm{ker}(\,\Gamma(C\setminus\infty,\Omega^1_C)\,\to\,\mathbb C^h\,) \,.
\end{equation*}

The projection $\Psi:\mathbb{C}((t))dt\to \mathrm{ker}(\Gamma(C\setminus\infty,\Omega^1_C)\to\mathbb C^h)$ can then be written in terms of~$\psi$ as follows:
\begin{equation*}\label{GreenProj}
\Psi(\alpha)\,=\,\sum_{i\geq 1}\,\langle t^i,\alpha \rangle\,\xi_i\,=\,({\rm res}\otimes {\rm id})\,(\alpha\bullet \psi)\,,
\end{equation*}
where $\alpha\bullet\psi\in \mathbb{C}((t))dt\,\hat\otimes\,\mathrm{ker}(\Gamma(C\setminus\infty,\Omega^1_C)\to\mathbb C^h)$ denotes the term-by-term multiplication by~$\alpha$ on the first component of~$\psi$.
%where bullet is the action of $\mathbb{C}((t))dt$ on $\mathbb{C}((t))dt \hat \otimes \mathbb{C}[[t]]$ (series sum_{n geq 0}a_n(t)dt otimes t^n
%avec a_n qcq) and res_{t=0} otimes id is the map C((t))dt hat otimes C[[t]]->CC[[t]] given by sum_{n geq 0}a_n(t)dt otimes t^n->sum_{n geq 0}res_{t=0}(a_n(t)dt) otimes t^n 

\subsection{The map $\Psi_{\hat\infty}$}\label{ssec:53}

Let~$\hat\infty$ be the preimage of~$\infty$ by~$\pi$ which lies in~$\hat D$. Let $\mathrm{inj}_{\hat\infty} : \Gamma(\hat C\setminus\pi^{-1}(\infty),\Omega^1_{\hat C})\hookrightarrow\mathbb C((t))dt$
be the injection given by sending a 1-form to its formal Laurent series expansion in a local coordinate~$\hat t$ at~$\hat\infty$, and by identifying $\mathbb{C}((\hat t))d\hat t$ with  $\mathbb{C}((t))dt$. We set $\Psi_{\hat\infty}:=
\pi^*\circ\Psi\circ\mathrm{inj}_{\hat\infty}$; this is a linear endomorphism of 
$\Gamma(\hat C\setminus\pi^{-1}(\infty),\Omega^1_{\hat C})$. One can check that this map does not depend on the choice of local coordinates~$t$ and~$\hat t$.

\begin{lem}\label{lem:210516n1}
If $\alpha\in\Gamma(\hat C\setminus\pi^{-1}(\infty),\Omega^1_{\hat C})$, then $(\mathrm{id}-\Psi_{\hat \infty})(\alpha)$
has at most a simple pole at~$\hat\infty$. 
\end{lem}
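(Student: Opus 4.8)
The plan is to reduce everything to the Laurent expansion of $\alpha$ at $\hat\infty$ and to read off the pole order from the defining direct sum decomposition~\eqref{eq:210514n1}. Writing $\alpha_{\hat\infty}:=\mathrm{inj}_{\hat\infty}(\alpha)\in\mathbb{C}((t))\,dt$ for this expansion, the point is that $(\mathrm{id}-\Psi_{\hat\infty})(\alpha)$ is designed so that its expansion at $\hat\infty$ is precisely the projection of $\alpha_{\hat\infty}$ onto the first summand $\mathbb{C}[[t]]\frac{dt}{t}$ of~\eqref{eq:210514n1}, and every element of that summand has at most a simple pole at $t=0$.

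The one geometric input I would isolate first is a compatibility between $\pi^*$ and Laurent expansion. Since the Schottky covering $\pi:\hat C\to C$ is unramified, it restricts to a biholomorphism from a neighbourhood of $\hat\infty$ onto a neighbourhood of $\infty$, so one may take the local coordinate $\hat t$ at $\hat\infty$ to be $\pi^*t$. With this choice the identification $\mathbb{C}((\hat t))\,d\hat t\simeq\mathbb{C}((t))\,dt$ underlying $\mathrm{inj}_{\hat\infty}$ is literally induced by $\pi^*$, and consequently, for every $\omega\in\ker(\Gamma(C\setminus\infty,\Omega^1_C)\to\mathbb{C}^h)$, the Laurent expansion of $\pi^*\omega$ at $\hat\infty$ equals the Laurent expansion of $\omega$ at $\infty$. (The independence of $\Psi_{\hat\infty}$ on the coordinate choices, noted after its definition, legitimises fixing $\hat t=\pi^*t$.)

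Granting this, the computation is short. Applying $\mathrm{inj}_{\hat\infty}$ to $(\mathrm{id}-\Psi_{\hat\infty})(\alpha)=\alpha-\pi^*\Psi(\alpha_{\hat\infty})$ and using the previous paragraph, the second term contributes exactly the Laurent expansion at $\infty$ of $\Psi(\alpha_{\hat\infty})\in\ker(\Gamma(C\setminus\infty,\Omega^1_C)\to\mathbb{C}^h)$. Hence $\mathrm{inj}_{\hat\infty}((\mathrm{id}-\Psi_{\hat\infty})(\alpha))$ is $\alpha_{\hat\infty}$ minus its component in the second summand of~\eqref{eq:210514n1}, i.e. its component in $\mathbb{C}[[t]]\frac{dt}{t}$. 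Since such a series is of the form $\sum_{k\geq0}c_k\,t^{k-1}\,dt$, it has at most a simple pole, which proves the claim. It remains only to note that $(\mathrm{id}-\Psi_{\hat\infty})(\alpha)$ is genuinely a section of $\Omega^1_{\hat C}$ over $\hat C\setminus\pi^{-1}(\infty)$ — both $\alpha$ and $\pi^*\Psi(\alpha_{\hat\infty})$ are — so the assertion about its pole at $\hat\infty$ is well posed. The main (and essentially only) obstacle is the careful bookkeeping of the two coordinate identifications hidden in $\Psi_{\hat\infty}=\pi^*\circ\Psi\circ\mathrm{inj}_{\hat\infty}$; once $\hat t=\pi^*t$ is fixed, the statement is immediate from the definition of $\Psi$ as the projection in~\eqref{eq:210514n1}.
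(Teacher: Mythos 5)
Your proof is correct and follows essentially the same route as the paper's: both compute $\mathrm{inj}_{\hat\infty}\circ(\mathrm{id}-\Psi_{\hat\infty})(\alpha)=(\mathrm{id}-\mathrm{inj}_\infty\circ\Psi)(\mathrm{inj}_{\hat\infty}(\alpha))$ and conclude via the decomposition~\eqref{eq:210514n1} that this lies in $\mathbb{C}[[t]]\,\frac{dt}{t}$. The only difference is cosmetic: the paper simply invokes the identity $\mathrm{inj}_\infty=\mathrm{inj}_{\hat\infty}\circ\pi^*$, whereas you justify it explicitly by fixing $\hat t=\pi^*t$ via the unramifiedness of $\pi$ near $\hat\infty$ — a harmless (indeed welcome) extra bit of bookkeeping.
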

\begin{proof}
Let $\mathrm{inj}_\infty:\Gamma(C\setminus\infty,\Omega^1_C)\hookrightarrow\mathbb C((t))dt$ be the injection given by sending a 1-form to its formal Laurent series at~$\infty$. 
One then has 
$$
\mathrm{inj}_{\hat\infty}\circ(\mathrm{id}-\Psi_{\hat \infty})(\alpha)=(\mathrm{id}-\mathrm{inj}_{\hat\infty}\circ\pi^*\circ\Psi)(\mathrm{inj}_{\hat\infty}(\alpha))
=(\mathrm{id}-\mathrm{inj}_\infty\circ\Psi)(\mathrm{inj}_{\hat\infty}(\alpha)), 
$$
where the first equality follows from the definition of~$\Psi_{\hat \infty}$, and the second one from 
$\mathrm{inj}_\infty=\mathrm{inj}_{\hat\infty}\circ\pi^*$. It follows from the definition of $\Psi$ that 
for any~$\xi$ in $\mathbb C((t))dt$, $(\mathrm{id}-\mathrm{inj}_\infty\circ\Psi)(\xi)$ belongs to 
$\mathbb{C}[[t]]\tfrac{dt}{t}$. The above equality then implies that so does 
$\mathrm{inj}_{\hat\infty}\circ(\mathrm{id}-\Psi_{\hat \infty})(\alpha)$, which proves the statement.  
\end{proof}

The next result relates the map $\Psi_{\hat{\infty}}$ with the fundamental form of the third kind $\psi_{\hat\infty}$ defined in \S\ref{sec:fundformIIkind}. 

\begin{prop}
For any 1-form $\alpha\in \Gamma(\hat C\setminus \pi^{-1}(\infty),\Omega^1_{\hat C})$ and any fixed $(P,v)\in T^{1,0}(C\setminus\infty)$, one has $\Psi_{\hat\infty}(\alpha)(P,v)={\rm res}_{\hat\infty}(\psi_{\hat \infty}|_{(P,v)\times \hat C}\cdot\alpha)$.
\end{prop}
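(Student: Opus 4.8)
The plan is to reduce the claimed identity to a statement about a single Laurent coefficient of $\psi_{\hat\infty}$ and then to identify that coefficient with the dual-basis form $\xi_j$ via the characterization of \S\ref{ssec:52}. First I would unwind both sides into Laurent data at $\hat\infty$. By definition $\Psi_{\hat\infty}=\pi^*\circ\Psi\circ\mathrm{inj}_{\hat\infty}$, and by the formula $\Psi(\beta)=\sum_{i\geq 1}\langle t^i,\beta\rangle\,\xi_i$ with $\langle f,\beta\rangle=\mathrm{res}(f\beta)$, so that
\[
\Psi_{\hat\infty}(\alpha)(P,v)=\sum_{i\geq 1}\mathrm{res}_{\hat\infty}(\hat t^{\,i}\alpha)\,\xi_i(P,v).
\]
On the other hand, since $\psi_{\hat\infty}$ has a zero along $\hat C\times\hat\infty$ and no pole there in the second variable, its restriction expands as $\psi_{\hat\infty}|_{(P,v)\times\hat C}=\sum_{j\geq 1}g_j(P,v)\,\hat t^{\,j}$ near $\hat\infty$, whence $\mathrm{res}_{\hat\infty}\big(\psi_{\hat\infty}|_{(P,v)\times\hat C}\cdot\alpha\big)=\sum_{j\geq 1}g_j(P,v)\,\mathrm{res}_{\hat\infty}(\hat t^{\,j}\alpha)$. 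Comparing the two expressions, the proposition becomes equivalent to $g_j=\xi_j$ for every $j\geq 1$; here $g_j$ is linear in $v$ and, by the pull-back property of $\psi$ in its first argument, descends to a $1$-form on $C\setminus\infty$ in the variable $P$.

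To identify $g_j$ with $\xi_j$ I would invoke the characterization of \S\ref{ssec:52}: $\xi_j$ is the unique element of $\mathrm{ker}(\Gamma(C\setminus\infty,\Omega^1_C)\to\mathbb C^h)$ with $\mathrm{res}_\infty(t^i\xi_j)=\delta_{ij}$ for all $i\geq 1$. Writing $\psi_{\hat\infty}|_{(P,v)\times\hat C}=\int_{\hat\infty}^{\bullet}\varphi\big|_{(P,v)\times C}$, holomorphy of $g_j$ on $C\setminus\infty$ is immediate, since extracting the $\hat t^{\,j}$-coefficient at $\hat z_2=\hat\infty$ keeps us away from the diagonal pole at $\hat z_2=P$; and the vanishing of the $\mathcal A_i$-periods of $g_j$ follows from $\int^{(1)}_{\mathcal A_i}\varphi=0$, because $\int_{\mathcal A_i}^{(P)}$ commutes both with integration in the second variable and with coefficient extraction, giving $\int_{\mathcal A_i}^{(P)}\int_{\hat\infty}^{\hat z_2}\varphi=\int_{\hat\infty}^{\hat z_2}\int^{(1)}_{\mathcal A_i}\varphi=0$.

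The main obstacle is the residue-pairing property $\mathrm{res}_\infty(t^i g_j)=\delta_{ij}$, where the diagonal singularity of $\varphi$ enters. I would prove it by a direct local computation. Coefficient extraction gives $g_j(P)=\mathrm{res}_{\zeta=\hat\infty}\big(\hat t(\zeta)^{-j-1}\,\Phi(P,\zeta)\,d\hat t(\zeta)\big)$ with $\Phi=\int_{\hat\infty}^{\bullet}\varphi|_{(P,v)}$, so that $\mathrm{res}_\infty(t^i g_j)$ is the iterated residue $\mathrm{res}_{s=0}\mathrm{res}_{u=0}$ of $s^i u^{-j-1}\Phi\,ds\,du$ in coordinates $s,u$ centred at $\hat\infty$ (identifying $t$ and $\hat t$ via $\pi$). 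Using $\mathcal{R}\mathrm{es}(\varphi)=(1,0)$, i.e. $\varphi=\big((s-u)^{-2}+R(s,u)\big)\,ds\,du$ with $R$ holomorphic and symmetric, integration in the second variable yields $\Phi=\big((s-u)^{-1}-s^{-1}+\tilde R(s,u)\big)\,ds$ with $\tilde R$ holomorphic and $\tilde R(s,0)=0$. Expanding $(s-u)^{-1}-s^{-1}=\sum_{m\geq 1}u^m s^{-m-1}$, the inner residue in $u$ picks out $s^i\big(s^{-(j+1)}+\tilde r_j(s)\big)\,ds$, and since $\tilde r_j$ is holomorphic the outer residue in $s$ reduces to $\mathrm{res}_{s=0}\big(s^{\,i-j-1}\,ds\big)=\delta_{ij}$.

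Putting these together, $g_j$ is regular on $C\setminus\infty$, has vanishing $\mathcal A_i$-periods, and satisfies $\mathrm{res}_\infty(t^i g_j)=\delta_{ij}$; the uniqueness statement of \S\ref{ssec:52} then forces $g_j=\xi_j$ for all $j\geq 1$, which is exactly the reduced claim and therefore proves the proposition. I expect the only delicate point to be bookkeeping the order of the two residues and the regular part $\tilde R$ in the local model; the rest is formal manipulation of the two Laurent expansions.
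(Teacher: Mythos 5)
Your proof is correct, and it takes a genuinely different route from the paper's. The paper splits $\alpha$ according to the decomposition \eqref{eq:210514n1}: for $\alpha\in\mathbb C[[t]]\tfrac{dt}{t}$ both sides vanish because $\psi_{\hat\infty}|_{(P,v)\times\hat C}$ vanishes at $\hat\infty$, while for $\alpha\in\mathrm{ker}(\Gamma(C\setminus\infty,\Omega^1_C)\to\mathbb C^h)$ it computes $\mathrm{res}_{\hat\infty}(\psi_{\hat\infty}|_{(P,v)\times\hat C}\cdot\alpha)$ by the contour decomposition $\sigma_\infty=\sigma_{P,\infty}-\sigma_P$ inside the fundamental domain, killing the $\sigma_{P,\infty}$ term with Riemann's bilinear relations (using $\int^{(1)}_{\mathcal A_i}\varphi=\int_{\mathcal A_i}\alpha=0$) and extracting $\alpha(P,v)$ from the simple pole of $\psi_{\hat\infty}$ along the diagonal. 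You instead prove the stronger, $\alpha$-independent statement that the Taylor coefficients $g_j$ of $\psi_{\hat\infty}|_{(P,v)\times\hat C}$ at $\hat\infty$ coincide with the dual basis $(\xi_j)$ of \S\ref{ssec:52} --- i.e.\ that the expansion of the third-kind form recovers exactly the kernel $\psi=\sum_{i\geq 1}t^i\otimes\xi_i$ representing $\Psi$, which incidentally explains the paper's use of the same letter for both objects --- by verifying the three characterizing properties: holomorphy of $g_j$ on $C\setminus\infty$ (coefficient extraction at $\hat z_2=\hat\infty$ stays off the diagonal since $P\neq\infty$), vanishing $\mathcal A_i$-periods (again from $\int^{(1)}_{\mathcal A_i}\varphi=0$, via a Fubini swap that is legitimate because the loop $\mathcal A_i$ and the short path near $\hat\infty$ avoid each other's projections), and the duality $\mathrm{res}_\infty(t^i g_j)=\delta_{ij}$ by a local iterated residue at $(\infty,\hat\infty)$ using only $\mathcal R\mathrm{es}(\varphi)=(1,0)$ and the symmetry of $\varphi$. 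Your residue bookkeeping is right where it needs to be: the inner contour must satisfy $|u|<|s|$ (the Taylor disk of $\Phi$ at $u=0$ has radius $|s|$, since the nearest pole is the lift of $P$), which is exactly what licenses the expansion $\sum_{m\geq 1}u^m s^{-m-1}$; the same local model shows $g_j$ has a pole of order exactly $j+1$ at $\infty$, so $g_j$ is indeed an algebraic section and the uniqueness argument works (an element of the kernel pairing to zero with all $t^i$ lies in $\mathbb C[[t]]\tfrac{dt}{t}\cap\mathrm{ker}=0$ by \eqref{eq:210514n1}). As for the trade-off: the paper's contour argument is shorter and fully global, treating each $\alpha$ at once, whereas yours avoids Riemann's bilinear relations entirely and isolates the reusable kernel identity $g_j=\xi_j$, at the cost of more delicate local bookkeeping; also note that only one direction of your opening ``equivalence'' ($g_j=\xi_j$ implies the proposition) is used, so the unproved converse is harmless.
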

\begin{proof}
Since $\Gamma(\hat C\setminus \pi^{-1}(\infty),\Omega^1_{\hat C})\hookrightarrow \mathbb{C}((t))dt$,  using the decomposition \eqref{eq:210514n1} it is enough to show the statement for $\alpha\in \mathbb{C}[[t]]\tfrac{dt}{t}$ (we identify $\alpha$ with its Laurent expansion) and for $\alpha\in \mathrm{ker}(\Gamma(C\setminus\infty,\Omega^1_C)\to\mathbb{C}^h)$.

Suppose first that $\alpha\in \mathbb{C}[[t]]\tfrac{dt}{t}$, so that $\Psi_{\hat\infty}(\alpha)=0$. Since $\psi_{\hat \infty}$ has a simple zero along $C\times \hat\infty$, for any fixed $(P,v)$ with $P\neq \infty$ we get an asymptotic expansion $\psi_{\hat \infty}|_{(P,v)\times C}\in t\mathbb{C}[[t]]$, and therefore ${\rm res}_{\hat\infty}(\psi_{\hat \infty}|_{(P,v)\times \hat C}\cdot\alpha)=0$.

Suppose now that $\alpha\in \mathrm{ker}(\Gamma(C\setminus\infty,\Omega^1_C)\to\mathbb{C}^h)$, so that $\Psi_{\hat\infty}(\alpha)=\alpha$. Let $\sigma_{\infty}$, $\sigma_{P}$ and $\sigma_{P,\infty}$ denote small positively oriented contours contained in the simply connected fundamental domain $D\subset C$ encircling  $\infty$, $P$ and both $\infty$ and $P$, respectively. Note that one can locally view $\psi_{\hat \infty}|_{(P,v)\times \hat C}$ as a function on $D$. 

By definition of $\psi_{\hat \infty}|_{(P,v)\times \hat C}$, Riemann's bilinear relations, and the fact that $\int_{\mathcal{A}_i}\varphi|_{(P,v)\times C}=\int_{\mathcal{A}_i}\alpha=0$, we get
\begin{align*}
&\frac{1}{2\pi i}\int_{\sigma_{P,\infty}}\psi_{\hat \infty}|_{(P,v)\times C}\,\cdot\,\alpha\,=\,\int_{z\in\sigma_{P,\infty}}\alpha(z)\bigg(\int_{\infty}^z\varphi|_{(P,v)\times C}\bigg)\\
&\,=\,\sum_{i=1}^g \bigg(\int_{\mathcal{A}_i}\varphi|_{(P,v)\times C}\int_{\mathcal{B}_i} \alpha\,-\,\int_{\mathcal{A}_i} \alpha\int_{\mathcal{B}_i}\varphi|_{(P,v)\times C}\bigg)=0
\end{align*}
Moreover, since $\psi_{\hat \infty}|_{(P,v)\times \hat C}=\tfrac{1}{t(P)-t}+O(1)$ for $t$ a local coordinate in a neighborhood of $P$, then one has
\begin{equation*}
\frac{1}{2\pi i}\int_{\sigma_P}\psi_{\hat \infty}|_{(P,v)\times \hat C}\,\cdot\,\alpha\,=\,\mathrm{res}_{P}(\psi_{\hat \infty}|_{(P,v)\times \hat C}\,\cdot\,\alpha)\,=\,-\alpha(P,v).
\end{equation*}

Combining these two identities, we get
\begin{equation*}
\mathrm{res}_{\hat{\infty}}(\psi_{\hat \infty}|_{(P,v)\times \hat C}\,\cdot\,\alpha)\,=\,\frac{1}{2\pi i}\int_{\sigma_\infty}\psi_{\hat \infty}|_{(P,v)\times \hat C}\,\cdot\,\alpha\,=\,\frac{1}{2\pi i}\bigg(\int_{\sigma_{P,\infty}}-\int_{\sigma_P}\bigg)\psi_{\hat \infty}|_{(P,v)\times C}\,\cdot\,\alpha\,=\,\alpha(P,v).
\end{equation*}
\end{proof}
%\begin{lem}
%Let $\alpha\in\Gamma(\hat C\setminus\pi^{-1}(\infty),\Omega^1_{\hat C})$ and let $\psi_{\hat\infty}\in\Gamma(C\setminus\infty\times \hat C,\Omega^1_C\boxtimes \mathcal{O}_{\hat C}(\Delta))$ be the fundamental form of the third kind introduced in \S\ref{sec:fundformIIkind}. Then \textcolor{red}{FZ: new (bad?) notation. Problem: the pole is not simple, so it's not a Poincaré residue as we defined it.}
%\begin{equation}\label{eq:210604n6}
%\Psi_{\hat\infty}(\alpha)\,=\,{\rm res}^{(2)}_{\hat\infty}(\alpha * \psi_{\hat\infty})\,,
%\end{equation}
%%\Psi_{\hat\infty}(\alpha)(z)\,=\,{\rm res}_{\hat t=0}\,(\psi_{\hat\infty}(z,\hat t)\,\alpha(\hat t))\,.
%\end{lem}
%where $\alpha * \psi_{\hat\infty}\in \Gamma\big((C\setminus\infty)\times(\hat C\setminus\pi^{-1}(\infty)),\Omega^1_{C}\boxtimes \Omega^1_{\hat C}(\Delta)\big)$ denotes the multiplication by~$\alpha$ on the second component of~$\psi_{\infty}$, and ${\rm res}^{(2)}_{\hat\infty}$ denotes the residue on the second copy of the curve.
%\begin{proof}
%\textcolor{red}{FZ: proof of Lemma 1.6 road map (to be adapted). Since that proof makes use of properties of $\psi_{\hat\infty}$ coming from its definition as integral of fundamental form of the second kind, I suggest we first write the background section about $\psi_{\hat\infty}$, and then write this proof} 
%\end{proof}
 
\subsection{General formulas for $K_{\hat\infty,j}$ and $H_{\hat\infty,j}$}\label{ssec:K}

In Definition~\ref{def210514n1} we have introduced the family of forms $K_{P,j}$, where $P\in\hat C$ and $j\in[\![1,h]\!]$. Throughout this section, we focus only on $K_{\hat\infty,j}$. 

Recall that $K_{\hat\infty,j}\in\Gamma(\hat C\setminus\pi^{-1}(\infty),\Omega^1_{\hat C})\,\hat\otimes\, 
U(\mathrm{Lie}(b_1,\ldots,b_h))^\wedge$, and it has simple poles at the points of $\pi^{-1}(\infty)$ (by Definition~\ref{def210514n1} and the properties of the forms $\omega_{P,i_1\ldots i_m}$), while $g\in\Gamma(\hat C\setminus\pi^{-1}(\infty),\mathcal O_{\hat C})\,\hat\otimes\,U(\mathrm{Lie}(b_1,\ldots,b_h))^\wedge$, and it has higher order poles at the points of $\pi^{-1}(\infty)$.

\begin{defn}
Let ${\rm op}$ denote the linear endomorphism of $\Gamma(\hat C\setminus\pi^{-1}(\infty),\Omega^1_{\hat C})$ defined by\footnote{Here and elsewhere~$\omega_k$ denotes by an abuse of terminology the pull-back to~$\hat C$ of the holomorphic 1-form~$\omega_k$, originally defined on~$C$.} 
$\alpha\mapsto\sum_{k=1}^h\big(\int_{\hat{\mathcal{A}}_k}\alpha\big)\,\omega_k$.
\end{defn}

%\begin{defn}\label{def:210601}
%Let us set $K^\dagger_{\hat\infty,j}:=(1-g)K_{\hat\infty,j}$. 
%\end{defn} 
%
%In particular, $K^\dagger_{\hat\infty,j}\in\Gamma(\hat C\setminus\pi^{-1}(\infty),\Omega^1_{\hat C})\,\hat\otimes\,U(\mathrm{Lie}(b_1,\ldots,b_h))^\wedge$, and the poles at the points of $\pi^{-1}(\infty)$ are not simple. Notice that, by definition, $K^\dagger_{\hat\infty,j}=K_{\hat\infty,j}-H_{\hat\infty,j}$. 

\begin{prop}\label{lem:210601}
One has
\begin{equation}\label{eq:recursion}
g\,K_{\hat\infty,j}\,=\big((\Psi_{\hat\infty}+{\rm op})\,\otimes\,{\rm id}\big)\big((g-1)\,K_{\hat\infty,j}\big)
\,+\,\omega_j\otimes\frac{b_j}{e^{b_j}-1}
\end{equation}
(equality in $\Gamma(\hat C\setminus\pi^{-1}(\infty),\Omega^1_{\hat C})\,\hat\otimes\,
U(\mathrm{Lie}(b_1,\ldots,b_h))^\wedge$). 
\end{prop}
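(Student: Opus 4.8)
The plan is to apply the operator $(\Psi_{\hat\infty}+\mathrm{op})\otimes\mathrm{id}$ to the full product $g\,K_{\hat\infty,j}$ and to exploit that this operator acts as the identity on forms pulled back from $C$. First I would record the two monodromy laws at my disposal: $\hat{\mathsf B}_k^*g=g\,e^{-b_k}$ (from \eqref{eq:31032021B} and Lemma~\ref{lemma:hatg:06042021}) and $\hat{\mathsf B}_k^*K_{\hat\infty,j}=e^{b_k}K_{\hat\infty,j}$ (since $K_{\hat\infty,j}\in\mathcal Forms$ by Lemma~\ref{lem:210503multi}(a)). Because the factor $e^{-b_k}$ produced to the right of $g$ cancels the factor $e^{b_k}$ produced to the left of $K_{\hat\infty,j}$ --- these elements all lying in the commutative subalgebra generated by $b_k$ --- the product $g\,K_{\hat\infty,j}$ is invariant under every $\hat{\mathsf B}_k$, hence under all of $\mathrm{Aut}(\hat C/C)$. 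As $g$ and $K_{\hat\infty,j}$ are holomorphic on $\hat C\setminus\pi^{-1}(\infty)$ with poles only along $\pi^{-1}(\infty)$, this invariance means that, degree by degree in the $b$-grading, $g\,K_{\hat\infty,j}$ is the pullback under $\pi$ of a meromorphic $1$-form on $C$ holomorphic on $C\setminus\infty$; that is, $g\,K_{\hat\infty,j}$ lies in $\pi^*\Gamma(C\setminus\infty,\Omega^1_C)\,\hat\otimes\,U(\mathrm{Lie}(b_1,\ldots,b_h))^\wedge$.

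The key lemma I would then establish --- and the step I expect to require the most care, since it is where the multivaluedness of the two factors is reconciled --- is that $\Psi_{\hat\infty}+\mathrm{op}$ restricts to the identity on $\pi^*\Gamma(C\setminus\infty,\Omega^1_C)$. Given $\Phi=\pi^*\bar\Phi$ I would split $\bar\Phi=\sum_k c_k\omega_k+\bar\Phi_0$ with $c_k=\int_{A_k}\bar\Phi=\int_{\hat{\mathcal A}_k}\Phi$ and $\bar\Phi_0\in\mathrm{ker}(\Gamma(C\setminus\infty,\Omega^1_C)\to\mathbb C^h)$, using the decomposition of \S\ref{ssec:52}. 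Then $\mathrm{op}(\Phi)=\sum_k c_k\omega_k$ is holomorphic, so its Laurent tail at $\hat\infty$ lies in $\mathbb C[[t]]dt\subset\mathbb C[[t]]\frac{dt}{t}$ and is annihilated by $\Psi$, giving $\Psi_{\hat\infty}(\mathrm{op}(\Phi))=0$; while $\Psi_{\hat\infty}(\pi^*\bar\Phi_0)=\pi^*\bar\Phi_0$, because $\mathrm{inj}_\infty=\mathrm{inj}_{\hat\infty}\circ\pi^*$ and $\bar\Phi_0$ lies in the second summand of \eqref{eq:210514n1}, on which $\Psi$ is the identity. Summing yields $(\Psi_{\hat\infty}+\mathrm{op})(\Phi)=\pi^*\bar\Phi_0+\sum_k c_k\omega_k=\Phi$. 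I would emphasise that this argument is insensitive to the order of the pole of $\bar\Phi$ at $\infty$, which is precisely what allows me to apply it to $g\,K_{\hat\infty,j}$, whose higher-order poles come from $g$.

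Applying this identity to $g\,K_{\hat\infty,j}$, and using linearity of $(\Psi_{\hat\infty}+\mathrm{op})\otimes\mathrm{id}$ together with $g\,K_{\hat\infty,j}=(g-1)K_{\hat\infty,j}+K_{\hat\infty,j}$, the claimed formula reduces to the single evaluation $(\Psi_{\hat\infty}+\mathrm{op})(K_{\hat\infty,j})=\omega_j\otimes\frac{b_j}{e^{b_j}-1}$, which I would verify term by term on $K_{\hat\infty,j}=\sum_m\omega_{\hat\infty,i_1\cdots i_m j}\otimes b_{i_1}\cdots b_{i_m}$. Each $\omega_{\hat\infty,i_1\cdots i_m j}$ has at most a simple pole at $\hat\infty$, being a section of $\Omega^1_{\hat C}(\pi^{-1}(\infty))$, so its expansion lies in $\mathbb C[[t]]\frac{dt}{t}$ and $\Psi_{\hat\infty}$ kills it; hence $\Psi_{\hat\infty}(K_{\hat\infty,j})=0$. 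For the $\mathrm{op}$ part I would insert the $A$-period formula \eqref{eq:210516n1} (valid as $\hat\infty\in\hat D$) for the length-$(m+1)$ word $(i_1,\ldots,i_m,j)$, giving $\int_{\hat{\mathcal A}_k}\omega_{\hat\infty,i_1\cdots i_m j}=\frac{\mathsf b_m}{m!}\,\delta_{k i_1\cdots i_m j}$; the Kronecker factor forces $k=i_1=\cdots=i_m=j$, collapsing the double sum to $\omega_j\otimes\sum_{m\geq0}\frac{\mathsf b_m}{m!}b_j^m=\omega_j\otimes\frac{b_j}{e^{b_j}-1}$ by the defining generating series of the Bernoulli numbers. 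Combining the three steps proves the proposition, and I expect the only genuine work to be the identity-on-pullbacks lemma of the second paragraph, the rest being bookkeeping.
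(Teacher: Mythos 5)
Your proposal is correct, and it reorganises the argument around a different key lemma than the paper's proof. The paper sets $\alpha:=g\,K_{\hat\infty,j}-(\Psi_{\hat\infty}\otimes\mathrm{id})\big((g-1)K_{\hat\infty,j}\big)$, observes that $\alpha$ is a pullback from $C\setminus\infty$ (since $gK_{\hat\infty,j}=H_{\hat\infty,j}$ is single-valued), uses Lemma~\ref{lem:210516n1} to bound its pole at $\hat\infty$ by a simple pole, invokes the nonexistence of a meromorphic $1$-form on $C$ with a single simple pole to conclude that $\alpha$ is holomorphic, and finally pins $\alpha$ down by its $A$-periods via \eqref{eq:210516n1}. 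You instead prove once and for all that $\Psi_{\hat\infty}+\mathrm{op}$ acts as the identity on $\pi^*\Gamma(C\setminus\infty,\Omega^1_C)$ --- a direct consequence of the decomposition \eqref{eq:210514n1} and of $\mathrm{inj}_\infty=\mathrm{inj}_{\hat\infty}\circ\pi^*$ --- apply it to the single-valued form $g\,K_{\hat\infty,j}$, and reduce the proposition by linearity to the evaluation $(\Psi_{\hat\infty}+\mathrm{op})(K_{\hat\infty,j})=\omega_j\otimes\frac{b_j}{e^{b_j}-1}$, which you carry out by the same $A$-period/Bernoulli computation \eqref{eq:210516n1} that the paper uses (your restriction to $P=\hat\infty\in\hat D$ is correctly noted). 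Your route buys something real: it bypasses Lemma~\ref{lem:210516n1} and the residue-theorem step entirely --- both are absorbed into the directness of the sum \eqref{eq:210514n1}, i.e.\ ultimately into Lemma~\ref{lemm:decomp} --- and your identity-on-pullbacks lemma explains conceptually why the operator $\Psi_{\hat\infty}+\mathrm{op}$ is the right correction, whereas the paper's version makes the pole-order bookkeeping for $(g-1)K_{\hat\infty,j}$ explicit. Two cosmetic remarks: in your first step no commutativity is needed, since $(g\,e^{-b_k})(e^{b_k}K_{\hat\infty,j})=g\,K_{\hat\infty,j}$ by $e^{-b_k}e^{b_k}=1$ alone; and your phrase ``$\Psi_{\hat\infty}(\mathrm{op}(\Phi))=0$'' is really the statement that $\Psi_{\hat\infty}$ annihilates the $\mathrm{Span}(\omega_1,\ldots,\omega_h)$-component of $\Phi$, which is what your computation actually uses. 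Neither affects correctness, and your implicit reliance on the meromorphy of $g$ at $\pi^{-1}(\infty)$ is legitimate, as the paper asserts it just before the proposition.
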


\begin{proof} 
Let $\alpha:=gK_{\hat\infty,j}-(\Psi_{\hat\infty}\otimes {\rm id})((g-1)K_{\hat\infty,j})$. 
By the definition of $\Psi_{\hat \infty}$ and the fact that $gK_{\hat\infty,j}=H_{\hat\infty,j}$,~$\alpha$ is the pull-back via~$\pi$ of an element of $\Gamma(C\setminus\infty,\Omega^1_{C})\,\hat\otimes\,
U(\mathrm{Lie}(b_1,\ldots,b_h))^\wedge$. 

On the other hand, one can write $\alpha=K_{\hat\infty,j}+((\mathrm{id}-\Psi_{\hat\infty})\otimes {\rm id})((g-1)K_{\hat\infty,j})$, and Lemma~\ref{lem:210516n1} implies that $((\mathrm{id}-\Psi_{\hat\infty})\otimes {\rm id})((g-1)K_{\hat\infty,j})$ has at most a simple pole at~$\hat\infty$. Since~$K_{\hat\infty,j}$ has a simple pole at~$\hat\infty$, it follows that~$\alpha$ has at most a simple pole at~$\hat\infty$. 

All this implies that $\alpha$ is the pull-back of an element of $\Gamma(C\setminus\infty,\Omega^1_{C})\,\hat\otimes\,
U(\mathrm{Lie}(b_1,\ldots,b_h))^\wedge$ with at most a simple pole at~$\infty$. Because there are no global meromorphic 1-forms on~$C$ with a single simple pole at~$\infty$, we conclude that~$\alpha$ must be the pull-back of a holomorphic 1-form, which is uniquely determined by its integrals on the $A$-cycles:
\begin{equation}\label{210504}
\alpha\,=\,\sum_{k=1}^{h}\,\omega_k\otimes \int_{\hat{\mathcal A}_k}\alpha\,.
\end{equation}

One then has 
\begin{align*}
\int_{\hat{\mathcal A}_k}\alpha&\,=\,\int_{\hat{\mathcal A}_k}K_{\hat\infty,j}
\,+\,\int_{\hat{\mathcal A}_k}(g-1)\,K_{\hat\infty,j}\,-\,\int_{\hat{\mathcal A}_k}(\Psi_{\hat\infty}\otimes {\rm id})((g-1)\,K_{\hat\infty,j})\\
&\,=\,\delta_{jk}\,\frac{b_k}{e^{b_k}-1}\,+\,\int_{\hat{\mathcal A}_k}(g-1)\,K_{\hat\infty,j}\,,
\end{align*}
where the last equality follows from the property~\eqref{eq:210516n1} of the 1-forms $\omega_{\hat\infty,i_1\ldots i_m}$, and from the fact that $\int_{\hat{\mathcal A}_k}(\Psi_{\hat\infty}\otimes {\rm id})((g-1)K_{\hat\infty,j})=0$, by the definition of~$\Psi_{\hat\infty}$. 
Combining this with~\eqref{210504}, one obtains 
$$%\begin{equation}\label{eq:210516n2}
\alpha\,=\,\omega_j\otimes\frac{b_j}{e^{b_j}-1}\,+\,\sum_{k=1}^{h}\,\omega_k\otimes \int_{\hat{\mathcal A}_k}(g-1)\,K_{\hat\infty,j}\,,
$$%\end{equation}
which implies the statement. 
\end{proof}

In the rest of this section we will see how to use the above result to obtain explicit formulas for the coefficients of~$K_{\hat{\infty},j}$ and of~$H_{\hat{\infty},j}$ in terms of those of~$g$. Let us set $\mathcal{E}:=\Gamma(\hat C\setminus\pi^{-1}(\infty),\Omega^1_{\hat C})\,\hat\otimes\,U({\rm Lie}(b_1,\ldots ,b_h))^\wedge$ and $\mathcal{F}:=\Gamma(\hat C\setminus\pi^{-1}(\infty),\mathcal{O}_{\hat C})\,\hat\otimes\,U({\rm Lie}(b_1,\ldots ,b_h))^\wedge$, so that $K_{\hat\infty,j},H_{\hat\infty,j}\in\mathcal{E}$ and $g\in\mathcal{F}$. Note that~$\mathcal{F}$ is an algebra, equipped with a left action on~$\mathcal{E}$ given by multiplying on the first component and concatenating on the second.
\begin{defn}
(a) Let ${\rm Op}$ denote the linear endomorphism $(-{\rm id}+\Psi_{\hat{\infty}}+{\rm op})\otimes {\rm id}$ of $\mathcal{E}$.

(b) Let $g-1$ denote the linear endomorphism of $\mathcal{E}$ given by $\kappa\to(g-1)\,\kappa$.
\end{defn}
Using this notation, the statement of Proposition~\ref{lem:210601} can be written as follows:
\begin{equation}\label{eq:210604n1}
\big({\rm id}\,-\,{\rm Op}\circ (g-1)\big)(K_{\hat\infty,j})\,=\,\omega_j\,\otimes\,\frac{b_j}{e^{b_j}-1}\,.
\end{equation}

Consider now the decreasing filtration on $\mathcal{E}$ given by $F^k\mathcal{E}:={\rm ker}(\mathcal{E}\to\prod_{d=0}^{k-1}\mathcal{E}[d])$, where $\mathcal{E}[d]:=\Gamma(\hat C\setminus\pi^{-1}(\infty),\Omega^1_{\hat C})\,\otimes\,U({\rm Lie}(b_1,\ldots ,b_h))[d]$, and $U({\rm Lie}(b_1,\ldots ,b_h))[d]$ is the degree-$d$ part of $U({\rm Lie}(b_1,\ldots ,b_h))^\wedge$. This induces a topology on $\mathcal{E}$, for which $\mathcal{E}$ is complete and separated.
\begin{lem}\label{lem:210604n1}
For any $v\in\mathcal{E}$ the series $\sum_{r\geq 0}({\rm Op}\circ(g-1))^r(v)$ converges. The linear endomorphism ${\rm id}-{\rm Op}\circ(g-1)$ is invertible, and its inverse is the map $v\mapsto \sum_{r\geq 0}({\rm Op}\circ(g-1))^r(v)$.
\end{lem}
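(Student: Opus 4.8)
The plan is to exploit the grading on $U(\mathrm{Lie}(b_1,\ldots,b_h))^\wedge$ to show that $N:=\mathrm{Op}\circ(g-1)$ \emph{strictly} increases the filtration $F^\bullet\mathcal{E}$, and then to run the standard Neumann series argument in the complete, separated topological vector space $\mathcal{E}$.

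First I would record the two elementary facts about the building blocks of $N$. On the one hand, $\mathrm{Op}=(-\mathrm{id}+\Psi_{\hat\infty}+\mathrm{op})\otimes\mathrm{id}$ acts only on the differential-form tensor factor and as the identity on the $U(\mathrm{Lie}(b_1,\ldots,b_h))^\wedge$ factor, which is where the grading lives; hence $\mathrm{Op}(\mathcal{E}[d])\subseteq\mathcal{E}[d]$ for every $d$, and in particular $\mathrm{Op}(F^k\mathcal{E})\subseteq F^k\mathcal{E}$. On the other hand, since $\mathrm{Lie}(b_1,\ldots,b_h)$ is positively graded (its generators $b_i$ have degree $1$, and it has no degree-$0$ part), every $\Lambda_i$ appearing in the expansion~\eqref{eq2:211006} of $g$ lies in $\mathrm{Lie}(b_1,\ldots,b_h)^\wedge$ by Lemma~\ref{lem:210731n1} and therefore has degree $\geq 1$. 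Consequently every monomial of $g-1$ has $U(\mathrm{Lie}(b_1,\ldots,b_h))$-degree at least $1$, so the multiplication operator $g-1$ maps $F^k\mathcal{E}$ into $F^{k+1}\mathcal{E}$.

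Combining these, $N(F^k\mathcal{E})=\mathrm{Op}\big((g-1)F^k\mathcal{E}\big)\subseteq\mathrm{Op}(F^{k+1}\mathcal{E})\subseteq F^{k+1}\mathcal{E}$, so $N$ raises the filtration degree by at least one; in particular $N^r(v)\in F^r\mathcal{E}$ for all $v\in\mathcal{E}$, and $N$ is continuous for the filtration topology. Since $\mathcal{E}$ is complete and separated, the tails $\sum_{r>M}N^r(v)$ lie in $F^{M+1}\mathcal{E}$, so the partial sums of $\sum_{r\geq0}N^r(v)$ form a Cauchy sequence and the series converges; write $S(v):=\sum_{r\geq0}N^r(v)$ for its sum.

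It remains to check that $S$ is a two-sided inverse of $\mathrm{id}-N$, which is the usual telescoping computation: using the continuity of $N$ to interchange it with the limit, one has $(\mathrm{id}-N)S(v)=\sum_{r\geq0}N^r(v)-\sum_{r\geq0}N^{r+1}(v)=v$ and likewise $S\big((\mathrm{id}-N)(v)\big)=v$, the cancellations being legitimate because $N^{r}(v)\to0$ in $\mathcal{E}$. Hence $\mathrm{id}-N$ is invertible with inverse $S$. The only real content is the degree-raising property of $g-1$; once that is in place the argument is the standard one for topologically nilpotent operators, so I do not anticipate a genuine obstacle.
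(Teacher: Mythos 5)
Your proposal is correct and takes essentially the same approach as the paper's proof: both show that $\mathrm{Op}$ preserves the filtration $F^\bullet\mathcal{E}$ while multiplication by $g-1$ raises it by one (the paper phrases this as $g-1\in F^1\mathcal{F}$ together with $F^k\mathcal{F}\cdot F^l\mathcal{E}\subset F^{k+l}\mathcal{E}$), so that $\mathrm{Op}\circ(g-1)$ is topologically nilpotent and the Neumann series converges in the complete, separated space $\mathcal{E}$. The details you add --- the telescoping verification of the two-sided inverse and the degree bound on the $\Lambda_i$ via Lemma~\ref{lem:210731n1} --- merely spell out steps the paper leaves implicit.
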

\begin{proof}
Consider the decreasing filtration on $\mathcal{F}$ given by $F^k\mathcal{F}:={\rm ker}(\mathcal{F}\to\prod_{d=0}^{k-1}\mathcal{F}[d])$, where $\mathcal{F}[d]:=\Gamma(\hat C\setminus\pi^{-1}(\infty),\mathcal{O}_{\hat C})\,\otimes\,U({\rm Lie}(b_1,\ldots ,b_h))[d]$. One has $F^k\mathcal{F}\cdot F^l\mathcal{E}\subset F^{k+l}\mathcal{E}$. 

Since $g-1\in F^1\mathcal{F}$, it follows that $(g-1)(F^k\mathcal{E})\subset F^{k+1}\mathcal{E}$ for any $k\geq 0$. Moreover, one has ${\rm Op}(F^k\mathcal{E})\subset F^k\mathcal{E}$. All this implies that ${\rm Op}\circ (g-1)(F^k\mathcal{E})\subset F^{k+1}\mathcal{E}$ for any $k\geq 0$, from which it follows that $\sum_{r\geq 0}({\rm Op}\circ(g-1))^r(v)$ converges, and that it therefore provides an inverse to the linear endomorphism ${\rm id}-{\rm Op}\circ(g-1)$.
\end{proof}

\begin{cor}\label{cor1}
One has
\begin{equation}\label{eq:210604n2}
K_{\hat{\infty},j}\,=\,\sum_{r\geq 0}({\rm Op}\circ(g-1))^r\,\bigg(\omega_j\otimes\frac{b_j}{e^{b_j}-1}\bigg)\,.
\end{equation}
\end{cor}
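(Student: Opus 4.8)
The plan is to read off the corollary as an immediate application of the invertibility statement in Lemma~\ref{lem:210604n1} to the operator identity recorded just before it. First I would recall that Proposition~\ref{lem:210601}, once transcribed into the operator notation introduced above, is precisely the identity~\eqref{eq:210604n1}, namely
\begin{equation*}
\big({\rm id}-{\rm Op}\circ(g-1)\big)(K_{\hat\infty,j})=\omega_j\otimes\frac{b_j}{e^{b_j}-1}.
\end{equation*}
This exhibits $K_{\hat\infty,j}$ as the solution of a linear equation in $\mathcal{E}$ whose right-hand side is the explicit element $\omega_j\otimes\frac{b_j}{e^{b_j}-1}$.

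Next I would invoke Lemma~\ref{lem:210604n1}, which asserts that the endomorphism ${\rm id}-{\rm Op}\circ(g-1)$ of $\mathcal{E}$ is invertible, with inverse given by the convergent series $v\mapsto\sum_{r\geq 0}({\rm Op}\circ(g-1))^r(v)$. Applying this inverse to both sides of the displayed identity yields
\begin{equation*}
K_{\hat\infty,j}=\sum_{r\geq 0}({\rm Op}\circ(g-1))^r\bigg(\omega_j\otimes\frac{b_j}{e^{b_j}-1}\bigg),
\end{equation*}
which is exactly the asserted formula~\eqref{eq:210604n2}.

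Since both ingredients are already in place, I do not expect any genuine obstacle: the corollary is a formal consequence of the two preceding results. The only point deserving a word of care is that the right-hand side be a well-defined element of $\mathcal{E}$, i.e.\ that the series converge in the filtration topology; but this is precisely the content of Lemma~\ref{lem:210604n1}, whose proof shows that ${\rm Op}\circ(g-1)$ strictly raises the filtration degree (because $g-1\in F^1\mathcal{F}$ while ${\rm Op}$ preserves the filtration). Hence no further estimates are required, and the statement follows at once.
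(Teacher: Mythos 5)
Your proposal is correct and coincides with the paper's own proof, which likewise obtains Corollary~\ref{cor1} by applying the inverse from Lemma~\ref{lem:210604n1} to the identity~\eqref{eq:210604n1} (i.e.\ Proposition~\ref{lem:210601} in operator form). Your remark on convergence in the filtration topology is exactly the content already established in Lemma~\ref{lem:210604n1}, so nothing further is needed.
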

\begin{proof}
This follows by combining Lemma~\ref{lem:210604n1} with eq.~\eqref{eq:210604n1}.
\end{proof}

\begin{cor}\label{cor2}
One has
\begin{equation}\label{eq:210604n3}
H_{\hat{\infty},j}\,=\,\bigg({\rm id}\,+\,\big((\Psi_{\hat{\infty}}+{\rm op})\otimes{\rm id}\big)\circ(g-1)\circ\sum_{r\geq 0}({\rm Op}\circ(g-1))^r\bigg)\,\bigg(\omega_j\otimes\frac{b_j}{e^{b_j}-1}\bigg)\,.
\end{equation}
\end{cor}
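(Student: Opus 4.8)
The plan is to derive this formula by feeding the closed expression for $K_{\hat\infty,j}$ from Corollary~\ref{cor1} into the recursion of Proposition~\ref{lem:210601}, after identifying $H_{\hat\infty,j}$ with $gK_{\hat\infty,j}$.

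First I would record the identity $H_{\hat\infty,j}=gK_{\hat\infty,j}$. This is immediate from the definition $(H_{\hat\infty,1},\ldots,H_{\hat\infty,h},\underline H_{\hat\infty})=S_g(K_{\hat\infty,1},\ldots,K_{\hat\infty,h},\underline K_{\hat\infty})$ in \S\ref{ssec:45}, together with part~(b) of Lemma~\ref{lem:210503sv}, according to which $S_\gamma$ acts on the $\kappa_i$-components by $\kappa_i\mapsto\gamma\kappa_i$.

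Next I would invoke Proposition~\ref{lem:210601}, i.e. eq.~\eqref{eq:recursion}, which reads
$$g\,K_{\hat\infty,j}=\big((\Psi_{\hat\infty}+{\rm op})\otimes{\rm id}\big)\big((g-1)\,K_{\hat\infty,j}\big)+\omega_j\otimes\frac{b_j}{e^{b_j}-1}.$$
Combining this with the identity of the previous step gives
$$H_{\hat\infty,j}=\big((\Psi_{\hat\infty}+{\rm op})\otimes{\rm id}\big)\circ(g-1)\big(K_{\hat\infty,j}\big)+\omega_j\otimes\frac{b_j}{e^{b_j}-1}.$$

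Finally I would substitute the explicit value of $K_{\hat\infty,j}$ furnished by Corollary~\ref{cor1}, namely $K_{\hat\infty,j}=\sum_{r\geq 0}({\rm Op}\circ(g-1))^r\big(\omega_j\otimes\frac{b_j}{e^{b_j}-1}\big)$, whose convergence is guaranteed by Lemma~\ref{lem:210604n1}. Factoring the common argument $\omega_j\otimes\frac{b_j}{e^{b_j}-1}$ out of both the series term and the additive term then yields precisely the claimed formula, the isolated summand $+\omega_j\otimes\frac{b_j}{e^{b_j}-1}$ becoming the ${\rm id}$-contribution. There is no genuine obstacle here: the argument is a formal concatenation of three already-established results, and the only point requiring a moment's care is the operator bookkeeping---keeping straight that ${\rm Op}=(-{\rm id}+\Psi_{\hat\infty}+{\rm op})\otimes{\rm id}$ appears inside the series defining $K_{\hat\infty,j}$, whereas only $(\Psi_{\hat\infty}+{\rm op})\otimes{\rm id}$, without the $-{\rm id}$, multiplies it on the outside in the expression for $H_{\hat\infty,j}$.
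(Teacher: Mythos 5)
Your proposal is correct and coincides with the paper's own proof: the paper likewise combines the identity $H_{\hat\infty,j}=gK_{\hat\infty,j}$ with Proposition~\ref{lem:210601} and then substitutes the series for $K_{\hat\infty,j}$ from Corollary~\ref{cor1}. Your added remark distinguishing ${\rm Op}=(-{\rm id}+\Psi_{\hat\infty}+{\rm op})\otimes{\rm id}$ inside the series from the outer factor $(\Psi_{\hat\infty}+{\rm op})\otimes{\rm id}$ is exactly the right bookkeeping point and is consistent with the paper.
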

\begin{proof}
Since $H_{\hat{\infty},j}=gK_{\hat{\infty},j}$, Proposition~\ref{lem:210601} tells us that $H_{\hat{\infty},j}=\big((\Psi_{\hat\infty}+{\rm op})\otimes{\rm id}\big)\circ(g-1)(K_{\hat{\infty},j})+\omega_j\otimes\tfrac{b_j}{e^{b_j}-1}$. The statement follows by combining this identity with eq.~\eqref{eq:210604n2}.
\end{proof}

For $d\geq 0$ let~$K_{\hat\infty,j}[d]$ and~$H_{\hat\infty,j}[d]$ denote the components of~$K_{\hat\infty,j}$ and~$H_{\hat\infty,j}$ in~$\mathcal{E}[d]$, and let~$g[d]$ denote the component of~$g$ in~$\mathcal{F}[d]$. In particular, one has $K_{\hat\infty,j}[0]=\omega_j$, $g[0]=1$ and $H_{\hat\infty,j}[0]=g[0]\,K_{\hat\infty,j}[0]=\omega_j$.
\begin{cor}\label{cor515}
For any $d\geq 0$ one has
\begin{equation}\label{eq:210604n4}
K_{\hat\infty,j}[d]\,=\,\sum_{r\geq 0}\sum_{\substack{d_1,\ldots ,d_r\geq 1,d_{r+1}\geq 0\\d_1+\cdots +d_{r+1}=d}}{\rm Op}\circ g[d_1]\circ\cdots\circ {\rm Op}\circ g[d_r]\bigg(\omega_j\,\otimes\,\frac{\mathsf{b}_{d_{k+1}}}{d_{k+1}!}\,b_j^{d_{k+1}}\bigg)
\end{equation}
and
\begin{align}\label{eq:last}
&H_{\hat\infty,j}[d]\,=\,\omega_j\,\otimes\,\frac{\mathsf{b}_{d}}{d!}\,b_j^{d}\,+\\
&\sum_{r\geq 0}\sum_{\substack{d_0,\ldots ,d_r\geq 1,d_{r+1}\geq 0\\d_0+\cdots +d_{r+1}=d}}\big((\Psi_{\hat{\infty}}+{\rm op})\otimes{\rm id}\big)\circ g[d_0]\circ {\rm Op}\circ g[d_1]\circ\cdots\circ {\rm Op}\circ g[d_r]\,\bigg(\omega_j\otimes\frac{\mathsf{b}_{d_{r+1}}}{d_{r+1}!}\,b_j^{d_{r+1}}\bigg)\notag.
\end{align}
\end{cor}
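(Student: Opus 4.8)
The plan is to read off the degree-$d$ components of the closed formulas of Corollary~\ref{cor1} and Corollary~\ref{cor2}. Two structural facts make this mechanical: first, both $\mathrm{Op}=(-\mathrm{id}+\Psi_{\hat\infty}+\mathrm{op})\otimes\mathrm{id}$ and $(\Psi_{\hat\infty}+\mathrm{op})\otimes\mathrm{id}$ act only on the differential-form tensor factor $\Gamma(\hat C\setminus\pi^{-1}(\infty),\Omega^1_{\hat C})$, hence preserve the grading of $\mathcal{E}$ by $U(\mathrm{Lie}(b_1,\ldots,b_h))$-degree; second, the multiplication operator by $g[d_i]$ sends $\mathcal{E}[e]$ into $\mathcal{E}[e+d_i]$, since it concatenates a degree-$d_i$ element on the Lie-algebra factor.

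First I would fix the grading conventions, writing $g=\sum_{d\geq 0}g[d]$ with $g[0]=1$, so that $g-1=\sum_{d\geq 1}g[d]$, and expanding the seed vector by the defining generating series of the Bernoulli numbers,
\begin{equation*}
\omega_j\otimes\frac{b_j}{e^{b_j}-1}\,=\,\sum_{m\geq 0}\omega_j\otimes\frac{\mathsf{b}_m}{m!}\,b_j^m\,,
\end{equation*}
so that its $\mathcal{E}[m]$-component is $\omega_j\otimes\tfrac{\mathsf{b}_m}{m!}b_j^m$.

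Next I would substitute $g-1=\sum_{d\geq 1}g[d]$ into the geometric series of Corollary~\ref{cor1} and expand each of the $r$ factors, obtaining
\begin{equation*}
K_{\hat\infty,j}\,=\,\sum_{r\geq 0}\sum_{d_1,\ldots,d_r\geq 1}\mathrm{Op}\circ g[d_1]\circ\cdots\circ\mathrm{Op}\circ g[d_r]\Big(\omega_j\otimes\frac{b_j}{e^{b_j}-1}\Big)\,.
\end{equation*}
Because each $\mathrm{Op}$ preserves degree while each $g[d_i]$ raises it by $d_i$, the composite $\mathrm{Op}\circ g[d_1]\circ\cdots\circ\mathrm{Op}\circ g[d_r]$ applied to the degree-$d_{r+1}$ part of the seed lands in $\mathcal{E}[d_1+\cdots+d_{r+1}]$; collecting the terms with $d_1+\cdots+d_{r+1}=d$ yields precisely \eqref{eq:210604n4}. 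The identical argument applied to Corollary~\ref{cor2} gives \eqref{eq:last}: the summand $\omega_j\otimes\tfrac{\mathsf{b}_d}{d!}b_j^d$ is the degree-$d$ part of the $\mathrm{id}$-summand, while the remaining double sum arises from $(\Psi_{\hat\infty}+\mathrm{op})\otimes\mathrm{id}$ composed with the factor $g[d_0]$ (with $d_0\geq 1$, since it issues from $g-1$) followed by the expanded geometric series.

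The only point demanding care — and the sole, modest, obstacle — is the legitimacy of these rearrangements inside the completed space $\mathcal{E}$. I would invoke the filtration argument already established in the proof of Lemma~\ref{lem:210604n1}, where $\mathrm{Op}\circ(g-1)$ strictly raises the filtration degree. This guarantees that, for each fixed $d$, only finitely many multi-indices $(r;d_1,\ldots,d_{r+1})$ (respectively $(r;d_0,\ldots,d_{r+1})$) contribute to $\mathcal{E}[d]$, so each displayed sum is finite in every degree and the interchange of summation is valid.
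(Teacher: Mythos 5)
Your proposal is correct and follows essentially the same route as the paper, whose proof consists precisely of deducing both formulas from Corollaries~\ref{cor1} and~\ref{cor2} together with the observation that $\mathrm{Op}$ is a degree-zero endomorphism of $\mathcal{E}$ — exactly the degree bookkeeping you spell out, with your Bernoulli-series expansion of the seed and the finiteness-in-each-degree remark supplying details the paper leaves implicit. As a side note, your reading silently corrects the statement's indexing slip $\mathsf{b}_{d_{k+1}}/d_{k+1}!$ (which should be $\mathsf{b}_{d_{r+1}}/d_{r+1}!$) in the first displayed formula.
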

\begin{proof}
These two equations follow from eq.~\eqref{eq:210604n2} and~\eqref{eq:210604n3}, respectively, combined with the fact that the linear endomorphism~${\rm Op}$ of~$\mathcal{E}$ has degree zero.
\end{proof}
Recall that, by definition, $K_{\hat\infty,j}[d]=\sum_{i_1,\ldots ,i_d\in[\![1,h]\!]}\omega_{\hat{\infty},i_1\cdots i_dj}b_{i_1}\cdots b_{i_d}$. Therefore, if we define a family of functions $g_{i_1\cdots i_d}\in\Gamma(\hat C\setminus\pi^{-1}(\infty),\mathcal{O}_{\hat{C}})$ by
$$
g\,=:\,1+\sum_{d\geq 1}\sum_{i_1,\ldots ,i_d\in[\![1,h]\!]}g_{i_1\cdots i_d}\,b_{i_1}\cdots b_{i_d}\,,
$$
then eq.~\eqref{eq:210604n4} implies the following:
\begin{cor}\label{cor516}
For any $d\geq 0$ and any $i_1,\ldots ,i_d\in[\![1,h]\!]$ one has
\begin{align}\label{eq:210604n5}
\omega_{\hat{\infty},i_1\cdots i_dj}&\,=\,\sum_{r\geq 0}\sum_{\substack{d_1,\ldots ,d_r\geq 1,d_{r+1}\geq 0\\d_1+\cdots +d_{r+1}=d}}(-{\rm id}+\Psi_{\hat{\infty}}+{\rm op})(g_{i_1\cdots i_{d_1}}\cdot(-{\rm id}+\Psi_{\hat{\infty}}+{\rm op})(g_{i_{d_1+1}\cdots i_{d_1+d_2}}\cdots \\
&\times\,(-{\rm id}+\Psi_{\hat{\infty}}+{\rm op})\bigg(g_{i_{d_1+\cdots+d_{r-1}+1}\cdots i_{d_1+\cdots +d_r}}\cdot \bigg(\frac{\mathsf{b}_{d_{r+1}}}{d_{r+1}!}\,\delta_{j,i_{d_1+...+d_r+1},...,i_d}\,\omega_j\bigg)\bigg)\cdots )\,,\notag
\end{align}
where $\delta_{j,i_{d_1+...+d_r+1},...,i_d}=1$ if the last~$d_r$ indices $i_{d_1+\cdots +d_{r}+1},\ldots ,i_d$ are equal to~$j$, and it vanishes otherwise.
\end{cor}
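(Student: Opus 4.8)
The plan is to deduce Corollary~\ref{cor516} directly from the closed formula~\eqref{eq:210604n4} of Corollary~\ref{cor515} by extracting, on both sides, the coefficient of the fixed Lie word $b_{i_1}\cdots b_{i_d}$. By Definition~\ref{def210514n1}, the coefficient of $b_{i_1}\cdots b_{i_d}$ in $K_{\hat\infty,j}[d]$ is by construction $\omega_{\hat\infty,i_1\cdots i_d j}$, so it suffices to read off the same coefficient from the right-hand side of~\eqref{eq:210604n4}.

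First I would record how the two operators occurring in~\eqref{eq:210604n4} act on a pure tensor $\alpha\otimes w\in\mathcal E$, with $\alpha$ a $1$-form and $w$ a word in the $b_i$. Since $\mathrm{Op}=(-\mathrm{id}+\Psi_{\hat\infty}+\mathrm{op})\otimes\mathrm{id}$, it modifies only the $1$-form component and leaves the word $w$ untouched. On the other hand, via the left action of $\mathcal F$ on $\mathcal E$ recalled in \S\ref{ssec:K}, writing $g[d_k]=\sum_{l_1,\ldots,l_{d_k}}g_{l_1\cdots l_{d_k}}\,b_{l_1}\cdots b_{l_{d_k}}$ one has
$$
g[d_k]\cdot(\alpha\otimes w)=\sum_{l_1,\ldots,l_{d_k}}(g_{l_1\cdots l_{d_k}}\,\alpha)\otimes(b_{l_1}\cdots b_{l_{d_k}}\,w),
$$
so each factor $g[d_k]$ multiplies the $1$-form by one function $g_{l_1\cdots l_{d_k}}$ and prepends the letters $b_{l_1}\cdots b_{l_{d_k}}$ to the left of the current word.

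Next I would track a single term of~\eqref{eq:210604n4}. Reading the composition $\mathrm{Op}\circ g[d_1]\circ\cdots\circ\mathrm{Op}\circ g[d_r]$ from the inside out, $g[d_r]$ is applied first to the starting tensor $\omega_j\otimes\frac{\mathsf b_{d_{r+1}}}{d_{r+1}!}b_j^{d_{r+1}}$, prepending a length-$d_r$ block; then $g[d_{r-1}]$ prepends a length-$d_{r-1}$ block to its left, and so on, the interspersed copies of $\mathrm{Op}$ leaving the word unchanged. Hence the word produced is the concatenation of the blocks from $g[d_1],\ldots,g[d_r]$, in this order from left to right, followed by the tail $b_j^{d_{r+1}}$. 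Matching this with $b_{i_1}\cdots b_{i_d}$ forces the first block to carry $i_1\cdots i_{d_1}$, the second $i_{d_1+1}\cdots i_{d_1+d_2}$, and so on, and forces the final $d_{r+1}$ letters $i_{d_1+\cdots+d_r+1},\ldots,i_d$ all to equal $j$; this last constraint is exactly the factor $\delta_{j,i_{d_1+\cdots+d_r+1},\ldots,i_d}$, and it is accompanied by the scalar $\frac{\mathsf b_{d_{r+1}}}{d_{r+1}!}$. Collecting the $1$-form components along this unique matching — each $g[d_k]$ contributing the multiplier $g_{i_{d_1+\cdots+d_{k-1}+1}\cdots i_{d_1+\cdots+d_k}}$ and each $\mathrm{Op}$ contributing an application of $-\mathrm{id}+\Psi_{\hat\infty}+\mathrm{op}$ — reproduces precisely the nested expression on the right-hand side of~\eqref{eq:210604n5}.

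The argument is essentially bookkeeping, so the one point demanding care is the order of concatenation: because $\mathcal F$ acts on the left and each $g[d_k]$ prepends its block, the blocks appear in the word in the same order $d_1,\ldots,d_r$ in which the operators are composed from the outside in, with the $b_j^{d_{r+1}}$ tail innermost; this is what makes the index ranges in~\eqref{eq:210604n5} come out in the stated order. Since $\mathrm{Op}$ acts trivially on words, the split of $b_{i_1}\cdots b_{i_d}$ into consecutive blocks is governed solely by the partition $d_1+\cdots+d_{r+1}=d$, so for each such partition the matching is unique, no multiplicities or cancellations occur, and summing over all $r$ and all partitions yields the formula of Corollary~\ref{cor516}.
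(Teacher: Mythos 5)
Your proposal is correct and follows exactly the paper's (largely implicit) argument: Corollary~\ref{cor516} is obtained from eq.~\eqref{eq:210604n4} of Corollary~\ref{cor515} by expanding $g$ in the monomial basis $b_{i_1}\cdots b_{i_d}$ of $U(\mathrm{Lie}(b_1,\ldots,b_h))^\wedge$, using that $K_{\hat\infty,j}[d]=\sum_{i_1,\ldots,i_d}\omega_{\hat\infty,i_1\cdots i_d j}\,b_{i_1}\cdots b_{i_d}$, and extracting the coefficient of a fixed word. Your careful bookkeeping — that $\mathrm{Op}$ acts only on the form component while each $g[d_k]$ left-concatenates its block, so the blocks appear in the order $d_1,\ldots,d_r$ with the $b_j^{d_{r+1}}$ tail innermost (forcing the last $d_{r+1}$ indices to equal $j$, whence the $\delta$-factor) — is precisely the verification the paper leaves to the reader.
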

Explicit low-degree formulas computed with these methods can be found in \S\ref{sec:computations}.

\subsection{Low-degree formulas}\label{sec:computations}

As explained in \S\ref{ssec:g}, in order to compute the first terms of~$g$ one needs to invert the relations~\eqref{eq1:211006} and compute the first few coefficients of the Lie algebra elements $\Lambda_l$. We report below the first terms: for any $l\in [\![1,h]\!]$ one has
\begin{align*}
\Lambda_l&\,=\,-\,\sum_{i,j=1}^{h}\,[h+i,h+j|l]\,\bigg(b_i+\frac{b_i^2}{2}+\cdots\bigg)\bigg(b_j+\frac{b_j^2}{2}+\cdots\bigg)\\
&\,-\,\sum_{i,j,k=1}^{h}\Big([h+i,h+j,h+k|l]\,-\,\sum_{r=1}^{2h}\big([h+j,h+k|r]\,[h+i,r|l]\\
&\,+\,[h+i,h+j|r]\,[r,h+k|l]\big)\Big)(b_i+\cdots)(b_j+\cdots)(b_k+\cdots)\,+\,\cdots\,,\\
\Lambda_{h+l}&\,=b_l+\frac{b^2_l}{2}+\frac{b^3_l}{6}+\cdots\,-\,\sum_{i,j=1}^{h}\,[h+i,h+j|h+l]\,\bigg(b_i+\frac{b_i^2}{2}+\cdots\bigg)\bigg(b_j+\frac{b_j^2}{2}+\cdots\bigg)\\
&\,-\,\sum_{i,j,k=1}^{h}\bigg([h+i,h+j,h+k|h+l]\,-\,\sum_{r=1}^{2h}\Big([h+j,h+k|r]\,[h+i,r|h+l]\\
&\,+\,[h+i,h+j|r]\,[r,h+k|h+l]\Big)\bigg)\,(b_i+\cdots)(b_j+\cdots)(b_k+\cdots)\,+\,\cdots\,.
\end{align*}

Combining these formulas with~\eqref{eq2:211006}, one finds\footnote{We have rewritten these formulas in a more compact way by making use of the shuffle relations~\eqref{eq:shuffle} and the identity $[i|j]=\delta_{ij}$.}
\begin{align*}
g[1]&\,=\,-\sum_{i=1}^h\,[h+i|\bullet]\,b_i\,,\\
g[2]&\,=\,-\sum_{i=1}^h\,[h+i|\bullet]\,\frac{b_i^2}{2}\\
&+\sum_{i,j=1}^h\bigg([h+j,h+i|\bullet]+\sum_{r=1}^{2h}[h+i,h+j|r]\,[r|\bullet]\bigg)\, b_ib_j\,,\\
g[3]&\,=\,-\sum_{i=1}^h \,[h+i|\bullet]\,\frac{b_i^3}{6}\\
&\,+\sum_{i,j=1}^h\bigg([h+j,h+i|\bullet]+\sum_{r=1}^{2h}[h+i,h+j|r]\,[r|\bullet]\bigg)\, \bigg(\frac{b_i^2b_j}{2}+\frac{b_ib_j^2}{2}\bigg)\\
&\,+\sum_{i,j,k=1}^h\Bigg(-[h+k,h+j,h+i|\bullet]+\sum_{r=1}^{2h}\bigg([h+i,h+j,h+k|r]\\
&\,-\sum_{s=1}^{2h}\Big([h+i,h+j|s]\,[s,h+k|r]+[h+j,h+k|s]\,[h+i,s|r]\Big)\bigg)\,[h+r|\bullet]\\
&\,-[h+i,h+j|r]\,[h+k,r|\bullet]-[h+j,h+k|r]\,[r,h+i|\bullet]\Bigg)\, b_ib_jb_k\,.\\
\end{align*}

On the other hand, as explained in \S\ref{ssec:lambda}, in order to deduce compact expressions for the low-degree terms $I=gd(g^{-1})$ it is useful to first compute the low-degree terms of $\lambda=\log(g)$ using formula~\eqref{eq3:211006}, and then to use formula~\eqref{eq4:211006}. One finds
\begin{align*}
\lambda[1]&=-\sum_{i=1}^{2h}[h+i|\bullet]\otimes b_i\,,\\
\lambda[2]&=-\frac{1}{2}\sum_{i,j=1}^h\bigg([h+i,h+j|\bullet]\,-\sum_{r=1}^{2h}\,[h+i,h+j|r]\,[r|\bullet]\bigg)\otimes [b_i,b_j]\,,\\
\lambda[3]&=\sum_{i,j,k=1}^h\Bigg(\frac{1}{6}\,\big([h+j,h+k,h+i|\bullet]-[h+i,h+j,h+k|\bullet]\big)\\
&+\frac{1}{6}\sum_{r=1}^{2h}
\big([h+i,h+j,h+k|r]-[h+j,h+k,h+i|r]\big)\,[r|\bullet]
\\ &+\frac{1}{4}\sum_{s=1}^{2h}[h+j,h+k|s]\bigg([h+i,s|\bullet]-[s,h+i|\bullet]\\
&+\sum_{r=1}^{2h}\Big([s,h+i|r]-[h+i,s|r]\Big) [r|\bullet]\bigg)\Bigg)\otimes [b_i,[b_j,b_k]]\,, 
\end{align*}
as well as
\begin{align*}
I[1]&=\sum_{i=1}^h\beta_{i}\otimes b_i\,,\\
I[2]&=-\frac{1}{2}\bigg(\sum_{i,j=1}^h\sum_{r=1}^{2h}\,[h+i,h+j|r]\,\gamma_r\bigg)\otimes [b_i,b_j]\,,\\
I[3]&=-\sum_{i,j,k=1}^h\bigg(\sum_{r=1}^{2h}
\Big(\frac{1}{6}\,[h+i,h+j,h+k|r]-\frac{1}{6}\,[h+j,h+k,h+i|r]\\
&+\frac{1}{4}\sum_{s=1}^{2h}
[h+j,h+k|s]\,\big([s,h+i|r]-[h+i,s|r]\big)\Big)
 \gamma_r\bigg)\otimes [b_i,[b_j,b_k]]\,.
\end{align*}

Combining the expressions given above of the low-degree terms of~$g$ with the formulas~\eqref{eq:210604n4} and~\eqref{eq:decompK1}, one finds that, in the case $n=1$,
\begin{align*}
&{\pmb K}[1]\,=\,\sum_{i=1}^h \omega_i\otimes a_i\,,\\
&{\pmb K}[2]\,=\,\sum_{i,j=1}^h\Big(({\rm id}-\Psi_{\hat{\infty}}-{\rm op})([h+i|\bullet]\,\omega_j)-\frac{\delta_{ij}}{2}\,\omega_i\Big)\otimes [b_i,a_j]\,,\\
&{\pmb K}[3]\,=\,\sum_{i,j,k=1}^h\Bigg((-{\rm id}+\Psi_{\hat{\infty}}+{\rm op})\bigg(\frac{\delta_{ij}+\delta_{jk}}{2}\,[h+i|\bullet]-[h+i,h+j|\bullet]\,\omega_k\\
&\,-\sum_{r=1}^{2h}\,[h+j,h+i|r]\,[r|\bullet]\,\omega_k+[h+i|\bullet](\Psi_{\hat{\infty}}+{\rm op})([h+j|\bullet]\,\omega_k)\bigg)+\frac{\delta_{ijk}}{12}\,\omega_i\Bigg)\otimes [b_i,[b_j,a_k]]\,.
\end{align*}

Finally, combining the expressions for the low degree terms of~$I$ with the formula~\eqref{eq:last}, one finds that, in the case $n=1$,
\begin{align*}
{\pmb J}[1]&\,=\,\sum_{i=1}^h\beta_i\otimes b_i\,+\,\sum_{i=1}^h \omega_i\otimes a_i\,,\\
{\pmb J}[2]&\,=\,-\frac{1}{2}\bigg(\sum_{i,j=1}^h\sum_{r=1}^{2h}\,[h+i,h+j|r]\,\gamma_r\bigg)\otimes [b_i,b_j]\\
&\,-\,\sum_{i,j=1}^h\bigg((\Psi_{\hat{\infty}}+{\rm op})([h+i|\bullet]\,\omega_j)+\frac{\delta_{ij}}{2}\,\omega_i\bigg)\otimes [b_i,a_j]\,,\\
{\pmb J}[3]&\,=\,-\sum_{i,j,k=1}^h\bigg(\sum_{r=1}^{2h}
\Big(\frac{1}{6}\,[h+i,h+j,h+k|r]-\frac{1}{6}\,[h+j,h+k,h+i|r]\\
&+\frac{1}{4}\sum_{s=1}^{2h}
[h+j,h+k|s]\,\big([s,h+i|r]-[h+i,s|r]\big)\Big)
 \gamma_r\bigg)\otimes [b_i,[b_j,b_k]]\\
&+\sum_{i,j,k=1}^h\Bigg((\Psi_{\hat{\infty}}+{\rm op})\bigg(\frac{\delta_{ij}+\delta_{jk}}{2}\,[h+i|\bullet]-[h+i,h+j|\bullet]\,\omega_k\\
&\,-\sum_{r=1}^{2h}\,[h+j,h+i|r]\,[r|\bullet]\,\omega_k+[h+i|\bullet](\Psi_{\hat{\infty}}+{\rm op})([h+j|\bullet]\,\omega_k)\bigg)+\frac{\delta_{ijk}}{12}\,\omega_i\Bigg)\otimes [b_i,[b_j,a_k]]\,.
\end{align*}

\section*{Acknowledgements}

We would like to thank C. Gasbarri for the useful discussions on Poincar\'e residues.

The first-named author has received funding from the ANR grant ``Project HighAGT ANR-20-CE40-0016''.

The second-named author has received funding from the LabEx IRMIA, and from the European Union’s Horizon 2020 research and innovation programme under the Marie Skłodowska-Curie grant
agreement No 843960 for the project ``HIPSAM''.

This collaboration was supported by a three-month invitation of the second-named author at IRMA.

\end{document}